\documentclass[reqno,11pt]{amsart}
\usepackage{amssymb}
\usepackage{mathrsfs}
\usepackage{txfonts}
\usepackage{amsfonts}
\usepackage{amstext}
\usepackage{amssymb}
\usepackage{amsmath}
\usepackage{graphicx}
\usepackage{hyperref}
\usepackage{url}
\usepackage{tikz}
\usepackage{xparse}

\makeatletter
\def\sim@x@scale{.15}
\def\sim@y@scale{.05}
\def\sim@y@thick{.02}
\newsavebox\sim@upper
\newsavebox\sim@lower
\NewDocumentCommand{\xSim}{ O{} m }{\TextOrMath{\PackageError{TEST}{`\string\xSim` is valid in math mode only.}{}}{
\sbox\sim@upper{$\scriptsize #2$}
\sbox\sim@lower{$\scriptsize #1$}
\pgfmathparse{min(max(\wd\sim@upper/1em, \wd\sim@lower/1em, 1.0), 1.5)}
\edef\sim@ratio{\pgfmathresult}
\def\sim@x {\sim@x@scale * \sim@ratio}
\def\sim@y {\sim@y@scale * \sim@ratio}
\def\sim@@y{\sim@y@thick * \sim@ratio}
\pgfmathparse{floor(max(\wd\sim@upper/1em, \wd\sim@lower/1em)) + 1}
\edef\sim@wd{\pgfmathresult em}
\mathrel{
\begin{tikzpicture}[baseline=-.7ex]
\filldraw[line width=.2pt] 
(0, 0)
.. controls +(\sim@x, \sim@y+\sim@@y) and +(-\sim@x, -\sim@y) .. 
+(\sim@wd, 0) 
node[midway, above] {\usebox\sim@upper} 
node[midway, below] {\usebox\sim@lower}
.. controls +(-\sim@x, -\sim@y-\sim@@y) and +(\sim@x, \sim@y) .. 
(0, 0);
\end{tikzpicture}
}
}}
\makeatother
\hypersetup{
    colorlinks  = true,
    citecolor  = blue,
    linkcolor  = blue,
    urlcolor   = blue
}
\allowdisplaybreaks
\newtheorem{theorem}{Theorem}[section]
\newtheorem{proposition}[theorem]{Proposition}
\newtheorem{lemma}[theorem]{Lemma}
\newtheorem{corollary}[theorem]{Corollary}
\newtheorem{remark}[theorem]{Remark}

\newtheorem{definition}[theorem]{Definition}

\newtheorem{example}[theorem]{Example}

\renewcommand{\theequation}{\thesection.\arabic{equation}}
\newenvironment{notation}{\smallskip{\sc Notation.}\rm}{\smallskip}
\newenvironment{acknowledgement}{\smallskip{\sc Acknowledgement.}\rm}{\smallskip}
\DeclareMathOperator*{\supp}{supp}
\DeclareMathOperator*{\einf}{einf}
\DeclareMathOperator*{\esup}{esup}

\DeclareMathOperator*{\cutoff}{cutoff}
\DeclareMathOperator*{\diam}{diam}

\numberwithin{equation}{section}
\newcounter{counterConstant}

\makeatletter

\newcommand{\Rmnum}[1]{\expandafter\@slowromancap\romannumeral #1@}
\@namedef{subjclassname@2020}{\textup{2020} Mathematics Subject Classification}
\makeatother
\input{tcilatex}

\makeatletter
\@namedef{subjclassname@2020}{%
  \textup{2020} Mathematics Subject Classification}
\makeatother

\def\qed{\hfill$\square$\par}

\def\dsum{\displaystyle\sum}

\def\tbigcup{\mathop{\textstyle \bigcup }}
\def\tbigcap{\mathop{\textstyle \bigcap }}

\def\Xint#1{\mathchoice
{\XXint\displaystyle\textstyle{#1}}%
{\XXint\textstyle\scriptstyle{#1}}%
{\XXint\scriptstyle\scriptscriptstyle{#1}}%
{\XXint\scriptscriptstyle\scriptscriptstyle{#1}}%
\!\int}
\def\XXint#1#2#3{{\setbox0=\hbox{$#1{#2#3}{\int}$ }
\vcenter{\hbox{$#2#3$ }}\kern-.6\wd0}}

\def\oint{\Xint-}

\def\fint{\Xint-}

\def\FRAME#1#2#3#4#5#6#7#8
{
 \begin{figure}[H]
 \begin{center}
 \includegraphics[height=#3]{#7}
 \caption{#5}
 \label{#6}
 \end{center}
 \end{figure}
}

\def\enddoc{\end{document}}

\begin{document}
\title[Heat Kernels]{Inhomogeneous Scaling Function and Heat Kernel Estimates on Fractals Satisfying Some Resistance Conditions}
\author[Chang]{Diwen Chang}
\address{School of Mathematics and Statistics,
	Beijing Technology and Business University,
	Beijing, 102488, China.}
\email{20250705@btbu.edu.cn}
\author[Liu]{Guanhua Liu}
\address{Center for Applied Mathematics and KL-AAGDM, Tianjin University, Tianjin, 300072, China.}
\email{liu\_gh@tju.edu.cn}
\date{\today }

\begin{abstract}
In this paper, we focus on strongly local regular Dirichlet forms, especially those satisfying Morrey-type inequalities. We prove the equivalence between resistance estimates and heat kernel estimates in this case. Self-similar forms on fractals serve as a major application, where we construct a spatially inhomogeneous scaling function and characterize all the doubling self-similar measures. Further, on some special examples, the resistance conditions are reduced to some geometric conditions, on which a complete theory on self-similar Dirichlet spaces is established therein. In particular, we construct a concrete example on rotated triangle fractals, where the optimal heat kernel estimate is not related at all to the lower scaling exponent.
\end{abstract}

\subjclass[2020]{Primary: 35K08; Secondary: 28A80, 31E05.}

\keywords{Heat kernel, scaling function, Dirichlet form, self-similar fractal}

\maketitle
\tableofcontents

\section{Background and Motivation}
The heat kernel, defined as the fundamental solution to the heat equation, serves as a fundamental tool for analyzing geometric and analytic properties of an underlying space. On fractals, the study of heat kernel estimates originated with the groundbreaking work of \cite{BarlowBass.1989.AIHPPS225,BarlowBass.1992.PTRF307,BarlowPerkins.1988.PTRF543}, where constructed are Brownian motions on the Sierpi\'nski gasket and the Sierpi\'nski carpet, and derived are their corresponding heat kernel estimates. Over the past four decades, a rich and mature theory has been developed, for example, in \cite{GrigoryanHu.2008.IM81,GrigoryanHuLau.2014.TAMS6397,GrigoryanTelcs.2012.AP1212}, largely centered on the \emph{homogeneous} space-time scaling relation
\[t = r^{\beta},\]
where $\beta > 1$ is a constant known as the \emph{walk dimension}. In Euclidean spaces, $\beta = 2$ corresponds to the classical Brownian motion, while on fractals $\beta \ge 2$ typically, corresponds to some diffusion process. This framework leads to a comprehensive characterization of two-sided sub-Gaussian heat kernel bounds in terms of geometric conditions such as volume doubling, the Faber-Krahn inequality, and resistance estimates. Importantly, the theory remains valid for more general scaling functions $\Phi(r)$ satisfying appropriate regularity and comparability conditions, with $t = \Phi(r)$ replacing $t = r^{\beta}$.

However, many natural inhomogeneous media -- such as materials with varying porosity, fractured rocks, or hierarchical structures with distinct local geometries -- exhibit diffusion whose propagation rate depends significantly on location. In such settings, the scaling
\[t = W(x, r)\]
takes a more general form, where the function $W$ depends \emph{essentially} on the spatial variable $x$. Telcs \cite{Telcs2006} provided an illustrative example on graphs. Meanwhile, there have been scarce rigorously investigated examples on continuous fractals, in particular, on fractals equipped with strongly local Dirichlet forms, which correspond to diffusion processes in continuous settings. Known results under $W(x,r)$-scaling focus primarily on non-local Dirichlet forms \cite{GrigoryanHuHu.2022.TPGcap, GrigoryanHuHu.2022.TPLE,GrigoryanHuHu.2023.TPDUE,GrigoryanHuHu.2024.TPUEq}, leaving a significant gap in the strongly local case. The core difficulty lies in the fact that established heat kernel estimates rely heavily on the homogeneity of $r^{\beta}$ or $\Phi(r)$. Spatial inhomogeneity disrupts the standard covering, chaining and iteration arguments, for which reason a thorough reworking of the underlying analytic machinery is required now.

This paper aims to bridge this gap. We achieve this by developing a general theoretical framework for the strongly local setting and, concurrently, providing a concrete, verifiable fractal model. Together, these contributions demonstrate both the feasibility and the analytic richness of inhomogeneous scalings on continuous fractal spaces. 

Firstly, this work provides, for the first time, a complete theory of sub-Gaussian heat-kernel estimates, along with their equivalent characterizations, for strongly local Dirichlet forms in the setting of $t = W(x, r)$ scaling. Concretely, we extend the classical equivalence theorems for sub-Gaussian heat kernel estimates to the case where a general scaling function $W(x,r)$ depends on the spatial variable. To accommodate the spatial inhomogeneity, we reformulate the key conditions, that is, the Faber-Krahn inequality $(\mathrm{FK})$, the survival estimate $(\mathrm{S})$, the resistance estimate $(\mathrm{R})$, and the corresponding Morrey-type inequality $(\mathrm{MI})$. Within this setting we prove the equivalence
\[
(\mathrm{FK}) + (\mathrm{S}) + (\mathrm{sloc})\; \Leftrightarrow \; (\mathrm{UE}_{\mathrm{exp}}) + (\mathrm{C}),
\]
and, under an extra assumption of the chain condition $(\mathrm{CH})$, a better equivalence
\[
(\mathrm{MI}) + (\mathrm{S}) + (\mathrm{sloc}) \; \Leftrightarrow \; (\mathrm{UE}_{\mathrm{exp}}) + (\mathrm{LE}_{\mathrm{exp}}) + (\mathrm{C}).
\]
The proofs, presented in a self-contained manner in the appendices, systematically overcome the technical challenges introduced by the $x$-dependence of $W$, providing a ready-to-use toolkit to establish heat kernel bounds in inhomogeneous settings.

Secondly, as a concrete realization of the general theory, we show for the first time that the self-similar Dirichlet forms on the rotated triangle family $K_{\lambda}$ (introduced by Barlow \cite{Barlow.1998.1}) admit a scaling function $W(x,r)$ that \emph{essentially} depends on the spatial variable $x$. We then derive precise heat kernel estimates for these forms. To achieve this, we present a general method to construct scaling functions from the self-similar coefficients. This construction decides space-time scalings on fractals with suitable geometric conditions. Through a detailed geometry analysis of $K_{\lambda}$, we confirm all the required assumptions, including the connectivity property $(\mathrm{CP}_{\rho})$ and the average condition $(\mathrm{AV}_{\rho})$. For the established self-similar resistance forms on $K_{\lambda}$ by Cao \cite{Cao.2023.AG}, we then derive \emph{optimal two-sided heat kernel estimate} of the form
\[
p_t(x,y) \asymp \frac{C}{V\bigl(x,W^{-1}(x,t)\bigr)}\,
\exp\biggl\{-c\Bigl(\frac{W(x,d(x,y))}{t}\Bigr)^{\frac{1}{\beta_1-1}}\biggr\},
\]
which is valid whenever the associated exponents satisfy $\beta_1 \ge \beta_4$. Here $\beta_1,\beta_4$ are the scaling exponents arising from the self-similar data; the function $W$ essentially varies with $x$ when $\beta_1 \neq \beta_4$. This provides a concrete example of spatially inhomogeneous diffusions on fractals showing how the abstract $W(x,r)$-framework can be applied to an explicit model.

The main innovations of this work lie in the following aspects:
\begin{itemize}
	\item \textbf{A complete theory for $W(x,r)$-scaling.} We establish the first systematic framework of sub-Gaussian heat kernel estimates and their equivalent characterizations for strongly local Dirichlet forms under spatially dependent scaling.
	
	\item \textbf{A general construction of scaling functions.} We provide a method to explicitly build the scaling function $W(x,r)$ on fractals using self-similar data, enabling the realization of spatially dependent scaling on fractals with specific geometric conditions.
	
	\item \textbf{A concrete fractal exemplar.} We provide the first verifiable example -- the rotated triangle family $K_{\lambda}$ -- where the scaling function $W(x,r)$ is explicitly constructed and shown to depend essentially on $x$, with all supporting geometric conditions rigorously checked.
	
	\item \textbf{Sharp bounds in highly inhomogeneous regimes.} We derive optimal two-sided estimates even when the lower scaling exponent $\beta_*$ is less than 1, revealing how the paths that diffusion takes place along are decided by local inhomogeneity.
\end{itemize}

The discussions here guide to some similar phenomenon on other geometrically complicated spaces, for which details are left to future researches.

\emph{Structure of the paper.} The rest of this paper is organized as follows. Section \ref{mainresults} states the main equivalent characterizations of standard heat kernel estimates under the scaling relation $t=W(x,r)$. Section \ref{SSSWay} shows the general way to construct scaling functions on self-similar sets based on self-similar coefficients. In Section \ref{Kl} we 
combine our general theories in Sections \ref{mainresults}-\ref{SSSWay} to a particular fractal $K_{\lambda}$, explicitly show an inhomogeneous scaling function $W(x,r)$ and corresponding sharp two-sided heat kernel bounds.

\begin{notation}
The letters $c,c_{1},C,C_{0},C_{1},C_{0,1}$, etc., denote positive constants whose values are unimportant and may differ at different occurrences. The relation $f\asymp g$ between two non-negative functions $f,g$ means that there is a constant $C\geq 1$ such that $C^{-1}f\leq g\leq Cf$ for a specified range of the variables.
\end{notation}

\section{Terminologies and Heat Kernel Estimates}\label{mainresults}

In this section, we list basic assumptions on the metric measure structure, and then investigate necessary and sufficient conditions for the sub-Gaussian upper and lower bounds of heat kernels on general metric measure spaces. Based on these results, we particularly focus on the case where the related effective resistance exists, which is commonly observed on fractal spaces. The relation between resistance estimates and corresponding two-sided heat kernel estimates will be systematically examined.

\subsection{Conditions on metric measure spaces}

Let $(M,d)$ be a locally compact separable metric space and let $\mu $ be a
Radon measure on $M$ with full support. Denote metric balls in $(M,d)$ by 
\begin{equation*}
B(x,r):=\{y\in M:d(x,y)<r\}\quad\text{for\quad any }x\in M\quad \text{and}\quad
r>0.
\end{equation*}%
For every ball $B:=B(x,r)$ and every real number $\lambda >0$, set 
\begin{equation*}
V(x,r):=\mu (B(x,r))\quad \text{and}\quad \lambda B:=B(x,\lambda r).
\end{equation*}

Assume that all metric balls in $M$ are precompact. Denote by $\bar{R}$ the
diameter of $(M,d)$, that is, 
\begin{equation*}
\bar{R}:=\sup_{x,y\in M}d(x,y).
\end{equation*}

We always assume that $M$ contains at least two points, or equivalently, $\bar{R}>0$.

Recall that a measure $\mu $ satisfies the \emph{volume doubling} property $(\mathrm{VD})$ (for short, $\mu$ is doubling), if there exists a
constant $C\geq 1$ such that, for all $x\in M$ and all $r>0$, 
\begin{equation*}
V(x,2r)\leq CV(x,r).
\end{equation*}%
Condition $(\mathrm{VD})$ implies that $0<V(x,r)<\infty $ for all $x\in M$
and all $r>0$. Note that $V(x,0)=0$ for all $x\in M$.

It is known that $(\mathrm{VD})$ implies (and actually, is equivalent to) the following condition: there exist two positive constants $C,\alpha ^{\ast }$ such
that, for all $x,y\in M$ and all $0<r\leq R<\infty $, 
\begin{equation*}
\frac{V(x,R)}{V(y,r)}\leq C\left( \frac{d(x,y)+R}{r}\right) ^{\alpha ^{\ast
}}.
\end{equation*}

Recall that a measure $\mu $ satisfies the \emph{reverse volume doubling} property $(\mathrm{RVD})$, if there exist two positive constants $C,\alpha_{\ast }$ such that, for all $x\in M$ and all $0<r\leq R<\bar{R}$, 
\begin{equation*}
\frac{V(x,R)}{V(x,r)}\geq C^{-1}\left( \frac{R}{r}\right) ^{\alpha _{\ast }}.
\end{equation*}
Here are some observations:

\begin{enumerate}
\item[i)] If $\mu $ satisfies both $(\mathrm{VD})$ and $(\mathrm{RVD})$,
then $\alpha ^{\ast }\geq \alpha _{\ast }$, and there exists a positive
constant $C$ such that, for all $0<r\leq R<\bar{R}$ and for all $x,y\in M$
with $d(x,y)\leq R$, 
\begin{equation}
C^{-1}\left( \frac{R}{r}\right) ^{\alpha _{\ast }}\leq \frac{V(x,R)}{V(y,r)}%
\leq C\left( \frac{R}{r}\right) ^{\alpha ^{\ast }}.  \label{muscal0}
\end{equation}

\item[ii)] If $(\mathrm{VD})$ holds, then $\bar{R}<\infty $ implies $\mu
(M)<\infty $. On the other hand, if $(\mathrm{RVD})$ holds, then $\mu
(M)<\infty $ implies $\bar{R}<\infty .$
\end{enumerate}

Recall that a function $W: M\times[0,\infty]\to[0,\infty]$ is called a \emph{%
scaling function} (see \cite{GrigoryanHuHu.2022.TPLE,GrigoryanHuHu.2022.TPGcap}), if

\begin{enumerate}
\item[i)] for any $x\in M$, the function $W(x,\cdot)$ is strictly
increasing, $W(x,0)=0$ and $W(x,\infty)=\infty $;

\item[ii)] there exist three positive constants $C_{W},\beta _{\ast }<\beta^{\ast }$ such that for all $0<r\leq R<\infty $ and all $x,y\in M$ with $d(x,y)\leq R$, 
\begin{equation}
C_{W}^{-1}\left( \frac{R}{r}\right) ^{\beta _{\ast }}\leq \frac{W(x,R)}{%
W(y,r)}\leq C_{W}\left( \frac{R}{r}\right) ^{\beta ^{\ast }}.  \label{scpr}
\end{equation}
\end{enumerate}
Replacing $W(x,r)$, if necessary, by $\displaystyle \int_{0}^{r}\dfrac{W(x,s)}{s}ds$ (which, according to (\ref{scpr}), is always comparable with $W(x,r)$)
we assume that $W$ is continuous. It follows that, for
all $x\in M$ and all $0<r\leq R<\infty $, 
\begin{equation}
C^{-1}\left( \frac{R}{r}\right) ^{1/\beta ^{\ast }}\leq \frac{W^{-1}(x,R)}{%
W^{-1}(x,r)}\leq C\left( \frac{R}{r}\right) ^{1/\beta _{\ast }},
\label{WinvScal}
\end{equation}%
where $W^{-1}(x,\cdot)$ is the inverse function of $W(x,\cdot)$ for any $%
x\in M$.

We say that the number $\beta_{\ast}$ (resp.\ $\beta^\ast$) is the \emph{%
sub-scaling} (resp.\ \emph{super-scaling}) \emph{exponent} of $W$. Note that
if (\ref{scpr}) holds for some $0<\beta_\ast\leq\beta^\ast$, then it also
holds for any exponents $\hat{\beta}_\ast$, $\hat{\beta}^\ast$ whenever $0<%
\hat{\beta}_\ast\leq\beta_\ast$ and $\beta ^\ast\leq\hat{\beta}^\ast<\infty$%
. Sometimes one hopes to decide the sub-scaling and super-scaling exponents
appearing in (\ref{scpr}) as good as possible, for example, when yielding
off-diagonal estimates (see Definitions \ref{dUE}, \ref{dLE}, Proposition \ref{thm:main1} and Theorem \ref{thm:main2}).

For convenience, for any metric ball $B=B(x,r)$, we write 
\begin{equation*}
W(B):=W(x,r).
\end{equation*}

Note that in some metric spaces a ball as a subset of $M$ may have different
centers and radii, that is, it may be possible that $B(x_{1},r_{1})=B(x_{2},r_{2})$ whereas $x_{1}\neq x_{2}$ or $r_{1}\neq r_{2}$%
. To avoid ambiguities in the notation $\lambda B,W(B)$ and other similar
notations, we always identify a ball as a pair of center and radius rather
than as a subset of $M$.

Sometimes we may require $(M,d)$ to satisfy the \emph{chain condition},
denoted by $(\mathrm{CH})$. This means there exists a positive constant $C_{%
\mathrm{CH}}$ such that, for any two distinct points $x,y\in K$ and every
integer $n\geq 1$, there exists a chain of points $\{x_{i}\}_{i=0}^{n}$ in $K $ such that $x_{0}=x$, $x_{n}=y$, and 
\begin{equation}
d(x_{i},x_{i+1})\leq \frac{C_{\mathrm{CH}}}{n}d(x,y)\quad \text{for all}\quad 0\leq i<n\text{.}  \label{CHC}
\end{equation}

\subsection{Conditions on Dirichlet forms}
Let $(\mathcal{E},\mathcal{F})$ be a regular Dirichlet form on $L^{2}(M,\mu) $. Then, it admits a unique \emph{Beurling-Deny decomposition} (cf.\ \cite[Theorems 3.2.1 and 4.5.2]{FukushimaOshimaTakeda.2011.489}): 
\begin{equation*}
\mathcal{E}(u,v)=\mathcal{E}^{(L)}(u,v)+\mathcal{E}^{(J)}(u,v)+\mathcal{E}^{(K)}(u,v),
\end{equation*}
where $\mathcal{E}^{(L)}$ is the \emph{strongly local part}, so that for all $u,v\in \mathcal{F}\cap C_{0}(M)$ such that $u$ equals to some constant $c$ on a neighborhood of $\supp(v)$, 
\begin{equation*}
\mathcal{E}^{(L)}(u,v)=0;
\end{equation*}%
$\mathcal{E}^{(J)}$ is the \emph{jump part} associated with a unique
symmetric Radon measure $j$ on $M\times M\setminus \mathrm{diag}$: 
\begin{equation}
\mathcal{E}^{(J)}(u,v)=\int_{M\times M\setminus \mathrm{diag}%
}(u(x)-u(y))(v(x)-v(y))dj(x,y);  \label{EJ}
\end{equation}%
and finally, $\mathcal{E}^{(K)}$ is the \emph{killing part} associated with
a unique Radon measure $k$ on $M$: 
\begin{equation*}
\mathcal{E}^{(K)}(u,v)=\int_{M}u(x)v(x)dk(x).
\end{equation*}

A regular Dirichlet form $(\mathcal{E},\mathcal{F})$ is said to satisfy condition $(\mathrm{sloc})$ if it is strongly local, i.e., $\mathcal{E}^{(J)}\equiv\mathcal{E}^{(K)}\equiv 0$; it is said to be local, if $\mathcal{E}^{(J)}\equiv 0$.

Recall that $(\mathcal{E},\mathcal{F})$ is \emph{conservative}, denoted by $(\mathrm{C})$, if $P_t1=1$ for all $t>0$, where $P_t$ is the heat semigroup of the form $(\mathcal{E},\mathcal{F})$; it is \emph{parabolic}, denoted by $(\mathrm{Para})$, if for any compact subset $K\subset M$,
\[\mathrm{Cap}(K):=\inf\{\mathcal{E}(u):u\in\mathcal{F}\cap C_0(M), u|_K\geq 1\}=0.\]
Recall by \cite[Lemma 6.4]{GrigoryanHuLau.2014.TAMS6397} that $(\mathrm{Para})\Rightarrow(\mathrm{C})$.

For any open subset $\Omega \subset M$, let $\mathcal{F}(\Omega)$ be the
closure of $\mathcal{F}\cap C_{0}(\Omega)$ under $\mathcal{E}_{1}=\mathcal{E}+\|\cdot\|_{L^2}$. By \cite%
[Theorem 4.4.3]{FukushimaOshimaTakeda.2011.489}, if $(\mathcal{E},\mathcal{F}%
)$ is regular, then $(\mathcal{E},\mathcal{F}(\Omega))$ is also a regular
Dirichlet form on $L^{2}(\Omega ,\mu)$.

Now we fix a scaling function $W$. Recall that the Faber-Krahn inequality $(\mathrm{FK}^{W})$ holds, if there
exist constants $\varepsilon \in (0,1)$ and $C,\kappa >0$ such that, for
every ball $B:=B(x_{0},r)$ with $x_{0}\in M,0<r<\varepsilon \bar{R}$ and for
every non-empty open subset $U\subset B$, 
\begin{equation*}
\lambda _{1}(U):=\inf_{u\in \mathcal{F}(U)\setminus \{0\}}\frac{\mathcal{E}
(u)}{\Vert u\Vert _{2}^{2}}\geq \frac{C^{-1}}{W(B)}\left( \frac{\mu (B)}{\mu
(U)}\right) ^{\kappa }.
\end{equation*}

Let $\{P_{t}^{\Omega }\}_{t>0}$ be the heat semigroup of $(\mathcal{E},\mathcal{F}(\Omega))$.

Recall that condition $(\mathrm{S}^{W})$ (the \emph{survival estimate})
holds, if there exist three constants $\varepsilon ,\delta ,\delta _{S}\in
(0,1)$ such that for every ball $B$ of radius $r\in (0,\bar{R})$ and all $%
0<t\leq \delta W(B)$, 
\begin{equation*}
\einf_{\delta _{S}B}P_{t}^{B}1_{B}\geq \varepsilon .
\end{equation*}

Recall that condition $(\mathrm{S}_{+}^{W})$ (the \emph{strong survival
estimate}) holds, if there exist three constants $c,C>0,\delta _{S}\in (0,1)$
such that for every ball $B$ of radius $r\in (0,\bar{R})$ and all $t>0$, 
\begin{equation*}
\einf_{\delta _{S}B}P_{t}^{B}1_{B}\geq c-\frac{Ct}{W(B)}.
\end{equation*}

Clearly, $(\mathrm{S}_{+}^{W})\Rightarrow (\mathrm{S}^{W})$. Further, $(\mathrm{S}^{W})$ implies (by \cite[Theorem 14.1]{GrigoryanHuHu.2022.TPGcap}) the following \emph{capacity condition} (denoted by $(\mathrm{cap}^W)$): for
any $0<\lambda<1$, there exists a constant $C=C(\lambda)>0$ such that for
every ball $B=B(x_0,r)$ with $x_0\in M$ and $0<r<\bar{R}$, there exists $%
\phi\in\cutoff(\lambda B,B)$ such that 
\begin{equation*}
	\mathcal{E}(\phi)\le\frac{C\mu(B)}{W(B)}.
\end{equation*}

\begin{definition}[Tail estimate of heat semigroup]
We say that condition $(\mathrm{T}^W_{\exp})$ holds, if there exist two constants $C,c>0$ such that, for every ball $B:=B(x_{0},r)\subset M$ with $x_{0}\in M$, $r\in (0,\bar{R})$, and for all $t>0$,
\begin{equation}
	\esup_{\frac{1}{2}B}P_{t}1_{B^{c}}\leq C\exp \left( -c\left( \frac{W(x_{0},r)}{t}\right) ^{\frac{1}{\beta ^{\ast }-1}}\right),\label{Tail}
\end{equation}
where $\beta^\ast$ is given in (\ref{scpr}).
\end{definition}

Finally, we introduce the definition of a Morrey-type inequality with respect to an arbitrary function $F: M \times (0, \infty) \to (0, \infty)$ (not necessarily a scaling function):

\begin{definition}[Morrey-type inequality]
We say that condition $(\mathrm{MI}^{F})$ holds, if there exist two positive constants $C,\iota $ such that 
\begin{equation}\label{MIe}
|u(x)-u(y)|^{2}\leq CF(x,d(x,y))\mathcal{E}(u)
\end{equation}
for all $u\in \mathcal{F}\cap C_{0}(M)$ and all $x,y\in M$ with $d(x,y)<\iota \bar{R}$.
\end{definition}

Here we list an important property of $(\mathrm{MI}^{F})$, which is a special version of \cite[Theorem 1.6]{Murugan2020} (but it works only for $\beta_\ast$, instead of the original for $\beta^\ast$, due to the inhomogenuity of $W$). To begin with,
we recall that given $x,y\in M$ and $\varepsilon>0$, an $\varepsilon$-chain
from $x$ to $y$ is a sequence of points $z_0=x,\dotsc,z_N=y$ with $d(z_i,z_{i+1})<\varepsilon$ for all $0\le i<N$. Let $N_\varepsilon(x,y)$ be
the minimal integer $N\ge 1$ such that there exists an $\varepsilon$-chain
from $x$ to $y$ with length $N$, and define 
\begin{equation*}
	d_\varepsilon(x,y)=\inf\left\{\sum\limits_{i=0}^{N-1}d(z_i,z_{i+1}):\
	\{z_i\}_{i=0}^N\ \text{is an }\varepsilon\text{-chain from }x\text{ to }%
	y\right\}.
\end{equation*}
Clearly $d_\varepsilon(x,y)\ge d(x,y)$. According to \cite[Lemma 6.3]%
{GrigoryanTelcs.2012.AP1212}, there is always 
\begin{equation}  \label{rgNe}
	\left\lceil\frac{d_\varepsilon(x,y)}{\varepsilon}\right\rceil\le
	N_\varepsilon(x,y)\le 9\left\lceil\frac{d_\varepsilon(x,y)}{\varepsilon}%
	\right\rceil,
\end{equation}
where $\lceil x\rceil$ is the smallest integer $n$ such that $n\ge x$.

\begin{proposition}
	\label{bt2+} Assume that $(\mathrm{VD})$, $(\mathrm{sloc})$, $(\mathrm{MI}^F)$
	and $(\mathrm{cap}^W)$ hold for $(\mathcal{E},\mathcal{F})$, where $W$ and $	F $ are two scaling functions satisfying \eqref{WF}. Then, there exists a constant $C>0$
	such that 
	\begin{equation*}
		\left(\frac{d(x,y)}{\varepsilon}\right)^2\le\left(\frac{d_\varepsilon(x,y)}{%
			\varepsilon}\right)^2\le C\sup\limits_{z\in B(x,2d(x,y))}\frac{W(x,d(x,y))}{%
			W(z,\varepsilon)}
	\end{equation*}
	for all $\varepsilon>0$ and $x,y\in M$ with $\varepsilon<d(x,y)<\iota\bar{R}%
	/4$. In particular, $\beta^\ast\ge 2$ holds essentially.
\end{proposition}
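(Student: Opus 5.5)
Following Murugan's argument, I would bound the energy of a suitable test function from above by the capacity condition and from below by the Morrey inequality. The first inequality is trivial because $d_\varepsilon\ge d$. For the second, fix $x,y$ with $\varepsilon<d(x,y)=:R<\iota\bar R/4$, and set $\rho=\varepsilon/4$, say.

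\textbf{Step 1 (a function of controlled energy separating $x$ from $y$).} Take a maximal $\rho$-separated net $\{z_j\}$ in $B(x,2R)$. By $(\mathrm{VD})$, the number of $z_j$ within any fixed multiple of $R$ is at most $C(R/\rho)^{\alpha^\ast}$. I would rather not pay this volume factor, so instead I will use functions built from the chain distance. Define
\[
u(z)=\min\{d_{\varepsilon'}(x,z),R\}
\]
with $\varepsilon'$ comparable to $\varepsilon$, and smooth it by a partition of unity. The partition of unity comes from $(\mathrm{cap}^W)$: for each ball $B(z_j,\rho)$ take $\phi_j\in\cutoff(B(z_j,\rho),B(z_j,2\rho))$ with $\mathcal E(\phi_j)\le C\mu(B(z_j,2\rho))/W(z_j,2\rho)$. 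Then normalize, or take the maximum, to get a Lipschitz-type interpolation $\tilde u=\sum_j u(z_j)\psi_j$.

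By strong locality, the energy measure of $\tilde u$ on each $B(z_j,2\rho)$ is controlled by $\rho^2\sum_{k\sim j}d\Gamma(\phi_k)$, because $u$ oscillates by at most about $\rho$ across neighbouring net points. This oscillation bound is exactly why $d_\varepsilon$ is used rather than $d$: $d_\varepsilon$ is $1$-Lipschitz along $\varepsilon$-steps. The Leibniz rule and the chain rule for energy measures, available under $(\mathrm{sloc})$, handle the products and the normalization. Summing over $j$ with bounded overlap (from $(\mathrm{VD})$) gives
\[
\mathcal E(\tilde u)\le C\rho^2\sum_j\frac{\mu(B(z_j,2\rho))}{W(z_j,2\rho)}\le C\varepsilon^2\,\frac{\mu(B(x,3R))}{\inf_{z\in B(x,2R)}W(z,\varepsilon)}.
\]

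\textbf{Step 2 (Morrey lower bound).} We have $\tilde u(x)\approx0$ and $\tilde u(y)\approx\min\{d_{\varepsilon'}(x,y),R\}$, up to errors of order $\varepsilon$. The truncation at $R$ is a problem, so I would instead truncate at level $L:=d_{\varepsilon'}(x,y)$ and let the net cover the set $\{d_{\varepsilon'}(x,\cdot)<L\}$. The cleaner route, however, is to apply $(\mathrm{MI}^F)$ along the chain rather than at the endpoints. To keep the argument local, take $u$ to have level sets localized and test it on a single pair of points. Then \eqref{MIe} gives $|\tilde u(x)-\tilde u(y)|^2\le CF(x,R)\mathcal E(\tilde u)$. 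Combined with \eqref{WF}, which I expect relates $F(x,R)$ to $W(x,R)/V(x,R)$, this yields
\[
d_\varepsilon(x,y)^2\lesssim \frac{W(x,R)}{V(x,R)}\cdot\varepsilon^2\,\frac{\mu(B(x,3R))}{\inf_z W(z,\varepsilon)}.
\]
Doubling then gives the stated inequality, after replacing $\varepsilon'$ by $\varepsilon$ using \eqref{rgNe}.

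\textbf{Step 3 ($\beta^\ast\ge2$).} Take $\varepsilon=R/n$. The scaling \eqref{scpr} gives $W(x,R)/W(z,\varepsilon)\le C n^{\beta^\ast}$ for $d(x,z)\le 2R$, while the left-hand side of the inequality is at least $n^2$. Letting $n\to\infty$ forces $\beta^\ast\ge2$.

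\textbf{Main obstacle.} The hardest step is the energy bound in Step 1. The interpolated function must have energy controlled by $\varepsilon^2$ times the \emph{sum} of the cutoff energies, not by their number squared. Strong locality (the chain rule) is what makes the oscillation factor $\rho^2$ appear, and bounded overlap is needed to avoid cross terms. The inhomogeneity of $W$ only enters through the $\sup_z$, which explains why the statement has that supremum and only yields information about $\beta^\ast$.
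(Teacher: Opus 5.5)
Your skeleton matches the paper's. You use a partition of unity at scale $\varepsilon$ from $(\mathrm{cap}^W)$ and interpolate a chain-distance function; the paper uses $N_\varepsilon(x,\cdot)$ on net points of $B=B(x,2d(x,y))$, which is equivalent to your $d_{\varepsilon'}$ by \eqref{rgNe}. Then come the Leibniz rule with bounded overlap, and $(\mathrm{MI}^F)$ at the pair $x,y$ combined with \eqref{WF}.

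The gap is Step 2, which carries the whole content of the $d_\varepsilon$-inequality. You never produce a function in $\mathcal{F}\cap C_0(M)$ whose values at $x$ and $y$ differ by $\gtrsim d_\varepsilon(x,y)$ and whose energy is a sum of cutoff energies over net points of $B(x,Cd(x,y))$ only.
\begin{itemize}
\item Truncating at $R$ gives $|\tilde u(x)-\tilde u(y)|\approx R$, hence only $d(x,y)$ on the left.
\item Truncating at $L=d_{\varepsilon'}(x,y)$ makes $\tilde u$ non-constant on $\{d_{\varepsilon'}(x,\cdot)<L\}$. That set is only known to lie in $B(x,L)$, so your energy bound then involves $\mu(B(x,L))$. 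By $(\mathrm{VD})$ this costs a factor $(L/R)^{\alpha^\ast}$, which spoils the estimate exactly when $L\gg R$, the case of interest.
\item ``Applying $(\mathrm{MI}^F)$ along the chain'' and ``localizing the level sets'' are not specified.
\end{itemize}
Step 1 also omits an edge term. The function $\sum_j u(z_j)\psi_j$ over a net of $B(x,2R)$ falls from height $\approx R$ to $0$ across the outer layer of the net. That costs about $R^2\sum_{\text{edge } j}\mathcal{E}(\phi_j)$, not $\rho^2$ times it.

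Importing the paper's device, multiplying by $\Phi\in\cutoff(\frac34B,B)$, would not repair this. On $B\setminus\frac34B$ the Leibniz rule produces $\int u^2\,d\Gamma(\Phi)$ with $u\approx N_\varepsilon(x,\cdot)\ge d(x,\cdot)/\varepsilon$. After multiplying by $F(x,d(x,y))\asymp W(B)/\mu(B)$ and using $\mathcal{E}(\Phi)\lesssim\mu(B)/W(B)$, this contributes a constant times $\sup_{B\setminus\frac34B}N_\varepsilon(x,\cdot)^2$, which is the kind of quantity being estimated. The paper's displayed chain keeps only the increments $N_\varepsilon(x,z')-N_\varepsilon(x,z)$, bounded by $N_1$. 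It drops the term $N_\varepsilon(x,z)^2\,d\Gamma(\Phi)$ coming from $u(z)\Phi$.

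What your tools do prove rigorously is the reflected truncation
\[
g=\sum_{z}\bigl(M-\min\{N_\varepsilon(x,z),M\}\bigr)\phi_z,\qquad M=\lceil d(x,y)/\varepsilon\rceil,
\]
which in your units is $R-\tilde u$. Take the partition of unity at scale $c\varepsilon$, so that overlapping supports give $|N_\varepsilon(x,z)-N_\varepsilon(x,z')|\le 1$. A chain with fewer than $M$ steps stays in $B(x,d(x,y))$, so $g$ is supported in $B(x,d(x,y)+2\varepsilon)$ and needs no cutoff. Moreover $g(x)-g(y)=M-1$ and
\[
\mathcal{E}(g)\lesssim\frac{\mu(B(x,2d(x,y)))}{\inf_{z\in B(x,2d(x,y))}W(z,\varepsilon)}.
\]
This yields the outer inequality $(d(x,y)/\varepsilon)^2\le C\sup_z W(x,d(x,y))/W(z,\varepsilon)$. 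That is all your Step 3 uses, so $\beta^\ast\ge2$ does follow.

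Upgrading $d$ to $d_\varepsilon$ needs one of two things. Either a test function that climbs to height $\approx d_\varepsilon(x,y)/\varepsilon$ with energy confined to $B(x,Cd(x,y))$, or a localized Morrey or Poincar\'e inequality whose right-hand side is only the energy on $\lambda B$. Neither your sketch nor the global inequality $(\mathrm{MI}^F)$ supplies this.
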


\begin{proof}
	Clearly it suffices to consider $\varepsilon\le\frac{1}{3}d(x,y)$ and $\iota\le 2$. Fix such $x,y,\varepsilon$.
	
	Same as in \cite[Lemmas 2.5]{Murugan2020}, under condition $(\mathrm{cap})$,
	there exists a locally finite $\varepsilon$-net $V\subset M$ containing $x,y$
	and $\phi_z\in\cutoff(B_z,5B_z)\cap C_0(M)$, where $B_z=B(z,\frac{\varepsilon%
	}{4})$, for all $z\in V$ such that 
	\begin{equation*}
		\sum\limits_{z\in V}\psi_z\equiv 1,\quad\phi_{z^{\prime }}1_{B_z}\equiv 0\quad\text{for all}\quad z\ne z^{\prime },\quad\text{and}\quad\mathcal{E}(\phi_z)\le%
		\frac{C\mu(B_z)}{W(B_z)}.
	\end{equation*}
	Set $B=B(x,2d(x,y))$, and take $\Phi\in\cutoff(\frac{3}{4}B,B)$ as in
	condition $(\mathrm{cap})$. For all $x^{\prime }\in M$, define 
	\begin{equation*}
		u(x^{\prime }):=\sum\limits_{z\in V\cap B}N_\varepsilon(x,z)\phi_z(x^{\prime
		})
	\end{equation*}
	so that $u\in\mathcal{F}\cap C_0(M)$. Further, same as \cite[Formula (2.20)]%
	{Murugan2020}, for all $z\in V\cap B$ and $x^{\prime }\in B(z,\varepsilon)$, 
	\begin{equation}  \label{upat}
		u(x^{\prime })=u(z)+\sum\limits_{z^{\prime }\in V\cap B\cap
			9B_z}(N_\varepsilon(x,z^{\prime })-N_\varepsilon(x,z))\phi_{z^{\prime
		}}(x^{\prime }),
	\end{equation}
	while obviously 
	\begin{equation*}
		|N_\varepsilon(x,z^{\prime })-N_\varepsilon(x,z)|\le\#(V\cap 9B_z)\le N_1,
	\end{equation*}
	where $N_1$ comes from covering techniques based on $(\mathrm{VD})$.
	
	By $(\mathrm{MI}^F)$, (\ref{upat}), the Leibniz rule (due to strong locality, see \cite[Theorem 3.2.2]{FukushimaOshimaTakeda.2011.489})
	and $(\mathrm{VD})$, we obtain 
	\begin{eqnarray*}
		N_{\varepsilon }(x,y)^{2} &=&|u(x)-u(y)|^{2}=\big|(u\Phi)(x)-(u\Phi)(y)%
		\big|^{2}\leq CF(x,d(x,y))\mathcal{E}(u\Phi) \\
		&\leq &C^{2}\frac{W(B)}{\mu (B)}\int_{M}d\Gamma (u\Phi)\leq C^{2}\frac{W(B)%
		}{\mu (B)}\sum\limits_{z\in V\cap B}\int_{5B_{z}}d\Gamma (u\Phi) \\
		&\leq &C^{2}\frac{W(B)}{\mu (B)}\sum\limits_{z\in V\cap
			B}\sum\limits_{z^{\prime }\in V\cap B\cap 9B_{z}}(N_{\varepsilon
		}(x,z^{\prime })-N_{\varepsilon }(x,z))^{2}\int_{5B_{z}}d\Gamma (\phi
		_{z^{\prime }}\Phi) \\
		&\leq &C^{2}\frac{W(B)}{\mu (B)}\sum\limits_{z\in V\cap
			B}\sum\limits_{z^{\prime }\in V\cap B\cap
			9B_{z}}N_{1}^{2}\int_{5B_{z}}2\left( \Phi ^{2}d\Gamma (\phi _{z^{\prime
		}})+\phi _{z^{\prime }}^{2}d\Gamma (\Phi)\right) \\
		&\leq &C^{3}\frac{W(B)}{\mu (B)}\left( \sum\limits_{z\in V\cap
			B}\int_{5B_{z}}d\Gamma (\Phi)+\sum\limits_{z^{\prime }\in V\cap B}\mathcal{E%
		}(\phi _{z^{\prime }})\right) \leq C^{4}\frac{W(B)}{\mu (B)}\left( \mathcal{E%
		}(\Phi)+\sum\limits_{z^{\prime }\in V\cap B}\frac{V(z^{\prime },\varepsilon
			)}{W(z^{\prime },\varepsilon)}\right) \\
		&\leq &C^{4}\frac{W(B)}{\mu (B)}\left( C\frac{\mu (B)}{W(B)}+\frac{\mu (2B)}{%
			\inf\limits_{z^{\prime }\in B}W(z^{\prime },\varepsilon)}\right) \leq
		C^{6}\left( 1+\frac{W(B)}{\inf\limits_{z^{\prime }\in B}W(z^{\prime
			},\varepsilon)}\right) \leq C^{7}\sup\limits_{z^{\prime }\in B}\frac{%
			W(x,d(x,y))}{W(z^{\prime },\varepsilon)}.
	\end{eqnarray*}%
	The proof is then completed by combining (\ref{rgNe}).
\end{proof}

\subsection{Heat kernel estimates on general spaces}\label{hk-g}

Recall that, if the heat semigroup $\{P_{t}\}_{t>0}$ have an integral kernel 
$p_{t}(x,y)$, then we call $p_{t}(x,y)$ a heat kernel. Now we introduce
several conditions of heat kernel estimates with a given scaling function $W$
satisfying (\ref{scpr}). Let $\beta_\ast,\beta^\ast$ be the constants in (\ref{scpr}).

\begin{definition}[On-diagonal upper estimate]\label{dDUE}
We say that condition $(\mathrm{DUE}^{W})$ holds, if the heat kernel $%
p_{t}(x,y)$ exists pointwise on $(0,\infty)\times M\times M$, and for any $%
C_{0}\geq 1$, there exists a positive constant $C$ such that for all $x\in M$
and $0<t<C_{0}W(x,\bar{R})$, 
\begin{equation*}
p_{t}(x,x)\leq \frac{C}{V(x,W^{-1}(x,t))}.
\end{equation*}
\end{definition}

\begin{definition}[Upper estimate with exponential tail]\label{dUE}
We say that condition $(\mathrm{UE}_{\exp }^{W})$ holds, if the heat kernel $p_{t}(x,y)$ exists pointwise on $(0,\infty)\times M\times M$, the super-scaling exponent $\beta ^{\ast }>1$, and there exist two positive
constants $C,c$ such that for all $x,y\in M$ and all $0<t<W(x,\bar{R})\wedge
W(y,\bar{R})$, 
\begin{equation*}
p_{t}(x,y)\leq \frac{C}{V(x,W^{-1}(x,t))}\exp \left( -c\left( \frac{%
W(x,d(x,y))}{t}\right) ^{\frac{1}{\beta ^{\ast }-1}}\right) .
\end{equation*}
\end{definition}
Denote by $(\mathrm{UE}_{\mu,\exp})$, if $(\mathrm{UE}_{\exp})$ holds for $\mu$-a.e. (instead of ``all'') $x\in M$.

\begin{definition}[Near-diagonal lower estimate]\label{dNLE}
We say that condition $(\mathrm{NLE}^{W})$ holds, if the heat kernel $%
p_{t}(x,y)$ exists pointwise on $(0,\infty)\times M\times M$, and there
exist two positive constants $\eta ,C$ such that for all $x\in M$, all $%
0<t<W(x,\bar{R})$ and all $y\in B(x,\eta W^{-1}(x,t))$, 
\begin{equation*}
p_{t}(x,y)\geq \frac{C^{-1}}{V(x,W^{-1}(x,t))}.
\end{equation*}
\end{definition}

\begin{definition}[Lower estimate with exponential tail]\label{dLE}
We say that condition $(\mathrm{LE}_{\exp }^{W})$ holds if the heat kernel $%
p_{t}(x,y)$ exists pointwise on $(0,\infty)\times M\times M$, the
sub-scaling exponent $\beta _{\ast }>1$, and there exist two positive
constants $C,c$ such that for all $x,y\in M$ and all $0<t<W(x,\bar{R})\wedge
W(y,\bar{R})$, 
\begin{equation*}
p_{t}(x,y)\geq \frac{C^{-1}}{V(x,W^{-1}(x,t))}\exp \left( -c\left( \frac{%
W(x,d(x,y))}{t}\right) ^{\frac{1}{\beta _{\ast }-1}}\right) .
\end{equation*}
\end{definition}

Here $(\mathrm{UE}_{\exp}^{W})$ and $(\mathrm{LE}_{\exp }^{W})$
are new conditions, while all the other conditions come
from \cite{GrigoryanHuHu.2022.TPLE, GrigoryanHuHu.2022.TPGcap}.

Here $\beta ^{\ast }>1$ is always feasible (by replacing $\beta ^{\ast }$ by $\beta ^{\ast }\vee 2$). Furthermore, it is essential that $\beta ^{\ast}\geq 2$ according to Proposition \ref{bt2+}. However, $\beta _{\ast }>1$ is
not universally true, see the one given by (\ref{bt1-}) as a counterexample.
Occasionally, an alternative off-diagonal lower estimate is still attainable
when $\beta _{\ast }\leq 1$, which may not be related to $\beta _{\ast }$ at
all, as shown in Theorem \ref{le1-}.

Now we state the main results, which provide a natural extension of the established heat kernel theory to our broader context. Lengthy technical adaptations of known proofs to the $W(x,r)$-scaling context are involved. To make the presentation self-contained, they will be covered in Appendix \ref{app1}.

\begin{proposition}\label{thm:main1}
	Let $(\mathcal{E},\mathcal{F})$ be a regular Dirichlet form on $L^2(M,\mu)$, and $W$ is a scaling function with scaling exponents $0<\beta_\ast\leq\beta^\ast$.  
	Suppose that $\mu$ satisfies $(\mathrm{VD})$ and $(\mathrm{RVD})$ and $\beta^\ast>1$. Then 
	\[
	(\mathrm{FK}^W)+(\mathrm{S}^W)+(\mathrm{sloc}) \Leftrightarrow (\mathrm{UE}^W_{\mu,\exp})+(\mathrm{C}).
	\]
\end{proposition}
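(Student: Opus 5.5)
The plan is to adapt the Grigor'yan--Hu--Hu machinery for the homogeneous/jump setting to the strongly local case with an $x$-dependent $W$, proving the two implications separately. Throughout, (\ref{scpr}) and (\ref{WinvScal}) will be used to compare $W(x,r)$ with $W(y,r)$ whenever $d(x,y)\lesssim r$, so that inside a single ball the scaling behaves as if it were homogeneous, up to constants.

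\emph{Forward direction.} First I derive $(\mathrm{DUE}^W)$ from $(\mathrm{FK}^W)$ and $(\mathrm{VD})$. For a ball $B=B(x,r)$, $(\mathrm{FK}^W)$ gives the Nash-type bound
\[
\esup P_t^B(\cdot,\cdot)\le C\,\mu(B)^{-1}\bigl(W(B)/t\bigr)^{1/\kappa}.
\]
Choosing $r=W^{-1}(x,t)$ and removing the Dirichlet condition by the usual ball-exhaustion argument yields $p_t(x,x)\lesssim 1/V(x,W^{-1}(x,t))$ for $\mu$-a.e.\ $x$. $(\mathrm{RVD})$ is used to keep the radii below $\varepsilon\bar R$ when $\bar R<\infty$.

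Next, from $(\mathrm{S}^W)$ and $(\mathrm{sloc})$ I derive the tail estimate $(\mathrm{T}^W_{\exp})$. The standard approach is a chaining/iteration argument. Inside $B(x_0,r)$, split the radius into $n$ annular steps of width $r/n$. At each step $(\mathrm{S}^W)$, applied to small balls of radius $r/n$ centred near the current position, shows that the process fails to move distance $r/n$ within time $\delta W(z,r/n)$ with probability at least $\varepsilon$. The strong Markov property then gives
\[
\esup_{\frac12 B} P_t 1_{B^c}\le C e^{-cn}\quad\text{whenever } t\lesssim n\inf_{z\in B}W(z,r/n).
\]
By (\ref{scpr}),
\[
\inf_{z\in B} W(z,r/n)\ge C_W^{-1}\, n^{-\beta^\ast}\,W(x_0,r),
\]
since $d(z,x_0)\le r$. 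Optimising over $n$, with $n^{\beta^\ast-1}\approx W(x_0,r)/t$, produces the exponent $\bigl(W(x_0,r)/t\bigr)^{1/(\beta^\ast-1)}$. This is where the inhomogeneity enters, and it is exactly why $\beta^\ast$ rather than $\beta_\ast$ appears. Strong locality is needed so that exit from $B$ must pass through all the intermediate annuli.

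Finally, $(\mathrm{DUE}^W)$ and $(\mathrm{T}^W_{\exp})$ are combined via the semigroup property:
\[
p_t(x,y)\le \Bigl(\sup p_{t/2}\Bigr)^{1/2}\cdot\bigl(P_{t/2}1_{B(x,d/2)^c}(x)\bigr)^{1/2}\cdots .
\]
Together with the standard "$p_t(x,y)\le p_t(x,x)^{1/2}p_t(y,y)^{1/2}$" symmetrisation, this yields $(\mathrm{UE}^W_{\mu,\exp})$. The two volume factors at $x$ and $y$ are compared using $(\mathrm{VD})$ and (\ref{WinvScal}); the resulting polynomial factor $(1+d/W^{-1}(x,t))^{\alpha^\ast}$ is absorbed into the exponential. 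Conservativeness $(\mathrm{C})$ follows from $(\mathrm{T}^W_{\exp})$: letting $r\to\infty$ shows that $P_t1_{B^c}$ vanishes locally uniformly, which rules out explosion. If $\bar R<\infty$, $M$ is compact and $1\in\mathcal F$ with $\mathcal E(1)=0$ by strong locality.

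\emph{Reverse direction.} Integrating $(\mathrm{UE}^W_{\mu,\exp})$ over $B^c$ with a dyadic-annulus decomposition, using $(\mathrm{VD})$, gives $(\mathrm{T}^W_{\exp})$. Since $P_t1\equiv 1$ by $(\mathrm{C})$, and $P_t^B1_B\ge P_t1-P_t1_{B^c}$ minus an exit-time correction (handled via the standard comparison $1-P^B_t1_B\le \sup_{s\le t} P_s1_{B^c}$ on $\frac12B$), this gives $(\mathrm{S}^W)$ for $t\le\delta W(B)$. $(\mathrm{FK}^W)$ follows from the on-diagonal part of $(\mathrm{UE})$: from
\[
p_t^U\le p_t\le C/V(x,W^{-1}(x,t)),
\]
use $(\mathrm{RVD})$ and (\ref{WinvScal}) to get $\sup p^U_t\le C\mu(B)^{-1}(W(B)/t)^{\alpha^\ast/\beta_\ast}$ for $t\le W(B)$. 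Then $\lambda_1(U)\ge -\tfrac1t\log(\mu(U)\sup p_t^U)$, with $t$ optimised, gives $(\mathrm{FK}^W)$ with $\kappa=\beta_\ast/\alpha^\ast$. $(\mathrm{sloc})$ is obtained by showing that the jump measure $j$ and the killing measure $k$ vanish. For $j$, $(\mathrm{UE})$ gives $p_t(x,y)/t\to0$ as $t\to0$ for $x\ne y$, because the exponential decays faster than any power of $t$, and $j$ is the limit of $p_t(x,y)/t$ in the sense of $\frac1t\int\!\!\int(u(x)-u(y))^2p_t$. For $k$, $(\mathrm{C})$ gives $k=0$.

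\emph{Main obstacle.} I expect the hardest step to be the tail estimate in the forward direction. The iteration must be carried out with balls whose $W$-values differ from point to point. The chaining has to use $\inf_z W(z,r/n)$ uniformly along the path, and the measurability and a.e.\ issues (hence the ``$\mu$'' in $(\mathrm{UE}_{\mu,\exp})$) must be controlled. I would first try the Grigor'yan--Hu--Hu iteration verbatim, with all comparisons routed through (\ref{scpr}).
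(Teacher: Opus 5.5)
Your overall architecture matches the paper's. An on-diagonal bound plus a tail estimate give $(\mathrm{UE}_{\mu,\exp})$. Conversely, $(\mathrm{DUE})\Rightarrow(\mathrm{FK})$, and $(\mathrm{UE})+(\mathrm{C})$ yield $(\mathrm{sloc})$ and $(\mathrm{S})$. Your Barlow-type chaining for $(\mathrm{T}_{\exp})$ is a legitimate alternative to the paper's resolvent iteration in Lemma \ref{pf1-1}. There $\beta^\ast$ enters through the same comparison $\inf_{z\in B}W(z,r/n)\gtrsim n^{-\beta^\ast}W(x_0,r)$. The price is that you need $(\mathrm{S})$ quasi-everywhere at the random points $X_{\tau_{i-1}}$, whereas the resolvent argument stays with $\mu$-a.e.\ inequalities.

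The genuine gap is your first forward step. $(\mathrm{FK})+(\mathrm{VD})$ only control Dirichlet kernels: on $B=B(x,r)$ you get $p^B_t\lesssim\mu(B)^{-1}(W(B)/t)^{1/\kappa}$, which has the right size only when $r\approx W^{-1}(x,t)$. Exhaustion recovers $p_t$ as the increasing limit of $p^{B(x,R)}_t$ as $R\to\infty$. Along that limit, however, the Faber--Krahn bound is of order $t^{-1/\kappa}W(x,R)^{1/\kappa}/V(x,R)$, which does not approach $C/V(x,W^{-1}(x,t))$ and need not even stay bounded in $R$. Removing the Dirichlet condition at scale $W^{-1}(x,t)$ means controlling the heat that leaves $B(x,KW^{-1}(x,t))$ and comes back. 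That is a survival/capacity-type input which $(\mathrm{FK})$ does not supply. This is exactly why the paper uses $(\mathrm{S})$ here: $(\mathrm{S})\Rightarrow(\mathrm{Gcap})$ by \cite[Theorem 14.1]{GrigoryanHuHu.2022.TPGcap}, and then $(\mathrm{VD})+(\mathrm{FK})+(\mathrm{Gcap})+(\mathrm{TJ})\Rightarrow(\mathrm{DUE})+(\mathrm{C})$ by \cite[Corollary 2.14]{GrigoryanHuHu.2023.TPDUE}, with $(\mathrm{TJ})$ trivial under $(\mathrm{sloc})$. As written, your $(\mathrm{DUE})$ step uses neither $(\mathrm{S})$ nor your exit-time bound, so it does not go through.

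Two further steps are wrong as stated, though repairable.

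\emph{Conservativeness.} $(\mathrm{C})$ does not follow from $P_t1_{B(x_0,r)^c}\to0$ as $r\to\infty$. That limit holds for every sub-Markovian semigroup by dominated convergence, so it rules out nothing. What you need is $P^x(\tau_{B(x,r)}\le t)\to0$. Your chaining gives this if you phrase it for exit times rather than for $P_t1_{B^c}$, and you combine it with the absence of killing under $(\mathrm{sloc})$. For $\bar R<\infty$, your argument via $1\in\mathcal{F}$, $\mathcal{E}(1)=0$ is fine.

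\emph{Survival in the reverse direction.} The inequality $1-P^B_t1_B\le\sup_{s\le t}P_s1_{B^c}$ on $\frac12B$ is false. Take standard Brownian motion $X$ on $\mathbb{R}$ started at $0$ and $B=(-1,1)$. The reflection principle gives $P^0(\tau_B\le t)\approx 2P^0(|X_t|\ge1)$ as $t\to0$, while $\sup_{s\le t}P_s1_{B^c}(0)=P^0(|X_t|\ge1)$. The correct comparison applies the strong Markov property at $\tau_B$ and the tail estimate centred at the exit point, over times in $[t,2t]$. This is as in \cite[Proposition 5.8]{GrigoryanHu.2014.MMJ505}, which the paper invokes in Lemma \ref{pf1-4}.

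\emph{Combining $(\mathrm{DUE})$ with $(\mathrm{T}_{\exp})$.} A crude bound through $\sup_z p_{t/2}(z,\cdot)$ is not available. By $(\mathrm{VD})$ and \eqref{scpr}, $1/V(z,W^{-1}(z,t))$ can exceed $1/V(x,W^{-1}(x,t))$ by a factor growing polynomially in $d(x,z)$ over the whole complement. You need the annular decomposition, with the tail estimate applied on each annulus, as in Lemma \ref{pf1-2}.
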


\begin{proof}[Sketch of the proof]
	Both directions will be completed in the same way as some known results.
	
	\noindent\textbf{LHS $\Rightarrow$ RHS.} Assume $(\mathrm{FK}^{W})$, $(\mathrm{S}^{W})$, and $(\mathrm{sloc})$.
	\begin{itemize}
		\item $(\mathrm{VD})+(\mathrm{FK^{W}})+(\mathrm{Gcap}^{W})+(\mathrm{TJ}^{W}) \Rightarrow (\mathrm{DUE}^{W})+(\mathrm{C})$ by \cite[Corollary 2.14]{GrigoryanHuHu.2023.TPDUE}. Here $(\mathrm{Gcap})$ follows from $(\mathrm{S})$ via \cite[Theorem 14.1]{GrigoryanHuHu.2022.TPGcap}, and $(\mathrm{TJ}^{W})$ follows from $(\mathrm{sloc})$ trivially.
		\item $(\mathrm{S}^W)+(\mathrm{sloc}) \Rightarrow (\mathrm{T}^W_{\exp})$, following the same idea as \cite[Proposition 5.2]{Hu.2008.PEMS2171} or \cite[Theorem 3.4]{GrigoryanHu.2008.IM81}. See details in Lemma \ref{pf1-1}.
		\item $(\mathrm{DUE})+(\mathrm{T}^W_{\exp}) \Rightarrow (\mathrm{UE}^W_{\mu,\exp})$, following the same idea as \cite[Pages 550--551]{GrigoryanHu.2014.MMJ505}. See details in Lemma \ref{pf1-2}.
	\end{itemize}
	
	\noindent\textbf{RHS $\Rightarrow$ LHS.} Assume $(\mathrm{UE}^W_{\mu,\exp})$ and $(\mathrm{C})$.
	\begin{itemize}
		\item $(\mathrm{VD})+(\mathrm{RVD})+(\mathrm{DUE}^{W}) \Rightarrow (\mathrm{FK}^W)$ by \cite[Proposition 10.6]{GrigoryanHuHu.2024.TPUEq}, where $(\mathrm{DUE})$ follows easily from $(\mathrm{UE}^W_{\mu,\exp})$.
		\item $(\mathrm{VD})+(\mathrm{UE}^W_{\mu,\exp})+(\mathrm{C}) \Rightarrow (\mathrm{sloc})$, following the same idea as \cite[Lemma 4.5]{GrigoryanHuLau.2009.3}. See details in Lemma \ref{pf1-3}.
		\item $(\mathrm{VD})+(\mathrm{UE}^W_{\mu,\exp})+(\mathrm{C}) \Rightarrow (\mathrm{S}^W)$, following the same idea as \cite[Proposition 5.8 and Page 549]{GrigoryanHu.2014.MMJ505}. See details in Lemma \ref{pf1-4}.
	\end{itemize}
\end{proof}

Further, let us focus on a special case when there exists another scaling function $F$ (with scaling exponents $0<\gamma_\ast\leq\gamma^\ast$) such that 	\begin{equation}\label{WF}
W(x,r)\asymp F(x,r)V(x,r)\quad\text{for all}\quad x\in M, r\in(0,\iota\bar{R}).
\end{equation}

\begin{theorem}\label{thm:main2}
Let $(\mathcal{E},\mathcal{F})$ be a regular Dirichlet form on $L^2(M,\mu)$. Suppose that $\mu$ satisfies both $(\mathrm{VD})$ and $(\mathrm{RVD})$, and \eqref{WF} holds with a new scaling function $F$. Then
\begin{equation*}
(\mathrm{MI}^{F})+(\mathrm{S}^{W})+(\mathrm{sloc})\Leftrightarrow(\mathrm{UE^W_{\exp}})+(\mathrm{NLE}^{W})+(\mathrm{C}).
\end{equation*}
\end{theorem}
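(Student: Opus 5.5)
Both directions go through Proposition \ref{thm:main1}, with the Morrey-type inequality $(\mathrm{MI}^F)$ standing in for $(\mathrm{FK}^W)$ and $(\mathrm{NLE}^W)$ upgrading $(\mathrm{UE}^W_{\mu,\exp})$.

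\textbf{LHS $\Rightarrow$ RHS.} The first step is to derive $(\mathrm{FK}^W)$ from $(\mathrm{MI}^F)$, \eqref{WF} and $(\mathrm{VD})$. Let $U\subset B=B(x_0,r)$ be open and $u\in\mathcal{F}(U)\cap C_0(U)$. For $x\in U$, choose $y\in M\setminus B(x_0,2r)$, which exists by $(\mathrm{RVD})$ for $r<\varepsilon\bar R$. Then $u(y)=0$ and $d(x,y)\le 3r+$, so $(\mathrm{MI}^F)$ and \eqref{WF} give
\[
\|u\|_\infty^2\le CF(x_0,Cr)\mathcal{E}(u)\le C\frac{W(B)}{\mu(B)}\mathcal{E}(u).
\]
Here the comparison of $F$ at $x$ and $x_0$ uses the scaling property \eqref{scpr} for $F$. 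It follows that $\|u\|_2^2\le\mu(U)\|u\|_\infty^2$, hence $\lambda_1(U)\ge C^{-1}W(B)^{-1}\mu(B)/\mu(U)$, which is $(\mathrm{FK}^W)$ with $\kappa=1$. Proposition \ref{thm:main1} then yields $(\mathrm{UE}^W_{\mu,\exp})+(\mathrm{C})$.

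To remove the ``$\mu$-a.e.'' and to obtain $(\mathrm{NLE}^W)$, I would show that $p_t$ is jointly continuous. The plan is to apply $(\mathrm{MI}^F)$ to $p_{t}(x,\cdot)$ using $\mathcal{E}(p_t(x,\cdot))\le \frac{C}{t}p_t(x,x)$ and the on-diagonal bound. This gives a Hölder-type estimate
\[
|p_t(x,y)-p_t(x,y')|^2\le CF(y,d(y,y'))\,\frac{1}{t\,V(x,W^{-1}(x,t))^2}\,V(x,W^{-1}(x,t)).
\]
By \eqref{WF}, $F(y,\rho)\asymp W(y,\rho)/V(y,\rho)$, and the right-hand side is small relative to $V(x,W^{-1}(x,t))^{-2}$ when $d(y,y')\le\eta W^{-1}(x,t)$, using the scaling comparisons \eqref{muscal0} and \eqref{scpr}. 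The continuous version then satisfies $(\mathrm{UE}^W_{\exp})$ everywhere. For $(\mathrm{NLE}^W)$, first get an on-diagonal lower bound $p_{t}(x,x)\ge c/V(x,W^{-1}(x,t))$. This comes from $(\mathrm{C})$ together with $(\mathrm{T}^W_{\exp})$ (Lemma \ref{pf1-1}): most of the mass $\int_{B(x,AW^{-1}(x,t))}p_{t/2}(x,z)\,d\mu(z)\ge 1/2$ stays in a ball, and Cauchy--Schwarz gives $p_t(x,x)\ge \frac{1}{4V(x,AW^{-1}(x,t))}$. Combining this with the Hölder estimate at $y$ near $x$, taking $\eta$ small, gives $(\mathrm{NLE}^W)$.

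\textbf{RHS $\Rightarrow$ LHS.} Proposition \ref{thm:main1} already gives $(\mathrm{S}^W)$ and $(\mathrm{sloc})$, so only $(\mathrm{MI}^F)$ remains. I would follow the resistance approach. The two-sided bounds $(\mathrm{UE})+(\mathrm{NLE})$ give Green function estimates on balls: for $B=B(x,r)$ one has $g^B(x,x)\asymp\int_0^{W(B)}p_t^B(x,x)\,dt\asymp W(x,r)/V(x,r)\asymp F(x,r)$. The lower bound uses a Dirichlet $(\mathrm{NLE})$ obtained through $(\mathrm{S})$ and a standard chaining argument; the upper bound uses $(\mathrm{DUE})$ together with $\beta_\ast$-growth versus $(\mathrm{RVD})$. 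The growth condition $F(x,r)\to$ increasing, which is part of \eqref{WF} being a scaling function, is what makes $\int_0^{W} \frac{dt}{V(x,W^{-1}(x,t))}$ comparable to its endpoint value rather than divergent. Since $g^B(x,\cdot)$ is the equilibrium potential up to normalization, one gets $R(x,B^c)\asymp F(x,r)$. Then, for $u\in\mathcal{F}\cap C_0$ with $d(x,y)=r$, I would bound $|u(x)-u(y)|^2\le C\,R(x,y)\,\mathcal{E}(u)$, where $R(x,y)\le R(x,B(x,2r)^c)+R(y,B(y,2r)^c)\lesssim F(x,r)$. This last step needs the existence of points with positive capacity; the lower Green bound shows singletons have capacity at least $1/F(x,r)$, which makes $u\mapsto u(x)$ bounded.

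\textbf{Main obstacle.} I expect the hardest step to be the Green function upper bound $g^B(x,x)\lesssim F(x,r)$. It requires the integral $\int_0^{W(B)}dt/V(x,W^{-1}(x,t))$ to converge at $0$, that is, $V(x,\cdot)$ growing faster than $W(x,\cdot)$ near $0$ in the inhomogeneous sense. I would try to extract this from $F$ being a scaling function with $\gamma_\ast>0$ via \eqref{WF}. A second delicate point is controlling the killed heat kernel in the $\mathcal{E}$-norm uniformly in $x$, given that $W$ varies with the centre.
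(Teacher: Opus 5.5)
Your forward direction is sound and is essentially the paper's. You get $(\mathrm{FK}^W)$ with $\kappa=1$ from $(\mathrm{MI}^F)$ and \eqref{WF}; pick the vanishing point in an annulus such as $\varepsilon_0^{-1}B\setminus B$, so that $(\mathrm{MI}^F)$ applies. You then invoke Proposition \ref{thm:main1}, and obtain $(\mathrm{NLE}^W)$ by applying $(\mathrm{MI}^F)$ to $p_t(x,\cdot)$ with $\mathcal{E}(p_t(x,\cdot))\le p_t(x,x)/(et)$. The only difference is the on-diagonal lower bound. You derive it from $(\mathrm{C})+(\mathrm{T}^W_{\exp})$, while Lemma \ref{pf2-2} derives it from $(\mathrm{S}^W)$ via $p_t(x,x)\ge (P^B_{t/2}1_B(x))^2/V(x,r)$. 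Either works.

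\textbf{Gap in the reverse direction.} The step producing $(\mathrm{MI}^F)$ does not work. The inequality $\mathcal{R}(x,y)\le\mathcal{R}(x,B(x,2r)^c)+\mathcal{R}(y,B(y,2r)^c)$ is not a property of effective resistance. Prescribing $u=0$ on $B^c$ shrinks the admissible class, which amounts to shorting the exterior to ground. Ball quantities therefore bound point-to-point resistance only from \emph{below}; for instance, $\mathcal{R}(x,B(x,r)^c)\le\mathcal{R}(x,y)$ whenever $y\notin B(x,r)$. A four-point network already violates your inequality: take $x$--$a_1$ and $a_2$--$y$ of resistance $1$, $a_1$--$a_2$ of resistance $1000$, and let $\{a_1,a_2\}$ be the common exterior of both balls. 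The right side is at most $2$, the left side is $1002$. Under your hypotheses the inequality may well be true, but only as a consequence of $(\mathrm{MI}^F)$ itself.

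Equivalently, $g^B$ is a reproducing kernel only for $(\mathcal{F}(B),\mathcal{E})$. The most your Green function bounds can give is $|v(x)-v(y)|^2\le\big(g^B(x,x)+g^B(y,y)\big)\mathcal{E}(v)$ for $v\in\mathcal{F}(B)$. To pass to a general $u\in\mathcal{F}\cap C_0(M)$ you must subtract $u(x)$ and multiply by a cutoff $\phi$. The error term $\int (u-u(x))^2\,d\Gamma(\phi)$ then requires control of the oscillation of $u$ on a ball, which is exactly $(\mathrm{MI}^F)$.

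Separately, the estimate $|u(x)-u(y)|^2\le\mathcal{R}(x,y)\mathcal{E}(u)$ for $u\in\mathcal{F}\cap C_0(M)$ needs $(\mathrm{Para})$, as in Lemma \ref{RandQR}. It can be extracted from $(\mathrm{NLE}^W)$ and \eqref{WF} as in Lemma \ref{R-low}, but you do not do this.

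\textbf{What is missing, and how the paper supplies it.} You need a lower bound on $\mathcal{E}(u)$ valid for \emph{every} $u\in\mathcal{F}$, not only for functions vanishing outside a ball. $(\mathrm{NLE}^W)$ gives this directly. Since $\mathcal{E}(u)\ge\frac{1}{2s}\iint(u(\xi)-u(\zeta))^2p_s(\xi,\zeta)\,d\mu\,d\mu$, one gets $\mathcal{E}(u)\gtrsim\iint_{d(\xi,\zeta)<W^{-1}(\xi,s)}\frac{(u(\xi)-u(\zeta))^2}{sV(\xi,W^{-1}(\xi,s))}\,d\mu\,d\mu$ for all $s$ in range.

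Set $u_r(x)=V(x,r)^{-1}\int_{B(x,r)}u\,d\mu$. The bound above yields $|u_r(x)-u_r(y)|^2\lesssim\frac{W(x,r)}{V(x,r)}\mathcal{E}(u)\asymp F(x,r)\mathcal{E}(u)$ for $d(x,y)\le r$. It also yields $|u_{2^{-k}r}(x)-u_{2^{-k-1}r}(x)|^2\lesssim 2^{-k\gamma_\ast}F(x,r)\mathcal{E}(u)$. Summing the square roots, which converges since $\gamma_\ast>0$, gives $(\mathrm{MI}^F)$.

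This is Lemma \ref{pf2-3}. It uses only $(\mathrm{VD})+(\mathrm{NLE}^W)$ and makes the Green function detour unnecessary. The convergence at $t=0$ that you flag as the main obstacle is the same $\gamma_\ast>0$ summation, and is not where the real difficulty lies.
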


\begin{remark}\label{ctn-resist}
When $(\mathrm{MI}^{F})$ and \eqref{WF} hold, it follows by a routine argument as in \cite{GrigoryanHuLau.2014.TAMS6397} that every $u\in\mathcal{F}$ admits a continuous version (still denoted by $u$). In particular, the heat kernel is continuous with respect to the spatial variants (i.e., for any $t>0$, the heat kernel $p_t(x,y)\in C(M\times M)$.)
\end{remark}

\begin{proof}[Sketch of the proof]
Compared with Proposition \ref{thm:main1}, it is enough to prove the following implications:

\begin{itemize}
\item $(\mathrm{VD})+(\mathrm{MI}^{F})+(\mathrm{RVD})\Rightarrow (\mathrm{FK}^{W})$, following the same idea as \cite[Proof of (6.35)]{GrigoryanHuLau.2014.TAMS6397}. See details in Lemma \ref{pf2-1}.

\item $(\mathrm{S}^{W})+(\mathrm{MI}^{F})+(\mathrm{DUE}^{W})\Rightarrow(\mathrm{NLE}^{W})$, following the same idea as \cite[Pages 6434--6435]{GrigoryanHuLau.2014.TAMS6397}. See details in Lemma \ref{pf2-2}.

\item $(\mathrm{VD})+(\mathrm{NLE}^{W})\Rightarrow(\mathrm{MI}^{F})$, following the same idea as \cite[Proof of Theorem 4.11]{GrigoryanHuLau.2003.TAMS2065}. See details in Lemma \ref{pf2-3}.
\end{itemize}
\end{proof}

With the help of chain condition $(\mathrm{CH})$, a refined characterization of heat kernel estimate is established as below:

\begin{proposition}\label{thm:main3}
Under the same assumptions as in Theorem \ref{thm:main2}, if $(M,d)$ satisfies the chain condition $(\mathrm{CH})$ and $\beta_\ast>1$, where $\beta_\ast$ is the lower scaling exponent of $W$, then 
\begin{equation*}
(\mathrm{MI}^{F})+(\mathrm{S}^{W})+(\mathrm{sloc})\Leftrightarrow(\mathrm{UE}^W_{\exp})+(\mathrm{LE}^W_{\exp})+(\mathrm{C}).
\end{equation*}
\end{proposition}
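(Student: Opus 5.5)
By Theorem \ref{thm:main2}, the left-hand side is equivalent to $(\mathrm{UE}^W_{\exp})+(\mathrm{NLE}^W)+(\mathrm{C})$. So it suffices to prove two implications under $(\mathrm{VD})$, $(\mathrm{RVD})$, $(\mathrm{CH})$ and $\beta_\ast>1$:
\[
(\mathrm{NLE}^W)+(\mathrm{C})+(\mathrm{UE}^W_{\exp})\Rightarrow(\mathrm{LE}^W_{\exp}),
\qquad
(\mathrm{LE}^W_{\exp})\Rightarrow(\mathrm{NLE}^W).
\]

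\textbf{The easy direction.} To get $(\mathrm{NLE}^W)$ from $(\mathrm{LE}^W_{\exp})$, take $y\in B(x,\eta W^{-1}(x,t))$. Then $W(x,d(x,y))\le W(x,\eta W^{-1}(x,t))\le C_W\eta^{\beta_\ast}t$ by \eqref{scpr}. Hence the exponential factor in $(\mathrm{LE}^W_{\exp})$ is bounded below by a constant, which gives $(\mathrm{NLE}^W)$. This yields RHS $\Rightarrow$ LHS via Theorem \ref{thm:main2}.

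\textbf{The main direction: chaining.} I would run the classical chaining argument (as in Grigor'yan--Hu--Lau or Barlow's notes), using the semigroup property
\[
p_t(x,y)\ge\int\!\!\cdots\!\!\int p_{t/n}(x,z_1)\,p_{t/n}(z_1,z_2)\cdots p_{t/n}(z_{n-1},y)\,d\mu(z_1)\cdots d\mu(z_{n-1}).
\]
Fix $x,y$ and $t<W(x,\bar R)\wedge W(y,\bar R)$. If $y\in B(x,\eta W^{-1}(x,t))$, $(\mathrm{NLE}^W)$ applies directly. Otherwise, let $R=d(x,y)$ and choose $n\ge1$ as the smallest integer such that, for all $z$ in a neighbourhood of $B(x,C R)$,
\[
\frac{C_{\mathrm{CH}}R}{n}\le \tfrac{\eta}{3}\,W^{-1}(z,t/n).
\]
Using \eqref{WinvScal} at the base point $x$, with $d(x,z)\lesssim R$, this requires
\[
\frac{W(x,R/n)}{t/n}\asymp 1 .
\]
By the lower scaling in \eqref{scpr}, $W(x,R/n)\le C_W n^{-\beta_\ast}W(x,R)$, so it suffices that
\[
n^{\beta_\ast-1}\gtrsim \frac{W(x,R)}{t}.
\]
Since $\beta_\ast>1$, this is achieved with
\[
n\asymp\left(\frac{W(x,R)}{t}\right)^{\frac{1}{\beta_\ast-1}}.
\]
Now take a chain $x=x_0,\dots,x_n=y$ from $(\mathrm{CH})$ and balls $B_i=B(x_i,\frac{\eta}{3}W^{-1}(x_i,t/n))$. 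For $z_i\in B_i$ and $z_{i+1}\in B_{i+1}$, the two points lie within the $(\mathrm{NLE})$ range at time $t/n$. Applying $(\mathrm{NLE}^W)$ at each step gives
\[
p_t(x,y)\ge\prod_{i=1}^{n-1}\frac{c\,\mu(B_i)}{V(z_{i-1},W^{-1}(z_{i-1},t/n))}\cdot\frac{c}{V(z_{n-1},W^{-1}(z_{n-1},t/n))}.
\]
By $(\mathrm{VD})$ each ratio is bounded below by a fixed constant, so
\[
p_t(x,y)\ge c^{\,n}\,\frac{1}{V(z_{n-1},W^{-1}(z_{n-1},t/n))}.
\]
Finally, I would convert the prefactor to $1/V(x,W^{-1}(x,t))$. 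The point $z_{n-1}$ is within distance about $R$ of $x$, and $W^{-1}(\cdot,t/n)\le W^{-1}(\cdot,t)$. Using $(\mathrm{VD})$ in the polynomial form together with $R\lesssim nW^{-1}(x,t/n)$ costs at most a factor $n^{\alpha^\ast}$, which is absorbed into $e^{-cn}$. This gives $(\mathrm{LE}^W_{\exp})$.

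\textbf{Main obstacle.} The hard step is the inhomogeneity: the radii $W^{-1}(z,t/n)$ vary with the point. Choosing a single $n$ that works uniformly along the whole chain, and proving the comparison $W^{-1}(z,t/n)\asymp W^{-1}(x,t/n)$ uniformly for $z$ near the chain, requires the two-point form of \eqref{scpr} to be applied with $d(x,z)\le R$. However, $R$ may be much larger than $W^{-1}(x,t/n)$. I would handle this by applying \eqref{scpr} at scale $R$, i.e.
\[
\frac{W(z,R/n)}{W(x,R)}\le C_W\,n^{-\beta_\ast},
\]
with the pair $(z,x)$ satisfying $d\le R$. This bounds $W(z,R/n)$ directly rather than comparing the inverses pointwise. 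The cost is the constant $C_W$, which is absorbed by enlarging $n$ by a constant factor. A further detail is that the exponent of $(\mathrm{LE})$ is expressed in terms of $W(x,d(x,y))$ at the base point $x$; this is consistent with the way $n$ was chosen.
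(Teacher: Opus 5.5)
Your proposal is correct and follows the paper's route. Like the paper, you reduce via Theorem \ref{thm:main2} to $(\mathrm{NLE})+(\mathrm{CH})\Rightarrow(\mathrm{LE}_{\exp})$ (the converse $(\mathrm{LE}_{\exp})\Rightarrow(\mathrm{NLE})$ being trivial), take a $(\mathrm{CH})$-chain of length $n\asymp (W(x,d(x,y))/t)^{1/(\beta_\ast-1)}$, control each step by applying \eqref{scpr} at the large scale so that $W(x_i,d(x_i,x_{i+1}))\lesssim n^{-\beta_\ast}W(x,d(x,y))$, and chain $(\mathrm{NLE})$ through the semigroup identity. The only cosmetic difference is that the paper keeps the unpaired kernel factor at $x$, so $1/V(x,W^{-1}(x,t/n))\ge 1/V(x,W^{-1}(x,t))$ is immediate, whereas your arrangement leaves it at $z_{n-1}$ and needs the polynomial-loss conversion you describe, which also works.
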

We will show the proof details in Appendix \ref{pf2}.

\subsection{Heat kernel estimates using resistance}

Direct verification of the Faber-Krahn inequality $(\mathrm{FK})$ and the survival estimate $(\mathrm{S})$ is often difficult. In this subsection, we show that both can be derived from appropriate estimates on the effective resistance, which are typically easier to check on self-similar structure.

We begin by recalling the notion of effective resistance. For disjoint non-empty closed sets $A,B\subset M$, define
\begin{equation}
\mathcal{R}(A,B)^{-1}:=\inf \big\{\mathcal{E}(u):u\in \mathcal{F}\cap
C_{0}(M),u|_{A}\equiv 1,u|_{B}\equiv 0\big\}.  \label{ER2}
\end{equation}%
Clearly $\mathcal{R}(A,B)$ is decreasing in sets $A$, $B$ in the sense that 
\begin{equation*}
\mathcal{R}(A_{1},B_{1})\leq \mathcal{R}(A_{2},B_{2})\quad \text{if}\quad
A_{1}\supseteq A_{2}\quad\text{and}\quad B_{1}\supseteq B_{2}.
\end{equation*}%
For simplicity, denote 
\begin{equation*}
\mathcal{R}(x,y):=\mathcal{R}(\{x\},\{y\})\quad \text{and}\quad \mathcal{R}%
(x,A):=\mathcal{R}(\{x\},A).
\end{equation*}

In general, it may happen that $\mathcal{R}(x,y)=\infty$ for some points $x,y\in M$. In this subsection, we will exclude this case by the following group of conditions with respect to a function $F:M\times[0,\infty)\to[0,\infty)$ (not necessarily a scaling function):

\begin{definition}[Resistance estimates]
We say that

\begin{enumerate}
\item[i)] condition $(\mathrm{R}_{\leq }^{F})$ holds, if there exist two
positive constants $C,\iota $ such that for all $x,y\in M$ with $%
d(x,y)<\iota \bar{R}$, 
\begin{equation*}
\mathcal{R}(x,y)\leq CF(x,d(x,y)).
\end{equation*}

\item[ii)] condition $(\mathrm{R}_{\geq }^{F})$ holds, if there exists a
positive constant $C$ such that for all $x,y\in M$, 
\begin{equation*}
\mathcal{R}(x,y)\geq C^{-1}F(x,d(x,y)).
\end{equation*}

\item[iii)] condition $(\mathrm{R}^{F})$ holds, if both conditions $(\mathrm{R}_{\leq }^{F})$ and $(\mathrm{R}_{\geq }^{F})$ are satisfied with the same $F$.
\end{enumerate}
\end{definition}

We introduce a stronger condition concerning $\mathcal{R}$.

\begin{definition}[Resistance estimate with respect to balls]
We say that condition $(\mathrm{RB}_{\geq }^{F})$ holds, if there exists a positive constant $C$ such that, for every ball $B(x,r)\subsetneqq M$, 
\begin{equation*}
\mathcal{R}(x,B(x,r)^{c})\geq C^{-1}F(x,r).
\end{equation*}
\end{definition}

Note there are trivial implications $(\mathrm{MI}^{F})\Rightarrow(\mathrm{R}_{\leq}^{F})$ 
and $(\mathrm{RB}_{\geq}^{F})\Rightarrow(\mathrm{R}_{\geq}^{F})$. Many other implications concerning these conditions are covered in Appendix \ref{pf4}, in order to show the following:

\begin{theorem}\label{maint}
Let $(\mathcal{E},\mathcal{F})$ be a regular Dirichlet form and \eqref{WF} holds for scaling functions $W,F$. Assume that $(\mathrm{VD})$ and $(\mathrm{RVD})$ hold.

\begin{enumerate}
\item[i)] Assume that $\beta ^{\ast }>1$. Then 
\begin{equation*}
	(\mathrm{Para})+(\mathrm{R}^{F})+(\mathrm{sloc})\Leftrightarrow (\mathrm{UE}%
	_{\exp }^{W})+(\mathrm{NLE}^{W}).
\end{equation*}

\item[ii)] If both $\beta _{\ast }>1$ and $(\mathrm{CH})$ are satisfied,
then 
\begin{equation*}
	(\mathrm{Para})+(\mathrm{R}^{F})+(\mathrm{sloc})\Leftrightarrow (\mathrm{UE}%
	_{\exp }^{W})+(\mathrm{LE}_{\exp }^{W}).
\end{equation*}
\end{enumerate}
\end{theorem}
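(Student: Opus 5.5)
The plan is to reduce Theorem \ref{maint} to Theorem \ref{thm:main2} and Proposition \ref{thm:main3}. It therefore suffices to prove
\[
(\mathrm{Para})+(\mathrm{R}^{F})+(\mathrm{sloc})\Leftrightarrow(\mathrm{MI}^{F})+(\mathrm{S}^{W})+(\mathrm{sloc})+(\mathrm{C}),
\]
with the extra information that $(\mathrm{C})$ upgrades to $(\mathrm{Para})$ in the converse direction. Part ii) then follows verbatim from Proposition \ref{thm:main3} in place of Theorem \ref{thm:main2}, since the hypotheses $\beta_\ast>1$ and $(\mathrm{CH})$ enter only there.

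\textbf{Forward direction.} First, $(\mathrm{R}_{\le}^F)$ gives $(\mathrm{MI}^F)$. For $u\in\mathcal F\cap C_0(M)$ with $u(x)\neq u(y)$, the normalized function $(u-u(y))/(u(x)-u(y))$ is an admissible competitor in \eqref{ER2} for the pair $\{x\},\{y\}$, after a cutoff/approximation step justified by regularity and $(\mathrm{Para})$, which lets us drop compact support. This yields $|u(x)-u(y)|^2\le \mathcal R(x,y)\mathcal E(u)\le CF(x,d(x,y))\mathcal E(u)$. Second, to obtain $(\mathrm{S}^W)$, I would first upgrade $(\mathrm{R}_{\ge}^F)$ to $(\mathrm{RB}_{\ge}^F)$. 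This uses $(\mathrm{MI}^F)$ and $(\mathrm{VD})$: given $u$ with $u(x)=1$ and $u=0$ off $B(x,r)$, either $u\ge 1/2$ on a small ball $B(x,\delta r)$ or some point there has $u<1/2$. In the second case the resistance between two points gives the bound via the scaling of $F$. In the first case one compares with the resistance from $x$ to a point of $\partial B(x,r)$, chaining through $(\mathrm{R}_\ge^F)$ and the triangle inequality for $\mathcal R^{1/2}$. From $(\mathrm{RB}_\ge^F)$ I would bound the Green function $g^B(x,\cdot)$ of the ball. Its value at $x$ is $\mathcal R(x,B^c)\ge cF(x,r)$. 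By $(\mathrm{MI}^F)$ its oscillation on $\delta B$ is at most $CF(x,\delta r)$, which is small relative to $F(x,r)$ by the lower scaling of $F$. Hence $g^B\gtrsim F(x,r)$ on $\delta B$. The mean exit time then satisfies $E_x\tau_B=\int_B g^B(x,z)\,d\mu(z)\gtrsim F(x,r)V(x,\delta r)\asymp W(x,r)$ by \eqref{WF}. The upper bound $E_z\tau_B\le C W(x,r)$ follows from $g^B(z,\cdot)\le g^B(z,z)\le \mathcal R(z,B^c)\le CF$, using $(\mathrm{R}_\le^F)$ and $(\mathrm{VD})$. The standard argument of \cite{GrigoryanHu.2014.MMJ505} then gives $(\mathrm{S}^W)$: combine $E_x\tau_B\ge cW$ and $\sup E\tau_B\le CW$ with the Markov property to get $P_x(\tau_B>t)\ge c-Ct/W(B)$, using the continuity from Remark \ref{ctn-resist} to pass to $\einf$. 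Finally $(\mathrm{Para})\Rightarrow(\mathrm{C})$ by \cite[Lemma 6.4]{GrigoryanHuLau.2014.TAMS6397}, and Theorem \ref{thm:main2} applies.

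\textbf{Converse direction.} Theorem \ref{thm:main2} gives $(\mathrm{MI}^F)$, $(\mathrm{S}^W)$ and $(\mathrm{sloc})$. The first of these gives $(\mathrm{R}_\le^F)$ directly. For $(\mathrm{R}_\ge^F)$, I would use $(\mathrm{NLE}^W)$, which, combined with $(\mathrm{UE}^W_{\exp})$, gives two-sided bounds on the ball Green function: $g^B(x,y)\gtrsim\int_0^{\delta W(B)}p^B_t(x,y)\,dt\gtrsim W(x,r)/V(x,r)\asymp F(x,r)$ for $y$ near $x$. Since $\mathcal R(x,y)\ge g^{B}(x,x)-$ corrections, this yields $\mathcal R(x,B(x,r)^c)\gtrsim F(x,r)$, hence $(\mathrm{RB}_\ge^F)$ and therefore $(\mathrm{R}_\ge^F)$. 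A cleaner route is to test $\mathcal R(x,y)^{-1}$ against the equilibrium potential and use $(\mathrm{NLE})$ as in \cite{GrigoryanHuLau.2014.TAMS6397}. For $(\mathrm{Para})$: if $\bar R<\infty$, then $M$ is compact by $(\mathrm{VD})$, $1\in\mathcal F$ and $\mathcal E(1)=0$. If $\bar R=\infty$, then $\mathrm{Cap}(K)\le \mathcal R(K,B(x,r)^c)^{-1}\lesssim F(x,r)^{-1}\to0$, because $F$ has positive lower exponent. Here one needs $\gamma_\ast>0$, which holds since $F$ is a scaling function.

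\textbf{Main obstacle.} I expect the hardest step to be the passage from the pointwise estimate $(\mathrm{R}_\ge^F)$ to $(\mathrm{RB}_\ge^F)$. The inhomogeneity of $F$ prevents a naive comparison of $F(x,r)$ with $F(z,\cdot)$ for $z$ on the boundary of the ball. I would handle this with \eqref{scpr} applied to $F$, which is valid since $d(x,z)\le r$, so the comparisons only cost constants.
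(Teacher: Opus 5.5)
Your reduction to Theorem \ref{thm:main2} follows the paper, but the forward direction has a genuine gap at the step $(\mathrm{R}_\ge^F)\Rightarrow(\mathrm{RB}_\ge^F)$, and the difficulty is not the one you name.

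\textbf{The step $(\mathrm{R}_\ge^F)\Rightarrow(\mathrm{RB}_\ge^F)$.} Your dichotomy starts from an arbitrary admissible $u$ (with $u(x)=1$ and $u=0$ off $B=B(x,r)$) and bounds $\mathcal E(u)$ from below, e.g.\ via $|u(x)-u(z)|^2\le\mathcal R(x,z)\mathcal E(u)$. After taking the infimum over $u$, any such bound is an \emph{upper} bound on $\mathcal R(x,B^c)$, which is the wrong direction. A lower bound on $\mathcal R(x,B^c)$ means exhibiting one test function with energy $\lesssim F(x,r)^{-1}$. Pointwise lower bounds on $\mathcal R(x,z)$ for $z\in B^c$ cannot be chained into this by triangle inequalities, because $\mathcal R(x,B^c)\le\inf_{z\in B^c}\mathcal R(x,z)$ and nothing reverses that.

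The missing idea is the min-of-potentials construction in Lemma \ref{RandQR}(2):
\begin{enumerate}
\item[(a)] For $z\in B\setminus\frac12B$, pick $\phi_z$ with $\phi_z(x)=1$, $\phi_z(z)=0$ and $\mathcal E(\phi_z)\le2\mathcal R(x,z)^{-1}\lesssim F(x,r)^{-1}$, using $(\mathrm{R}_\ge^F)$.
\item[(b)] By $(\mathrm{MI}^F)$ and the lower scaling of $F$, $\phi_z\le\frac12$ on $B(z,\eta r)$. This is the only place where $F(z,\cdot)$ must be compared with $F(x,\cdot)$, and \eqref{scpr} makes that comparison free.
\item[(c)] Cover the annulus $B\setminus\frac12B$ by $N$ such balls, with $N$ uniform by $(\mathrm{VD})$.
\item[(d)] Then $f=\min_i\phi_{z_i}$ has $\mathcal E(f)\le\sum_i\mathcal E(\phi_{z_i})\lesssim NF(x,r)^{-1}$, and $g=(2f-1)_+$ vanishes on the annulus.
\item[(e)] By $(\mathrm{sloc})$, $\mathcal E(g\phi)\le\mathcal E(g)$ for $\phi\in\cutoff(\frac12B,B)$, so $g\phi$ is the required test function. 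This is where $(\mathrm{sloc})$ genuinely enters.
\end{enumerate}
Without this construction, your lower bound on $g^B$, and hence $(\mathrm{S}^W)$, has no input. With it, the rest of your forward argument is the paper's (Lemma \ref{gBE}, Corollary \ref{hboth}, Lemma \ref{RandS}).

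\textbf{The converse direction.} You invoke Theorem \ref{thm:main2} before you have $(\mathrm{C})$, which is part of its right-hand side. Your derivation of $(\mathrm{Para})$ then uses $(\mathrm{sloc})$, which came from that invocation. It also uses a bound $\mathcal R(K,B^c)\gtrsim F(x,r)$ that monotonicity does not supply: monotonicity gives $\mathcal R(K,B^c)\le\mathcal R(x,B^c)$, and you would need $(\mathrm{MI})$ to show the equilibrium potential of $x$ is $\ge\frac12$ on $K$.

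When $\bar R=\infty$, the paper's Lemma \ref{R-low} avoids this loop. $(\mathrm{NLE})$ with \eqref{WF} gives $\int_0^\infty p_t(x,y)\,dt=\infty$, hence recurrence, $(\mathrm{Para})$ and $(\mathrm{C})$, and only then is Theorem \ref{thm:main2} applied. $(\mathrm{R}_\ge^F)$ comes directly from $|p_t(x,x)-p_t(x,y)|^2\le\mathcal R(x,y)\,p_t(x,x)/(et)$ together with $(\mathrm{NLE})$, $(\mathrm{UE}_{\exp})$ and the choice $t=\varepsilon W(x,d(x,y))$. This needs neither $(\mathrm{RB}_\ge)$ nor a near-diagonal lower bound for the Dirichlet kernel $p^B_t$, which your Green-function route would require.

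\textbf{The bounded case.} When $\bar R<\infty$, neither your argument nor the paper's can close; the paper's calls Lemma \ref{1inF}, which needs $(\mathrm{sloc})$. Consider $[0,1]$ with Lebesgue measure and $\mathcal E(u)=\int_0^1|u'|^2dx+\int_0^1u^2dx$ on $H^1(0,1)$. Its heat kernel is $e^{-t}$ times the Neumann kernel, so for $t<1$, with $W=r^2$ and $F=r$, it satisfies $(\mathrm{UE}_{\exp}^W)$, $(\mathrm{NLE}^W)$ and $(\mathrm{LE}_{\exp}^W)$. Yet $(\mathrm{sloc})$ and $(\mathrm{Para})$ both fail. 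So in the bounded case $(\mathrm{C})$ must be added to the right-hand side.
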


\begin{proof}[Sketch of the proof]
Compared with results in Subsection \ref{hk-g}, it is enough to prove the following implications:

\begin{itemize}
\item $(\mathrm{R}^{F}_{\leq})+(\mathrm{Para})\Rightarrow(\mathrm{MI}^F)$ and $(\mathrm{sloc})+(\mathrm{MI}^F)+(\mathrm{R}_{\geq }^{F})\Rightarrow(\mathrm{RB}_{\geq }^{F})$, following the same idea as \cite[Propositions 6.6 and 6.9]{GrigoryanHuLau.2014.TAMS6397}. See details in Lemma \ref{RandQR}.

\item $(\mathrm{MI}^{F})+(\mathrm{RB}_{\geq}^{F})\Rightarrow(\mathrm{S}^{W}_+)$ (and hence $(\mathrm{S}^{W})$ trivially), following the same idea as \cite[Proof of Theorem 6.13]{GrigoryanHuLau.2014.TAMS6397}. See details in Lemma \ref{RandS}.

\item $(\mathrm{VD})+(\mathrm{NLE}^{W})+(\mathrm{UE}_{\exp}^W)\Rightarrow(\mathrm{Para})+(\mathrm{R}_{\geq}^{F})$, following the same idea as \cite[Proof of Theorem 6.17]{GrigoryanHuLau.2014.TAMS6397}. See details in Lemma \ref{R-low}.
\end{itemize}
\end{proof}

\section{Constructing Scaling Functions on Self-Similar Sets via Coefficients}
\label{SSSWay}

Now we apply the general theory above to a concrete and important class of metric measure spaces: self-similar fractals. In this section, we develop a systematic scheme for
constructing scaling functions on such sets. The construction intrinsically depends on the iterated function system (IFS) generating the set, utilizing two given tuples of self-similar coefficients. The central idea
is that if one tuple satisfies an \emph{average condition} (see Definition \ref{AV}) relative to the other, and if the set satisfies appropriate geometric
conditions, then one can explicitly construct a scaling function with scaling exponents prescribed by the two tuples of coefficients.

The main result of this section is the following constructive lemma.

\begin{proposition}[Construction of scaling function]
\label{ML4} Let $(K,\{F_{i}\}_{i=1}^{N})$ be a self-similar set and $\mathbf{%
\theta },\mathbf{\zeta }$ be two $N$-tuples such that $\mathbf{\zeta }$ satisfies the average condition $(%
\mathrm{AV}_{\mathbf{\theta }})$. Assume that $K$ satisfies the connectivity
condition $(\mathrm{CP}_{\mathbf{\theta },c})$ for some
constant $c\in (0,1)$. Then there exists a scaling function $H$, such that

\begin{enumerate}
\item The sub- and super-scaling exponents of $H$ are sharply given by
\begin{equation}
a_{\ast }:=\min_{1\leq i\leq N}\frac{\ln \zeta _{i}}{\ln \theta _{i}}\quad 
\text{and}\quad a^{\ast }:=\max_{1\leq i\leq N}\frac{\ln \zeta _{i}}{\ln
\theta _{i}}.  \label{thet'a}
\end{equation}

\item There exists a constant $C$ such that, for all $r\in (0,\bar{R})$, every
$x\in K$ and every $w\in \Lambda _{\mathbf{\theta }}(r/\bar{R})$ with $x\in
K_{w}$, 
\begin{equation*}
C^{-1}\xi _{w}\leq H(x,r)\leq C\xi _{w}.
\end{equation*}
\end{enumerate}
\end{proposition}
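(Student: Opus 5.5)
We take $\Lambda_{\mathbf{\theta}}(s)$ to be the usual $\mathbf{\theta}$-partition of $K$ at scale $s$, namely the words $w$ with $\theta_w\le s<\theta_{w^-}$. We take $\xi_w=\zeta_w=\zeta_{w_1}\cdots\zeta_{w_n}$. We read $(\mathrm{AV}_{\mathbf{\theta}})$ as saying that $\xi_w\asymp\xi_v$ uniformly whenever $w,v\in\Lambda_{\mathbf{\theta}}(s)$ and $K_w\cap K_v\neq\emptyset$. We read $(\mathrm{CP}_{\mathbf{\theta},c})$ as saying that any two cells of $\Lambda_{\mathbf{\theta}}(s)$ meeting a ball $B(x,s\bar R)$ can be joined by a chain of pairwise intersecting cells of $\Lambda_{\mathbf{\theta}}(cs)$ of uniformly bounded length. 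Throughout we use $\theta_i\in(0,1)$ and $\zeta_i\in(0,1)$, so that $\zeta_i=\theta_i^{a_i}$ with $a_i:=\ln\zeta_i/\ln\theta_i$.

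\emph{Construction.} For $0<r<\bar R$ set
\[
H_0(x,r):=\max\{\xi_w:\ w\in\Lambda_{\mathbf{\theta}}(r/\bar R),\ x\in K_w\}.
\]
For $r\ge\bar R$ set $H_0(x,r):=(r/\bar R)^{a^\ast}H_0(x,\bar R^-)$. Only boundedly many cells of $\Lambda_{\mathbf{\theta}}(s)$ contain $x$, and they pairwise intersect at $x$. Hence $(\mathrm{AV}_{\mathbf{\theta}})$ gives $\xi_w\asymp H_0(x,r)$ for every such $w$, which is item (2) for $H_0$.

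\emph{Monotonicity and continuity.} As $r$ decreases, each cell containing $x$ is replaced by a descendant. Since every $\zeta_i<1$, the function $H_0(x,\cdot)$ is nondecreasing up to bounded constants. Finally put
\[
H(x,r):=\int_0^r H_0(x,s)\,\frac{ds}{s}.
\]
Once the two-sided scaling bounds for $H_0$ are established, $H$ is comparable to $H_0$, continuous and strictly increasing, with $H(x,0)=0$ and $H(x,\infty)=\infty$. So item (2) passes to $H$.

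\emph{Scaling inequality \eqref{scpr}.} Take $0<r\le R<\bar R$ and $d(x,y)\le R$. Pick $w\in\Lambda_{\mathbf{\theta}}(R/\bar R)$ with $x\in K_w$, and $v\in\Lambda_{\mathbf{\theta}}(r/\bar R)$ with $y\in K_v$. Let $u\in\Lambda_{\mathbf{\theta}}(R/\bar R)$ be the ancestor of $v$; it contains $y$.

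\begin{enumerate}
\item[(a)] \emph{Comparing $w$ and $u$.} Both cells meet $B(x,R)$. By $(\mathrm{CP}_{\mathbf{\theta},c})$ they are joined by a bounded chain of adjacent cells at the comparable scale $cR/\bar R$. Passing between $\Lambda(R/\bar R)$ and $\Lambda(cR/\bar R)$ changes $\xi$ by at most bounded factors, because only boundedly many letters are added. Iterating $(\mathrm{AV}_{\mathbf{\theta}})$ along the chain then gives $\xi_w\asymp\xi_u$.
\item[(b)] \emph{Comparing $u$ and $v$.} Write $v=u\tau$. Then $\theta_\tau\asymp r/R$, and each letter satisfies $\theta_i^{a^\ast}\le\zeta_i\le\theta_i^{a_\ast}$. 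Multiplying over $\tau$ gives $(r/R)^{a^\ast}\lesssim\zeta_\tau\lesssim(r/R)^{a_\ast}$.
\end{enumerate}
Combining (a) and (b) yields \eqref{scpr} with exponents $a_\ast,a^\ast$. The range $R\ge\bar R$ is handled by the power extension above.

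\emph{Sharpness.} Let $i$ attain the minimum (resp.\ maximum) in \eqref{thet'a}, and let $x$ be the fixed point of $F_i$. For $r=\theta_i^n\bar R$ the cell $i^n$ contains $x$, so $H(x,\bar R)/H(x,r)\asymp\zeta_i^{-n}=(\bar R/r)^{a_i}$. Hence no better exponents are possible.

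The main obstacle I expect is step (a). One must control $\xi$ across cells that contain different points and may be far apart in the word tree. This is exactly where the bounded-length chains of $(\mathrm{CP}_{\mathbf{\theta},c})$ and the neighbour comparability of $(\mathrm{AV}_{\mathbf{\theta}})$ must be combined carefully, keeping all constants independent of $x$, $y$ and the scale.
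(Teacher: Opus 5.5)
Your overall plan matches the paper's: control of $H_0(x,r)$ by $\zeta_w$ through $(\mathrm{AV}_{\mathbf{\theta}})$, pointwise scaling through ancestors, same-scale comparison of $x$ and $y$ through a connecting chain, and sharpness at fixed points of the $F_i$. However, step (a) rests on a misreading of $(\mathrm{CP}_{\mathbf{\theta},c})$.

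The hypothesis says: if $r\in(0,1)$ and $d(x,y)<c\bar R r$, then $x$ and $y$ are joined by $L_2$ consecutively intersecting cells of $\Lambda_{\mathbf{\theta}}(r)$. So the chain lives at a scale $1/c$ times \emph{coarser} than the separation, not at the finer scale $cs$. Used at scale $cR/\bar R$, it only connects points at distance less than $c^2R$, so it says nothing about your $x,y$ with $d(x,y)\le R$. Your version is genuinely stronger, because it forces a connectivity that $(\mathrm{CP})$ does not impose. For the middle-thirds Cantor set with $\mathbf{\theta}=(\frac13,\frac13)$, which the paper notes satisfies $(\mathrm{CP}_{\mathbf{\theta},1/16})$, take $s\in(\frac13,1)$: the two cells of $\Lambda_{\mathbf{\theta}}(s)$ both meet $B(\frac13,s)$, yet no chain of intersecting cells links them. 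So (a), as written, is unjustified.

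The repair is the paper's Step 4, Case 2: pass to the coarser scale $R':=2R/(c\bar R)$, which differs from $R/\bar R$ by the fixed factor $2/c$.

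If $R'\ge1$, then $R/\bar R\ge c/2$, and by \eqref{part1} every word of $\Lambda_{\mathbf{\theta}}(R/\bar R)$ has length less than $\ln(\theta_{\min}c/2)/\ln\theta_{\max}$. Hence $\zeta_w$ and $\zeta_u$ are both comparable to $1$.

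If $R'<1$, then $d(x,y)\le R<c\bar R R'$, and $(\mathrm{CP}_{\mathbf{\theta},c})$ gives a chain in $\Lambda_{\mathbf{\theta}}(R')$ from a cell containing $x$ to one containing $y$. The ancestors $\hat w,\hat u\in\Lambda_{\mathbf{\theta}}(R')$ of $w,u$ meet the first and last chain cells at $x$ and $y$ respectively, so $(\mathrm{AV}_{\mathbf{\theta}})$ applied along the chain gives $\zeta_{\hat w}\asymp\zeta_{\hat u}$. By \eqref{part2.2}, $w$ and $u$ are obtained from $\hat w,\hat u$ by appending at most $\ln(\theta_{\min}c/2)/\ln\theta_{\max}$ letters, whence $\zeta_w\asymp\zeta_u$.

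With this change the rest of your argument goes through. Three smaller remarks:

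The claim that only boundedly many cells of $\Lambda_{\mathbf{\theta}}(s)$ contain $x$ does not follow from the hypotheses, since no $(\mathrm{WOC}_{\mathbf{\theta}})$ is assumed. You do not need it, because the $(\mathrm{AV})$ constant is uniform and $\Lambda_{\mathbf{\theta}}(s)$ is finite.

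Your $H_0(x,\cdot)$ is exactly nondecreasing, not just up to constants. Each finer cell containing $x$ lies in an ancestor containing $x$, and $\zeta$ decreases along descendants.

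Your max-over-cells definition followed by logarithmic averaging is a legitimate alternative to the paper's piecewise-linear interpolation along a fixed address. It has the minor advantage of not depending on the choice of address.
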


Notions involved here and the constructive proof will be detailed in Subsections \ref{subsec:preliminaries} and \ref{subsec:construction} respectively.

The so-constructed scaling function benefits much to the analysis on self-similar fractals. In particular, we could verify sufficient conditions for the doubling property of self-similar measures in terms of their contraction ratios and probability weights (see Corollary \ref{muScal}) by applying Proposition \ref{ML4} to $\mathbf{\theta}=\mathbf{\rho}$ (the contraction ratios) and $\mathbf{\zeta}=\mathbf{p}$ (the probability weights). This would further serve as a fundamental tool for heat kernel bounds in Section \ref{Kl}.

\subsection{Preliminaries: self-similar structure and related conditions}

\label{subsec:preliminaries}

We begin by recalling the standard framework of self-similar sets and notation, mostly from \cite{Kigami.2001.226}. Let $N\geq 2$ and $\{F_{i}\}_{i=1}^{N}$ be a finite
family of distinct contracting maps on a metric space $(M,d)$ such that for
each $1\leq i\leq N$, 
\begin{equation}  \label{ssp1}
d(F_{i}(x),F_{i}(y))\leq \rho _{i}d(x,y)
\end{equation}%
for all $x,y\in M$, with $\rho _{i}\in(0,1)$ a \emph{contraction ratio} of $F_{i}$.

If $(M,d)$ is complete, then there exists a unique non-empty compact set $K$
such that 
\begin{equation}  \label{ssp2}
K=\tbigcup\limits_{i=1}^NF_i(K),
\end{equation}
which is called the \emph{self-similar set} generated by the \emph{iterated
function system} (IFS) $\{F_i\}_{i=1}^N$.

We say an $N$-dimensional vector $\mathbf{\theta}:=(\theta_1,\theta_2,%
\dotsc,\theta_N)$ is an $N$-\emph{tuple}, if $\theta_i\in(0,1)$ for each $i$. For
example, the contraction ratios in (\ref{ssp1}) constitute an $N$-tuple 
\begin{equation}  \label{rho}
\mathbf{\rho}:=(\rho_1,\rho_2,\dotsc,\rho_N).
\end{equation}
For every $N$-tuple $\mathbf{\theta}$, denote 
\begin{equation*}
\theta_{\min}:=\min_{1\leq i\leq N}\theta_i,\quad\text{and}\quad\theta_{\max}:=\max_{1\leq
i\leq N}\theta_i.
\end{equation*}

Let $I:=\{1,2,\dotsc ,N\}$ be the index set. For every $n\geq 1$, set 
\begin{equation*}
I^{n}=\big\{w=w_{1}w_{2}\cdots w_{n}:\quad w_{i}\in I\quad\text{for each}\quad 1\leq
i\leq N\big\}
\end{equation*}%
and $I^{0}=\{\varnothing \}$. We say that $w$ is a \emph{finite word}, if
there exists an integer $n\geq 0$ such that $w\in I^{n}$, and let the \emph{length} of $w$ be $|w|:=n$. We say $\varnothing $ is the \emph{empty word}, and $w\in
I^{n}$ is a non-empty finite word if $n\geq 1$. All finite words belong to these two types. For convenience, we define 
\begin{equation*}
F_{w}:=F_{w_{1}}\circ \cdots \circ F_{w_{n}},\quad K_{w}:=F_{w}(K),\quad
\theta _{w}:=\theta _{w_{1}}\theta _{w_{2}}\cdots \theta _{w_{n}},
\end{equation*}%
for every $w=w_{1}\cdots w_{n}\in I^{n}$ and $N$-tuple $\mathbf{\theta }$. Conventionally we set $F_{\varnothing }=id$, $K_{\varnothing}=K $ and $\theta _{\varnothing }=1$.

Let the set of all infinite words over $I$ be 
\begin{equation*}
	I^{\ast }:=\big\{\omega =\omega _{1}\omega _{2}\cdots :\text{ each }\omega
	_{i}\in I\big\}.
\end{equation*}%
By \cite[Theorem 1.2.3]{Kigami.2001.226}, there exists a surjective
projection $\pi :I^{\ast }\rightarrow K$ such that 
\begin{equation*}
	\{\pi (\omega)\}=\tbigcap\limits_{n\geq 1}F_{\omega _{1}\cdots \omega
		_{n}}(K)
\end{equation*}%
for each infinite word $\omega =\omega _{1}\omega _{2}\cdots \in I^{\ast }$.
We call $\omega \in \pi ^{-1}(\{x\})$ an \emph{address} of 
$x$.

Given two finite words $w=w_1w_2\cdots w_n$ and $\tau=\tau_1\tau_2\cdots%
\tau_m$, we set 
\begin{equation*}
w\tau:=w_1w_2\cdots w_n\tau_1\tau_2\cdots\tau_m,
\end{equation*}
where, if $\tau=\varnothing$, set $w\tau=w$; while if $w=\varnothing$, set $%
w\tau=\tau$. Obviously, for any $N$-tuple $\mathbf{\theta}$, 
\begin{equation}  \label{wtau}
\theta_{w\tau}=\theta_w\theta_\tau.
\end{equation}

We introduce a crucial algebraic condition to relate two general $N$-tuples $\mathbf{\theta }$, $\mathbf{\zeta}$, which will ensure the comparability of the $\mathbf{\zeta }$-weights of two cells that intersect and have comparable $\mathbf{\theta }$-sizes.

\begin{definition}[Average condition]\label{AV}
We say that an $N$-tuple $\mathbf{\zeta }=(\zeta _{1},\zeta _{2},\dotsc
,\zeta _{N})$ satisfies the \emph{average condition} with respect to some $N$-tuple $\mathbf{\theta }$, denoted by $(\mathrm{AV}_{\mathbf{\theta }})$, if
there exist two positive constants $C^{\prime },C^{\prime \prime }$ such
that for all non-empty finite words $v,w$ with 
\begin{equation*}
\theta _{w}\leq C^{\prime }\theta _{v}\quad \text{and}\quad K_{v}\cap
K_{w}\neq \emptyset ,
\end{equation*}
there is 
\begin{equation*}
\zeta _{w}\leq C^{\prime \prime }\zeta _{v}.
\end{equation*}
\end{definition}

Intuitively, $(\mathrm{AV}_{\mathbf{\theta }})$ implies some sort of continuity
of $\mathbf{\zeta }$ with respect to $\mathbf{\theta }$ on intersecting
cells. A more practical equivalent formulation will be given in Proposition %
\ref{av=} below. To state it, we first recall the notion of a partition
induced by $\mathbf{\theta }$.

Given $r\in (0,1)$, the \emph{partition} $\Lambda _{\mathbf{\theta }}(r)$
with respect to $\mathbf{\theta }$ is defined by 
\begin{equation}
\Lambda _{\mathbf{\theta }}(r):=\left\{ w=w_{1}w_{2}\cdots w_{n}\in
\tbigcup\limits_{k\geq 1}I^{k}:\ \ \theta _{w}\leq r<\frac{\theta _{w}}{%
\theta _{w_{n}}}\right\} ,  \label{partfor}
\end{equation}%
see for example \cite{Hambly.1997.AP1059}. It is easy to see that for all $%
w\in \Lambda _{\mathbf{\theta }}(r)$, 
\begin{equation}
\theta _{\min }r<\theta _{w}\leq r.  \label{part1}
\end{equation}%
Further, by the self-similarity (\ref{ssp2}), we know that $K=\cup _{w\in
\Lambda _{\mathbf{\theta }}(r)}K_{w}$.

If $0<r_{1}\leq r_{2}<1$, then the partition $\Lambda _{\mathbf{\theta }%
}(r_{1})$ is a \emph{refinement} of $\Lambda _{\mathbf{\theta }}(r_{2})$ in
the sense that any word $w^{\prime }$ in $\Lambda _{\mathbf{\theta }}(r_{1})$
can be written as $w^{\prime }=w\tau $ for some $w\in \Lambda _{\mathbf{%
\theta }}(r_{2})$ and finite word $\tau $. In this representation, $%
w^{\prime }$ is called an\emph{\ offspring} of $w$, while $w$ is called an 
\emph{ancestor} of $w^{\prime }$.

The following basic observations for the partition will be useful:

\begin{itemize}
\item It follows by (\ref{part1}) that, for any $0<r_{1}\leq r_{2}<1$ and
any $w^{\prime }\in \Lambda _{\mathbf{\theta }}(r_{1})$, $w\in \Lambda _{%
\mathbf{\theta }}(r_{2})$, 
\begin{equation}  \label{part2.1}
\theta _{\min }\frac{r_{1}}{r_{2}}\leq \frac{\theta _{w^{\prime }}}{%
\theta_{w}}\leq (\theta _{\min })^{-1}\frac{r_{1}}{r_{2}}
\end{equation}%
Further, if $w^{\prime }=w\tau $ with some finite word $\tau $, it follows
by (\ref{part2.1}) and $\frac{\theta _{w^{\prime }}}{\theta _{w}}=\theta
_{\tau }\leq \theta _{\max }^{|\tau |}$ that 
\begin{equation}  \label{part2.2}
|\tau |=|w^{\prime }|-|w|\leq \frac{\ln (\theta _{\min }r_{1}/r_{2})}{\ln
\theta _{\max }}.
\end{equation}

\item Let $\mathbf{\theta },\mathbf{\zeta }$ be two $N$-tuples. Then for any
finite word $w$, 
\begin{equation}
(\theta _{w})^{a^{\ast }}\leq \zeta _{w}\leq (\theta _{w})^{a_{\ast }},
\label{part2.3}
\end{equation}%
where $a_{\ast },a^{\ast }$ are constants defined in (\ref{thet'a}). Note by
(\ref{part1}), it follows that for any $r\in (0,1)$ and any finite word $%
v\in \Lambda _{\mathbf{\theta }}(r)$, 
\begin{equation}
(\theta _{\min }r)^{a^{\ast }}\leq \zeta _{v}\leq r^{a_{\ast }}.
\label{part2.4}
\end{equation}
\end{itemize}

Here we show an equivalent characterization of condition $(\mathrm{AV}_{%
\mathbf{\theta }})$ based on partitions.

\begin{proposition}
\label{av=} Let $\mathbf{\zeta },\mathbf{\theta }$ be two $N$-tuples. Then
the following properties are equivalent:

\begin{enumerate}
\item[a)] $\mathbf{\zeta}$ satisfies $(\mathrm{AV}_{\mathbf{\theta}})$.

\item[b)] There exists a constant $C>0$ such that for all non-empty finite
words $v,w$ with $\theta_{\min}\theta_v\leq\theta_w\leq\theta_{\min}^{-1}
\theta_v$ and $K_v\cap K_w\neq\emptyset$, 
\begin{equation*}
C^{-1}\zeta_v\leq\zeta_w\leq C\zeta_v.
\end{equation*}

\item[c)] There exists a constant $C>0$ such that for all $r\in (0,1)$ and
all non-empty finite words $v,w\in \Lambda _{\mathbf{\theta }}(r)$ with $%
K_{v}\cap K_{w}\neq \emptyset $, 
\begin{equation*}
C^{-1}\zeta _{v}\leq \zeta _{w}\leq C\zeta _{v}.
\end{equation*}
\end{enumerate}
\end{proposition}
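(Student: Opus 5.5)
We prove the cycle a) $\Rightarrow$ c) $\Rightarrow$ b) $\Rightarrow$ a). The easy implications are a) $\Rightarrow$ c) and b) $\Rightarrow$ c); the real work is recovering a) from c). We only use the elementary partition facts (\ref{part1})--(\ref{part2.4}).

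\textbf{a) $\Rightarrow$ c).} Take $v,w\in\Lambda_{\mathbf{\theta}}(r)$ with $K_v\cap K_w\neq\emptyset$. By (\ref{part1}), $\theta_w\le r<\theta_{\min}^{-1}\theta_v$, and symmetrically for $v$. If $C'\ge\theta_{\min}^{-1}$, then a) applies in both orders and gives $\zeta_w\le C''\zeta_v$ and $\zeta_v\le C''\zeta_w$.

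If $C'<\theta_{\min}^{-1}$, we must first enlarge $C'$ at the cost of $C''$. Suppose $\theta_w\le C'\theta_v$ with $K_v\cap K_w\ne\emptyset$, and let $k$ be an integer with $(C')^{k}\ge\theta_{\min}^{-1}$. Pick a point $x\in K_v\cap K_w$ and choose an offspring $w\tau$ of $w$ containing $x$ with $\theta_{w\tau}\le C'\theta_v$, obtained by truncating an address of $x$ inside $K_w$. Then chain the comparisons.

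A more direct route for this step: for any word $w$ and any prefix $w'$ of $w$, we have $\zeta_w\le\zeta_{w'}$ and $\zeta_{w'}\le\zeta_{\min}^{-|w|+|w'|}\zeta_w$. When $\theta_w$ and $\theta_{w'}$ are within a bounded factor, the length difference $|w|-|w'|$ is bounded by (\ref{part2.2}). Hence $\zeta$ is comparable along bounded truncations, and a larger $C'$ can be reduced to the given one by truncating $w$ to a prefix $\tilde w$ with $\theta_{\tilde w}\le C'\theta_v$. This prefix still intersects $K_v$, and $\zeta_{\tilde w}\asymp\zeta_w$.

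\textbf{c) $\Rightarrow$ b).} Take $v,w$ with $\theta_{\min}\theta_v\le\theta_w\le\theta_{\min}^{-1}\theta_v$ and $K_v\cap K_w\ne\emptyset$. Set $r=\theta_{\min}^{2}\min(\theta_v,\theta_w)$. Pick $x\in K_v\cap K_w$, and take the unique offspring $v\sigma\in\Lambda_{\mathbf{\theta}}(r)$ along an address of $x$ in $K_v$, and likewise $w\tau\in\Lambda_{\mathbf{\theta}}(r)$. Both cells contain $x$, so they intersect, and c) gives $\zeta_{v\sigma}\asymp\zeta_{w\tau}$.

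By (\ref{part2.1})--(\ref{part2.2}), the lengths $|\sigma|$ and $|\tau|$ are uniformly bounded, say by $L$. Therefore $\zeta_{\min}^{L}\zeta_v\le\zeta_{v\sigma}\le\zeta_v$, and the same holds for $w$. Combining these gives b).

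\textbf{b) $\Rightarrow$ a).} Set $C'=1$. Given $\theta_w\le\theta_v$ with $K_v\cap K_w\ne\emptyset$, truncate $w$ to the shortest prefix $\tilde w$ with $\theta_{\tilde w}\le\theta_v$. If no proper truncation works, take $\tilde w=w$, and note that $w$ is nonempty. Then $\theta_{\min}\theta_v<\theta_{\tilde w}\le\theta_v$, and $K_{\tilde w}\supseteq K_w$ meets $K_v$.

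In the edge case where even the length-one prefix satisfies $\theta_{w_1}\le\theta_v$, take $\tilde w=w_1$. Here $\theta_v<1$, and since $\theta_v\le 1$, we get $\theta_{w_1}\ge\theta_{\min}\ge\theta_{\min}\theta_v$. So b) applies and yields $\zeta_{\tilde w}\le C\zeta_v$. Finally, $\zeta_w\le\zeta_{\tilde w}$ because all $\zeta_i<1$, which gives a) with $C''=C$.

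The main obstacle is the bookkeeping on bounded length differences in c) $\Rightarrow$ b). It is handled uniformly by (\ref{part2.2}) together with the bounds $\zeta_{\min}^{|\tau|}\le\zeta_\tau\le1$.
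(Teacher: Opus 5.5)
Your cycle a)$\Rightarrow$c)$\Rightarrow$b)$\Rightarrow$a) is a sound alternative to the paper's organisation, and it works once one step is repaired.

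\textbf{Comparison with the paper.} The paper calls a)$\Rightarrow$b)$\Rightarrow$c) obvious and proves c)$\Rightarrow$a) directly for an \emph{arbitrary} $C'$. Fix $x\in K_v\cap K_w$. If $\theta_v<\theta_w$, the paper extends $w$ along an address of $x$ into $\Lambda_{\mathbf{\theta}}(\theta_v)$; the extension has bounded length by (\ref{part2.2}), because $\theta_w\le C'\theta_v$. If $\theta_v>\theta_w$, it extends $v$ into $\Lambda_{\mathbf{\theta}}(\theta_w)$ and only uses $\zeta_{v\sigma}\le\zeta_v$.

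You split the same work differently. Your c)$\Rightarrow$b) refines both cells to one common level $r<\min(\theta_v,\theta_w)$, which is symmetric and avoids the case split. Your b)$\Rightarrow$a) coarsens $w$ to a prefix of size comparable to $\theta_v$. That needs only $\zeta_w\le\zeta_{\tilde w}$, but it gives $(\mathrm{AV}_{\mathbf{\theta}})$ only with $C'=1$. Since the definition quantifies $C'$ existentially, this suffices. Your a)$\Rightarrow$c) then has to upgrade a possibly small $C'$ to the range $\theta_{\min}^{-1}$. The paper's ``obvious'' a)$\Rightarrow$b) passes over this point, so your accounting is the more careful one.

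\textbf{The flawed step.} The upgrade has to be written correctly, and your ``more direct route'' for it fails. A prefix $\tilde w$ of $w$ has $\theta_{\tilde w}\ge\theta_w$. So if $\theta_w>C'\theta_v$, no prefix satisfies $\theta_{\tilde w}\le C'\theta_v$. Drop that paragraph, and also the unused remark about $(C')^k\ge\theta_{\min}^{-1}$.

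The offspring version in your first sentence is the right one, but it needs a minimality choice to bound the extension. Write an address of $x$ as $wi_1i_2\cdots$ and take $n\ge0$ minimal with $\theta_{wi_1\cdots i_n}\le C'\theta_v$. If $n\ge1$, then
\[
\theta_{wi_1\cdots i_n}>\theta_{\min}C'\theta_v>\theta_{\min}^2C'\theta_w,
\]
using $\theta_v>\theta_{\min}\theta_w$ from (\ref{part1}). Hence $\theta_{\max}^n\ge\theta_{i_1\cdots i_n}>\theta_{\min}^2C'$, which bounds $n$. Then a) gives $\zeta_w\le\zeta_{\min}^{-n}\zeta_{wi_1\cdots i_n}\le\zeta_{\min}^{-n}C''\zeta_v$. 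This is exactly the mechanism of the paper's Case (2).

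With this fix, the rest of your proof goes through as written. This includes c)$\Rightarrow$b), where the length bounds come from (\ref{part2.2}) applied to $v\in\Lambda_{\mathbf{\theta}}(\theta_v)$ and $w\in\Lambda_{\mathbf{\theta}}(\theta_w)$, and b)$\Rightarrow$a) with its edge case $\tilde w=w_1$.
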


\begin{proof}
Indeed, it is obvious that a)$\Rightarrow $ b)$\Rightarrow $ c). Now we
prove that c)$\Rightarrow $ a).

Fix a positive constant $C^{\prime }$ and non-empty finite words $v,w$ such
that $\theta _{w}\leq C^{\prime }\theta _{v}$ and $K_{v}\cap K_{w}\neq
\emptyset $. Fix $x\in K_{v}\cap K_{w}$. To find a constant $C^{\prime
\prime }>0$ such that $\zeta _{w}\leq C^{\prime \prime }\zeta _{v}$, we
distinguish three cases.

Case $(1)$: $\theta _{v}=\theta _{w}$. Clearly, $v,w\in
\Lambda _{\mathbf{\theta }}(\theta _{v})$, thus it follows directly by c) that $\zeta
_{w}\leq C\zeta _{v}$, where $C$ also comes from c).

Case $(2)$: $\theta _{v}<\theta _{w}$. Clearly $v\in \Lambda _{\mathbf{%
\theta }}(\theta _{v})$, and we look for an offspring of $w$ that is also in 
$\Lambda _{\mathbf{\theta }}(\theta _{v})$.

Indeed, since $x\in K_{w}$, then we can take an address of $x$ of form $wi_1i_2i_3\cdots $. In particular, $x\in K_{w^{(n)}}$ with $w^{(n)}=wi_{1}i_{2}\cdots i_{n}$ for every $n\geq 1$. Set also $w^{(0)}=w$.
Since $\theta _{w^{(k)}}=\theta _{wi_{1}i_{2}\cdots i_{k}}\rightarrow 0$ as $%
k\rightarrow \infty $ whilst $\theta _{v}>0$, there must be $n\geq 1$ such
that 
\begin{equation*}
\theta _{w^{(n)}}\leq \theta _{v}<\theta _{w^{(n-1)}}.
\end{equation*}
It follows directly that $w^{(n)}\in \Lambda _{\mathbf{\theta }}(\theta
_{v}) $. Meanwhile, $K_{w^{(n)}}\cap K_{v}\neq \emptyset $ since $x\in
K_{w^{(n)}}\cap K_{v}$. Hence, it follows by c) that $\zeta _{w^{(n)}}\leq
C\zeta _{v}$.

On the other hand, since $w\in \Lambda _{\mathbf{\theta }}(\theta
_{w}) $ and $w^{(n)}\in \Lambda _{\mathbf{\theta }}(\theta _{v})$, by (\ref{part2.2}) and $\theta _{w}\leq C^{\prime }\theta _{v}$,
\begin{equation*}
n\leq \frac{\ln \left( \theta _{\min }\frac{\theta _{v}}{\theta _{w}}\right) 
}{\ln \theta _{\max }}\leq \frac{\ln \left( \theta _{\min }/C^{\prime
}\right) }{\ln \theta _{\max }}=\frac{-\ln C^{\prime }+\ln \theta _{\min }}{
\ln \theta _{\max }}.
\end{equation*}
Therefore, 
\begin{equation*}
\frac{\zeta _{w}}{\zeta _{v}}=\frac{\zeta _{wi_{1}\cdots i_{n}}}{\zeta _{v}} 
\frac{1}{\zeta _{i_{1}\cdots i_{n}}}\leq \frac{C}{(\zeta _{\min })^{n}}\leq
C(\zeta _{\min })^{\frac{\ln C^{\prime }-\ln \theta _{\min }}{\ln \theta
_{\max }}}=:C_{1}(C,\mathbf{\theta },\mathbf{\zeta }).
\end{equation*}

Case $(3)$: $\theta _{v}>\theta _{w}$. Similar to Case $(2)$, there exist an integer $m\geq 1$ and digits $j_{1},j_{2},\cdots ,j_{m}\in
I $ such that the word $v^{(m)}=vj_{1}\cdots j_{m}\in \Lambda _{\mathbf{\
\theta }}(\theta _{w})$ satisfies $x\in K_{v^{(m)}}\cap K_{w}$. Then we
have by c) and the fact $0<\zeta _{k}<1$ for every $k\in I$ that 
\begin{equation*}
\zeta _{w}\leq C\zeta _{v^{(m)}}=C\zeta _{v}\zeta _{j_{1}\cdots j_{m}}\leq
C\zeta _{v},
\end{equation*}
where $C$ also comes from c).

In summary, a) holds with the constant $C^{\prime \prime }:=C\vee C_{1}(C,%
\mathbf{\theta },\mathbf{\zeta })$, where $C$ comes from c).
\end{proof}

Now we consider geometry of $K$. Here we always assume that the set $K$ is
not a singleton so that 
\begin{equation*}
\bar{R}:=\sup_{x,y\in K}d(x,y)\in (0,\infty).
\end{equation*}

The following set of words will be frequently used in the sequel: for any
Borel subset $A$ of $M$ and any partition $\Lambda _{\mathbf{\theta }}(r)$,
define 
\begin{equation}
\Gamma _{\mathbf{\theta }}(A,r)=\big\{v\in \Lambda _{\mathbf{\theta }}(r):\
K_{v}\cap A\neq \emptyset \big\}.  \label{gam}
\end{equation}

\begin{definition}[Weak overlapping condition]\label{WOC}
Given an $N$-tuple $\mathbf{\theta}$, we say $(K,\{F_i\}_{i=1}^N)$ satisfies
a \emph{weak overlapping condition} with respect to $\mathbf{\theta}$ and a
parameter $C_1>0$, denoted by $(\mathrm{WOC}_{\mathbf{\theta},C_1})$, if
there exists an integer $L_1=L_1(\mathbf{\theta},C_1,\bar{R})\geq 1$ such
that for every point $x\in K$ and every $r\in(0,1)$, 
\begin{equation*}
\#\Gamma_{\mathbf{\theta}}(B(x,C_1\bar{R}r),r)\leq L_1.
\end{equation*}
We say $(K,\{F_i\}_{i=1}^N)$ satisfies $(\mathrm{WOC}_{\mathbf{\theta}})$ if
there exists $C_1=C_1(\mathbf{\theta})>0$ such that $(\mathrm{WOC}_{\mathbf{\theta},C_1})$ holds.
\end{definition}

This condition says that, for any word $v$ in any partition $%
\Lambda_{\mathbf{\theta}}(r)$ with $r\in(0,1)$, the number of cells $K_v$
intersecting an arbitrary ball of radius $C_1\bar{R}r$ is controlled by a
universal integer $L_1$. See Figure \ref{picWOC}. 
\begin{figure}[h]
\centering\includegraphics[width=0.35\textwidth]{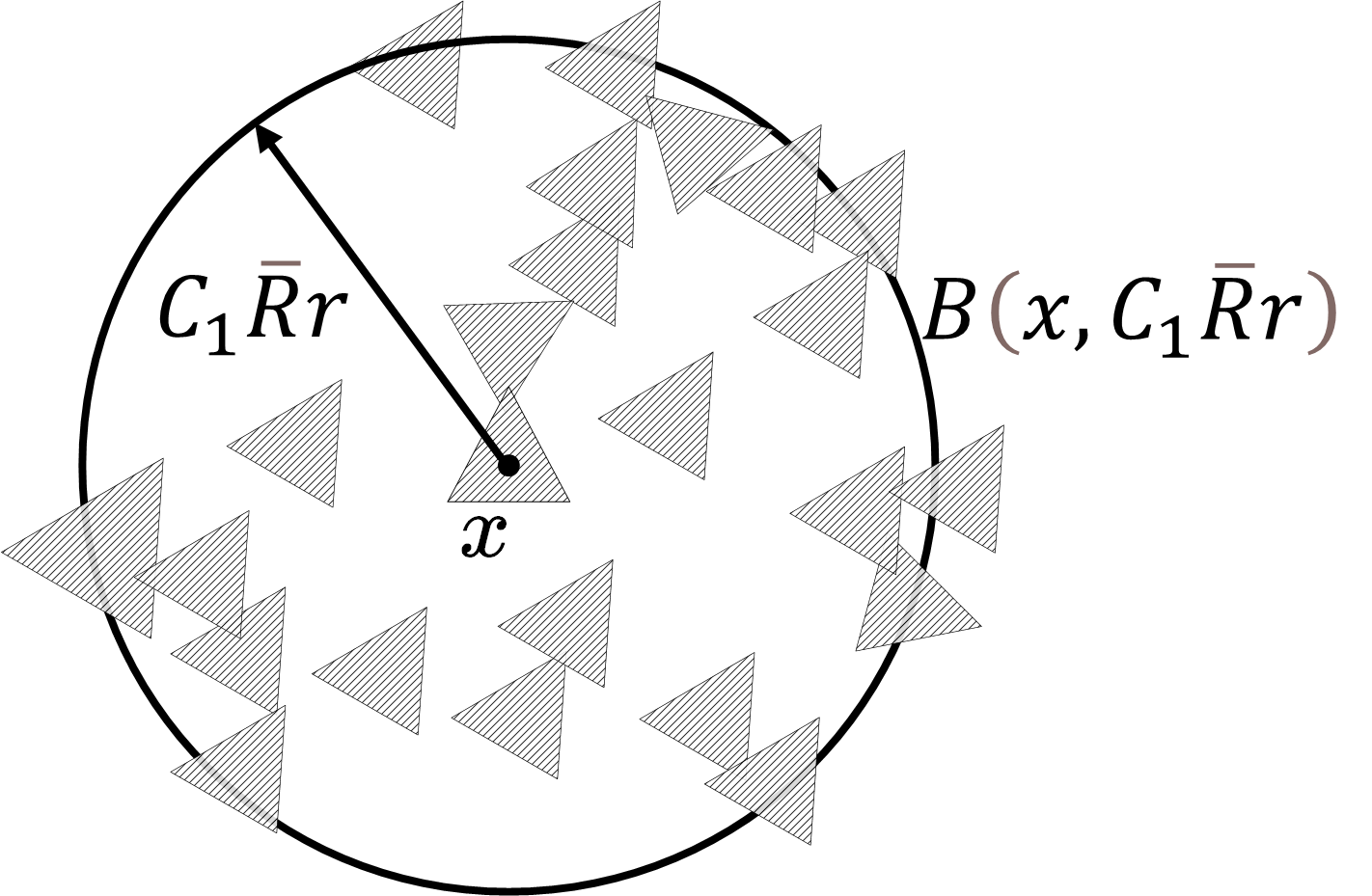}
\caption{The ball $B(x,C_1\bar{R}r)$ intersects at most $L_1$ cells from the
partition $\Lambda_{\mathbf{\protect\theta}}(r)$.}
\label{picWOC}
\end{figure}

Clearly, we know by definition that condition $(\mathrm{WOC}_{\mathbf{\theta}
,C_1})$ is increasingly stronger in $C_1$, that is, 
\begin{equation*}
(\mathrm{WOC}_{\mathbf{\theta},C^{\prime }_1})\Rightarrow(\mathrm{WOC}_{ 
\mathbf{\theta},C_1})\quad\text{for any }\ C^{\prime }_1>C_1.
\end{equation*}
On the converse, by counting words in
$$\Gamma _{\mathbf{\theta }}(B(x,C_{1}^{\prime }\bar{R}r),r)$$
(similar as in the proof of Proposition \ref{av=}), we can easily show $(\mathrm{WOC}_{\mathbf{\theta},C_1})\Rightarrow(\mathrm{WOC}_{\mathbf{\theta},C^{\prime }_1})$. That is,
\begin{equation*}
(\mathrm{WOC}_{\mathbf{\theta},C_1})\Leftrightarrow(\mathrm{WOC}_{\mathbf{\theta},C^{\prime }_1})\quad\text{for all }\ C^{\prime }_1>C_1.
\end{equation*}

The weak overlapping condition with respect to the tuple $\mathbf{\rho}$ defined in (\ref{rho}) is known to hold on a certain class of self-similar sets on $\mathbb{R}^n$, see the following lemma:

\begin{lemma}
\cite[Proposition 1.5.8]{Kigami.2001.226}\label{op'cond} A self-similar set $%
(K,\{F_i\}_{i=1}^N)$ on $\mathbb{R}^n$ satisfies $(\mathrm{WOC}_{\mathbf{%
\rho }})$, if the following two properties hold:

\begin{enumerate}
\item each $F_i$ is a contracting \emph{similitude}, that is, \eqref{ssp1} holds with ``$\le$'' replaced by ``$=$'';

\item the \emph{open set condition} is satisfied, that is, there exists a 
\emph{basic open set} $U\subset\mathbb{R}^n$ such that $\cup_{i=1}^NF_i(U)\subset U$
and $F_i(U)\cap F_j(U)=\emptyset$ for all $1\le i<j\le N$.
\end{enumerate}
\end{lemma}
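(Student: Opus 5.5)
The plan is the standard volume-counting argument of Hutchinson and Kigami: show that the cells of a partition $\Lambda_{\boldsymbol\rho}(r)$ meeting a ball of radius comparable to $r$ are essentially disjoint and each have Lebesgue measure comparable to $r^n$, so only boundedly many of them fit into a slightly larger ball. Fix $C_1>0$; by the equivalence $(\mathrm{WOC}_{\boldsymbol\rho,C_1})\Leftrightarrow(\mathrm{WOC}_{\boldsymbol\rho,C_1'})$ noted above, any fixed $C_1$ will do. Let $U$ be the basic open set. Replacing $U$ by $U\cap B(0,R_0)$ for a large $R_0$, I may assume $U$ is bounded: this preserves the open set condition, since the $F_i$ are contractions and $\bar U$ can be enclosed in a large ball invariant under all $F_i$. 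Standard facts then give $K\subset\bar U$, because $\bar U$ is a compact set mapped into itself by the IFS. I will also use $F_w(U)\cap F_v(U)=\emptyset$ for any two incomparable words $v,w$ (neither a prefix of the other), which follows by induction from the open set condition applied at the first differing letter.

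Step 1 is disjointness. Distinct words of a fixed partition $\Lambda_{\boldsymbol\rho}(r)$ are incomparable, since $w\in\Lambda_{\boldsymbol\rho}(r)$ forces $\rho_{w'}>r$ for every proper prefix $w'$. Hence the open sets $F_v(U)$, $v\in\Lambda_{\boldsymbol\rho}(r)$, are pairwise disjoint.

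Step 2 compares measure and diameter. Since $F_v$ is a similitude with ratio $\rho_v$ and $\rho_{\min}r<\rho_v\le r$ by (\ref{part1}),
\[
\mathcal L^n(F_v(U))=\rho_v^n\,\mathcal L^n(U)\ge(\rho_{\min}r)^n\mathcal L^n(U),
\qquad
\diam F_v(U)\le r\,\diam U .
\]

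Step 3 is the counting. Take $x\in K$ and $v\in\Gamma_{\boldsymbol\rho}(B(x,C_1\bar Rr),r)$. Then $K_v\subset F_v(\bar U)$ meets $B(x,C_1\bar Rr)$, so
\[
F_v(U)\subset B\bigl(x,(C_1\bar R+\diam U)\,r\bigr).
\]
Summing the disjoint volumes over all such $v$ gives
\[
\#\Gamma_{\boldsymbol\rho}\bigl(B(x,C_1\bar Rr),r\bigr)\,(\rho_{\min}r)^n\,\mathcal L^n(U)\le \omega_n\,(C_1\bar R+\diam U)^n r^n,
\]
where $\omega_n$ is the volume of the unit ball. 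The powers of $r$ cancel, so we may take
\[
L_1:=\bigl\lfloor\omega_n(C_1\bar R+\diam U)^n\rho_{\min}^{-n}\mathcal L^n(U)^{-1}\bigr\rfloor ,
\]
which is independent of $x$ and $r$.

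The main subtlety I expect is the reduction to a bounded $U$ together with the inclusion $K\subset\bar U$. These are needed so that $\diam U<\infty$ and so that every cell $K_v$ lies within the closure of the corresponding open piece $F_v(U)$. If simply intersecting $U$ with a ball breaks the invariance $F_i(U)\subset U$, I will instead intersect with a closed ball $D$ satisfying $F_i(D)\subset D$ for all $i$, which exists because the maps are contractions. The rest is routine.
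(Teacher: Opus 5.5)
Your argument is correct, and it is essentially the standard Hutchinson-type volume-counting proof behind the cited result of Kigami. The paper gives no proof of its own, only the citation, and your route (disjoint images $F_v(U)$ for incomparable words, volume comparable to $r^n$, all contained in a slightly enlarged ball) is the usual one.

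One point to tidy is the reduction to bounded $U$. Intersect $U$ with an \emph{open} ball $B(0,R_0)$, where $R_0\ge\max_i|F_i(0)|/(1-\rho_i)$ so that $F_i(B(0,R_0))\subset B(0,R_0)$, and take $R_0$ large enough that the ball meets $U$. Do not intersect with a closed ball: the result must stay open and non-empty.
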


Recall that a self-similar set $(K,\{F_{i}\}_{i=1}^{N})$ is said to satisfy
the \emph{connectivity property} $(\mathrm{CP}_{\mathbf{\theta },C_{2}})$
for an $N$-tuple $\mathbf{\theta }$ and a constant $C_{2}>0$, if there
exists an integer $L_{2}=L_{2}(\mathbf{\theta },C_{2},\bar{R})\geq 1$ such
that for any number $r\in (0,1)$ and any two points $x,y\in K$ such that $%
y\in B(x,C_{2}\bar{R}r)$, there exists a chain $\{w^{(i)}\}_{i=1}^{L_{2}}\subset \Lambda _{\mathbf{\theta }}(r)$ such that 
\begin{equation*}
x\in K_{w^{(1)}},\quad y\in K_{w^{(L_{2})}},\quad K_{w^{(i-1)}}\cap K_{w^{(i)}}\neq\emptyset\quad\text{for every}\quad 2\leq i\leq L_{2}.
\end{equation*}
Equivalently, there is a chain of points $\{x_{i}\}_{i=0}^{L_{2}}$ in $K$ satisfying $x_{0}=x$, $x_{L_{2}}=y$ and every successive pair $x_i,x_{i+1}$ lie in a same cell $K_w$ with $w\in\Lambda_{\mathbf{\theta}}(r)$. The latter version comes from, for example, \cite{HuWang.2006.SM153}. See Figure \ref{picCP}. 
\begin{figure}[h]
\centering\includegraphics[width=0.35\textwidth]{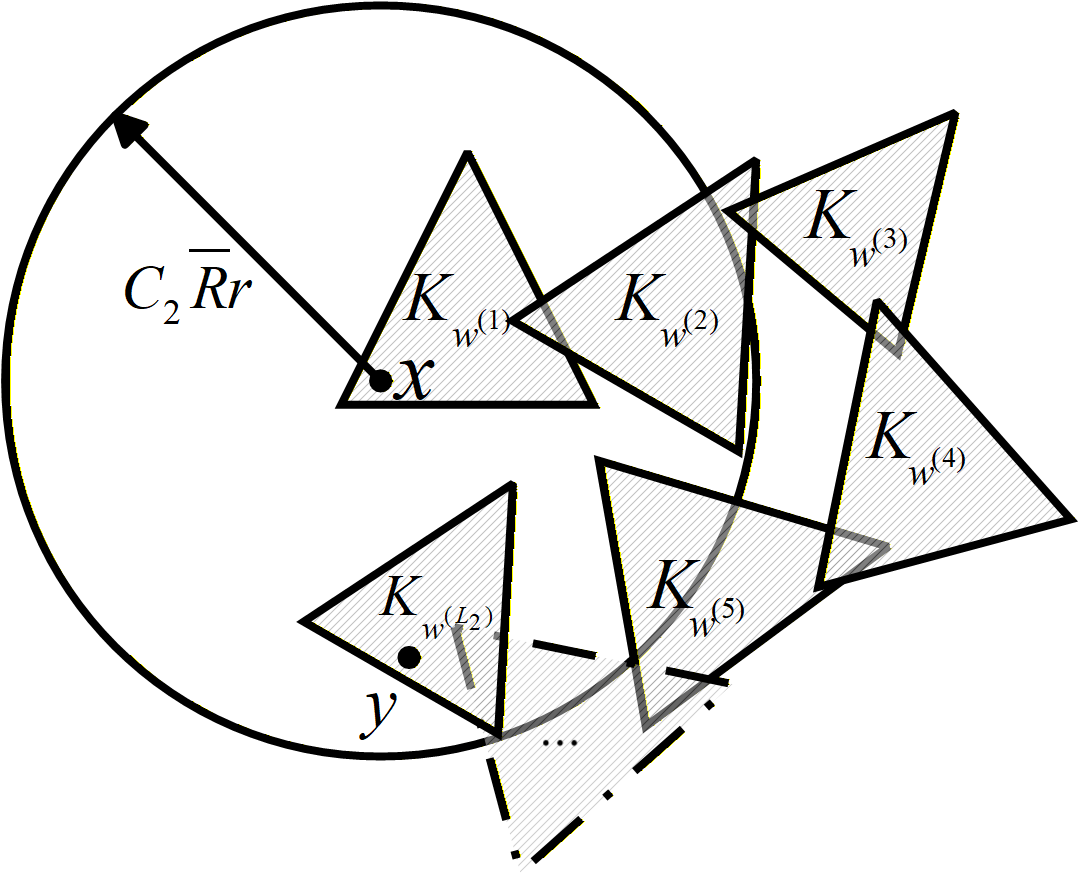}
\caption{Two points $x$ and $y$ are connected by a sequence $\{K_{w^{(i)}}\}_{i=1}^{L_{2}}$ with $x\in K_{w^{(1)}}, y\in K_{w^{(L_2)}}$ and $K_{w^{(i-1)}}\cap K_{w^{(i)}}\neq\emptyset$, where $w^{(i)}\in\Lambda_{\mathbf{\theta}}(r)$.}
\label{picCP}
\end{figure}

We denote by $(\mathrm{CP}_{\mathbf{\theta }})$
if there exists some constant $C_{2}=C_{2}(\mathbf{\theta })>0$ such that $(%
\mathrm{CP}_{\mathbf{\theta },C_{2}})$ holds. Note by definition that
condition $(\mathrm{CP}_{\mathbf{\theta },C_{2}})$ is increasingly stronger
in $C_{2}$, that is, 
\begin{equation*}
(\mathrm{CP}_{\mathbf{\theta },C_{2}^{\prime }})\Rightarrow (\mathrm{CP}_{%
\mathbf{\theta },C_{2}})\quad \text{for any }\ C_{2}^{\prime }>C_{2}.
\end{equation*}%
As for the opposite conclusion, provided that $K$ is connected, $(\mathrm{CP}_{\mathbf{\theta }})$ could be self-improved by a similar argument as in the case of $(\mathrm{WOC}_{\mathbf{\theta}})$. However, if 
$K$ is not connected, then the self-improvement may not be true in general. See the following counterexample:

\begin{example}
Let $K$ be the Cantor set in the interval $[0,1]$ generated by $F_{1}(x)=x/3$, $F_{2}(x)=(x+2)/3$. It is easy to check that $(\mathrm{CP}_{\mathbf{\theta },1/16})$ holds whereas $(\mathrm{CP}_{\mathbf{\theta },5/2})$ fails.
\end{example}

It will be frequently assumed that $C_{2}<1$.

We remark also that condition $(\mathrm{CP}_{\mathbf{\theta }})$ reflects local
connectivity, while the chain condition $(\mathrm{CH})$\ ensures the global
connectivity. An exact relation between them is shown as:

\begin{proposition}
	\label{P-ch2} $(\mathrm{WOC}_{\mathbf{\theta}})+(\mathrm{CH})\Rightarrow(%
	\mathrm{CP}_{\mathbf{\theta}})$.
\end{proposition}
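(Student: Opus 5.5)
Fix $C_1>0$ and $L_1$ as in $(\mathrm{WOC}_{\mathbf{\theta},C_1})$. We will show that $(\mathrm{CP}_{\mathbf{\theta},C_2})$ holds with $C_2:=C_1/C_{\mathrm{CH}}$ and an $L_2$ depending only on $C_{\mathrm{CH}}$ and $L_1$. The idea is to use $(\mathrm{CH})$ to join $x$ and $y$ by a chain of points whose consecutive steps are at most of order $\bar R r$, and then to cover each step by boundedly many cells of $\Lambda_{\mathbf{\theta}}(r)$ using $(\mathrm{WOC}_{\mathbf{\theta}})$.

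Fix $r\in(0,1)$ and $x,y\in K$ with $d(x,y)<C_2\bar R r$; we may assume $x\neq y$. Apply $(\mathrm{CH})$ with $n=1$; the conclusion of $(\mathrm{CH})$ for a single step gives nothing beyond $d(x,y)\le C_{\mathrm{CH}}d(x,y)$, so we instead use a fine chain. Choose $n$ large, for instance $n$ with $C_{\mathrm{CH}}d(x,y)/n<\operatorname{diam}K_w$ for all relevant $w$. This is awkward, since cell diameters may be much smaller than $\bar R r$ and are not bounded below by $(\mathrm{WOC})$.

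The cleaner route is the following. Take $n$ so large that every consecutive pair $x_i,x_{i+1}$ lies either in a single cell $K_w$ with $w\in\Lambda_{\mathbf{\theta}}(r)$ or in two intersecting cells of $\Lambda_{\mathbf{\theta}}(r)$. This is possible because the partition is finite and the cells are compact: a Lebesgue-number-type argument shows there is $\delta>0$ such that any two points at distance $<\delta$ lie in cells $K_v,K_w$ of $\Lambda_{\mathbf{\theta}}(r)$ with $K_v\cap K_w\neq\emptyset$. Indeed, if $d(x',y')\to0$ with $x'\in K_v$, $y'\in K_w$ and $K_v\cap K_w=\emptyset$, compactness of the finitely many pairs of disjoint cells gives a positive minimal distance, a contradiction. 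All chain points satisfy
\[
d(x,x_i)\le \sum_{j<i}d(x_j,x_{j+1})\le C_{\mathrm{CH}}d(x,y)<C_1\bar R r,
\]
so all of them lie in $B(x,C_1\bar R r)$.

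Now let $w^{(i)}$ be a cell of $\Lambda_{\mathbf{\theta}}(r)$ containing $x_i$. Consecutive cells in the sequence $w^{(0)},\dots,w^{(n)}$ intersect, except that we may need to insert one extra cell between $w^{(i)}$ and $w^{(i+1)}$ (the cells produced by the Lebesgue argument); inserted cells still meet $B(x,C_1\bar R r)$, possibly after slightly enlarging $C_1$, which is allowed by self-improvement of $(\mathrm{WOC})$. This yields a chain of cells in $\Gamma_{\mathbf{\theta}}(B(x,C_1\bar R r),r)$, a set of cardinality at most $L_1$. Remove loops: whenever a cell repeats, cut the chain between its two occurrences. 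The resulting chain has distinct cells, hence length at most $L_1$, starts at a cell containing $x$, ends at a cell containing $y$, and has consecutive cells intersecting. Pad it by repeating the last cell (a cell trivially meets itself) to get length exactly $L_2:=L_1$.

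\textbf{Main obstacle.} The key point is that the length bound comes from $(\mathrm{WOC})$ counting distinct cells, not from the number of steps $n$ in the chain; $n$ may be arbitrarily large and depend on $r$, but that is harmless. The only delicate step is ensuring that consecutive cells actually intersect. I would handle this by choosing $n$ via the finite-partition compactness argument above, and keeping every chain point inside a fixed multiple of the ball $B(x,\bar R r)$ so that $(\mathrm{WOC})$ applies.
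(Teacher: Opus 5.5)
Your argument is correct. It uses the same three ingredients as the paper's proof:
\begin{itemize}
\item a fine $(\mathrm{CH})$-chain;
\item a positive separation distance coming from compactness of finitely many cells;
\item the cardinality bound from $(\mathrm{WOC}_{\mathbf{\theta}})$.
\end{itemize}
It organizes them differently.

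The paper argues by contradiction. It takes $\Gamma=\Gamma_{\mathbf{\theta}}(B(x_0,(C_{\mathrm{CH}}+1)\bar R r),r)$ and splits it into adjacency components. It then observes that the component $K_{x_0}$ containing $x_0$ lies at positive distance $d_0$ from the union of the remaining cells of $\Gamma$. A $(\mathrm{CH})$-chain with steps smaller than $d_0$ then shows $B(x_0,\bar R r)\subset K_{x_0}$, and the chain of cells is read off from that component, which has at most $L$ members.

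You instead build the chain directly. The minimal distance $\delta$ between disjoint cells of the finite partition $\Lambda_{\mathbf{\theta}}(r)$ forces cells containing consecutive chain points to intersect. Loop-erasure inside $\Gamma_{\mathbf{\theta}}(B(x,C_1\bar R r),r)$ then bounds the length by $L_1$.

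Each route buys something different. Yours uses $(\mathrm{WOC}_{\mathbf{\theta},C_1})$ only at the given constant and gives the explicit value $L_2=L_1$, at the price of the smaller $C_2=C_1/C_{\mathrm{CH}}$. The paper's gives $C_2=1$ and the stronger structural fact that a whole ball lies in one adjacency component. However, it needs $(\mathrm{WOC})$ at the enlarged radius $(C_{\mathrm{CH}}+1)\bar R r$, i.e.\ the self-improvement of $(\mathrm{WOC})$, which the paper only sketches.

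Some points to tidy up:
\begin{itemize}
\item The hedge about inserting extra cells and enlarging $C_1$ is unnecessary. Your compactness argument already shows that \emph{every} cell containing $x_i$ meets \emph{every} cell containing $x_{i+1}$ once $d(x_i,x_{i+1})<\delta$.
\item You should say why $\Lambda_{\mathbf{\theta}}(r)$ is finite: $\theta_{\min}r<\theta_w\le\theta_{\max}^{|w|}$ bounds $|w|$.
\item Delete the abandoned paragraph about $n=1$.
\end{itemize}
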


\begin{proof}
	Fix an $N$-tuple $\mathbf{\theta }$, $x_{0}\in K$ and $r\in (0,1)$. Define 
	\begin{equation*}
		\Gamma :=\Gamma _{\mathbf{\theta }}\left( B(x_{0},(C_{\mathrm{CH}}+1)\bar{R}%
		r),r\right) \quad \text{and}\quad K_{\Gamma }:=\tbigcup\limits_{v\in \Gamma
		}K_{v}.
	\end{equation*}%
	For a point $x\in K_{\Gamma }$, define a sequence of sets $\{ \Gamma_{x}^{(i)}\} _{i=0}^{\infty }$ contained in $\Gamma $ inductively by 
	\begin{eqnarray*}
		\Gamma _{x}^{(0)} &:=&\big\{v\in \Gamma :x\in K_{v}\big\}; \\
		\Gamma _{x}^{(i+1)} &:=&\left\{ v\in \Gamma :\text{there exists }\ \tau \in
		\Gamma _{x}^{(i)}\ \text{ such that }\ K_{v}\cap K_{\tau }\neq \emptyset
		\right\} \quad (i\geq 0).
	\end{eqnarray*}%
	In other words, the set $\Gamma _{x}^{(0)}$ consists of all words $v$ in $\Gamma $
	such that the point $x$ locates in $K_{v}$, while $\Gamma _{x}^{(i+1)}$
	consists of all words $v$ in $\Gamma $ such that $K_{v}$ intersects some
	cell $K_{\tau }$ with $\tau \in \Gamma _{x}^{(i)}$. See Figure \ref{Rl1}.
	
	\begin{figure}[h]
		\centering
		\begin{minipage}[t]{0.48\textwidth}
			\centering
			\includegraphics[width=0.8\textwidth]{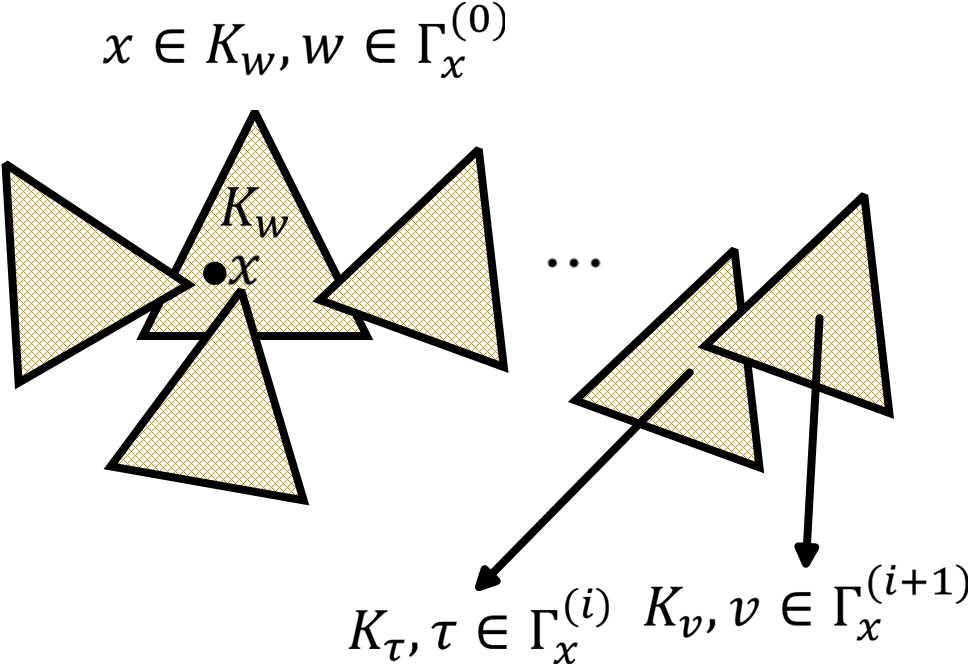}
			\caption{An illustration of $\Gamma_x^{(i)}$.}
			\label{Rl1}
		\end{minipage}
		\begin{minipage}[t]{0.48\textwidth}
			\centering
			\includegraphics[width=0.4\textwidth]{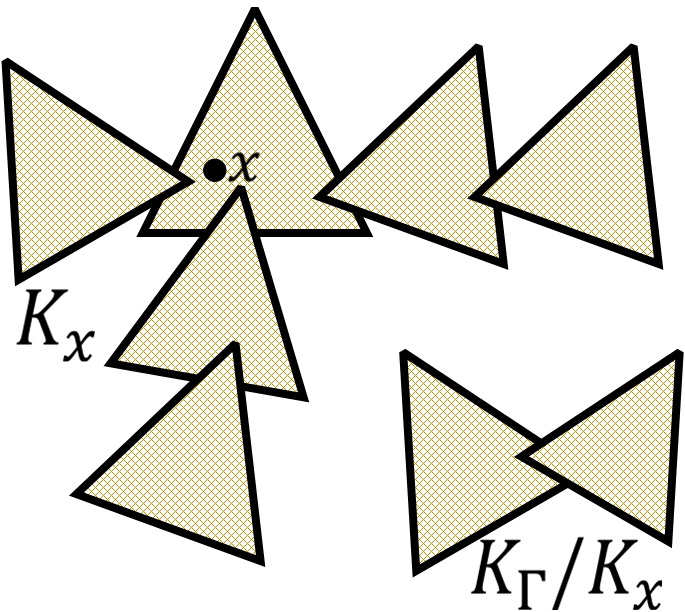}
			\caption{An illustration of $K_x$.}
			\label{Rl2}
		\end{minipage}
	\end{figure}
	
	By definition, it is obvious that 
	\begin{equation*}
		\Gamma_x^{(0)}\subset\Gamma_x^{(1)}\subset\cdots\subset\Gamma_x^{(i)}\subset%
		\Gamma_x^{(i+1)}\subset\cdots\subset\Gamma.
	\end{equation*}
	Therefore, since $\#\Gamma\leq L$ by $(\mathrm{WOC}_{\mathbf{\theta}})$,
	there exists a unique integer $0\leq i_x\leq L$ such that 
	\begin{equation*}
		\Gamma_x^{(i_x-1)}\subsetneqq\Gamma_x^{(i_x)}=\Gamma_x^{(i_x+k)}
	\end{equation*}
	for any $k\geq 0$, where $\Gamma_x^{(-1)}:=\emptyset$. Define the maximal
	connectivity branch containing $x$ by 
	\begin{equation*}
		K_x:=\tbigcup_{v\in\Gamma_x^{(i_x)}}K_v,
	\end{equation*}
	which is then a union of at most $L$ cells from $\Gamma$. See Figure \ref{Rl2}. It is easy to see that for any $x\in K_\Gamma$, $K_x$ is unique and compact.
	
	It suffices to prove $B(x_0,\bar{R}r)\subset K_{x_0}$. On the contrary,
	suppose there exists $y\in B(x_0,\bar{R}r)\setminus K_{x_0}$. Then, it is
	easy to check that $K_{x_0}\cap K_y=\emptyset$. This way, $K_\Gamma\setminus
	K_{x_0}\supset K_y\ni y$ is compact. Therefore, 
	\begin{equation*}
	d_0:=d(K_{x_0},K_\Gamma\setminus K_{x_0})>0.
	\end{equation*}
	
	On the other hand, let $n$ be an arbitrary positive integer. By $(\mathrm{CH}%
	)$, there exists a sequence $\{x_{i}\}_{i=1}^{n}\subset M$ such that $%
	x_{n}=y $ and 
	\begin{equation*}
		d(x_{i-1},x_{i})\leq C_{\mathrm{CH}}\frac{d(x_{0},y)}{n}\quad (1\leq i\leq
		n).
	\end{equation*}%
	Hence for any $1\leq i<n$, 
	\begin{equation*}
		d(x_{0},x_{i})\leq \sum\limits_{i=0}^{i-1}d(x_{j},x_{j+1})\leq \frac{i}{n}C_{%
			\mathrm{CH}}d(x_{0},y)<C_{\mathrm{CH}}\bar{R}r.
	\end{equation*}%
	By the definition of $\Gamma $, it follows that $x_{i}\in K_{\Gamma }$.
	Meanwhile, $x_{0}\in K_{x_{0}}$ but $y=x_{n}\notin K_{x_{0}}$, showing that
	there exists an integer $1\leq j\leq n$ such that $x_{j-1}\in K_{x_{0}}$ but 
	$x_{j}\notin K_{x_{0}}$. Therefore, 
	\begin{equation*}
	d_0\leq d(x_{j-1},x_{j})\leq C_{%
			\mathrm{CH}}\frac{d(x_{0},y)}{n}\leq \frac{C_{\mathrm{CH}}\bar{R}r}{n},
	\end{equation*}%
	which contradicts $d_0>0$ by letting $n\rightarrow \infty $. Thus $%
	K_{x_{0}}=K_{y}\ni y$, completing the proof.
\end{proof}

We give an example where both conditions $(\mathrm{WOC}_{\mathbf{\rho}})$
and $(\mathrm{CP}_{\mathbf{\rho}})$ are satisfied but $(\mathrm{CH})$ fails.

\begin{example}
	\label{ex-1} Consider the Sierpi\'{n}ski Gasket $(K,\{F_i\}_{i=1}^3)$ in $%
	\mathbb{R}^2$. Let $|\cdot|$ be the Euclidean metric, then $%
	d_{1/2}:=|\cdot|^{1/2}$ is another metric. It is easy to see that each $F_i$
	is still a contracting similitude in $\mathbb{R}^2$ under $d_{1/2}$ with contraction ratio $%
	\sqrt{2}/2$. In particular,
	\begin{equation*}
		\rho_1=\left(\frac{1}{2},\frac{1}{2},\frac{1}{2}\right),\quad\rho_{1/2}=%
		\left(\frac{\sqrt{2}}{2},\frac{\sqrt{2}}{2},\frac{\sqrt{2}}{2}\right)
	\end{equation*}
	are respectively contraction tuples of $K$ with respect to metrics $|\cdot|$ and $d_{1/2}$. Using the fact that
	\begin{equation}  \label{square}
	B_{1/2}(x,r)=B(x,r^2)\quad\text{and}\quad\Lambda_{\rho_{1/2}}(r)=\Lambda_{\rho_1}(r^2)
	\end{equation}
	for any $r\in(0,1)$, we see $(\mathrm{WOC}_{\mathbf{\rho}_{1/2}})$ and $(\mathrm{CP}_{\mathbf{\rho}_{1/2}})$ with $C^{\prime }_2=1/2$ and $L^{\prime }_2=2$, since $(\mathrm{WOC}_{\mathbf{\rho}_1})$ holds by Lemma \ref{op'cond} and $K$ satisfies $(\mathrm{CP}_{\mathbf{\rho}_1})$ with $C_2=1/4$ and $L_2=2$ clearly.
	
	However, $(\mathrm{CH})$ fails on $(K,d_{1/2})$: actually, otherwise for any 
	$x,y\in K$ and $n>0$, there exists a chain $\{x_{i}\}_{i=1}^{n}$ in $K$ such
	that $x_{0}=x$, $x_{n}=y$ and 
	\begin{equation*}
		d_{1/2}(x_{i-1},x_{i})\leq \frac{C_{\mathrm{CH}}}{n}d_{1/2}(x,y)
	\end{equation*}%
	for each $1\leq i\leq n$. This would imply
	\begin{equation*}
		|x-y|\leq
		\sum_{i=1}^{n}|x_{i-1}-x_{i}|=\sum_{i=1}^{n}d_{1/2}(x_{i-1},x_{i})^{2}\leq
		\sum \frac{C_{\mathrm{CH}}^{2}d_{1/2}(x,y)^{2}}{n^{2}}=\frac{C^{2}|x-y|}{n}
	\end{equation*}%
	for any $n>0$, which yields a contradiction as $n\rightarrow \infty $.
\end{example}

Now we give another example where $(\mathrm{CH})$ and $(\mathrm{CP}_{\mathbf{\rho}})$ are satisfied but $(\mathrm{WOC}_{\mathbf{\rho}})$ fails.

\begin{example}
	\label{ex3} On $\mathbb{R}$, take $q_{1}=0,q_{2}=1/2,q_{3}=1$ and consider
	the following maps: 
	\begin{equation*}
		F_{i}(x)=(x+q_{i})/2,\ \ i=1,2,3.
	\end{equation*}%
	Then the attractor of the IFS $\{F_{i}\}_{i=1}^{3}$ is the closed interval $%
	[0,1]$. Obviously $(\mathrm{CH})$ and $(\mathrm{CP}_{\mathbf{\rho }})$ hold.
	
	However, by mathematical induction, we have $j_{n,c}=2n+1$ for any $n\geq 1$ and for
	any $c\in(0,1)$, where
	\begin{equation*}
		j_{n,c}:=\#\big\{w\in I^n:d(1/2,K_w)<c2^{-n}\big\}.
	\end{equation*}
Hence $(\mathrm{WOC}_{\mathbf{\rho}})$ fails in this case.
\end{example}

\subsection{Proof of Proposition \ref{ML4}}

\label{subsec:construction}

We now construct the scaling function $H$ as asserted in Proposition \ref{ML4}. For
convenience, throughout this subsection we denote by $C_{\mathrm{%
AV}}$ the constant $C$ appearing in part (c) of Proposition \ref{av=}.

\begin{proof}[Proof of Proposition \ref{ML4}]
The proof is here constructive and divided into several steps.

\textbf{Step 1. Construction of an auxiliary function $H_0$.} Let $x \in K$ and fix an address $\omega = \omega_1\omega_2\cdots \in
I^\ast$ of $x$. For $n \ge 0$, let $\omega(n) =
\omega_1\cdots\omega_n$ (with $\omega(0)=\varnothing$). Define $H_0(x,
\cdot) $ as follows:

\begin{itemize}
\item $H_0(x,0)=0$.

\item For $r \ge 1$, set $H_0(x,r)=r^{a_\ast}$, where $a_\ast$ is given by \eqref{thet'a}.

\item For $r \in [\theta_{\omega(n)}, \theta_{\omega(n-1)})$ with $n \ge 1$,
define 
\begin{equation}  \label{Hxr}
H_0(x,r) = \frac{\zeta_{\omega(n-1)} - \zeta_{\omega(n)}}{%
\theta_{\omega(n-1)} - \theta_{\omega(n)}} (r - \theta_{\omega(n)}) +
\zeta_{\omega(n)}.
\end{equation}
\end{itemize}

Clearly, $H_0(x,\cdot)$ is continuous, strictly increasing, and satisfies 
\begin{equation*}
H_0(x,0)=0\quad\text{and}\quad\lim_{r\to\infty} H_0(x,r)=\infty.
\end{equation*}

\textbf{Step 2. Local control of $H_0$ by $\zeta_w$.} We show that there exists a constant $C_0 > 0$ such that for any $x \in K$, $%
r \in (0,1)$, and $w \in \Lambda_{\mathbf{\theta}}(r)$ with $x \in K_w$, 
\begin{equation}  \label{eq:H0_zeta_control}
C_0^{-1} \zeta_w \le H_0(x,r) \le C_0 \zeta_w.
\end{equation}
Indeed, let $n_0 \ge 0$ be such that $r \in [\theta_{\omega(n_0+1)},
\theta_{\omega(n_0)})$. Then $\omega(n_0+1) \in \Lambda_{\mathbf{\theta}}(r)$
and $x \in K_{\omega(n_0+1)}$. Since also $x \in K_w$, we have $K_w \cap
K_{\omega(n_0+1)} \neq \emptyset$. By (c) of Proposition \ref{av=} (with
constant $C_{\mathrm{AV}}$), we have 
\begin{equation}  \label{won}
C_{\mathrm{AV}}^{-1} \zeta_w \le \zeta_{\omega(n_0+1)} \le C_{\mathrm{AV}}
\zeta_w.
\end{equation}
On the other hand, from the definition of $H_0$, 
\begin{equation}\label{H0def}
\zeta_{\omega(n_0+1)} \le H_0(x,r) \le \zeta_{\omega(n_0)} \le
\zeta_{\min}^{-1} \zeta_{\omega(n_0+1)}.
\end{equation}
Combining \eqref{won}, we obtain \eqref{eq:H0_zeta_control} with $C_0 = C_{\mathrm{AV}} \zeta_{\min}^{-1}$.

\textbf{Step 3. Pointwise scaling property of $H_0$.} We show that, under
the assumptions of Proposition \ref{ML4}, there exists a constant $C_1 \ge 1$ such
that for any $x \in K$ and $0 < r \le R < \infty$, 
\begin{equation}  \label{eq:pointwise_scaling}
C_1^{-1} \left( \frac{R}{r} \right)^{a_\ast} \le \frac{H_0(x,R)}{H_0(x,r)}
\le C_1 \left( \frac{R}{r} \right)^{a^\ast}.
\end{equation}

For this, we distinguish three cases.

\noindent\textbf{Case (1): $r \ge 1$.} Then $H_0(x,R)=R^{a_\ast}$ and $%
H_0(x,r)=r^{a_\ast}$, so \eqref{eq:pointwise_scaling} holds with $C_1=1$.

\noindent\textbf{Case (2): $R < 1$.} By the construction of $H_0$, we can
choose $w \in \Lambda_{\mathbf{\theta}}(R)$ and $v \in \Lambda_{\mathbf{%
\theta}}(r)$ such that $x \in K_w \cap K_v=K_v$ (i.e., $w$ is the ancestor of 
$v$), and 
\begin{equation*}
H(x,\theta_w)=\zeta_w,\quad H(x,\theta_v)=\zeta_v.
\end{equation*}
Since $w$ is the ancestor of $v$, we have $v = w\tau$ for some finite word $\tau$. By \eqref{H0def}, we have 
\begin{equation*}
\zeta_{\min} \frac{1}{\zeta_\tau}= \zeta_{\min} \frac{\zeta_w}{\zeta_v}\le 
\frac{H_0(x,R)}{H_0(x,r)} \le \zeta_{\min}^{-1} \frac{\zeta_w}{\zeta_v}%
=\zeta_{\min}^{-1} \frac{1}{\zeta_\tau}.
\end{equation*}
On the other hand, from \eqref{part2.1}, we have 
\begin{equation*}
\theta_{\min} \frac{r}{R} \le \theta_\tau \le \theta_{\min}^{-1} \frac{r}{R}.
\end{equation*}
Using the power bounds \eqref{part2.3}, we have $\zeta_\tau \le
(\theta_\tau)^{a_\ast}$ and $\zeta_\tau \ge (\theta_\tau)^{a^\ast}$.
Therefore, 
\begin{equation*}
\frac{1}{\zeta_\tau} \le \frac{1}{(\theta_\tau)^{a^\ast}} \le \left( \frac{R%
}{r} \right)^{a^\ast} (\theta_{\min})^{-a^\ast}\quad\text{and}\quad\frac{1}{\zeta_\tau} \ge \frac{1}{(\theta_\tau)^{a_\ast}} \ge \left( \frac{R%
}{r} \right)^{a_\ast} (\theta_{\min})^{a_\ast}.
\end{equation*}
Combining these estimates, we obtain 
\begin{equation}
\zeta_{\min} (\theta_{\min})^{a_\ast} \left( \frac{R}{r} \right)^{a_\ast}
\le \frac{H_0(x,R)}{H_0(x,r)}\le \zeta_{\min}^{-1} (\theta_{\min})^{-a^\ast}
\left( \frac{R}{r} \right)^{a^\ast}.
\end{equation}
Thus, in this case, \eqref{eq:pointwise_scaling} holds with $C_1 = \zeta_{\min}^{-1} (\theta_{\min})^{-a^\ast}$.

\noindent\textbf{Case (3): $r < 1 \le R$.} Write 
\begin{equation*}
\frac{H_0(x,R)}{H_0(x,r)} = \frac{H_0(x,R)}{H_0(x,1)} \cdot \frac{H_0(x,1)}{H_0(x,r)}=R^{a_\ast}\lim\limits_{R\uparrow 1}\frac{H_0(x,R)}{H_0(x,r)}.
\end{equation*}
By Case (2), it follows that
\begin{equation*}
\frac{H_0(x,R)}{H_0(x,r)} \le C_1 R^{a_\ast} \left( \frac{1}{r}
\right)^{a^\ast} = C_1 \left( \frac{R}{r} \right)^{a^\ast} R^{a_\ast -
a^\ast} \le C_1 \left( \frac{R}{r} \right)^{a^\ast},
\end{equation*}
since $R \ge 1$ and $a_\ast \le a^\ast$. On the other hand, it easy to see
that 
\begin{equation*}
\frac{H_0(x,R)}{H_0(x,r)} \ge C_1^{-1} R^{a_\ast} \left( \frac{1}{r}
\right)^{a_\ast} = C_1^{-1} \left( \frac{R}{r} \right)^{a_\ast}.
\end{equation*}
Thus, \eqref{eq:pointwise_scaling} holds with the same $C_1$ as in Case (2).

In conclusion, \eqref{eq:pointwise_scaling} holds with the constant
\begin{equation}
C_1 = \zeta_{\min}^{-1} (\theta_{\min})^{-a^\ast}.  \label{HC1}
\end{equation}

\textbf{Step 4. Comparability of $H_0$ at the same scale.}
We show that, under the assumptions of Proposition \ref{ML4}, there exists a constant $C_2 > 0$ such that for any $R > 0$ and $x, y \in K$ with $d(x,y) \le \bar{R}%
R $, 
\begin{equation}  \label{eq:comparison_same_scale}
C_2^{-1} \le \frac{H_0(x,R)}{H_0(y,R)} \le C_2.
\end{equation}

If $R \ge 1$, then $H_0(x,R)=H_0(y,R)=R^{a_\ast}$, so %
\eqref{eq:comparison_same_scale} trivially holds with $C_2=1$.

Now assume $R < 1$. Let $C_{\mathrm{CP}}$, $L_2$ be the parameters from condition $(\mathrm{CP}_{\mathbf{\theta},\bullet})$. We distinguish two cases.

\noindent\textbf{Case (1): $d(x,y) < C_{\mathrm{CP}} \bar{R} R$.} By $(\mathrm{%
CP}_{\mathbf{\theta},C_{\mathrm{CP}}})$, there exists a chain $%
\{w^{(i)}\}_{i=1}^{L_2} \subset \Lambda_{\mathbf{\theta}}(R)$ such that 
\begin{equation*}
x \in K_{w^{(1)}},\quad y \in K_{w^{(L_2)}},\quad\text{and}\quad K_{w^{(i-1)}} \cap
K_{w^{(i)}} \neq \emptyset\quad\text{for}\quad 2 \le i \le L_2.
\end{equation*}
Applying \eqref{eq:H0_zeta_control} to $w^{(1)}$ and $w^{(L_2)}$, and using
(c) of Proposition \ref{av=} for every adjacent pair, we obtain 
$$H_0(x,R)\le \zeta_{\min}^{-1} \zeta_{w^{(1)}} \le \zeta_{\min}^{-1} C_{\mathrm{AV}} \zeta_{w^{(2)}} \le \cdots \le \zeta_{\min}^{-1} C_{\mathrm{AV}}^{L_2-1} \zeta_{w^{(L_2)}}\le \zeta_{\min}^{-2} C_{\mathrm{AV}}^{L_2-1} H_0(y,R).$$
Similarly, $H_0(y,R) \le \zeta_{\min}^{-2} C_{\mathrm{AV}}^{L_2-1} H_0(x,R)$. Thus, in this case, \eqref{eq:comparison_same_scale} holds with $C_2 =
\zeta_{\min}^{-2} C_{\mathrm{AV}}^{L_2-1}$.

\noindent\textbf{Case (2): $C_{\mathrm{CP}} \bar{R} R \le d(x,y) \le \bar{R} R$%
.} Let $R^{\prime }= R/(C_{\mathrm{CP}}/2)$. Then $R^{\prime }>R$ and $%
d(x,y) < 2\bar{R} R^{\prime }= C_{\mathrm{CP}} \bar{R} R^{\prime }$. If $%
R^{\prime }\ge 1$, then $H_0(x,R^{\prime }) = H_0(y,R^{\prime }) =
(R^{\prime a_\ast}$. If $R^{\prime }< 1$, then by Case 1 (applied at scale $%
R^{\prime }$), we have $H_0(x,R^{\prime }) \asymp H_0(y,R^{\prime })$ with
constant $\zeta_{\min}^{-2} C_{\mathrm{AV}}^{L_2-1}$.

Now write 
\begin{equation*}
\frac{H_0(x,R)}{H_0(y,R)} = \frac{H_0(x,R)}{H_0(x,R^{\prime })} \cdot \frac{%
H_0(y,R^{\prime })}{H_0(y,R)} \cdot \frac{H_0(x,R^{\prime })}{%
H_0(y,R^{\prime })}.
\end{equation*}
By the conclusion in Step 3, the first two factors are bounded by 
\begin{equation*}
\frac{H_0(x,R)}{H_0(x,R^{\prime })} \le C_1 \left( \frac{R}{R^{\prime }}
\right)^{a_\ast} = C_1 (C_{\mathrm{CP}}/2)^{a_\ast},
\end{equation*}
and 
\begin{equation*}
\frac{H_0(y,R^{\prime })}{H_0(y,R)} \le C_1 \left( \frac{R^{\prime }}{R}
\right)^{a^\ast} = C_1 (C_{\mathrm{CP}}/2)^{-a^\ast}.
\end{equation*}
The third factor is bounded by $\zeta_{\min}^{-2} C_{\mathrm{AV}}^{L_2-1}$
(if $R^{\prime }<1$) or $1$ (if $R^{\prime }\ge1$). Therefore, 
\begin{equation*}
\frac{H_0(x,R)}{H_0(y,R)} \le C_1^2 (C_{\mathrm{CP}}/2)^{a_\ast - a^\ast}
\cdot \max(1, \zeta_{\min}^{-2} C_{\mathrm{AV}}^{L_2-1})=\zeta_{\min}^{-4}(%
\theta_{\min})^{-2a^\ast}C_{\mathrm{AV}}^{L_2-1}(C_{\mathrm{CP}}/2)^{a_\ast
- a^\ast},
\end{equation*}
and a similar lower bound holds by exchanging the position of $x$ and $y$.
Thus, in this case, \eqref{eq:comparison_same_scale} holds with constant $%
C_2 $ as in Case (1).

Combining both cases, we prove what we desired with
\begin{equation}  \label{HC2}
	C_2=\zeta_{\min}^{-4}(\theta_{\min})^{-2a^\ast}C_{\mathrm{AV}}^{L_2-1}(C_{\mathrm{CP}}/2)^{a_\ast - a^\ast}.
\end{equation}

\textbf{Step 5. Scaling property of $H_0$ and definition of $H$.}
Following Steps 3 and 4, we see for any $0 < r \le R <
\infty$ and $x,y \in K$ with $d(x,y) \le \bar{R}R$, 
\begin{equation}  \label{H0ScalUE}
\frac{H_0(x,R)}{H_0(y,r)} = \frac{H_0(x,R)}{H_0(y,R)} \cdot \frac{H_0(y,R)}{%
H_0(y,r)} \le C_2 \cdot C_1 \left( \frac{R}{r} \right)^{a^\ast} = C_1 C_2
\left( \frac{R}{r} \right)^{a^\ast}.
\end{equation}
Similarly, 
\begin{equation*}
\frac{H_0(x,R)}{H_0(y,r)} \ge (C_1 C_2)^{-1} \left( \frac{R}{r}
\right)^{a_\ast}.
\end{equation*}
Thus, $H_0$ satisfies the scaling property with constant $C_{H_0} = C_1 C_2$%
, where $C_1, C_2$ are constants in \eqref{HC1}, \eqref{HC2} respectively.

Now define
\begin{equation*}
H(x,r) = H_0(x, r/\bar{R}) \quad\text{for}\quad 0\le r<\infty
\end{equation*}
and $H(x,\infty)=\infty$. We claim that $H$ is the scaling function we
desired.

Indeed, by definition, for any $x\in K$%
, $H(x,\cdot)$ is continuous and strictly increasing with respect to $r$ and
satisfies $H(x_0,0)=0, H(x,\infty)=\infty$. Besides, $H$ also satisfies the
scaling property with sub- and super-scaling exponential $a_\ast, a^\ast$ in %
\eqref{thet'a} and the constant $C_H=C_{H_0}=C_1C_2$.

Finally, for $r \in (0,\bar{R})$ and $w
\in \Lambda_{\mathbf{\theta}}(r/\bar{R})$ with $x \in K_w$, by \eqref{eq:H0_zeta_control}, obviously
\begin{equation*}
C_0^{-1} \zeta_w \le H(x,r) \le C_0 \zeta_w,
\end{equation*}
which proves ii) of Proposition \ref{ML4}.

\textbf{Step 6. Sharpness of the exponents.}
Without loss of the generality, let $\frac{\ln \zeta _{1}}{\ln \theta _{1}}
=a^{\ast }$ and let $x_{1}$ be the fixed point of the mapping $F_{1}$. Then $%
x_{1}\in K_{1^{(m)}}$ for every integer $m\geq 1$. Hence 
\begin{equation}
H(x_{1},\theta _{1^{(m)}})\leq C_{\mathrm{AV}}\zeta _{\min }^{-1}\zeta
_{1^{(m)}}=C_{\mathrm{AV}}\zeta _{\min }^{-1}\zeta _{1}^{m}=C_{\mathrm{AV}
}\zeta _{\min }^{-1}(\theta _{1})^{ma^{\ast }}.  \label{hatctrl}
\end{equation}
For any upper exponent $\hat{a}^{\ast }$, by definition, 
\begin{equation*}
\frac{H(x_{1},1)}{H(x_{1},\theta _{1^{(m)}})}\leq C_{H}\left( \frac{1}{
\theta _{1}^{m}}\right) ^{\hat{a}^{\ast }}=C_{H}\theta _{1}^{-m\hat{a}^{\ast
}}
\end{equation*}
Combining (\ref{hatctrl}), we see 
\begin{equation*}
\theta _{1}^{m(\hat{a}^{\ast }-a^{\ast })}\leq C_{H}C_{\mathrm{AV}}\zeta
_{\min }^{-1}\quad \text{for all}\quad m\geq 1,
\end{equation*}
which is only possible when $\hat{a}^{\ast }\geq a^{\ast }$. That is, $%
a^{\ast }$ is the best super-scaling exponent. Similarly $a_{\ast }$ is the
best sub-scaling exponent.
\end{proof}

\subsection{Doubling property of self-similar measures}

\label{subsec:application}

We now investigate self-similar measures with the help of Proposition \ref{ML4}. Let $\mu$ be the
self-similar measure on $K$ associated with a probability weight $\mathbf{p}
= (p_i)_{i=1}^N$ (where $\sum_{i=1}^Np_i=1$), that is,
$$\mu = \sum_{i=1}^N p_i \mu \circ F_i^{-1}.$$
The existence of such measures is checked in \cite{Hutchinson.1981.IUMJ713}. We aim to find suitable conditions so that $\mu$ is doubling (in other words, its volume
function $V(x,r) = \mu(B(x,r))$ satisfies the scaling property). To avoid ambiguity, in this subsection we consider $K$ as a subset of the whole space $M$ (where $F_i$'s are defined), and $\mu$ as a measure on $M$ supported on $K$.

We first give a lower estimate on the measure of an arbitrary ball, which can
be deduced without any extra geometric assumption.

\begin{proposition}
\label{muLctrl} There exists a constant $C>0$ such that 
\begin{equation}  \label{mulctrl}
V(x,r)\geq C^{-1}p_w
\end{equation}
for every $x\in K, r\in(0,\bar{R})$ and every word $w\in\Lambda_{\mathbf{\rho}
}(r/\bar{R})$ with $x\in K_w$, where $\bar{R}>0$ is the diameter of $K$.
\end{proposition}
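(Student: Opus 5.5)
The idea is to find, inside the ball $B(x,r)$, a whole subcell $K_{w\tau}$ of $K_w$ with $\tau$ of uniformly bounded length. Then $\mu(K_{w\tau})\ge p_{w\tau}$ is comparable to $p_w$.

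The first ingredient is the elementary self-similar inequality $\mu(K_v)\ge p_v$ for every finite word $v$. Iterating $\mu=\sum_i p_i\mu\circ F_i^{-1}$ gives $\mu=\sum_{|u|=n}p_u\,\mu\circ F_u^{-1}$. Keeping only the term $u=v$ (all terms are nonnegative) yields
\[
\mu(K_v)\ge p_v\,\mu\bigl(F_v^{-1}(K_v)\bigr)\ge p_v\,\mu(K)=p_v,
\]
since $K\subset F_v^{-1}(F_v(K))$ and $\mu$ is a probability measure supported on $K$.

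Next, fix $x\in K$, $r\in(0,\bar R)$ and $w\in\Lambda_{\mathbf{\rho}}(r/\bar R)$ with $x\in K_w$. Choose an address of $x$ of the form $w\,i_1i_2\cdots$, which is possible as in the proof of Proposition \ref{av=}. For $m\ge1$ let $\tau=i_1\cdots i_m$. Then $x\in K_{w\tau}$, and by \eqref{ssp1} together with $\mathrm{diam}\,K=\bar R$,
\[
\mathrm{diam}\,K_{w\tau}\le \rho_{w\tau}\bar R\le \rho_w\rho_{\max}^m\bar R\le \rho_{\max}^m r ,
\]
using $\rho_w\le r/\bar R$ from \eqref{part1}. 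Fix once and for all $m\ge1$ with $\rho_{\max}^m<1$; in fact $m=1$ works, since $\rho_{\max}<1$. Then every point of $K_{w\tau}$ lies at distance at most $\rho_{\max} r<r$ from $x$, so $K_{w\tau}\subset B(x,r)$.

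Combining the two steps,
\[
V(x,r)\ge \mu(K_{w\tau})\ge p_{w\tau}=p_w\,p_{i_1}\cdots p_{i_m}\ge p_{\min}^m\,p_w .
\]
This gives \eqref{mulctrl} with $C=p_{\min}^{-m}$ (with $m=1$, $C=p_{\min}^{-1}$), provided all $p_i>0$, which is implicit in $\mathbf p$ being an $N$-tuple. No real obstacle is expected. The only points needing care are the first step (the iteration of the self-similarity identity and the inclusion $K\subset F_v^{-1}(K_v)$) and the use of the diameter bound $\mathrm{diam}\,F_v(K)\le\rho_v\bar R$, which follows directly from the contraction inequality \eqref{ssp1}.
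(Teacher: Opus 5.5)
Your proof is correct and follows essentially the paper's argument: find a subcell $K_{w\tau}\subset B(x,r)$ with $|\tau|$ uniformly bounded, then use $\mu(K_{w\tau})\ge p_{w\tau}\ge p_{\min}^{|\tau|}p_w$. The differences are minor. The paper descends to the partition $\Lambda_{\mathbf{\rho}}(r/2\bar{R})$, so it only gets $|\tau|\le \ln(\rho_{\min}/2)/\ln\rho_{\max}$, whereas you append a single letter and obtain the sharper constant $p_{\min}^{-1}$; you also justify the inequality $\mu(K_v)\ge p_v$, which the paper uses without comment.
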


\begin{proof}
Fix $x\in K$, $r\in(0,\bar{R})$ and a word $w\in\Lambda_{\mathbf{\rho}}(r/ 
\bar{R})$ with $x\in K_w$. Then, there exists a word $w^{\prime
}\in\Lambda_{ \mathbf{\rho}}(r/2\bar{R})$ such that $x\in K_{w^{\prime }}$
and $w^{\prime }=w\tau$ for some finite word $\tau$.

Note that $K_{w^{\prime }}\subset B(x,r)$, since for any $y\in K_{w^{\prime
}}$, we directly have 
\begin{equation*}
d(x,y)\leq \rho _{w^{\prime }}d(F_{w^{\prime }}^{-1}(x),F_{w^{\prime
}}^{-1}(y))\leq \rho _{w^{\prime }}\bar{R}\leq \frac{r}{2}.
\end{equation*}

Applying (\ref{part2.2}) to $\tau $ with respect to $\mathbf{\rho }$, we see 
$|\tau |\leq \frac{\ln (\rho _{\min }/2)}{\ln \rho _{\max }}$. Consequently, 
\begin{equation}
V(x,r)\geq \mu (K_{w^{\prime }})\geq p_{w^{\prime }}=p_{w}p_{\tau }\geq
p_{w}p_{\min }^{|\tau |}\geq p_{w}(p_{\min })^{\frac{\ln (\rho _{\min }/2)}{
\ln \rho _{\max }}},  \label{mulct2}
\end{equation}
that is, (\ref{mulctrl}) holds with $C=(p_{\min })^{-\frac{\ln (\rho _{\min
}/2)}{\ln \rho _{\max }}}$.
\end{proof}

Upper estimate on the measure of a ball is more complicated, and we give a useful lemma before that.

\begin{lemma}
\label{MCP} Assume that $K$ satisfies $(\mathrm{WOC}_{\mathbf{\rho }})$, and $\mathbf{p}$ satisfies $(\mathrm{AV}_{\mathbf{\rho}})$. Then, there exists a constant $C_\mu>0$ such that for every finite word $\tau$, 
\begin{equation}  \label{mcp}
\mu(K_\tau)\leq C_\mu p_\tau.
\end{equation}
\end{lemma}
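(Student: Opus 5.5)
We aim to bound $\mu(K_\tau)$ by decomposing it via self-similarity at the scale of $\tau$. Set $r:=\rho_\tau$ and note that $\tau$ has an ancestor or offspring structure relative to $\Lambda_{\mathbf{\rho}}(r)$; in fact $\tau\in\Lambda_{\mathbf{\rho}}(\rho_\tau)$ when $\tau$ is non-empty (the case $\tau=\varnothing$ is trivial since $\mu(K)=1=p_\varnothing$). Iterating the self-similar identity $\mu=\sum_i p_i\mu\circ F_i^{-1}$ along the partition $\Lambda_{\mathbf{\rho}}(r)$ gives
\begin{equation*}
\mu(A)=\sum_{v\in\Lambda_{\mathbf{\rho}}(r)}p_v\,\mu\big(F_v^{-1}(A)\big)\quad\text{for every Borel }A,
\end{equation*}
which is standard since $\Lambda_{\mathbf{\rho}}(r)$ is a maximal antichain (every infinite word has exactly one prefix in it). Applying this to $A=K_\tau$, and using $\mu(F_v^{-1}(K_\tau))\le 1$, with $\mu(F_v^{-1}(K_\tau))=0$ unless $K_v\cap K_\tau\neq\emptyset$ (because $\mu$ is supported on $K$, so $\mu(F_v^{-1}(K_\tau))=\mu(F_v^{-1}(K_\tau)\cap K)$, and a point of $K$ mapped by $F_v$ into $K_\tau$ lies in $K_v\cap K_\tau$), we get
\begin{equation*}
\mu(K_\tau)\le\sum_{v\in\Lambda_{\mathbf{\rho}}(r),\,K_v\cap K_\tau\neq\emptyset}p_v .
\end{equation*}

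Next, two estimates handle this sum. First, by $(\mathrm{AV}_{\mathbf{\rho}})$ in the form of Proposition \ref{av=} c), every such $v$, lying in the same partition as $\tau$ and with $K_v\cap K_\tau\ne\emptyset$, satisfies $p_v\le C_{\mathrm{AV}}p_\tau$. Second, the number of such $v$ is uniformly bounded. Pick any $x\in K_\tau$; since $\operatorname{diam}K_\tau\le\rho_\tau\bar R=r\bar R$, all these $v$ belong to $\Gamma_{\mathbf{\rho}}(B(x,2\bar R r),r)$. Since $(\mathrm{WOC}_{\mathbf{\rho},C_1})$ holds for some $C_1$ and self-improves to $(\mathrm{WOC}_{\mathbf{\rho},2})$ as remarked after Definition \ref{WOC}, the count is at most $L_1$. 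Hence $\mu(K_\tau)\le L_1C_{\mathrm{AV}}p_\tau$, i.e.\ \eqref{mcp} holds with $C_\mu=L_1C_{\mathrm{AV}}$.

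The main obstacle is the counting step: one must make sure the self-improvement $(\mathrm{WOC}_{\mathbf{\rho},C_1})\Rightarrow(\mathrm{WOC}_{\mathbf{\rho},2})$ is legitimate. If only a small $C_1$ is available, the plan is to cover $B(x,2\bar R r)\cap K$ by boundedly many balls of radius $C_1\bar R r$, or instead to pass to a coarser scale $r'=r\cdot 2/C_1$, where cells in $\Lambda_{\mathbf{\rho}}(r)$ are offspring of at most $L_1$ cells of $\Lambda_{\mathbf{\rho}}(r')$, each having at most $N^{k}$ offspring with $k$ bounded by \eqref{part2.2}. Either route yields a uniform bound.
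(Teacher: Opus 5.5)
Your proposal is correct and follows essentially the same route as the paper: you decompose $\mu(K_\tau)$ by self-similarity over $\Lambda_{\mathbf{\rho}}(\rho_\tau)$, discard the cells not meeting $K_\tau$ using $\supp(\mu)=K$, bound each remaining $p_v$ via $(\mathrm{AV}_{\mathbf{\rho}})$, and count those cells with $(\mathrm{WOC}_{\mathbf{\rho}})$ on $B(x,2\rho_\tau\bar{R})$, which gives $C_\mu=L_1C_{\mathrm{AV}}$. You also treat the empty word separately and flag the self-improvement of $(\mathrm{WOC}_{\mathbf{\rho}})$ to radius $2\bar{R}r$; the paper uses this self-improvement tacitly, and your coarser-scale fallback argument for it works.
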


\begin{proof}
For each $\tau $ and any $x_{\tau }\in K_{\tau }$, set 
\begin{equation*}
\Gamma _{\tau }=\Gamma _{\mathbf{\rho }}(K_{\tau },\rho _{\tau })\quad\text{and}\quad \Gamma _{x_{\tau }}=\Gamma _{\mathbf{\rho }}\left( B(x_{\tau
},2\rho _{\tau }\bar{R}),\rho _{\tau }\right) .
\end{equation*}
Since $K_{\tau }\subset B(x_{\tau },2\rho _{\tau }\bar{R})$ for any $x_{\tau
}\in K_{\tau }$, it follows that $\Gamma _{\tau }\subset \Gamma _{x_{\tau }}$. Applying $(\mathrm{WOC}_{\mathbf{\rho }})$ at level $\rho _{\tau }$ on $%
B(x_{\tau },2\rho _{\tau }\bar{R})$, we have $\#\Gamma _{x_{\tau }}\leq
L_{1} $. Hence
\begin{equation}
\#\Gamma _{\tau }\leq \#\Gamma _{x_{\tau }}\leq L_{1}.  \label{GamTL}
\end{equation}

On one hand, for any $v\in \Gamma _{\tau }$, since $v\in $ $\Lambda _{ 
\mathbf{\rho }}(\rho _{\tau })$ and $K_{v}\cap K_{\tau }\neq \emptyset $ by
definition of $\Gamma _{\tau }$, it follows by $(\mathrm{AV}_{\mathbf{\rho }
})$ that 
\begin{equation}
p_{v}\leq C_{\ast }p_{\tau },  \label{vinGamT}
\end{equation}
where $C_{\ast }>0$ is a constant independent of words $v,\tau $.

On the other hand, for any $v^{\prime }\in \Lambda _{\mathbf{\rho }}(\rho
_{\tau })\setminus \Gamma _{\tau }$, we show that $F_{v^{\prime
}}^{-1}(K_{\tau })\cap K=\emptyset $. Otherwise, there would exist a point $%
x^{\prime }\in F_{v^{\prime }}^{-1}(K_{\tau })\cap K$, and thus by
definition of both $F_{v^{\prime }}^{-1}(K_{\tau })$ and $K_{v^{\prime }}$, $F_{v^{\prime }}(x^{\prime })\in K_{\tau }\cap K_{v^{\prime }}$,
implying that $v^{\prime }\in \Gamma _{\tau }$, a contradiction. Further,
since $\supp(\mu)=K$, it follows that 
\begin{equation}
\mu (F_{v^{\prime }}^{-1}(K_{\tau }))=\mu \left( F_{v^{\prime }}^{-1}(K_{\tau
})\cap K\right) =0\quad\text{for any}\quad v^{\prime }\in \Lambda _{\mathbf{\rho }
}(\rho _{\tau })\setminus \Gamma _{\tau }.  \label{voutGamT}
\end{equation}

Therefore, we have by self-similarity, (\ref{voutGamT}), (\ref{vinGamT}) and
(\ref{GamTL}) that 
\begin{eqnarray*}
\mu (K_{\tau }) &=&\sum_{v\in \Lambda _{\mathbf{\rho }}(\rho _{\tau
})}p_{v}\mu (F_{v}^{-1}(K_{\tau }))=\sum_{v\in \Gamma _{\tau }}p_{v}\mu
(F_{v}^{-1}(K_{\tau })) \\
&\leq &\sum_{v\in \Gamma _{\tau }}p_{v}\leq \sum_{v\in \Gamma _{\tau
}}C_{\ast }p_{\tau }=\#\Gamma _{\tau }\cdot C_{\ast }p_{\tau }\leq
CL_{1}p_{\tau },
\end{eqnarray*}
which proves (\ref{mcp}) with $C_{\mu }=C_{\ast }L_{1}$.
\end{proof}

\begin{proposition}
\label{muUctrl} Assume that $K$ satisfies conditions $(\mathrm{WOC}_{\mathbf{\rho}})$ and $(\mathrm{CP}_{\mathbf{\rho},C_2})$ for some constant $%
C_2\in(0,1)$. If $\mathbf{p}$ satisfies $(\mathrm{AV}_{\mathbf{\rho}})$,
then there exists a constant $C>0$ such that 
\begin{equation}  \label{muuctrl}
V(x,r)\leq Cp_w
\end{equation}
for any $x\in K$, $r\in(0,\bar{R})$ and any word $w\in\Lambda_{\mathbf{\rho}
}(r/\bar{R})$ with $x\in K_w$.
\end{proposition}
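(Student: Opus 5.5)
Fix $x\in K$, $r\in(0,\bar R)$ and $w\in\Lambda_{\mathbf{\rho}}(r/\bar R)$ with $x\in K_w$. I will cover the ball $B(x,r)$ by boundedly many cells of a partition at a comparable scale, bound the measure of each cell by Lemma \ref{MCP}, and then compare each cell's weight with $p_w$ through chains supplied by $(\mathrm{CP}_{\mathbf{\rho},C_2})$ together with $(\mathrm{AV}_{\mathbf{\rho}})$.

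\textbf{Step 1 (choice of scale).} Set $s:=r/(C_2\bar R)$. If $s\ge 1$, i.e.\ $r\ge C_2\bar R$, then $V(x,r)\le\mu(K)=1$. By (\ref{part1}), $\rho_w>\rho_{\min}r/\bar R\ge\rho_{\min}C_2$. Hence $|w|$ is bounded by a constant depending only on $\mathbf{\rho},C_2$, and so $p_w\ge p_{\min}^{|w|}\ge c>0$. This case is therefore trivial, and I assume $s<1$.

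\textbf{Step 2 (covering).} Consider $\Gamma:=\Gamma_{\mathbf{\rho}}(B(x,r),s)=\Gamma_{\mathbf{\rho}}(B(x,C_2\bar R s),s)$. Since $\mathrm{supp}\,\mu=K=\bigcup_{v\in\Lambda_{\mathbf{\rho}}(s)}K_v$, we get
\[
V(x,r)\le\sum_{v\in\Gamma}\mu(K_v)\le C_\mu\sum_{v\in\Gamma}p_v
\]
by Lemma \ref{MCP}. Moreover $(\mathrm{WOC}_{\mathbf{\rho},C_2})$ holds: $(\mathrm{WOC}_{\mathbf{\rho}})$ holds for some constant, and the self-improvement noted after Definition \ref{WOC} lets us pass to $C_2$ (or one may simply use monotonicity in $C_1$). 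Thus $\#\Gamma\le L_1$, and it remains to show $p_v\le Cp_w$ for each $v\in\Gamma$.

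\textbf{Step 3 (chaining).} For $v\in\Gamma$ pick $y\in K_v\cap B(x,r)$. Then $y\in B(x,C_2\bar R s)$, so $(\mathrm{CP}_{\mathbf{\rho},C_2})$ gives a chain $w^{(1)},\dots,w^{(L_2)}\in\Lambda_{\mathbf{\rho}}(s)$ with $x\in K_{w^{(1)}}$, $y\in K_{w^{(L_2)}}$, and consecutive cells intersecting. Applying Proposition \ref{av=}(c) along the chain, and once more to the intersecting pair $K_{w^{(L_2)}}\cap K_v\ni y$, yields $p_v\le C_{\mathrm{AV}}^{L_2}p_{w^{(1)}}$.

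\textbf{Step 4 (comparison with $w$).} Here $w^{(1)}\in\Lambda_{\mathbf{\rho}}(s)$ and $w\in\Lambda_{\mathbf{\rho}}(r/\bar R)$ with $r/\bar R=C_2 s\le s$, and both cells contain $x$. By (\ref{part2.1}) their ratios are comparable up to constants depending on $C_2,\rho_{\min}$, so $\rho_{w^{(1)}}\le C'\rho_w$ with $C'=\rho_{\min}^{-1}C_2^{-1}$. Hence $(\mathrm{AV}_{\mathbf{\rho}})$ in the form of Definition \ref{AV} (valid for any $C'$ by the proof of Proposition \ref{av=}) gives $p_{w^{(1)}}\le C''p_w$. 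Combining the steps,
\[
V(x,r)\le C_\mu L_1 C_{\mathrm{AV}}^{L_2}C''p_w .
\]

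\textbf{Main obstacle.} The delicate point is matching the scales of $(\mathrm{WOC})$ and $(\mathrm{CP})$ to the ball radius: $(\mathrm{CP})$ only works at radius $C_2\bar R s$ with $C_2<1$, which forces the coarser scale $s$. One must then check that the mismatch between the two partitions $\Lambda_{\mathbf{\rho}}(s)$ and $\Lambda_{\mathbf{\rho}}(r/\bar R)$ costs only a constant, which is exactly what Proposition \ref{av=} provides.
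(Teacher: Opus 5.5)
Your proof is correct and follows essentially the same route as the paper: the trivial case $r\ge C_2\bar R$, the covering of $B(x,r)$ by the cells in $\Gamma_{\mathbf{\rho}}(B(x,r),r/(C_2\bar R))$ together with Lemma \ref{MCP} and $\#\Gamma\le L_1$, and a $(\mathrm{CP}_{\mathbf{\rho},C_2})$ chain along which $(\mathrm{AV}_{\mathbf{\rho}})$ is applied. The only difference is cosmetic: in the final comparison the paper passes to the ancestor $v^{(0)}\in\Lambda_{\mathbf{\rho}}(r/(C_2\bar R))$ of $w$ and uses the bounded length of $\tau$ in $w=v^{(0)}\tau$, whereas you apply $(\mathrm{AV}_{\mathbf{\rho}})$ with an enlarged constant $C'$ (justified by the proof of Proposition \ref{av=}), which packages exactly that ancestor/offspring argument.
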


\begin{proof}
Fix $x\in K$, $r\in (0,\bar{R})$ and $w\in \Lambda _{\mathbf{\rho }}(r/\bar{%
R	})$ with $x\in K_{w}$. We distinguish two cases.

Case $(1)$: $C_{2}\bar{R}\leq r<\bar{R}$. In this case,
\begin{equation*}
\rho _{\max }^{|w|}\geq \rho _{w}>\rho _{\min }\frac{r}{\bar{R}}\geq
C_{2}\rho _{\min }
\end{equation*}
by (\ref{part1}), showing that 
\begin{equation*}
|w|\leq \frac{\ln C_{2}\rho _{\min }}{\ln \rho _{\max }}.
\end{equation*}
Since $\mu$ is a probability measure, it follows that 
\begin{equation*}
V(x,r)\leq 1=\frac{p_{w}}{p_{w}}\leq p_{\min }^{-|w|}p_{w}\leq p_{\min }^{- 
\frac{\ln C_{2}\rho _{\min }}{\ln \rho _{\max }}}p_{w},
\end{equation*}
thus proving (\ref{muuctrl}) with $C=p_{\min }^{-\frac{\ln C_{2}\rho _{\min
} }{\ln \rho _{\max }}}$ in this case.

Case $(2)$: $r<C_{2}\bar{R}$. We show that, there exists a constant $C_{\sharp }>0$ such that 
\begin{equation}
p_{v}\leq C_{\sharp }p_{w}\quad \text{for all }\ v\in \Gamma :=\Gamma _{\mathbf{\rho }}\left(B(x,r),r/(C_{2}\bar{R})\right).  \label{pv<w}
\end{equation}

Indeed, since $B(x,r)\subset \cup _{v\in \Gamma
}K_{v}$, then by Lemma \ref{MCP}, 
\begin{equation}
V(x,r)\leq \sum_{v\in \Gamma }\mu (K_{v})\leq C_{\mu }\sum_{v\in \Gamma
}p_{v}.  \label{balcel}
\end{equation}
Fix an arbitrary $v\in \Gamma$ and take $x_{v}\in K_{v}\cap B(x,r)$.
Applying $(\mathrm{CP}_{\mathbf{\rho },C_{2}})$ over the partition $\Lambda
_{\mathbf{\rho }}(r/(C_{2}\bar{R}))$, we find a chain $%
\{v^{(m)}\}_{m=1}^{L_{2}}\subset \Lambda _{\mathbf{\rho }}(r/(C_{2}\bar{R}))$
such that 
\begin{equation*}
x\in K_{v^{(1)}},\quad x_{v}\in K_{v^{(L_{2})}}\quad \text{and}\quad
K_{v^{(i-1)}}\cap K_{v^{(i)}}\neq \emptyset \quad\text{for}\quad 1<i\leq L_{2}.
\end{equation*}
Since $\Lambda _{\mathbf{\rho }}(r/\bar{R})$ is a refinement of $\Lambda _{ 
\mathbf{\rho }}(r/(C_{2}\bar{R}))$, one finds an ancestor $v^{(0)}\in
\Lambda _{\mathbf{\rho }}(r/(C_{2}\bar{R}))$ of $w\in \Lambda _{\mathbf{\rho 
}}(r/\bar{R})$ and a finite word $\tau $ such that $w=v^{(0)}\tau $. Thus $x\in K_{v^{(0)}}\cap K_{v^{(1)}}$. Since $x_{v}\in
K_{v}\cap K_{v^{(L_{2})}}\neq \emptyset $, by $(\mathrm{AV}_{\mathbf{\rho }
}) $ of tuple $\mathbf{p}$ we obtain 
\begin{equation*}
p_{v}\leq C_{\ast }p_{v^{(L_{2})}}\leq C_{\ast }^{2}p_{v^{(L_{2}-1)}}\leq
\cdots \leq C_{\ast }^{L_{2}}p_{v^{(1)}}\leq C_{\ast }^{L_{2}+1}p_{v^{(0)}},
\end{equation*}
where $C_{\ast }>0$ is a constant independent of $v,w$ and $x,r$. Since $|\tau |\leq $ $\frac{\ln C_{2}\rho _{\min }}{\ln \rho
_{\max }}$ by \eqref{part2.3}, it follows that
\begin{equation*}
p_{v}\leq C_{\ast }^{L_{2}+1}p_{v^{(0)}}=C_{\ast }^{L_{2}+1}\frac{p_{w}}{
p_{\tau }}\leq C_{\ast }^{L_{2}+1}p_{\min }^{-\frac{\ln C_{2}\rho _{\min }}{
\ln \rho _{\max }}}p_{w},
\end{equation*}
thus proving (\ref{pv<w}) with $C_{\sharp }=C_{\ast }^{L_{2}+1}p_{\min }^{- 
\frac{\ln C_{2}\rho _{\min }}{\ln \rho _{\max }}}$.

Finally, note that $\#\Gamma \leq L_{1}$ by $(\mathrm{WOC}_{\mathbf{\rho }})$
. Combining this with \eqref{pv<w}, \eqref{balcel}, we obtain 
\begin{equation*}
V(x,r)\leq C_{\mu }\sum_{v\in \Gamma }p_{v}\leq C_{\mu }\sum_{v\in \Gamma
}C_{\sharp }p_{w}\leq C_{\mu }C_{\sharp }L_{1}p_{w},
\end{equation*}
thus proving (\ref{muuctrl}) with $C=C_{\mu }C_{\star }L_{1}$ in this case. The proof is complete.
\end{proof}

\begin{corollary}
\label{muScal} Under the same assumptions as in Proposition \ref{muUctrl}, the volume function $V(x,r)$ satisfies the scaling property,
that is, there exists a constant $C>0$ such that 
\begin{equation}  \label{muscal}
C^{-1}\left(\frac{R}{r}\right)^{\alpha_\ast}\leq\frac{V(x,R)}{V(y,r)}\leq
C\left(\frac{R}{r}\right)^{\alpha^\ast},
\end{equation}
for all $0<r\leq R\leq\bar{R}$ and $x,y\in K$ with $d(x,y)\leq R$, where 
\begin{equation*}
\alpha_\ast:=\min_{1\leq i\leq N}\frac{\ln p_i}{\ln\rho_i}
,\quad\alpha^\ast:=\max_{1\leq i\leq N}\frac{\ln p_i}{\ln\rho_i}.
\end{equation*}
In particular, $\mu$ satisfies $(\mathrm{VD})$ and $(\mathrm{RVD})$.
\end{corollary}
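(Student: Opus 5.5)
The plan is to identify $V$ with the scaling function $H$ from Proposition \ref{ML4}, applied with $\mathbf{\theta}=\mathbf{\rho}$ and $\mathbf{\zeta}=\mathbf{p}$. The hypotheses match: $\mathbf{p}$ satisfies $(\mathrm{AV}_{\mathbf{\rho}})$, and $K$ satisfies $(\mathrm{CP}_{\mathbf{\rho},C_2})$ with $C_2\in(0,1)$. Hence Proposition \ref{ML4} yields a scaling function $H$ with sub- and super-scaling exponents exactly $\alpha_\ast,\alpha^\ast$, as given by \eqref{thet'a}. Moreover, $H(x,r)\asymp p_w$ for every $r\in(0,\bar R)$ and every $w\in\Lambda_{\mathbf{\rho}}(r/\bar R)$ with $x\in K_w$.

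Next I combine Propositions \ref{muLctrl} and \ref{muUctrl}, which give $C^{-1}p_w\le V(x,r)\le Cp_w$ for the same $x,r,w$. Since every $x\in K$ lies in some $K_w$ with $w\in\Lambda_{\mathbf{\rho}}(r/\bar R)$ (because $K=\cup_{w}K_w$), this gives $V(x,r)\asymp H(x,r)$ uniformly for all $x\in K$ and $r\in(0,\bar R)$. The scaling inequality \eqref{scpr} for $H$ then transfers directly to $V$. For $0<r\le R<\bar R$ and $d(x,y)\le R$ we get
\[
\frac{V(x,R)}{V(y,r)}\asymp\frac{H(x,R)}{H(y,r)}\in\Big[C_H^{-1}(R/r)^{\alpha_\ast},\,C_H(R/r)^{\alpha^\ast}\Big],
\]
which is \eqref{muscal} up to constants.

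The only remaining point is the endpoint $R=\bar R$, and more generally radii at or beyond the diameter; I expect this bookkeeping to be the only mild obstacle. At $R=\bar R$ I will use $V(x,\bar R)\le 1$, together with $V(x,\bar R)\ge V(x,\bar R/2)\asymp H(x,\bar R/2)\asymp H(x,\bar R)$, where the last comparison follows from the scaling of $H$ with a fixed factor of $2$. With this, the estimate at $R=\bar R$ reduces to the case $R=\bar R/2$, changing only the constants. Balls centred at points of $M\setminus K$ play no role, since the statement only concerns $x,y\in K$.

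Finally, $(\mathrm{VD})$ follows by taking $y=x$ and $R=2r$ in \eqref{muscal}. For $2r\ge\bar R$ I use monotonicity, noting that then $V(x,2r)\le 1\lesssim V(x,\bar R/2)\le V(x,r)$ up to the previous step. $(\mathrm{RVD})$ follows from the lower bound with $y=x$ and $0<r\le R<\bar R$.
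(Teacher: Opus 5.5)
Your proposal is correct and follows the paper's own proof. Both arguments obtain $V(x,r)\asymp p_w$ from Propositions \ref{muLctrl} and \ref{muUctrl}, obtain $p_w\asymp H_{\mathbf p}(x,r)$ from Proposition \ref{ML4} with $\mathbf{\theta}=\mathbf{\rho}$ and $\mathbf{\zeta}=\mathbf{p}$, and then transfer the scaling property of $H_{\mathbf p}$ to $V$. Your endpoint bookkeeping at $R=\bar R$ (via $V(x,\bar R/2)\le V(x,\bar R)\le 1$ and $H(x,\bar R)=1$) is a sound treatment of a detail the paper passes over silently.
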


\begin{proof}
Using Propositions \ref{muLctrl} and \ref{muUctrl}, we see $V(x,r)\asymp
p_{w}$ for all $x\in K$, $r\in (0,\bar{R})$ and $w\in \Lambda _{\mathbf{\rho 
}}(r/\bar{R})$ with $x\in K_{w}$. Meanwhile, by Proposition \ref{ML4} with $%
\mathbf{\theta }=\mathbf{\rho}$, $\mathbf{\zeta }=\mathbf{p}$, there exists a
scaling function $H_{\mathbf{p}}$ such that $H_{\mathbf{p}}(x,r)\asymp p_{w}$
. Thus $V\asymp H_{\mathbf{p}}$. Moreover, (\ref{muscal}) obviously holds by
the scaling property of $V\asymp H_{\mathbf{p}}$.
\end{proof}

\section{Spatially Inhomogeneous Scaling on Rotated Triangles}
\label{Kl}

Now we combine ideas in Sections \ref{mainresults} and \ref{SSSWay} on a concrete non-trivial example, so that there is an explicitly constructed inhomogeneous scaling function and (both general and optimal) heat kernel estimates are given on it.

Let us consider the family of rotated triangle fractals $\{K_{\lambda}\}_{\lambda\in(0,1/2)}$, which was first introduced by Barlow in \cite{Barlow.1998.1}. This model is ideal for our purpose due to:
\begin{enumerate}
\item its inherent inhomogeneity from distinct contraction ratios, which is the germ of position-dependent scaling;
\item the analytic tractability provided by Cao's continuous family of Dirichlet forms in \cite{Cao.2023.AG};
\item some balance between explicity and richness of the geometry structure.
\end{enumerate}

By applying our general theory to $K_{\lambda}$, this section achieves two interconnected goals:
\begin{itemize}
\item the first explicit construction of an intrinsic $W(x,r)$ that essentially depends on $x$;
\item the derivation of optimal two-sided heat kernel estimates for a wide range of parameters (see Theorem \ref{le1-}) from detailed geometry analysis.
\end{itemize}

\subsection{Preliminary observations on geometry}
\label{subsec:geo_observations}

With any $0<\lambda <1/2$, we construct the rotated triangle $K_{\lambda }$ as the attractor of the IFS $\{F_{1},F_{2},F_{3},F_{4,\lambda }\}$ on $\mathbb{R}^2$ given by 
\begin{equation*}
F_{i}(x)=\frac{1}{2}(x+q_{i}),\quad i=1,2,3,
\end{equation*}%
where $q_{1}=(\frac{1}{2},\frac{\sqrt{3}}{2})$, $q_{2}=(0,0)$ and $%
q_{3}=(1,0)$ are the vertices of the closed unit triangle region $%
\blacktriangle $, and 
\begin{equation*}
F_{4,\lambda }(x)=\frac{1}{4}(x_{1},x_{2})%
\begin{pmatrix}
1 & \sqrt{3}(1-4\lambda) \\ 
-\sqrt{3}(1-4\lambda) & 1%
\end{pmatrix}%
+\left( \frac{1-\lambda }{2},\frac{\sqrt{3}}{2}\lambda \right) \quad \text{
for}\quad x=(x_{1},x_{2}).
\end{equation*}%
See Figure \ref{lambda4}, \ref{lambda3} and \ref{lambdasq13} for examples
with different $\lambda $'s.

\begin{figure}[h]
\centering
\begin{minipage}[t]{0.32\textwidth}
		\centering\includegraphics[width=1\textwidth]{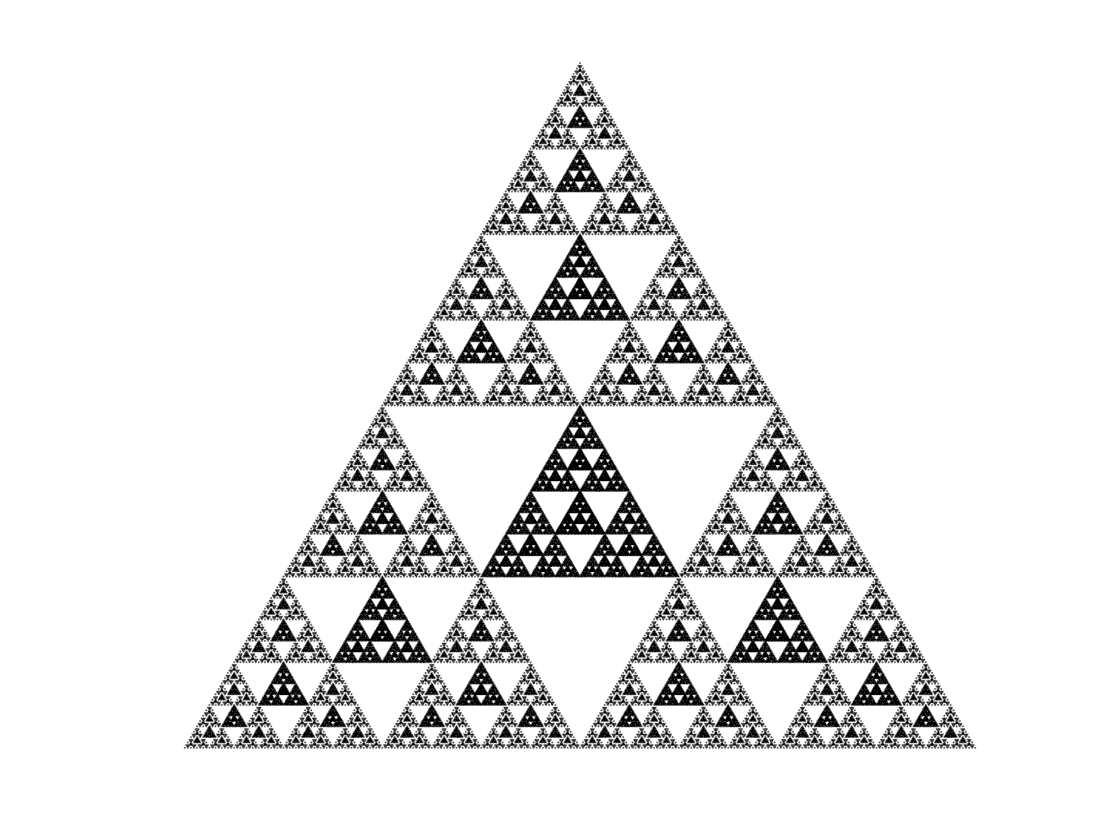}
		\caption{$\lambda=1/4$.}
		\label{lambda4}
	\end{minipage}
\begin{minipage}[t]{0.32\textwidth}
		\centering\includegraphics[width=1\textwidth]{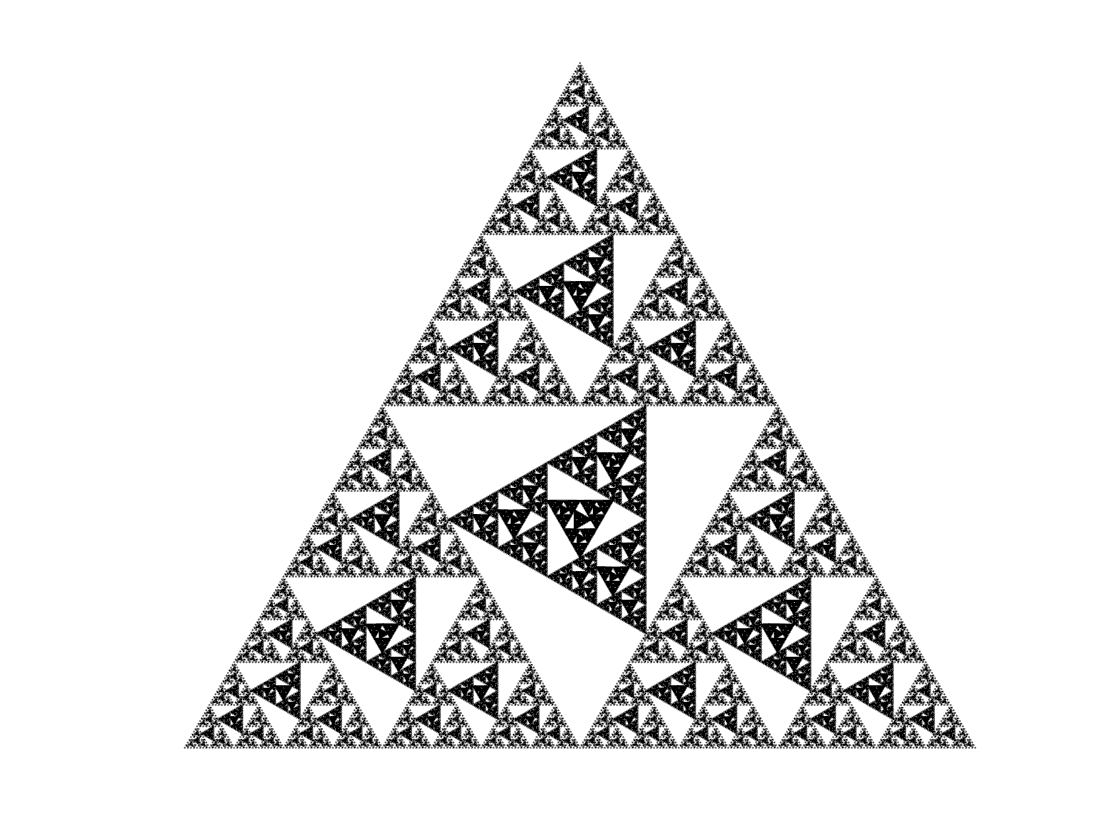}
		\caption{$\lambda=1/3$.}
		\label{lambda3}
	\end{minipage}
\begin{minipage}[t]{0.32\textwidth}
		\centering\includegraphics[width=1\textwidth]{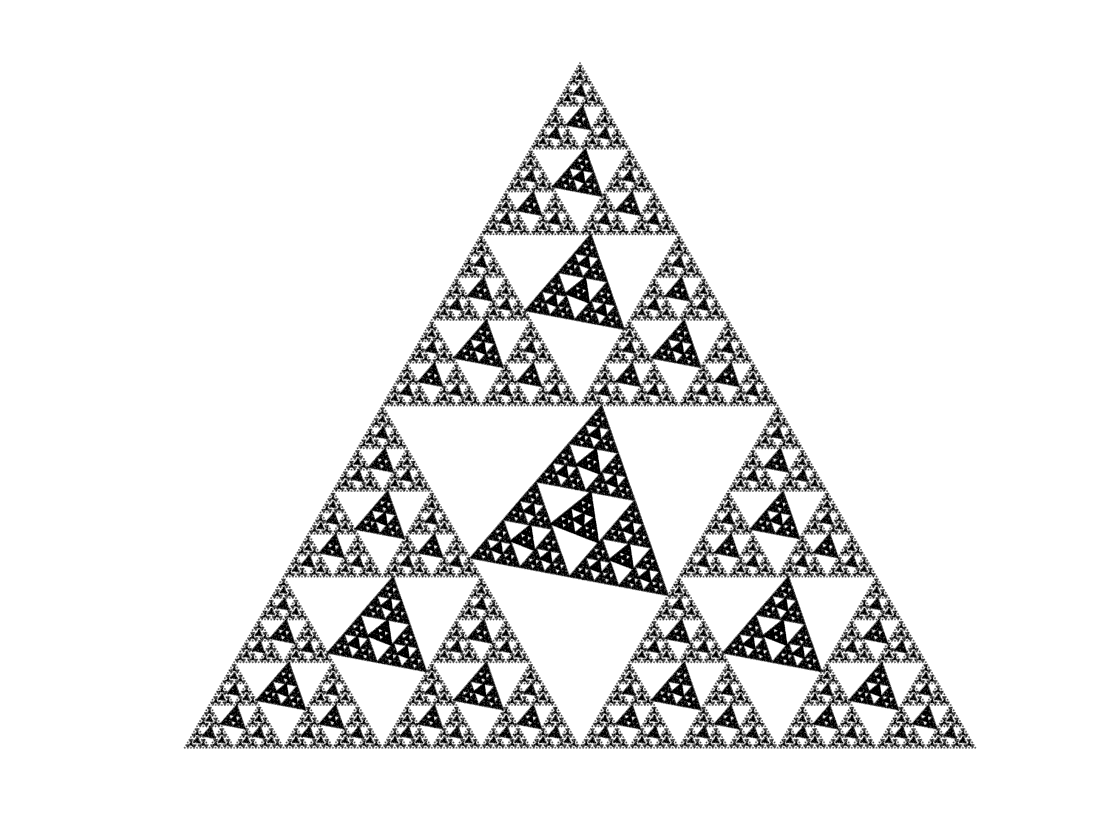}
		\caption{$\lambda=1/\sqrt{13}$.}
		\label{lambdasq13}
	\end{minipage}
\end{figure}

Clearly $K_{\lambda }$ is always connected and the diameter $\bar{R}=1$. Further, $K_\lambda$ always contains the attractor of $\{F_1,F_2,F_3\}$, that is, the well-known Sierpi\'nski gasket (SG, see Example \ref{ex-1}). Meanwhile, the presence of $F_{4,\lambda}$ with its distinct contraction ratio breaks the homogeneity, leading to an enriched cell intersection pattern, which is completely described below and forms the basis of this section.

Henceforth, for any fixed $\lambda\in(0,1/2)$, without giving rise to ambiguity, denote by%
\begin{equation*}
	K=K_{\lambda }\quad\text{ and }\quad F_{4}=F_{4,\lambda}.
\end{equation*}

The following geometric facts about $K_\lambda$ are essential for establishing the analytic framework. Their proofs, involving elementary but tedious plane geometry arguments, are omitted in order to maintain focus on analysis.

\begin{lemma}[Basic geometry of $K$]\label{GeoRT}
	Fix $\lambda \in (0,1/2)$ and set $K = K_\lambda$.
	\begin{enumerate}
		\item $K$ is symmetric with respect to the alternating group $A_3$.
		
		\item The IFS $\{F_1, F_2, F_3, F_4\}$ consists only of similitudes, with contraction ratios
		\begin{equation}\label{KVrho}
		\mathbf{\rho}:=(\rho_1,\rho_2,\rho_3,\rho_4)=\left(\frac{1}{2},\frac{1}{2},\frac{1}{2},\frac{\lambda_*}{2}\right),
		\end{equation}
		where $\lambda_* = \sqrt{12\lambda^2 - 6\lambda + 1} \in [1/2, 1).$
		
		\item The open set condition holds with the basic open set $\blacktriangle \setminus \triangle$, where $\triangle$ is the boundary of $\blacktriangle$.
		
		\item $\triangle\cap K_i\neq\emptyset$ for every $i\in\{1,2,3\}$, whereas $d(\triangle, K_4)=\frac{\sqrt{3}}{2}\left(\lambda\wedge\left(\frac{1}{2}-\lambda\right)\right)>0,$ deriving that $\triangle\cap K_4=\emptyset$.
	\end{enumerate}
\end{lemma}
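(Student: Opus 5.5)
We verify the four items directly from the explicit formulas for the IFS. The self-similar structure lets us reduce everything to computations on the vertices $q_1,q_2,q_3$ and the centroid $c=(\frac12,\frac{\sqrt3}{6})$ of $\blacktriangle$. Throughout we use the row-vector convention in the definition of $F_4$, and write $A$ for its linear part, so $F_4(x)=\frac14 xA+b$ with $b=\bigl(\frac{1-\lambda}{2},\frac{\sqrt3}{2}\lambda\bigr)$.

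\emph{Item (2).} Observe that $A$ has the form $\begin{pmatrix}a&s\\-s&a\end{pmatrix}$ with $a=1$ and $s=\sqrt3(1-4\lambda)$. Hence $F_4$ is a similitude: a rotation composed with a homothety of ratio
\begin{equation*}
\tfrac14\sqrt{1+3(1-4\lambda)^2}=\tfrac12\sqrt{12\lambda^2-6\lambda+1}=\tfrac{\lambda_*}{2}.
\end{equation*}
The quadratic $12\lambda^2-6\lambda+1$ attains its minimum $\frac14$ at $\lambda=\frac14$ and tends to $1$ at $\lambda\in\{0,\frac12\}$. This gives $\lambda_*\in[\frac12,1)$. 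The maps $F_1,F_2,F_3$ are homotheties of ratio $\frac12$.

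\emph{Item (1).} Let $R$ be the rotation by $2\pi/3$ about $c$.
\begin{itemize}
\item $R$ permutes $q_1,q_2,q_3$. Therefore $R\circ F_i\circ R^{-1}=F_{\sigma(i)}$ for $i\le3$, where $\sigma$ is a cyclic permutation.
\item A direct computation gives $F_4(c)=c$. Since the linear parts of $F_4$ and $R$ are both rotation-scalings, they commute, so $R\circ F_4\circ R^{-1}=F_4$.
\end{itemize}
Consequently $R(K)$ satisfies $R(K)=\bigcup_i F_i(R(K))$. By uniqueness of the attractor, $R(K)=K$, which is the $A_3$-symmetry.

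\emph{Item (3).} For $i\le3$, the sets $F_i(U)$ are the open corner triangles of the standard Sierpi\'nski subdivision, with the boundary removed. The key claim is that $F_4(\blacktriangle)$ is a triangle inscribed in the middle inverted triangle $T$ with vertices $(\frac12,0)$, $(\frac14,\frac{\sqrt3}{4})$, $(\frac34,\frac{\sqrt3}{4})$. To see this:
\begin{itemize}
\item $F_4(q_2)=b$, and $b=(1-2\lambda)(\frac12,0)+2\lambda(\frac14,\frac{\sqrt3}{4})$. So $b$ lies in the relative interior of one side of $T$, because $2\lambda\in(0,1)$.
\item By the $R$-equivariance from (1), $F_4(q_1)$ and $F_4(q_3)$ lie on the other two sides of $T$ at the same parameter.
\end{itemize}
Thus $F_4(\blacktriangle)\subset T$, so $F_4(U)\subset T^\circ\subset U$. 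Each $F_i(U)$, $i\le 3$, is contained in $U$ and is disjoint from $T^\circ$, and the corner triangles are pairwise disjoint. This gives the open set condition.

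\emph{Item (4).} Each $q_i$ is the fixed point of $F_i$, so $q_i\in K_i\cap\triangle$. For $K_4\subset F_4(\blacktriangle)$, the distance to $\triangle$ of a convex triangle is attained at a vertex. Moreover the vertices $F_4(q_j)$ lie in $K_4$, since $q_j\in K$. Hence $d(\triangle,K_4)=\min_j d(F_4(q_j),\triangle)$, and by symmetry it suffices to treat $b$.
\begin{itemize}
\item The distance from $b$ to the bottom side is $\frac{\sqrt3}{2}\lambda$.
\item The distance from $b$ to the left side is $\frac{\sqrt3}{2}\bigl|\frac{\sqrt3}{2}\cdot\frac{1-\lambda}{2}\cdot\frac{2}{\sqrt3}\cdot\ldots\bigr|$; more precisely, using the line $\sqrt3x-y=0$, it equals $\frac12|\sqrt3\frac{1-\lambda}{2}-\frac{\sqrt3}{2}\lambda|=\frac{\sqrt3}{2}(\frac12-\lambda)$.
\item The distance to the right side is larger.
\end{itemize}
This yields $\frac{\sqrt3}{2}\bigl(\lambda\wedge(\frac12-\lambda)\bigr)>0$.

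The main obstacle is the inscribed-triangle claim in (3), namely locating $F_4(\blacktriangle)$ exactly inside $T$. Once the vertex $F_4(q_2)$ is identified and equivariance is available, the other two vertices and item (4) follow mechanically.
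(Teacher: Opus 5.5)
Your proposal is correct. Since the paper omits this proof as ``elementary but tedious plane geometry,'' your direct verification is essentially the intended argument: the ratio $|1+i\sqrt3(1-4\lambda)|/4=\lambda_*/2$, the fixed point $F_4(c)=c$ giving commutation with the rotation $R$, $F_4(\blacktriangle)$ inscribed in the medial triangle $T$, and the distance computation at the vertex $b$. Three small points need tidying. First, state $K\subset\blacktriangle$ explicitly before using $K_4\subset F_4(\blacktriangle)$; it follows from $F_i(\blacktriangle)\subset\blacktriangle$ for all $i$, which you establish in (3). Second, justify the claim that the minimum is attained at a vertex: on $\blacktriangle$, $d(\cdot,\triangle)$ is the minimum of the three affine distances to the side lines, hence concave. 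Third, delete the garbled first expression in the left-side distance computation.
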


Lemma \ref{GeoRT} provides a global geometric framework and shows $(\mathrm{WOC}_{\mathbf{\rho}})$ by Lemma \ref{op'cond}. To establish the connectivity property, we must first understand how cells of comparable size intersect.

\begin{lemma}[Cell intersection patterns]\label{GeoRT2} 
Let $v, w$ be distinct finite words with $K_v \cap K_w \neq \emptyset$.
	\begin{enumerate}
		\item The intersection is a single point: $K_v \cap K_w = \{q\}$. The point $q$ is either a common vertex of the triangles $F_v(\triangle)$ and $F_w(\triangle)$, or a vertex of one triangle and a boundary (non-vertex) point of the other.
			
		\item There exist a common prefix $\tau$, distinct digits $i, j \in \{1,2,3,4\}$, and suffixes $\tau^{(1)}, \tau^{(2)}$ containing no digit `4' such that $v = \tau i \tau^{(1)}$, $w = \tau j \tau^{(2)}$ and
		\[
			K_{\tau i} \cap K_{\tau j} =K_v \cap K_w = \{q\}.
		\]
		If $q$ is a vertex of $F_v(\triangle)$ but not of $F_w(\triangle)$, then $i=4$ and $j\neq4$.
	\end{enumerate}
\end{lemma}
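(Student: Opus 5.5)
The plan is to reduce to the first level by stripping the common prefix, and then use the geometry of the first-level cells given in Lemma \ref{GeoRT}. Let $\tau$ be the longest common prefix of $v$ and $w$. Since $F_\tau$ is an injective similitude, $K_v\cap K_w=F_\tau(K_{v'}\cap K_{w'})$, where $v=\tau v'$ and $w=\tau w'$. If one of $v,w$ were a prefix of the other, the cells would be nested and not of the intersection type in question. I will therefore assume, as the lemma implicitly does, that neither word is a prefix of the other, so that $v'=i\sigma$ and $w'=j\sigma'$ with $i\neq j$. It then suffices to prove both items for words whose first letters differ.

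\textbf{Step 1: first-level intersections.} By the open set condition with basic open set $U=\blacktriangle\setminus\triangle$ (Lemma \ref{GeoRT}(3)), we have $K_i\cap K_j\subset F_i(\triangle)\cap F_j(\triangle)$ for $i\ne j$, because $K_i\subset F_i(\overline U)$. For $i,j\in\{1,2,3\}$ these are the three corner triangles of the gasket, which meet pairwise at a single common vertex (a midpoint of $\triangle$). For $j=4$, the triangle $F_4(\triangle)$ is the rotated inner triangle with vertices $F_4(q_k)$. An explicit computation from the formula for $F_{4,\lambda}$ shows that these three vertices lie, one each, on the three inner edges of the central removed triangle, at a non-vertex position since $0<\lambda<1/2$. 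Hence $F_4(\triangle)\cap F_i(\triangle)$ is exactly one point, namely a vertex of $F_4(\triangle)$ lying in the interior of a side of $F_i(\triangle)$. These points actually belong to $K$, since vertices $F_w(q_k)$ are always in $K$ and the relevant side of $F_i(\triangle)$ lies in the gasket $\subset K$. So every first-level intersection is a singleton, of one of the two claimed types.

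\textbf{Step 2: descent and suffixes without `4'.} Now let $q$ be the single point of $K_i\cap K_j$. Then $K_{i\sigma}\cap K_{j\sigma'}\subset\{q\}$, so the intersection is either empty or $\{q\}$, which gives item (1) once we check the vertex type below. I induct on the remaining letters, each time replacing $K_{i\sigma_1\cdots\sigma_k}$ by its child $K_{i\sigma_1\cdots\sigma_{k+1}}$ that still contains $q$. Pulling back by $F_{i\sigma_1\cdots\sigma_k}^{-1}$, the point $q$ corresponds to a point $p$ of the boundary $\triangle$. By Lemma \ref{GeoRT}(4), $d(\triangle,K_4)>0$, so $p\notin K_4$, and the next letter cannot be $4$. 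The same argument applies on the $j$ side, so $\sigma$ and $\sigma'$ contain no digit $4$; these are the suffixes $\tau^{(1)},\tau^{(2)}$, and we get $K_v\cap K_w=K_{\tau i}\cap K_{\tau j}=\{q\}$.

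\textbf{Step 3: vertex type and the final claim.} For the vertex type, note that descending with letters in $\{1,2,3\}$ from a point $p\in\triangle$ keeps it in $\triangle$. A vertex stays a vertex of the gasket subtriangle it lies in, while a non-vertex boundary point stays a non-vertex boundary point (the midpoint case only arises for $i,j\le3$, where $q$ is a vertex of both). In Step 1, a vertex on only one side occurs exactly when one index is $4$, and it is the vertex of $F_4(\triangle)$. Hence if $q$ is a vertex of $F_v(\triangle)$ but not of $F_w(\triangle)$, then $i=4$ and $j\ne4$.

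I expect the main obstacle to be the explicit planar computation in Step 1, namely locating $F_4(q_k)$ and ruling out further intersections of $F_4(\triangle)$ with the corner triangles. It also requires care that the descent preserves the vertex versus non-vertex type.
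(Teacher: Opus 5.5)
The paper gives no proof of this lemma; it is one of the geometric facts whose ``elementary but tedious plane geometry'' proofs are omitted, so there is no argument to compare against. Your proof is correct. Its three ingredients are sound:
\begin{enumerate}
\item the reduction to first-level cells by stripping the longest common prefix;
\item the explicit location of each $F_4(q_k)$ at parameter $2\lambda$ on an edge of the central triangle;
\item the descent argument that uses $d(\triangle,K_4)>0$ to exclude the digit $4$ from the suffixes.
\end{enumerate}
You are also right that the statement tacitly requires that neither of $v,w$ be a prefix of the other; otherwise $K_v\cap K_w$ is a whole cell.

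The two persistence facts you invoke hold and are quick to check, so they are worth stating explicitly.
\begin{itemize}
\item Points stay on $\triangle$: for $k\in\{1,2,3\}$, $F_k(\blacktriangle)\cap\triangle$ is the image under $F_k$ of the two sides of $\triangle$ through $q_k$, so pulled-back points remain on $\triangle$.
\item Vertices stay vertices: $q_k$ lies in no first-level cell other than $K_k$, and $F_k^{-1}(q_k)=q_k$.
\end{itemize}

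One remark in your Step 3 is false and should be dropped. A non-vertex boundary point need not stay non-vertex under descent, and the midpoint case is not confined to $i,j\le 3$. For $\lambda=1/4$ one has
$F_4(q_1)=(1/2,\sqrt3/4)=F_1\bigl((1/2,0)\bigr)=F_{12}(q_3)$.
So for $v=4$, $w=12$ the point $q$ is a common vertex even though $i=4$. The same happens whenever $\lambda$ is dyadic, since $F_1^{-1}(F_4(q_1))=(2\lambda,0)$.

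Nothing in the lemma depends on that remark. Item (1) explicitly allows common vertices. The final claim of item (2) follows from vertex persistence alone: if $i\neq 4$, then either $i,j\le 3$ and $q$ is a common vertex at level one, or $j=4$ and $q$ is a vertex of $F_{\tau 4}(\triangle)$. In both cases persistence makes $q$ a vertex of $F_w(\triangle)$.
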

For convenience, if $K_v\cap K_w\neq\emptyset$, denote by $q_{vw}$ the unique element of $K_v\cap K_w$ and by $l_v$ (resp., $l_w$) the opposite side of $F_v(\triangle)$ (resp., $F_w(\triangle)$) to $q_{vw}$.

For any finite word $w$, define the \emph{neighbor index set}
\[
\Gamma_{w,c_0} := \left\{ v \in \{1,2,3,4\}^{|w|} : K_v \cap B(K_w, c_0 \rho_w) \neq \emptyset,K_v\nsubseteq K_w \right\},
\]
where $B(K_w, c_0 \rho_w)=\cup_{x\in K_w}B(x, c_0\rho_w)$. Using this notion, we raise a constant that clarifies the local geometry of $K$:

\begin{proposition}[Local uniformity constant]\label{GeoRT3} 
	There exists a positive constant
	$$c_{0}=c_{0}(\lambda):=\frac{3}{32}\left\{ \lambda \wedge \left( \frac{1}{2}-\lambda \right) \right\} ^{2}\in (0,1)$$
	depending continuously on $\lambda$, such that:
	\begin{enumerate}
		\item For any $r\in (0,1)$ and any pair of distinct finite words $v, w \in \Lambda_{\mathbf{\rho}}(r)$, there is no finite word $\tau \in \Lambda_{\mathbf{\rho}}(c_0 r)$ such that $K_\tau$ intersects both $K_v$ and $K_w$.
		\item $\#\Gamma_{w,c_0} \leq 9$,
		\item $|N_4(v) - N_4(w)| \leq 1$ for all $v \in \Gamma_{w,c_0}$, where $N_4(\tau)$ is the number of digit `4` in $\tau$ for any finite word $\tau$. 
		\item For any $v \in \Gamma_{w,c_0}$, there exists $\tau \in \Gamma_{w,c_0}$ (possibly $\tau = v$) such that
		\[K_w \cap K_\tau \neq \emptyset \quad \text{and} \quad K_v \cap K_\tau \neq \emptyset.\]
		If $\tau \neq v$, then the point in $K_\tau \cap K_v$ is a vertex of $F_\tau(\triangle)$.
	\end{enumerate}
\end{proposition}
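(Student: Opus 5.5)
The plan is to reduce all four items to one quantitative separation estimate between non-adjacent cells, and then to read off the combinatorics from Lemmas \ref{GeoRT} and \ref{GeoRT2}. Write $\delta:=\frac{\sqrt3}{2}(\lambda\wedge(\frac12-\lambda))$, the distance from $\triangle$ to $K_4$ given in Lemma \ref{GeoRT}(4). Note that $c_0=\frac{1}{8}\delta^2$, since $\frac{3}{32}m^2=\frac18\cdot\frac34 m^2$ with $m=\lambda\wedge(\frac12-\lambda)$.

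\textbf{Key separation claim.} Let $v,w$ be finite words whose scales are comparable, say $v,w\in\Lambda_{\mathbf{\rho}}(r)$ or $|v|=|w|$. If $K_v\cap K_w=\emptyset$, then
\[
d(K_v,K_w)\ge 2c_0\,(\rho_v\vee\rho_w),
\]
or at least $d(K_v,K_w)>c_0\rho_w$ in the equal-length setting.

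To prove this, let $\tau$ be the longest common prefix, so that $v=\tau i\cdots$ and $w=\tau j\cdots$ with $i\ne j$. Rescaling by $F_\tau^{-1}$ reduces the claim to the level-one cells $K_i,K_j$. If $K_i\cap K_j=\emptyset$, then one of the two indices is $4$, and Lemma \ref{GeoRT}(4) gives a gap of at least $\rho_{\min}$-multiples of $\delta$. If $K_i\cap K_j=\{q\}$, then the subcells $K_v,K_w$ must avoid $q$ at their own scale. Here I use that inside $K_i$ the only subcells reaching $\triangle$ are non-$4$ cells, while $4$-cells stay at relative distance $\delta$ from the boundary. This is what produces the factor $\delta$. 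A second factor $\delta$, together with the factor $1/8$, comes from the scale mismatch $\lambda_*/2$ versus $1/2$ and from the partition slack in \eqref{part1}.

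\textbf{Deriving the four items.} Item (1) then follows directly. A cell $K_\tau$ with $\tau\in\Lambda_{\mathbf{\rho}}(c_0r)$ has diameter at most $c_0 r$. If it met both $K_v$ and $K_w$, we would get $d(K_v,K_w)\le c_0 r$. This contradicts the claim when $K_v\cap K_w=\emptyset$. When $K_v\cap K_w=\{q\}$, the geometry near the junction point in Lemma \ref{GeoRT2} shows that a small cell touching both must contain $q$. But then $q$ would lie in three cells, which Lemma \ref{GeoRT2}(2) excludes in the relevant configuration.

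Items (2)–(4) use the same claim at the level $|w|$. Every $v\in\Gamma_{w,c_0}$ either touches $K_w$ or touches a cell that touches $K_w$. The second alternative is exactly item (4), and the vertex statement there comes from Lemma \ref{GeoRT2}(1). Counting is then done on $F_w(\triangle)$. It has three vertices, and each vertex is shared with at most one equal-level neighbour. In addition, non-vertex boundary points meet $4$-cells only, and at most three such points exist. This gives at most nine cells, proving item (2).

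For item (3), Lemma \ref{GeoRT2}(2) expresses adjacent words as $\tau i\tau^{(1)}$ and $\tau j\tau^{(2)}$ with suffixes free of the digit $4$. So $N_4$ differs by at most $1$ between adjacent words, and a chain of length two through an intermediate $\tau$ needs the extra observation that the $4$'s cannot accumulate.

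\textbf{Main obstacle.} I expect the hardest step to be the quantitative separation claim, specifically the explicit constant $c_0$. It requires a careful planar computation of distances between the rotated $4$-cell and the gasket cells across all intersection types in Lemma \ref{GeoRT2}. My first attempt would be to bound everything by $\delta\cdot\rho_{\min}\cdot\frac{\lambda_*}{2}$-type products, and then check that these dominate $\frac18\delta^2$.
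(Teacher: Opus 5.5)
The paper states this proposition without proof, so your plan has to stand on its own. Its load-bearing step, the separation claim, is false for generic $\lambda$.

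\textbf{Counterexample to the separation claim.} Let $p:=F_4(q_1)=(\frac14+\lambda,\frac{\sqrt3}{4})$. This point lies on the bottom side $[\frac14,\frac34]\times\{\frac{\sqrt3}{4}\}$ of $F_1(\triangle)$. The level-$n$ cells of $K_1$ along that side are the $K_{1u}$ with $u\in\{2,3\}^{n-1}$, and their junctions sit at abscissae $\frac14+k2^{-n}$. For non-dyadic $\lambda$ and large $n$, put $k:=\lfloor 2^n\lambda\rfloor\ge 1$, $\epsilon_n:=2^n\lambda-k\in(0,1)$ and $z:=(\frac14+k2^{-n},\frac{\sqrt3}{4})$, so that $|p-z|=2^{-n}\epsilon_n$. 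Let $v=1u'$ and $\sigma=1u$ be the level-$n$ cells of $K_1$ whose bottom sides end and start at $z$, respectively. Let $w:=41^{(n-1)}$, the corner cell of $K_4$ at $p$.
\begin{itemize}
\item $|v|=|w|$, and both words lie in $\Lambda_{\mathbf{\rho}}(2^{-n})$ because $\lambda_*>\frac12$.
\item $K_v\cap K_w=\emptyset$. Indeed, $K_v\subset\{y\ge\frac{\sqrt3}{4}\}$ meets the line $y=\frac{\sqrt3}{4}$ only in its bottom side, left of $z$. Meanwhile $K_w\subset F_4(\blacktriangle)$ meets that line only at $p$.
\item Yet $\mathrm{dist}(K_v,K_w)\le 2^{-n}\epsilon_n$.
\end{itemize}
For every $\lambda$ whose binary expansion has arbitrarily long blocks of zeros (Lebesgue-a.e.\ $\lambda$), $\epsilon_n$ is arbitrarily small along a subsequence. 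So no bound $\mathrm{dist}(K_v,K_w)\ge c\,\rho_w$ with $c>0$ can hold.

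\textbf{Item (1) fails even for disjoint cells.} Suppose $\epsilon_n\le c_0/2$, and choose $j$ with $2^{-j}\le c_0<2^{-j+1}$. The corner subcell $\tau=1u2^{(j)}$ of $K_\sigma$ at $z$ belongs to $\Lambda_{\mathbf{\rho}}(c_02^{-n})$. Its bottom side $[z,z+2^{-n-j}]\times\{\frac{\sqrt3}{4}\}\subset K_\tau$ contains both $z\in K_v$ and $p\in K_w$.

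Your reduction to level one cannot see any of this. All six pairs $K_i,K_j$ intersect, so your case $K_i\cap K_j=\emptyset$ is empty. In the touching case, Lemma \ref{GeoRT}(4) only keeps $4$-cells away from $\triangle$. Where $p$ sits relative to the dyadic junctions of the side is dictated by the binary digits of $\lambda$, not by $\lambda\wedge(\frac12-\lambda)$.

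\textbf{Further problems in the remaining steps.}
\begin{itemize}
\item \emph{Literal item (1).} Read literally, (1) does not assume $K_v\cap K_w=\emptyset$, and for touching cells it is false for every $\lambda$. Take $v=1$, $w=2$, $r=\frac12$, $q=F_1(q_2)$. Some $\tau\in\Lambda_{\mathbf{\rho}}(c_0r)$ has $q\in K_\tau$, since these cells cover $K$, and it meets both $K_v$ and $K_w$. Being an offspring of a word of $\Lambda_{\mathbf{\rho}}(r)$, it produces no ``three cells'' contradiction.
\item \emph{Vertex clause of (4).} Exchange roles in the example: $w'=1u'$ and $v'=41^{(n-1)}$, which lies in $\Gamma_{w',c_0}$ when $\epsilon_n<c_0$. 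The only level-$n$ cell meeting both $K_{w'}$ and $K_{v'}$ is $K_{1u}$. The contact point $p$ is an interior point of a side of $F_{1u}(\triangle)$, so the clause fails. Lemma \ref{GeoRT2}(1) only says the contact point is a vertex of one of the two triangles.
\item \emph{Count in (2).} Your tally gives $3+3=6$, not $9$. It ignores members of $\Gamma_{w,c_0}$ that do not touch $K_w$, which do occur, as above. Also, ``each vertex has at most one equal-level neighbour'' fails for dyadic $\lambda$: for $\lambda=\frac14$, $F_{12}(q_3)=F_{13}(q_2)=F_{41}(q_1)=(\frac12,\frac{\sqrt3}{4})$.
\item \emph{Item (3).} That the digit $4$ cannot accumulate along a two-step chain is asserted, not shown.
\end{itemize}

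\textbf{Consequence.} Items (1) and (4) cannot be proved as stated. Item (1) must be restricted to disjoint cells. Even then, any admissible constant must satisfy $c_0\le 2\,\mathrm{dist}(2^n\lambda,\mathbb{Z})$ for all large $n$ with $2^n\lambda\notin\mathbb{Z}$. No positive function of $\lambda\wedge(\frac12-\lambda)$ can guarantee this.
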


Based on the geometric structure of $K$ given above, we now begin to verify analytically the doubling property of the self-similar measure and obtain heat kernel estimates. Our first step is to derive an auxiliary estimate following directly from Proposition \ref{GeoRT3}. This estimate will be an essential tool for proof of the connectivity property $(\mathrm{CP}_{\mathbf{\rho}})$ and the averaging property $(\mathrm{AV}_{\mathbf{\rho}})$ later. 

\begin{lemma}\label{rhowc0}
Let $\mathbf{\zeta }:=(\zeta _{1},\zeta _{2},\zeta _{3},\zeta
_{4})$ be a tuple such that $\zeta _{1}=\zeta _{2}=\zeta _{3}$. Then,
for any finite word $w$ and any $v\in \Gamma _{w,c_{0}}$, 
\begin{equation}
\frac{\zeta _{\min }}{\zeta _{\max }}\leq \frac{\zeta _{v}}{\zeta _{w}}\leq 
\frac{\zeta _{\max }}{\zeta _{\min }}.  \label{itasl}
\end{equation}
In particular, for the tuple $\mathbf{\rho }$, it follows that 
\begin{equation}
\lambda _{\ast }\leq \frac{\rho _{v}}{\rho _{w}}\leq \lambda _{\ast }^{-1}
\label{rhosl}
\end{equation}
for any finite word $w$ and for any $v\in \Gamma _{w,c_{0}}$, where $\lambda_{\ast }$ is the constant from (2) of Lemma \ref{GeoRT}.
\end{lemma}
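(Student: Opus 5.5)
Since $\zeta_1=\zeta_2=\zeta_3$, the weight of a word depends only on its length and the number of digits `4' it contains. For any finite word $\tau$ we have
\[
\zeta_\tau=\zeta_1^{|\tau|-N_4(\tau)}\zeta_4^{N_4(\tau)}=\zeta_1^{|\tau|}\Bigl(\frac{\zeta_4}{\zeta_1}\Bigr)^{N_4(\tau)}.
\]
By definition of $\Gamma_{w,c_0}$, every $v\in\Gamma_{w,c_0}$ satisfies $|v|=|w|$. Dividing the two expressions therefore gives
\[
\frac{\zeta_v}{\zeta_w}=\Bigl(\frac{\zeta_4}{\zeta_1}\Bigr)^{N_4(v)-N_4(w)}.
\]
The plan is to bound the exponent $N_4(v)-N_4(w)$ using item (3) of Proposition \ref{GeoRT3}.

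Item (3) gives $|N_4(v)-N_4(w)|\le 1$, so the exponent lies in $\{-1,0,1\}$. The ratio $\zeta_v/\zeta_w$ is thus one of $1$, $\zeta_4/\zeta_1$ or $\zeta_1/\zeta_4$. There are only two distinct values among the $\zeta_i$, so $\{\zeta_1,\zeta_4\}=\{\zeta_{\min},\zeta_{\max}\}$. Hence each of the three possible values lies in $[\zeta_{\min}/\zeta_{\max},\,\zeta_{\max}/\zeta_{\min}]$, which is \eqref{itasl}.

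For \eqref{rhosl} I would specialize to $\boldsymbol\zeta=\boldsymbol\rho$. This is legitimate because \eqref{KVrho} gives $\rho_1=\rho_2=\rho_3=1/2$ and $\rho_4=\lambda_*/2$ with $\lambda_*\in[1/2,1)$. Then $\rho_{\max}=1/2$ and $\rho_{\min}=\lambda_*/2$, so $\rho_{\min}/\rho_{\max}=\lambda_*$, and the general bound becomes \eqref{rhosl}.

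There is essentially no obstacle beyond having the geometric input. All the real content is in item (3) of Proposition \ref{GeoRT3}, which I take as given. The only point needing care is the observation that words in $\Gamma_{w,c_0}$ have the same length as $w$, so that the factors $\zeta_1^{|\cdot|}$ cancel exactly.
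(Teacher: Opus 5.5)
Your proof is correct and is essentially the paper's argument: cancel the common factor $\zeta_1^{|w|}$ using $|v|=|w|$, bound the exponent by $|N_4(v)-N_4(w)|\le 1$, and specialize to $\mathbf{\rho}$ via $\rho_{\min}/\rho_{\max}=\lambda_\ast$. Your version is slightly cleaner in two respects: you cite item (3) of Proposition \ref{GeoRT3}, which is the statement that actually gives the bound, whereas the paper points to (1) of Lemma \ref{GeoRT2}; and you keep the exponent signed, whereas the paper writes $|N_4(v)-N_4(w)|$.
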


\begin{proof}
Fix a finite word $w$ and fix $v\in \Gamma _{w,c_{0}}$ with $|v|=|w|=n$. It follows by (1) of Lemma \ref{GeoRT2} that $|N_{4}(v)-N_{4}(w)|\leq 1$.
Combining the fact that 
\begin{equation*}
\frac{\zeta _{v}}{\zeta _{w}}=\frac{\zeta _{1}^{n-N_{4}(v)}\zeta
_{4}^{N_{4}(v)}}{\zeta _{1}^{n-N_{4}(w)}\zeta _{4}^{N_{4}(w)}}=\left( \frac{
\zeta _{4}}{\zeta _{1}}\right) ^{|N_{4}(v)-N_{4}(w)|},
\end{equation*}
we obtain (\ref{itasl}). In particular, for $\mathbf{\zeta}=\mathbf{\rho }$, noting that $\frac{\rho
_{\min }}{\rho _{\max }}=\frac{\rho _{4}}{\rho _{1}}=\lambda _{\ast }$, we
obtain (\ref{rhosl}). The proof is complete.
\end{proof}

Now we show that $K$ satisfies $(\mathrm{CP}_{\mathbf{\rho }})$ as follows.

\begin{proposition}
\label{KlCP} $K$ satisfies $(\mathrm{CP}_{\mathbf{\rho },c_{0}})$, where $%
c_{0}$ comes from Proposition \ref{GeoRT3}.
\end{proposition}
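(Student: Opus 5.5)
To prove $(\mathrm{CP}_{\mathbf{\rho},c_0})$ we must produce, for fixed $r\in(0,1)$ and $x,y\in K$ with $d(x,y)<c_0 r$ (recall $\bar R=1$), a chain in $\Lambda_{\mathbf{\rho}}(r)$ of uniformly bounded length joining $x$ to $y$. The plan has three steps: pass to a finer partition where a single cell near $x$ is available, use Proposition \ref{GeoRT3} (4) to get a short chain at that fine level, and then lift the chain to $\Lambda_{\mathbf{\rho}}(r)$ by taking ancestors.

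\textbf{Reduction to a fine cell.} Pick $w\in\Lambda_{\mathbf{\rho}}(c_0 r)$ with $x\in K_w$; by \eqref{part1}, $\rho_{\min}c_0 r<\rho_w\le c_0 r$. I first want $y\in B(K_w,c_0\rho_w)$. This is false in general, since $d(x,y)$ may be as large as $c_0 r\gg c_0\rho_w$. So I would instead choose $w$ at a level $s$ with $c_0\rho_w\gtrsim d(x,y)$, i.e.\ $w\in\Lambda_{\mathbf{\rho}}(d(x,y)/(c_0\rho_{\min}))$. If $d(x,y)<c_0 r$ is small relative to $r$, that level lies below $r$ only when $d(x,y)<c_0\rho_{\min} r$. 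In the remaining range I would work directly at a level comparable to $r$. Then $y\in K_w$ or $y\in K_v$ for some $v$ of length $|w|$ with $K_v\nsubseteq K_w$ meeting $B(K_w,c_0\rho_w)$, i.e.\ $v\in\Gamma_{w,c_0}$. Here I use that the words of length $|w|$ cover $K$, and that if $K_v\subseteq K_w$ then $y\in K_w$.

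\textbf{Short chain at the fine level.} By Proposition \ref{GeoRT3} (4), there is $\tau\in\Gamma_{w,c_0}$ with $K_w\cap K_\tau\ne\emptyset$ and $K_\tau\cap K_v\neq\emptyset$. This gives a chain $w,\tau,v$ of length at most $3$ from $x$ to $y$. Its words have equal length, and by Lemma \ref{rhowc0} their $\rho$-sizes are within a factor $\lambda_*^{-1}$ of each other, so they are not in a common partition.

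\textbf{Lifting to $\Lambda_{\mathbf{\rho}}(r)$.} For each word $u$ in the chain, its $\rho_u$ is comparable to $\rho_w\le r$ up to constants depending on $\lambda$. So each $u$ either has a unique ancestor in $\Lambda_{\mathbf{\rho}}(r)$ (when $\rho_u\le r$), or is itself coarser than $r$. In the latter case, replace $u$ by the boundedly many descendants in $\Lambda_{\mathbf{\rho}}(r)$ needed to link the relevant points, using $(\mathrm{WOC}_{\mathbf{\rho}})$ and connectedness of cells. Ancestors preserve nonempty intersections, since $K_u\subset K_{\tilde u}$. Hence the lifted chain still connects $x$ to $y$, with length bounded by a constant $L_2$; pad by repetition to length exactly $L_2$.

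\textbf{Main obstacle.} The delicate case is $d(x,y)$ comparable to $c_0 r$: the single-step neighbor structure from Proposition \ref{GeoRT3} lives at scale $c_0\rho_w$, while $w\in\Lambda_{\mathbf{\rho}}(r)$ only gives $\rho_w>\rho_{\min}r$. I would handle it by choosing $w\in\Lambda_{\mathbf{\rho}}(r)$ itself, so that $c_0\rho_w>c_0\rho_{\min}r$. If $d(x,y)$ exceeds this, I would split the segment by intermediate points using connectedness of $K$, together with the cell graph of $\Lambda_{\mathbf{\rho}}(r)$ inside $B(x,c_0 r)$, which has at most $L_1$ vertices by $(\mathrm{WOC}_{\mathbf{\rho}})$. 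Any path in that finite graph then has length at most $L_1$. Connectivity of this local graph follows as in the proof of Proposition \ref{P-ch2}, with connectedness of $K$ replacing $(\mathrm{CH})$. This fallback in fact yields $L_2\le L_1$ directly, and is what I would try first if the neighbor-set argument gets messy.
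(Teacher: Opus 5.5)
Your treatment of the range $c_0\rho_{\min}r\le d(x,y)<c_0r$ has a genuine gap. In that range you take $w\in\Lambda_{\mathbf{\rho}}(r)$. When $d(x,y)$ exceeds $c_0\rho_w$, you assert that the intersection graph of the cells of $\Lambda_{\mathbf{\rho}}(r)$ meeting $B(x,c_0r)$ is connected ``as in the proof of Proposition~\ref{P-ch2}, with connectedness of $K$ replacing $(\mathrm{CH})$''. That substitution fails.

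In Proposition~\ref{P-ch2}, $(\mathrm{CH})$ produces arbitrarily fine chains of points from $x_0$ to $y$ that stay inside the ball, hence inside $K_\Gamma$. This is what contradicts $d(K_{x_0},K_\Gamma\setminus K_{x_0})>0$. Connectedness of $K$ only says that points of $K\setminus K_{x_0}$ accumulate somewhere on $K_{x_0}$. Since $K_{x_0}$ is a union of cells that may stick out of the ball, that accumulation can happen through cells outside $\Gamma$, and then there is no contradiction. Two close points of a connected set may be joinable only by going far away, like the two ends of a thin horseshoe; ruling this out uniformly in $r$ is exactly what $(\mathrm{CP})$ asserts.

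You also cannot invoke $(\mathrm{CH})$ itself, because the paper derives $(\mathrm{CH})$ for $K$ from $(\mathrm{CP}_{\mathbf{\rho},c_0})$. And there is no segment from $x$ to $y$ inside $K$ to split. So the fallback, and the bound $L_2\le L_1$, are unsupported.

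The repair is already in your first three steps: keep the adapted level in the whole range.
\begin{itemize}
\item Set $s:=d(x,y)/(c_0\rho_{\min})$, so $s<r/\rho_{\min}$. If $s<1$, take $w\in\Lambda_{\mathbf{\rho}}(s)$ containing $x$. Then $c_0\rho_w>d(x,y)$, so Proposition~\ref{GeoRT3}(4) and Lemma~\ref{rhowc0} give a chain $w,\tau,v$ with every $\rho_u\le\lambda_*^{-1}\rho_w<2\lambda_*^{-2}r$.
\item A word with $\rho_u\le r$ is replaced by its ancestor in $\Lambda_{\mathbf{\rho}}(r)$.
\item Otherwise, its descendants in $\Lambda_{\mathbf{\rho}}(r)$ have depth bounded via \eqref{part2.2}. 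Since $K_u$ is connected and is the finite union of these closed cells, their intersection graph is connected. Hence the two relevant points of $K_u$ are joined by boundedly many of them.
\item If $s\ge1$, then $r>\rho_{\min}$, so $\Lambda_{\mathbf{\rho}}(r)$ has boundedly many words and connectedness of $K$ applies the same way.
\end{itemize}
Connectedness is legitimate exactly here, where the finite family of cells covers a whole connected set. It is not legitimate for the family of cells meeting a ball.

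For comparison, the paper's proof works directly with $w\in\Lambda_{\mathbf{\rho}}(r)$. It takes $v,\tau\in\Gamma_{w,c_0}$ straight from the geometric lemmas, without the level adjustment you make. It then handles a coarse middle word explicitly: $\rho_\tau\le\lambda_*^{-1}r$ with $\lambda_*\ge1/2$ puts $\tau1,\tau2,\tau3$ in $\Lambda_{\mathbf{\rho}}(r)$. The junction points lie on $F_\tau(\triangle)$ and $K_4\cap\triangle=\emptyset$, so two of these children suffice, giving $L_2=5$.
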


\begin{proof}
Fix a ball $B:=B(x,r)$ with $x\in K$, $r\in (0,1)$. Let $y\in c_{0}B\cap K$.
Thus by (2) of Lemma \ref{GeoRT2}, there exist words $w\in \Lambda _{\mathbf{\rho }}(r)$ and $v,\tau \in \Gamma _{w,c_{0}}\subset
\{1,2,3,4\}^{|w|}$ such that 
\begin{equation*}
x\in K_{w},\quad y\in K_{v},\quad K_{w}\cap K_{\tau }\neq \emptyset ,\quad
K_{v}\cap K_{\tau }\neq \emptyset .
\end{equation*}%
Fix $q\in K_{w}\cap K_{\tau }$ and $q^{\prime }\in K_{v}\cap K_{\tau }$.
Obviously, points $x,q$ can be connected by the cell $K_{w}$.

We prove that $q,q^{\prime }$ can be connected by at most two cells coming
from $\Lambda _{\mathbf{\rho }}(r)$. Then the same holds for $q^{\prime },y$, and $(\mathrm{CP}_{\mathbf{\rho },c_{0}})$ follows with $L=5$by coupling the chains.

For convenience, let $|w|=n$ and $\tau =\tau _{1}\tau _{2}\cdots \tau _{n}$.
We distinguish two cases.

Case $(1)$: $\rho _{\tau }\leq r$. Then there exists an integer $1\leq k\leq
n$ such that $\tau ^{(k)}:=\tau _{1}\tau _{2}\cdots \tau _{k}\in \Lambda _{ 
\mathbf{\rho }}(r)$. It is obvious that $q,q^{\prime }$ can be connected by the
cell $K_{\tau }$.

Case $(2)$: $\rho _{\tau }>r$. Since $w\in \Lambda _{\mathbf{\rho }}(r)$ and $\rho _{\tau }\leq \lambda _{\ast }^{-1}\rho _{w}$
by Lemma \ref{rhowc0}, we have by (\ref{part1}) that 
\begin{equation*}
\rho _{\tau }\leq \lambda _{\ast }^{-1}\rho _{w}\leq \lambda _{\ast }^{-1}r.
\end{equation*}
On the other hand, since $\lambda _{\ast }\geq 1/2$, then
$$\rho _{\tau i}=\rho _{\tau }\rho _{i}\leq \frac{1}{2}\lambda _{\ast
}{}^{-1}r\leq r<\rho _{\tau }.$$
Combining the assumption $\rho _{\tau }>r$, we have $\tau i\in \Lambda _{\mathbf{\rho }}(r)$ for every $i\in \{1,2,3,4\}$.

Since $K$ is connected and $K_{4}\cap \triangle =\emptyset $, we can link $%
q,q^{\prime }$ by at most two cells coming from $\{\tau 1,\tau 2,\tau
3\}\subset \Lambda _{\mathbf{\rho }}(r)$, thus proving our claim.
\end{proof}

Now we show the chain condition $(\mathrm{CH})$. Converse to Proposition \ref{P-ch2}, $(\mathrm{CH})$ is obtained from $(\mathrm{CP}_{\mathbf{\rho }})$ here -- the
geometric fact $\triangle \subset K$ plays an important role.

\begin{proposition}
$K$ satisfies $(\mathrm{CH})$.
\end{proposition}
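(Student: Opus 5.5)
The plan is to show that the Euclidean metric on $K$ is comparable to an intrinsic path-type quantity: there is a constant $C$ such that any $x,y\in K$ can be joined by a continuous curve in $K$ of Euclidean length at most $C|x-y|$. Once this quasi-geodesic property is available, $(\mathrm{CH})$ follows at once. Given $n\ge 1$, we parametrize such a curve $\gamma$ by arclength and set $x_i=\gamma(iL/n)$, where $L\le C|x-y|$ is its length. Then $|x_i-x_{i+1}|\le L/n\le C|x-y|/n$, which is \eqref{CHC} with $C_{\mathrm{CH}}=C$.

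\textbf{Step 1: boundary edges and vertices.} Since $\triangle\subset K$ (the SG is contained in $K$ and contains $\triangle$), every edge of every cell triangle $F_w(\triangle)$ lies in $K$, because $F_w(\triangle)\subset F_w(K)=K_w$. Two vertices of a cell $K_w$ are therefore joined in $K$ by a straight segment of length at most $\rho_w$.

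\textbf{Step 2: points to vertices.} For any $x\in K_w$, we join $x$ to a vertex of $F_w(\triangle)$ by a curve in $K_w$ of length at most $C'\rho_w$. To do this, take an address $\omega$ of $x$ extending $w$. Then connect consecutive vertices of $F_{\omega(m)}(\triangle)$ and $F_{\omega(m+1)}(\triangle)$ inside $F_{\omega(m)}(\triangle)$. For $\omega_{m+1}\in\{1,2,3\}$ this takes one short segment. For $\omega_{m+1}=4$, we use Lemma \ref{GeoRT2}: $K_4$ meets $K_1\cup K_2\cup K_3$ at a vertex of $F_4(\triangle)$ lying on an edge of some $F_i(\triangle)$, and each such point is reached by a fixed finite polygonal path in $K$, of bounded length, scaled by $\rho_{\omega(m)}$. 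Summing the geometric series $\sum_m \rho_{\omega(m)}\le \sum_m \rho_w\rho_{\max}^{m-|w|}$ gives the bound $C'\rho_w$, and the curve converges to $x$.

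\textbf{Step 3: short chains of cells.} Fix distinct $x,y\in K$ and put $r=|x-y|/c_0$ (if $r\ge 1$, use $r$ close to $1$, since $\bar R=1$). Then $y\in B(x,c_0 r)$. By Proposition \ref{KlCP}, i.e.\ $(\mathrm{CP}_{\rho,c_0})$, there is a chain of at most $5$ cells $K_{w^{(i)}}$ with $w^{(i)}\in\Lambda_\rho(r)$ linking $x$ to $y$, so each cell has $\rho_{w^{(i)}}\le r$. Inside each cell, the path from the entry point to the exit point is built by Step 2, sending both points to vertices, together with Step 1, joining those vertices along edges. Each cell then contributes length at most $(2C'+2)r$, and the total length is at most $5(2C'+2)c_0^{-1}|x-y|$.

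\textbf{Main obstacle.} The hardest part is Step 2, in particular the case $\omega_{m+1}=4$. There the connection between the rotated cell $K_4$ and the boundary of the parent triangle must be routed through the SG-part with uniformly bounded length. This relies on the intersection structure in Lemma \ref{GeoRT2} and on $K$ being connected. I would first verify explicitly that each vertex of $F_4(\triangle)$ lies on an inner edge of some $F_i(\triangle)$, $i\le 3$; the required path is then a single segment along that edge, of length at most $1/2$.
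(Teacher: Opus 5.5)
Your argument is correct and is essentially the paper's proof. Both send each point to a vertex of its cell through nested cells using $\triangle\subset K$ (your Step 2 is a continuous, explicitly justified version of the paper's chain \eqref{link}, including the rotated cell $K_4$), join vertices along edges, and glue the five cells supplied by $(\mathrm{CP}_{\mathbf{\rho},c_0})$, with arclength sampling then giving every $n$ directly. Two small slips need fixing: with $r=|x-y|/c_0$ the point $y$ is not in the open ball $B(x,c_0r)$, so take $r=2|x-y|/c_0$ as the paper does; and when $|x-y|\ge c_0/2$ no $r<1$ makes $(\mathrm{CP})$ applicable, but Steps 1--2 applied to the single cell $K=K_\varnothing$ already give a path of length at most $5\le 10c_0^{-1}|x-y|$ (the paper instead routes through $q_1$).
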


\begin{proof}
We observe by $\triangle \subset K$ that for any integer $k\geq 1$ and any points $z\in K$, there exists a sequence $\{z_{i}\}_{i=0}^{2^{k}}$ such that 
\begin{equation}
z_{0}=q_{1},\quad z_{2^{k}}=z,\quad \{z_{i}\}_{i=0}^{2^{k}-1}\subset
\tbigcup\limits_{v\in I^{k}}F_{v}(\triangle)\quad \text{and}\quad
|z_{i-1}-z_{i}|\leq 2^{2-k}\quad \text{for every }\ 1\leq i\leq 2^{k}.
\label{link}
\end{equation}
Now we show that for any $x,y\in K$, there exists a sequence $%
\{x_{i}\}_{i=0}^{5}\subset K$ such that 
\begin{equation}
x_{0}=x,\quad x_{5}=y\quad \text{and}\quad |x_{i-1}-x_{i}|\leq
2c_{0}^{-1}|x-y|\quad \text{for every}\quad 1\leq i\leq 5.  \label{CH1}
\end{equation}

Indeed, if $c_{0}/2\leq |x-y|\leq 1$, then (\ref{CH1}) trivially holds by
letting $x_{i}=q_{1}$ for all $1\leq i\leq 4$. Thus it suffices to prove for
the case $|x-y|<c_{0}/2$. By $(\mathrm{CP}_{\mathbf{\rho },c_{0}})$ coming
from (i), there exists a sequence $\{x_{i}\}_{i=0}^{5}\subset K$ such that $%
x_{0}=x$, $x_{5}=y$ and for every $1\leq i\leq 5$, $x_{i-1},x_{i}$ lie in a
cell $K_{w^{(i)}}$ with some word $w^{(i)}\in \Lambda _{\mathbf{\rho }
}(2|x-y|/c_{0})$. Hence (\ref{CH1}) is proved with the same $C=10/c_{0}$ by 
\begin{equation*}
|x_{i-1}-x_{i}|\leq \rho _{w^{(i)}}\leq 2c_{0}^{-1}|x-y|\quad \text{for
every}\quad 1\leq i\leq 5.
\end{equation*}

Now we consider chains connecting $x=x_{0}$ and $x_{1}$. Let $%
z_{w^{(1)}}=F_{w^{(1)}}(q_{1})$. By (\ref{link}), for all $n\geq 1$, we can
find chains $\{z_{1,k}\}_{k=0}^{2^{n}}$, $\{z_{2,k}\}_{k=0}^{2^{n}}$ in $%
K_{v}$ connecting $x,z_{w^{(1)}}$ and $z_{w^{(1)}},x_{1}$ respectively such
that $z_{1,0}=x,z_{1,2^{n}}=z_{2,0}=z_{w^{(1)}},z_{2,2^{n}}=x_{1}$ and 
\begin{equation*}
|z_{i,j-1}-z_{i,j}|\leq 2^{2-n}\rho _{w^{(i)}}\leq
2^{3-n}c_{0}^{-1}|x-y|\quad \text{for}\quad i=1,2\quad\text{and}\quad 1\leq j\leq
2^{n}.
\end{equation*}

Coupling these two chains, we obtain a new chain $\{x_{1,k}%
\}_{k=0}^{2^{n+1}}$ such that
\begin{equation*}
x_{1,0}=x,\quad x_{1,2^{n+1}}=x_{1}\quad \text{and}\quad \left\vert
x_{1,j-1}-x_{1,j}\right\vert \leq 2^{3-n}c_{0}^{-1}|x-y|\quad \text{for }\
1\leq j\leq 2^{n+1}.
\end{equation*}%
The same holds for some chain $\{x_{i,k}\}_{k=0}^{2^{n+1}}$ with $i=2,3,4,5$
for all $n\geq 1$. Thus coupling chains $\{x_{i,k}\}_{k=0}^{2^{n+1}}$ in order from $i=1$ to $i=5$, we obtain a chain $\{y_{k}\}_{k=0}^{10\times 2^{n+1}}$such that $y_{0}=x$, $y_{10\times
2^{n+1}}=y$ and 
\begin{equation*}
\left\vert y_{j-1}-y_{j}\right\vert \leq 2^{3-n}c_{0}^{-1}|x-y|=160c_{0}^{-1}%
\frac{|x-y|}{10\times 2^{n+1}}\quad\text{for every}\quad 1\leq j\leq 10\times 2^{n+1},
\end{equation*}%
thus proving $(\mathrm{CH})$ with $C_{\mathrm{CH}}=160/c_{0}$.
\end{proof}

The following gives a sufficient and necessary condition for $(\mathrm{AV}_{ \mathbf{\rho }})$ on $K$.

\begin{proposition}
\label{KlAV} Let $\mathbf{\zeta }:=(\zeta _{1},\zeta _{2},\zeta _{3},\zeta
_{4})$ be a $4$-tuple. Then $\mathbf{\zeta }$ satisfies $(\mathrm{AV}_{%
\mathbf{\rho }})$ if and only if $\zeta _{1}=\zeta _{2}=\zeta _{3}$ where $%
\mathbf{\rho }$ is defined as in (\ref{KVrho}).
\end{proposition}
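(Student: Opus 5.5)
We prove the two directions separately, using the equivalent formulation c) of Proposition \ref{av=}: $\mathbf{\zeta}$ satisfies $(\mathrm{AV}_{\mathbf{\rho}})$ if and only if $\zeta_v\asymp\zeta_w$ for all $v,w$ in a common partition $\Lambda_{\mathbf{\rho}}(r)$ with $K_v\cap K_w\neq\emptyset$.

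\textbf{Sufficiency.} Assume $\zeta_1=\zeta_2=\zeta_3$. Fix $r\in(0,1)$ and $v,w\in\Lambda_{\mathbf{\rho}}(r)$ with $K_v\cap K_w=\{q\}$. By (2) of Lemma \ref{GeoRT2}, write $v=\tau i\tau^{(1)}$ and $w=\tau j\tau^{(2)}$, where $\tau^{(1)},\tau^{(2)}$ contain no digit $4$. Hence $N_4(v)$ and $N_4(w)$ differ from $N_4(\tau)$ by at most one each, so $|N_4(v)-N_4(w)|\le 1$.

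Since $\zeta_1=\zeta_2=\zeta_3$, we have $\zeta_v/\zeta_w=(\zeta_4/\zeta_1)^{N_4(v)-N_4(w)}\,\zeta_1^{|v|-|w|}$. The same identity holds for $\mathbf{\rho}$, because $\rho_1=\rho_2=\rho_3$. Moreover $\rho_v,\rho_w\in(\rho_{\min}r,r]$ by (\ref{part1}), so $\rho_v/\rho_w\in[\rho_{\min},\rho_{\min}^{-1}]$.

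Combining this with $|N_4(v)-N_4(w)|\le1$, we get
\[
2^{-\bigl||v|-|w|\bigr|}=\rho_1^{\bigl||v|-|w|\bigr|}\le \rho_{\min}^{-1}\lambda_*^{-1},
\]
so $\bigl||v|-|w|\bigr|$ is bounded by a constant. Therefore $\zeta_v/\zeta_w$ is bounded above and below by constants depending only on $\mathbf{\zeta}$ and $\lambda$, which is condition c).

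\textbf{Necessity.} Assume $(\mathrm{AV}_{\mathbf{\rho}})$ holds and, say, $\zeta_1\neq\zeta_2$; the other pairs follow from the $A_3$-symmetry, or by the same argument. The idea is to produce, for every $m$, intersecting cells of equal $\rho$-size whose $\zeta$-ratio is $(\zeta_1/\zeta_2)^{\pm m}$. This contradicts the uniform bound in b) of Proposition \ref{av=}.

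For the construction, take the SG common vertex $q_{12}=K_1\cap K_2$, which is the midpoint of the side between $q_1$ and $q_2$. The cell $K_{12^m}$ contains $F_1(q_2)=q_{12}$, since $q_2$ is the fixed point of $F_2$. Similarly, $K_{21^m}$ contains $F_2(q_1)=q_{12}$.

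Both words have length $m+1$ and contain no digit $4$, so $\rho_{12^m}=\rho_{21^m}=2^{-m-1}$. On the other hand, $\zeta_{12^m}/\zeta_{21^m}=(\zeta_2/\zeta_1)^{m-1}$, which is unbounded in one of the two orderings as $m\to\infty$. This contradicts b) (or a) with $C'=1$).

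\textbf{Main obstacle.} The delicate step is the sufficiency direction: one must confirm that Lemma \ref{GeoRT2}(2) really gives $|N_4(v)-N_4(w)|\le 1$ for intersecting words of different lengths. One must also bound the length difference via the $\rho$-comparability, taking care that the $\rho$ and $\zeta$ factors from the digit $4$ are handled simultaneously. The necessity direction is only elementary SG geometry.
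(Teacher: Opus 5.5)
Your proof is correct and follows essentially the paper's route. For sufficiency you count the digit $4$ and bound the length difference through $\rho$-comparability inside $\Lambda_{\mathbf{\rho}}(r)$, applying Lemma \ref{GeoRT2}(2) directly to the unequal-length pair; this is a bit cleaner than the paper's padding $v\tau$, and it correctly keeps the factor $\zeta_1^{|v|-|w|}$ that the paper's display for $\zeta_v/\zeta_w$ drops. For necessity you use the same pair $12^{(m)},21^{(m)}$ meeting at $F_1(q_2)$, plus symmetry.

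The only slip is the displayed inequality: as written ($2^{-||v|-|w||}\le\rho_{\min}^{-1}\lambda_*^{-1}$) it is vacuous. It should read $2^{\bigl||v|-|w|\bigr|}\le\rho_{\min}^{-1}\lambda_*^{-1}=2\lambda_*^{-2}\le 8$, i.e.\ $\bigl||v|-|w|\bigr|\le 3$, which is what your argument actually yields.
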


\begin{proof}
(i) We show the ``if'' part by proving (3) of Proposition \ref{av=}. Let $\zeta _{1}=\zeta _{2}=\zeta _{3}$. Fix $r\in
(0,1)$ and fix finite words $w,v\in \Lambda _{\mathbf{\rho }}(r)$ such that $%
K_{w}\cap K_{v}\neq \emptyset $.

Without loss of the generality, assume that $|w|\geq |v|$. We will show $C_{ 
\mathrm{AV}}^{-1}\zeta _{w}\leq \zeta _{v}\leq C_{\mathrm{AV}}\zeta _{w}$
for some constant $C_{\mathrm{AV}}\geq 1$ independent of $w,v$.

Indeed, it follows by the self-similarity of $K$ that, there exists a finite
word $\tau $ (possibly empty) such that $|v\tau |=|w|$
and $K_{v\tau }\cap K_{w}\neq \emptyset $. Recall by (1) of Lemma \ref{GeoRT2} that 
\begin{equation}
|N_{4}(w)-N_{4}(v\tau)|\leq 1.\label{KAV0}
\end{equation}
On one hand, by Lemma \ref{rhowc0} we have
\begin{equation*}
\frac{\zeta _{v}}{\zeta _{w}}\geq \frac{\zeta _{v\tau }}{\zeta _{w}}\geq 
\frac{\zeta _{\min }}{\zeta _{\max }}\quad\text{and}\quad\lambda _{\ast }\leq \frac{\rho _{v\tau }}{\rho _{w}}\leq \lambda _{\ast
}^{-1}.
\end{equation*}
On the other hand, since $w,v\in \Lambda _{\mathbf{\rho }}(r)$, it follows
by Proposition \ref{part2.1} that 
\begin{equation*}
\frac{\lambda _{\ast }}{2}=\rho _{\min }\leq \frac{\rho _{v}}{\rho _{w}}\leq
\rho _{\min }^{-1}=\frac{2}{\lambda _{\ast }}.
\end{equation*}

Hence, since $\rho _{v\tau }=\rho _{v}\rho _{\tau }$ and $\lambda
_{\ast }\in \lbrack 1/2,1)$, it follows that 
\begin{equation*}
\left( \frac{1}{2}\right) ^{|\tau |}=\rho _{\max }^{|\tau |}\geq \rho _{\tau
}\geq \frac{\lambda _{\ast }^{2}}{2}\geq \frac{1}{8},
\end{equation*}
showing $|\tau |\leq 3$. Combining this with basic facts
$$N_{4}(v\tau)=N_{4}(v)+N_{4}(\tau)\quad\text{and}\quad N_{4}(\tau)\leq|\tau|,$$
we have from (\ref{KAV0}) that 
\begin{equation*}
|N_{4}(w)-N_{4}(v)|\leq |N_{4}(w)-N_{4}(v\tau)|+N_{4}(\tau)\leq 1+|\tau
|\leq 4.
\end{equation*}
Since $\zeta _{1}=\zeta _{2}=\zeta _{3}$, it follows that
\begin{equation}
\left( \frac{\zeta _{\max }}{\zeta _{\min }}\right) ^{-4}\leq \frac{\zeta
_{v}}{\zeta _{w}}=\frac{\zeta _{4}^{N_{4}(v)}\cdot \zeta _{1}^{n-N_{4}(v)}}{
\zeta _{4}^{N_{4}(w)}\cdot \zeta _{1}^{n-N_{4}(w)}}
=\left( \frac{\zeta _{1}}{\zeta _{4}}\right) ^{N_{4}(w)-N_{4}(v)}\leq \left( \frac{\zeta _{\max }}{\zeta _{\min }}\right) ^{4},  \label{20}
\end{equation}
which shows that c) of Proposition \ref{av=} holds with the constant $C_{\mathrm{AV}}=(\frac{\zeta _{\max }}{\zeta _{\min }})^{4}$, thus proving that $\mathbf{\zeta }$ satisfies $(\mathrm{AV}_{\mathbf{\rho }})$ .

(ii) We show the ``only if'' part. Suppose
that $\mathbf{\zeta }$ satisfies $(\mathrm{AV}_{\mathbf{\rho }})$. To show $%
\zeta _{1}=\zeta _{2}=\zeta _{3}$, we consider two words $w:=12^{(m)}$ and $%
v:=21^{(m)}$ for arbitrary $m\geq 1$. By geometry $F_{1}(q_{2})\in
K_{12^{(m)}}\cap K_{21^{(m)}}$ and $\rho _{12^{(m)}}=\rho
_{21^{(m)}}=2^{-(m+1)}$ for all $m\geq 1$. By $(\mathrm{AV}_{\mathbf{\rho }
}) $, 
\begin{equation*}
C^{-1}\leq \frac{\zeta _{w}}{\zeta _{v}}=\frac{\zeta _{1}\zeta _{2}^{m}}{
\zeta _{2}\zeta _{1}^{m}}=\left( \frac{\zeta _{2}}{\zeta _{1}}\right)
^{m-1}\leq C
\end{equation*}
for all $m\geq 1$, showing that $\zeta _{1}=\zeta _{2}$. By the rotational
symmetry of $K$, we similarly have $\zeta _{2}=\zeta _{3}$. Therefore, we
conclude that $\zeta _{1}=\zeta _{2}=\zeta _{3}$.
\end{proof}

Now we obtain a sufficient and necessary condition of $(\mathrm{VD})$ for self-similar measures on $K$.

\begin{corollary}
\label{KlVD} For a fixed number $\lambda\in(0,1/2)$, let $%
\left(\mu,\{p_i\}_{i=1}^{4}\right)$ be a self-similar measure on $%
K:=K_\lambda$. Then $\mu$ satisfies both $(\mathrm{VD})$ and $(\mathrm{RVD})$
if and only if $p_1=p_2=p_3$.
\end{corollary}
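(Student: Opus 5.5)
The plan has two directions. The ``if'' direction is assembled from results already in place; the ``only if'' direction needs a short argument on the Sierpi\'nski gasket part of $K$.

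\textbf{If $p_1=p_2=p_3$.} By Proposition \ref{KlAV}, $\mathbf{p}$ satisfies $(\mathrm{AV}_{\mathbf{\rho}})$. Lemma \ref{GeoRT} gives similitudes and the open set condition, so Lemma \ref{op'cond} yields $(\mathrm{WOC}_{\mathbf{\rho}})$. Proposition \ref{KlCP} gives $(\mathrm{CP}_{\mathbf{\rho},c_0})$ with $c_0\in(0,1)$. These are exactly the hypotheses of Proposition \ref{muUctrl}, so Corollary \ref{muScal} applies. It gives the two-sided scaling \eqref{muscal} for $0<r\le R\le\bar R=1$ and $d(x,y)\le R$, and hence $(\mathrm{RVD})$ directly. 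For $(\mathrm{VD})$, \eqref{muscal} with $y=x$ covers $2r\le 1$. For $r\ge 1/2$, we have $V(x,2r)=\mu(K)=1\le C V(x,1/2)$ by the same estimate. (Since $\mu$ lives on $K$ and balls are centered in $K$, this suffices.)

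\textbf{If $\mu$ is doubling with $p_1,p_2,p_3$ not all equal.} I will argue by contradiction. By the $A_3$-symmetry of $K$ (Lemma \ref{GeoRT}(1)), relabel so that $p_1\ne p_2$. Mimicking part (ii) of Proposition \ref{KlAV}, consider the point $q=F_1(q_2)=F_2(q_1)\in K_1\cap K_2$ and the two cells $K_{12^{(m)}}$ and $K_{21^{(m)}}$. Both contain $q$ and have diameter $2^{-(m+1)}$. I expect $\mu(K_{12^{(m)}})\asymp p_1p_2^m$ and $\mu(K_{21^{(m)}})\asymp p_2p_1^m$. The lower bounds are trivial. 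For the upper bounds, use the self-similar decomposition over $\Lambda_{\mathbf\rho}$ as in Lemma \ref{MCP}, which only needs $(\mathrm{WOC}_{\mathbf\rho})$ together with a bound on the weights of the at most $L_1$ neighboring cells.

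The cleaner route is to use doubling directly. Set $r=2^{-(m+1)}$ and let $x_m=F_{12^{(m)}}(q_1)$ and $y_m=F_{21^{(m)}}(q_2)$, the vertices of the two small cells opposite to $q$. Then $d(x_m,y_m)\le 2r$. The ball $B(x_m,cr)$ for small $c$ meets only cells of level comparable to $12^{(m)}$ near $x_m$, and these carry weight $\asymp p_1p_2^m$. This relies on Proposition \ref{GeoRT3}, which says that cells near a cell have comparable numbers of digit $4$; in particular, cells near $12^{(m)}$ on the gasket side of $F_1(\triangle)$ are words of the form $12^{(m-1)}j$, possibly with a few extra digits, whose weights are within bounded factors of $p_1p_2^m$. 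Similarly $V(y_m,cr)\asymp p_2p_1^m$. Doubling gives $V(x_m,cr)\asymp V(x_m,4r)\ge V(y_m,cr)$, and symmetrically. Hence $(p_2/p_1)^{m-1}$ would be bounded above and below uniformly in $m$, which forces $p_1=p_2$, a contradiction.

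\textbf{Main obstacle.} The delicate step is the upper bound $V(x_m,cr)\lesssim p_1p_2^m$ (and its analogue at $y_m$) without already knowing $(\mathrm{AV}_{\mathbf\rho})$. I would handle it via Lemma \ref{GeoRT3}(1)--(3). A small ball around $x_m$ meets at most $9$ cells of the same level as $12^{(m)}$, and each differs from $12^{(m)}$ in a bounded way: they share the prefix $12^{(m-k)}$ with $k$ bounded. Choosing $x_m$ away from the digit-$4$ cell, for instance the vertex $F_{12^{(m)}}(q_2)$, which lies deep inside a gasket region, keeps these neighbors' weights within a factor depending only on $\mathbf p$, as needed.
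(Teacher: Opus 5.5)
Your ``if'' direction is the paper's argument. For ``only if'' the paper does not estimate any ball. It quotes \cite[Theorem 1.1]{Yung2007}: for doubling self-similar measures under the OSC, $p_v\le Cp_w$ whenever $K_v\subset\bar B(K_w,\rho_w)$. It applies this with $w=12^{(m)}$, $v=21^{(m)}$.

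Your direct doubling comparison between $x_m=F_{12^{(m)}}(q_1)$ and $y_m=F_{21^{(m)}}(q_2)$ is a sound strategy. However, the upper bound you flagged is not delivered by the tools you name. The decomposition of Lemma \ref{MCP} only gives $\mu(K_\tau)\le\sum_{v\in\Gamma_\tau}p_v$. Take $\tau$ to be the lighter of $12^{(m)}$ and $21^{(m)}$, which is the cell whose mass you must bound from above. Then $\Gamma_\tau$ contains the other word, since both have ratio $2^{-(m+1)}$ and $K_{12^{(m)}}\cap K_{21^{(m)}}=\{q\}$. The weights $p_1p_2^m$ and $p_2p_1^m$ are exactly the ones that are incomparable under your contradiction hypothesis, so this bound yields nothing.

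Proposition \ref{GeoRT3} does not help either. It compares the weights $p_v$ of cells meeting $B(x_m,cr)$, not their masses $\mu(K_v)$, and the step $\mu(K_v)\lesssim p_v$ is precisely where $(\mathrm{AV}_{\mathbf\rho})$ entered in Lemma \ref{MCP}. Also, your suggested alternative centre is $F_{12^{(m)}}(q_2)=F_1(F_2^m(q_2))=F_1(q_2)=q$. That is the junction point where both competing cells meet, so it would destroy the comparison; keep $x_m=F_{12^{(m)}}(q_1)$.

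The missing observation is that $\mu(K_\tau)=p_\tau$ exactly on $K_\lambda$. In $\mu(K_\tau)=\sum_{v\in\Lambda_{\mathbf\rho}(\rho_\tau)}p_v\,\mu(F_v^{-1}(K_\tau))$, every term with $v\ne\tau$ vanishes. Indeed, $F_v^{-1}(K_\tau)\cap K=F_v^{-1}(K_v\cap K_\tau)$ is at most one point by Lemma \ref{GeoRT2}(1). A self-similar measure with positive weights has no atoms (a maximal atom would force infinitely many atoms of the same mass).

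Now $x_m=F_{12^{(m-1)}1}(q_2)\in\triangle$ is the junction of $K_{12^{(m-1)}1}$ and $K_{12^{(m)}}$. Descending through $K_1\supset K_{12}\supset\cdots$ and using Lemma \ref{GeoRT}(4) shows $B(x_m,cr)\cap K\subset K_{12^{(m-1)}1}\cup K_{12^{(m)}}$ for $c=c(\lambda)$ small. Hence $V(x_m,cr)\le p_1^2p_2^{m-1}+p_1p_2^m\lesssim p_1p_2^m$. On the other side, $V(y_m,cr)\ge p_{21^{(m)}2^{(k)}}\gtrsim p_2p_1^m$ for a fixed $k$. Doubling with $d(x_m,y_m)=2r$ gives $(p_1/p_2)^{m-1}\lesssim 1$. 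The same argument with the two points exchanged gives the reverse, so $p_1=p_2$.

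Repaired this way, your route is self-contained and avoids Yung's general characterization. The price is that it relies on the single-point intersection structure specific to $K_\lambda$.
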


\begin{proof}
The ``if'' part follows directly from Proposition \ref{KlAV} (for tuple $\mathbf{p}$) and Corollary \ref{muScal} (using the known properties $(\mathrm{WOC}_{\mathbf{\rho }})$ and $(\mathrm{CP}_{\mathbf{\rho }})$).

We prove the ``only if'' part. Obviously
the IFS $\{ F_{1},F_{2},F_{3},F_{4}\} $ consists of only
similitudes and satisfies the open set condition. Since $\mu $ satisfies $(%
\mathrm{VD})$, it follows by \cite[Theorem 1.1]{Yung2007} that 
\begin{equation*}
p_{v}\leq Cp_{w}
\end{equation*}%
uniformly for any finite words $w,v$ such that $K_{v}\subset \bar{B}(K_{w},\rho _{w})$. In particular, with $w=12^{(m)},v=21^{(m)}$, we obtain 
\begin{equation*}
p_{2}p_{1}^{m}\leq Cp_{1}p_{2}^{m}\quad \text{for all }\ m\geq 1,
\end{equation*}%
which implies $p_{1}\leq p_{2}$. It follows by the rotational symmetry of $K$
that $p_{2}\leq p_{3}$ and $p_{3}\leq p_{1}$, thus showing $%
p_{1}=p_{2}=p_{3} $. 
\end{proof}

\subsection{Refined resistance estimates}

\label{subsec:refined_impedance}

Now we consider the resistance estimates on $K$. Indeed, Cao has
systematically investigated self-similar resistance forms on $K:=K_{\lambda }$
for all $\lambda \in (0,1/2)$ in \cite{Cao.2023.AG}. Here we translate his
main conclusions into the following lemma.

\begin{lemma}
\cite[Theorem 3.6 and Definition 3.7 (b)]{Cao.2023.AG}\label{KlCao} Let $%
\lambda \in (0,1/2)$ and $s\in (0,1)$. Then there exist a unique $%
b:=b(\lambda ,s)\in \lbrack 3/5,1)$ and a unique regular self-similar
resistance form $(\mathcal{E},\mathcal{F}):=(\mathcal{E}_{\lambda ,s}, 
\mathcal{F}_{\lambda ,s})$ on $K:=K_{\lambda }$ satisfying the following
properties:

\begin{enumerate}
\item[i)] $\mathcal{F}\subset C(K)$, and $\mathcal{R}(q_{i},q_{j})=2/3$ for
all $1\leq i<j\leq 3$, where $\mathcal{R}:=\mathcal{R}_{\lambda ,s}$ is the
resistance of $(\mathcal{E},\mathcal{F})$.

\item[ii)] For all $u\in \mathcal{F}$, $u\circ F_{i}\in \mathcal{F}$ for
every $1\leq i\leq 4$ and 
\begin{equation}
\mathcal{E}(u)=b^{-1}\sum_{i=1}^{3}\mathcal{E}(u\circ F_{i})+s^{-1}\mathcal{%
E }(u\circ F_{4}).  \label{Klss}
\end{equation}
\end{enumerate}
\end{lemma}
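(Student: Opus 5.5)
The plan is to reduce the statement to Cao's existence and uniqueness theorem for self-similar resistance forms on $K_\lambda$. The remaining work is to check that the objects there agree with the objects here, namely the normalization, the form of the renormalization factors, and regularity.

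\textbf{Step 1: the harmonic structure.} First I would recall the setup in \cite{Cao.2023.AG}. The boundary set is $V_0=\{q_1,q_2,q_3\}$, and by Lemma \ref{GeoRT} (4) the cell $K_4$ does not meet $\triangle$. So $F_4(V_0)$ attaches to the first-level graph only through points lying on the sides of the cells $K_1,K_2,K_3$, which are non-vertex points by Lemma \ref{GeoRT2}. A self-similar energy with weights $(b^{-1},b^{-1},b^{-1},s^{-1})$ is therefore determined by a renormalization (trace) map on conductance networks on $V_0$. By the $A_3$-symmetry (Lemma \ref{GeoRT} (1)), and because the first three weights are equal, the symmetric network $D_0$ with equal conductances on $V_0$ is mapped to a multiple of itself. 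Cao's Theorem 3.6 gives that, for each $s\in(0,1)$, there is exactly one $b=b(\lambda,s)$ for which $D_0$ is an eigenvector with eigenvalue $1$, and that $b\in[3/5,1)$. Here $3/5$ is the Sierpi\'nski gasket value, attained in the limit $s\to 0$, and $b<1$ comes from the extra parallel cell $K_4$.

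\textbf{Step 2: normalization and construction of the form.} I would fix $D_0$ so that the effective resistance between any two of the $q_i$ equals $2/3$; the triangle with unit edge conductances has exactly this value. Since the renormalization fixes $D_0$, the resistances between points of $V_0$ stay the same at every level, so $\mathcal{R}(q_i,q_j)=2/3$. Cao's Definition 3.7 (b) then builds $(\mathcal{E},\mathcal{F})$ as the increasing limit of the compatible discrete energies on $V_n=\bigcup_{|w|=n}F_w(V_0)$, which gives (\ref{Klss}) directly. Uniqueness comes from uniqueness of the symmetric fixed point in Cao's theorem together with the normalization.

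\textbf{Step 3: continuity and regularity.} By Kigami's general theory of resistance forms, every $u\in\mathcal{F}$ satisfies $|u(x)-u(y)|^2\le \mathcal{R}(x,y)\mathcal{E}(u)$. So $\mathcal{F}\subset C(K)$ as soon as $\mathcal{R}$ induces the Euclidean topology on $K$. For this I would use (\ref{Klss}) to show that $\operatorname{diam}_{\mathcal{R}}(K_w)\le C\,r_w$, where $r_w$ is the product of the resistance factors $b$ (for digits $1,2,3$) and $s$ (for digit $4$) along $w$. This quantity tends to $0$ geometrically, since $b,s<1$. Compactness of $(K,\mathcal{R})$ then yields regularity.

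\textbf{Main obstacle.} The delicate step is Step 1. One has to check that Cao's parametrization, with the equal factor $b$ on cells $1,2,3$ and the prescribed $s$ on cell $4$, corresponds exactly to a unique positive fixed point for every $s\in(0,1)$, and that the bound $b\ge 3/5$ holds. I would handle this by computing the trace of the level-one network explicitly and using monotonicity of effective resistance in the conductance of $K_4$.
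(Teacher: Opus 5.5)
Your top-level move, reducing the lemma to Cao's Theorem 3.6 and Definition 3.7(b), is exactly what the paper does. The lemma is imported from Cao's paper with no argument of its own.

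The problem is the mechanism you attribute to Cao in Steps 1--2 and plan to verify as the ``main obstacle''. It is Kigami's p.c.f.\ construction with boundary $V_0=\{q_1,q_2,q_3\}$, and $K_\lambda$ is not finitely ramified with respect to this $V_0$. Your own observation shows this, so the ``therefore'' points the wrong way. By Lemma \ref{GeoRT2}, $K_4\cap K_j$ is a vertex of $F_4(\triangle)$ but a non-vertex point of $F_j(\triangle)$, namely $F_j(p)$ with $p$ at relative position $2\lambda$ on a side of $\triangle$. Consequences:
\begin{itemize}
\item In the level-one energy $b^{-1}\sum_{i=1}^{3}D_0(u\circ F_i)+s^{-1}D_0(u\circ F_4)$ on $V_1$, the triangle $F_4(V_0)$ shares no vertex with the other three. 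Its trace on $V_0$ is therefore the gasket trace $\frac{3}{5}b^{-1}D_0$, and your eigenvalue equation returns $b=3/5$ for every $s$: it never sees $K_4$.
\item For $\lambda\notin\mathbb{D}$, the point $p$ is never a vertex of any cell, since cells containing a digit $4$ do not meet $\triangle$. So in the graphs on $V_n=\bigcup_{|w|=n}F_w(V_0)$, each cell $K_{w4}$ is never joined to its neighbours. The limit of these energies assigns infinite resistance across the contact points, so it is not a resistance form on $K_\lambda$.
\end{itemize}

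Hence the computation you propose, ``the trace of the level-one network'', cannot be carried out. The equation that fixes $b$ involves the trace of the unknown form on the post-critical set. That set contains $p$ and all its shifts along the sides of $\triangle$, and it is infinite when $\lambda$ is irrational, so this is an infinite-dimensional fixed-point problem. That is the real content of Cao's theorem. It is also why the paper, when it needs explicit harmonic extensions (proof of $(\mathrm{R}^H_{\geq})$), uses them only for $\lambda\in\mathbb{D}$ and reaches general $\lambda$ by approximation, as in Proposition \ref{KlDF}. For this lemma you should simply cite Cao for existence, uniqueness and the range of $b$, without the p.c.f.\ reconstruction.

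Two smaller points.
\begin{itemize}
\item Your heuristic for $b$ is reversed. In \eqref{Klss}, $s$ scales the resistance of $K_4$, so $s\to0$ short-circuits the contact points and pushes $b$ up. The gasket value $3/5$ corresponds to deleting $K_4$ (formally $s\to\infty$), and the parallel cell $K_4$ accounts for $b>3/5$, not for $b<1$.
\item Step 3 is fine once the form is known to exist: continuity and regularity follow from $\mathcal{R}(F_wx,F_wy)\le r_w\mathcal{R}(x,y)$, $b,s<1$, and compactness.
\end{itemize}
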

Definition and properties of resistance forms can be found in Appendix \ref{RFF}.

Note by Proposition \ref{KlAV} that $\mathbf{s}:=(b,b,b,s)$ satisfies $%
(\mathrm{AV}_{\mathbf{\rho }})$. Define 
\begin{equation}\label{Klgam1}
\gamma _{1}:=\gamma _{1}(\lambda ,s)=\frac{\ln b(\lambda ,s)}{\ln \rho _{1}}
,\quad \gamma _{4}:=\gamma _{4}(\lambda ,s)=\frac{\ln s}{\ln \rho _{4}}.
\end{equation}
Since $K$ satisfies $(\mathrm{CP}_{\mathbf{\rho }})$, it follows from
Propositions \ref{KlAV} and \ref{ML4} that, there exists a
scaling function $H(x,r)$ with scaling exponents
\begin{equation}
\gamma ^{\ast }=\gamma _{1}\vee \gamma _{4}\quad\text{and}\quad\gamma _{\ast}=\gamma _{1}\wedge \gamma _{4}.  \label{Klgam}
\end{equation}
such that $H(x,1)=1$ for all $x\in K$, and 
\begin{equation}
C_{\mathbf{s}}^{-1}s_{w}\leq H(x,r)\leq C_{\mathbf{s}}s_{w}  \label{HsimS}
\end{equation}%
for all $x\in K$, $r\in (0,1)$ and any finite word $w\in \Lambda _{\mathbf{\rho }}(r)$ with $x\in K_{w}$, where the constant 
\begin{equation}
C_{\mathbf{s}}:=C_{\mathbf{s}}(\lambda ,s)=\left( \frac{s_{\max }}{s_{\min }}\right) ^{4}s_{\min }^{-1}.  \label{Cs}
\end{equation}

The following proposition is inspired by \cite{Cao.2023.AG}.

\begin{proposition}
\label{KlDF} Let $\lambda \in (0,1/2)$ and $\mu $ be a Radon measure on $K$.
Then for every $s\in (0,1)$, $(\mathcal{E},\mathcal{F})$ is a regular strongly local Dirichlet form on $L^{2}(K,\mu)$.
\end{proposition}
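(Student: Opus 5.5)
The plan is to deduce everything from the general theory of resistance forms recalled in Appendix \ref{RFF}, together with Lemma \ref{KlCao} and compactness of $K$. By Lemma \ref{KlCao}, $(\mathcal{E},\mathcal{F})$ is a regular resistance form on $K$ with $\mathcal{F}\subset C(K)$. The first step is to check that the resistance metric $\mathcal{R}$ induces on $K$ the same topology as the Euclidean metric. I would do this with the self-similar identity \eqref{Klss}. For a finite word $w$ it gives $\mathcal{E}(u)\ge s_w^{-1}\mathcal{E}(u\circ F_w)$, where $\mathbf{s}=(b,b,b,s)$, and hence $\operatorname{osc}_{K_w}u\le C\,(s_w\mathcal{E}(u))^{1/2}$, using that $\sup_{x,y}\mathcal{R}(x,y)<\infty$ on $K$. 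Combined with the connectivity $(\mathrm{CP}_{\mathbf{\rho}})$ from Proposition \ref{KlCP}, this yields $\mathcal{R}(x,y)\to0$ as $|x-y|\to0$. Conversely, $\mathcal{R}(x,y)\ge c(x,y)>0$ for distinct points, because $\mathcal{F}\subset C(K)$ separates points (regularity). Since $K$ is compact Euclidean, the identity map $(K,|\cdot|)\to(K,\mathcal{R})$ is a continuous bijection from a compact space to a Hausdorff space, so it is a homeomorphism. In particular, $(K,\mathcal{R})$ is compact.

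Next, I would invoke the standard theorem (Kigami, Theorem 9.4 in ``Resistance forms...'', presumably recorded in Appendix \ref{RFF}): if $(\mathcal{E},\mathcal{F})$ is a regular resistance form on a locally compact separable metric space $(K,\mathcal{R})$ and $\mu$ is a Radon measure with full support, then $(\mathcal{E},\mathcal{F}\cap L^2(K,\mu))$ is a regular Dirichlet form on $L^2(K,\mu)$. Since $K$ is compact and $\mathcal{F}\subset C(K)$, every $u\in\mathcal{F}$ is bounded, so $\mathcal{F}\subset L^2(K,\mu)$. Closedness in $L^2$ then follows from the resistance estimate $|u(x)-u(y)|^2\le\mathcal{R}(x,y)\mathcal{E}(u)$. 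The Markov property is inherited from the resistance form axioms. Regularity means $\mathcal{F}\cap C(K)=\mathcal{F}$ is dense in $C(K)$ for the sup norm and in $\mathcal{F}$ for $\mathcal{E}_1$; the first is exactly regularity of the resistance form, and the second is trivial. If $\mu$ does not have full support, I would interpret the statement on $\supp\mu$, but presumably full support is intended.

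The remaining step is strong locality, which I expect to be the main obstacle. Let $u,v\in\mathcal{F}$ with $u$ constant on a neighborhood $U$ of $\supp v$. Choose $n$ so large that every cell $K_w$ with $w\in I^n$ meets either $U^c$ or $\supp v$ but not both. This is possible because $\operatorname{diam}K_w\le2^{-n}\to0$ while $d(\supp v,U^c)>0$. Iterating \eqref{Klss} gives
\begin{equation*}
\mathcal{E}(u,v)=\sum_{w\in I^n}s_w^{-1}\mathcal{E}(u\circ F_w,v\circ F_w).
\end{equation*}
On each cell, either $v\circ F_w\equiv0$ or $u\circ F_w$ is constant. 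In both cases the term vanishes, since $\mathcal{E}(\text{const},\cdot)=0$ for resistance forms. Hence $\mathcal{E}(u,v)=0$, which is $(\mathrm{sloc})$; in particular, the jump and killing parts vanish. The delicate point is ensuring that the polarized form of \eqref{Klss} holds, which follows by polarization. It also requires knowing that $\mathcal{E}(c,\cdot)=0$, which holds because $\mathcal{E}(1)=0$ for resistance forms.
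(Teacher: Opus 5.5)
Your proposal is correct. For the Dirichlet-form and strong-locality parts it matches the paper: the paper also derives regularity from Kigami's theorem (Lemma \ref{R2DF}) once $\mathcal{R}$ is known to induce the Euclidean topology, and its Proposition \ref{ssploc} is exactly your cell-decomposition argument.

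\textbf{Where you genuinely differ: identifying the topologies.} The paper imports from \cite{Cao.2023.AG} the two-sided bound \eqref{wKlRE2}, $C^{-1}d(x,y)^{\gamma^{\ast}}\le\mathcal{R}(x,y)\le Cd(x,y)^{\gamma_{\ast}}$. That bound is available there only for dyadic $\lambda$. The paper transfers it to general $\lambda$ through the convergence $\mathcal{R}_{\lambda_n,s}\to\mathcal{R}_{\lambda,s}$, $b_n\to b$. You prove only the upper, H\"older-type bound, directly from \eqref{Klss} and $(\mathrm{CP}_{\mathbf{\rho}})$. You then replace the lower bound by a soft argument: a continuous bijection from the compact space $(K,|\cdot|)$ onto the metric space $(K,\mathcal{R})$ is a homeomorphism. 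Your route is self-contained and avoids both the dyadic restriction and the limiting procedure. What it gives up is a quantitative lower bound on $\mathcal{R}$, which the paper's route supplies but which is not needed for this proposition.

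\textbf{A step you should justify.} You use $D:=\sup_{x,y\in K}\mathcal{R}(x,y)<\infty$ without proof, and your oscillation bound rests on it. It is not among the properties listed in Lemma \ref{KlCao}, but it is easy to supply. Since $\mathcal{F}\subset C(K)$ and $K$ is compact, for each fixed $u$ the numbers $u(x)-u(y)$ are bounded uniformly in $x,y$. The maps $u\mapsto u(x)-u(y)$ are bounded linear functionals on the Hilbert space $(\mathcal{F}/{\sim},\mathcal{E})$ with norms $\mathcal{R}(x,y)^{1/2}$. The uniform boundedness principle then gives $D<\infty$.

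Alternatively, $\mathcal{R}(q_1,\cdot)$ is lower semicontinuous on $K$, being a supremum of continuous functions. By \eqref{Klss} you have $\mathcal{R}(F_w a,F_w b)\le s_w\mathcal{R}(a,b)$, hence $\mathcal{R}(q_1,F_{\omega(n)}(q_1))\le \max_i\mathcal{R}(q_1,F_i(q_1))\sum_{k\ge 0}s_{\max}^k$. Letting $F_{\omega(n)}(q_1)\to x$ gives the bound.

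\textbf{Two smaller remarks.} Positivity of $\mathcal{R}(x,y)$ for $x\ne y$ needs no regularity: $\mathcal{R}$ is a metric for every resistance form. Also, your derivation of $\mathcal{E}(1)=0$ from the resistance-form axioms is cleaner than the paper's appeal to Lemma \ref{1inF}, which as stated already assumes $(\mathrm{sloc})$.
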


\begin{proof}
Given $\lambda \in (0,1/2)$ and $s\in (0,1)$, we first show there
exists $C=C(\lambda ,s)>0$ such that 
\begin{equation}
C^{-1}d(x,y)^{\gamma ^{\ast }}\leq \mathcal{R}(x,y)\leq Cd(x,y)^{\gamma
_{\ast }},  \label{wKlRE2}
\end{equation}
for all $x,y\in K$, where $\gamma ^{\ast },\gamma _{\ast }$ come from (\ref%
{Klgam}).

Indeed, taking 
\begin{equation*}
\lambda _{n}\in \left[ \frac{\lambda }{2},\frac{1}{4}+\frac{\lambda }{2} %
\right] \cap \mathbb{D},\quad \text{where}\quad \mathbb{D}:=\left\{ \frac{k}{
2^{m}}:k\in \mathbb{Z},m\in \mathbb{N}\right\}
\end{equation*}
such that $\lambda _{n}\rightarrow \lambda $, we see by \cite[Proposition
5.6]{Cao.2023.AG} that (\ref{wKlRE2}) holds for $\lambda _{n},s$ with some $%
C$ depending on $\lambda $ and $s$ but independent of $n$. Further, by \cite[
Subsection 6.1]{Cao.2023.AG}, $\mathcal{R}_{\lambda _{n},s}(x,y)\rightarrow 
\mathcal{R}_{\lambda ,s}(x,y)=\mathcal{R}(x,y)$ for all $x,y\in K$ and $%
b_{n}\rightarrow b$. Therefore, $\gamma ^{\ast }(\lambda _{n},s)\rightarrow
\gamma ^{\ast }(\lambda ,s)=\gamma ^{\ast }$ and $\gamma _{\ast }(\lambda
_{n},s)\rightarrow \gamma _{\ast }(\lambda ,s)=\gamma _{\ast }$. Thus (\ref%
{wKlRE2}) follows for $\lambda ,s$ with the same $C>0$.

As a consequence of (\ref{wKlRE2}), $\mathcal{R}$ induces the same topology as the original one. Thus $(\mathcal{E},\mathcal{F})$ is a regular
Dirichlet form on $L^{2}(K,\mu)$ by Lemma \ref{R2DF}. The strong locality follows from Proposition \ref{ssploc}, using the fact that $b,s<1$.
\end{proof}

Now we show the resistance estimates with respect to $(\mathcal{E},\mathcal{F})$, which are more precise than what is given in \cite[Proposition 5.6]{Cao.2023.AG}.

\begin{proposition}
For any $\lambda\in(0,1/2)$ and $s\in(0,1)$, the resistance $\mathcal{R}:= 
\mathcal{R}_{\lambda,s}$ related to $(\mathcal{E},\mathcal{F}):=(\mathcal{E}
_{\lambda,s},\mathcal{F}_{\lambda,s})$ on $K:=K_\lambda$ satisfies $(\mathrm{%
R}^H)$.
\end{proposition}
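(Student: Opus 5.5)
We must prove $(\mathrm{R}^H)$, i.e.\ $C^{-1}H(x,d(x,y))\le\mathcal{R}(x,y)\le CH(x,d(x,y))$ for $x,y\in K$ (with $d(x,y)<\iota$ for the upper bound). By \eqref{HsimS} it suffices to compare $\mathcal{R}(x,y)$ with $s_w$, where $w\in\Lambda_{\mathbf{\rho}}(r)$ contains $x$ and $r\asymp d(x,y)$. The two basic tools are the following. First, by \eqref{Klss} and the standard restriction argument for self-similar resistance forms, for every finite word $v$ and $u\in\mathcal{F}$ we have $\mathcal{E}(u)\ge s_v^{-1}\mathcal{E}(u\circ F_v)$. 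Hence for $a,b\in K_v$,
\[
\mathcal{R}(a,b)\le s_v\,\mathcal{R}(F_v^{-1}a,F_v^{-1}b)\le C s_v,
\]
with $C=\sup_{K\times K}\mathcal{R}<\infty$, which is finite by \eqref{wKlRE2}. Second, $\mathcal{R}$ is a metric, so it satisfies the triangle inequality.

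\textbf{Upper bound.} Take $r$ with $d(x,y)<c_0r$ and $r\asymp d(x,y)$. Apply $(\mathrm{CP}_{\mathbf{\rho},c_0})$ from Proposition~\ref{KlCP} to obtain a chain of at most five cells $K_{w^{(i)}}$, $w^{(i)}\in\Lambda_{\mathbf{\rho}}(r)$, joining $x$ to $y$. By the triangle inequality,
\[
\mathcal{R}(x,y)\le C\sum_i s_{w^{(i)}}.
\]
By $(\mathrm{AV}_{\mathbf{\rho}})$ for $\mathbf{s}$ (Proposition~\ref{KlAV}, in form c) of Proposition~\ref{av=}), each $s_{w^{(i)}}$ is comparable to $s_{w^{(1)}}$. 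Then \eqref{HsimS} together with the scaling of $H$ (to pass from $r$ to $d(x,y)$) gives $\mathcal{R}(x,y)\le CH(x,d(x,y))$.

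\textbf{Lower bound.} This is the main obstacle. I would prove the stronger ball estimate $\mathcal{R}(x,B(x,r)^c)\gtrsim H(x,r)$ by constructing a test function. Fix $n$ so that the level-$n$ words near $x$ have $\rho_w\asymp d(x,y)$ (via Lemma~\ref{rhowc0}), and let $w$ be a word containing $x$. Let $U=K_w\cup\bigcup_{v\in\Gamma_{w,c_0}}K_v$; this is a neighbourhood of $K_w$ in $K$ consisting of at most ten cells, by Proposition~\ref{GeoRT3}(2). Pick $y$ with $d(x,y)\ge c\rho_w$ so that $y\notin U$. On each cell $K_v\subset U$, define $u=h_v\circ F_v^{-1}$, where $h_v$ is harmonic with boundary values prescribed at the vertices $F_v(q_i)$: value $1$ at vertices lying in $K_w$, value $0$ at vertices on the outer boundary of $U$, and suitable values otherwise. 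Set $u=0$ off $U$. Lemma~\ref{GeoRT2} (cells meet at single vertex/boundary points) makes this function well defined and continuous. Proposition~\ref{GeoRT3}(4), which says that every neighbour is linked to $K_w$ through a vertex, should guarantee that the values $1$ and $0$ are never forced at the same point. This is the delicate geometric step. The non-vertex contact points allowed by Lemma~\ref{GeoRT2}(1) (a $4$-cell vertex touching the side of a $1,2,3$-cell) require care: on such a cell I would choose $h_v$ to equal the boundary value constant along the relevant side, using $\triangle\subset K$, or else subdivide once more.

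Energy estimate. By self-similarity,
\[
\mathcal{E}(u)=\sum_{v}s_v^{-1}\mathcal{E}(h_v)\le C\max_v s_v^{-1}\le C' s_w^{-1},
\]
using Lemma~\ref{rhowc0} with $\zeta=\mathbf{s}$. Since $u(x)=1$ and $u(y)=0$, we get $\mathcal{R}(x,y)\ge \mathcal{E}(u)^{-1}\ge c s_w\asymp H(x,\rho_w)$. For general pairs with $d(x,y)$ large relative to the chosen scale, take the level $n$ maximal such that $y\notin U$; then $\rho_w\asymp d(x,y)$ holds up to the factor $\lambda_*/2$, and \eqref{HsimS} together with the scaling of $H$ finishes the bound. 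Large distances $d(x,y)\gtrsim c_0$ are handled trivially, since $\mathcal{R}$ is bounded below on such pairs by \eqref{wKlRE2}.
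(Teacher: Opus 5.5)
\textbf{Upper bound.} Your upper bound is the paper's argument, and it is fine: a five-cell chain from $(\mathrm{CP}_{\mathbf{\rho},c_0})$, the triangle inequality, $\mathcal{R}(a,b)\le Cs_v$ on $K_v$, $(\mathrm{AV}_{\mathbf{\rho}})$ for $\mathbf{s}$, \eqref{HsimS} and scaling. For $d(x,y)\ge c_0/2$ you just use that $\mathcal{R}$ is bounded.

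\textbf{Lower bound: the test function is not constructed.} This is a genuine gap. The whole content of $(\mathrm{R}^H_{\geq})$ is producing a continuous $u\in\mathcal{F}$ with $u=1$ near $x$, $u=0$ off a bounded union of cells, and $\mathcal{E}(u\circ F_v)$ uniformly bounded on each cell. Your construction does not produce one.

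Pieces that are harmonic with respect to the three vertices cannot be glued. In $K_\lambda$ cells also meet at non-vertex points of their sides (Lemma~\ref{GeoRT2}), and there the value of such a piece is dictated by its vertex data.

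Take $w=13$ and its neighbour $v=31$, for any $\lambda$:
\begin{itemize}
\item $K_{31}$ meets $K_{13}$ at its top vertex;
\item its two bottom vertices lie in $K_{32}$ and $K_{33}$;
\item $K_{34}$ touches the bottom side of $K_{31}$ at a non-vertex point.
\end{itemize}
Since $K_{32},K_{33},K_{34}\subset\{y\le\sqrt3/4\cdot\tfrac12\}$ and $K_{13}\subset\{y\ge\sqrt3/4\}$, these three cells lie outside your $U$. So the vertex data on $K_{31}$ are forced to be $(1,0,0)$, while $u$ must also vanish at the contact point with $K_{34}$. The vertex-harmonic function with data $(1,0,0)$ is positive there by the maximum principle, so $u$ jumps.

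Proposition~\ref{GeoRT3}(4) does not address this. A clash of $1$ and $0$ at a single point is already excluded by the $c_0$-separation; the problem is the values along sides.

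\textbf{Your fallbacks.} Your two fallbacks are exactly where the real work lies.
\begin{itemize}
\item ``Subdividing once more'' turns such a contact into a vertex after finitely many steps only when its relative position on the side (here $2\lambda$) is dyadic, i.e.\ for $\lambda\in\mathbb{D}$.
\item Functions with data on a whole side (e.g.\ $1$ at a vertex, $0$ on the opposite side) need an existence and energy bound. The paper takes this from Cao's Lemma 5.2, which is available for dyadic $\lambda$. Such a function also takes intermediate values on the two adjacent sides, so every other cell touching those sides must be made consistent with it and brought down to $0$. In a same-level $U$, whose cells touch each other and the outside both at vertices and at side points, ``suitable values otherwise'' does not say how to do this with uniformly bounded energy.
\end{itemize}

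\textbf{What the paper does instead.} It uses a multiscale buffer rather than same-level neighbours:
\begin{itemize}
\item $u\equiv1$ on the at most three cells of $\Lambda_{\mathbf{\rho}}(r/4)$ containing $x$;
\item on each much smaller cell $K_v$, $v\in\Lambda_{\mathbf{\rho}}(c_0r/4)$, meeting that set, $u$ is the vertex-to-opposite-side function, or $u\equiv1$ if the contact is a non-vertex point of $F_v(\triangle)$;
\item a second layer of cells from $\Lambda_{\mathbf{\rho}}(c_0^2r/4)$ around each such $K_v$ carries the values down to $0$.
\end{itemize}
Proposition~\ref{GeoRT3}(1) keeps the pieces attached to different $v$ disjoint. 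The energy is summed using the counts $27$ and $9$ and $(\mathrm{AV}_{\mathbf{\rho}})$.

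Non-dyadic $\lambda$ is then handled separately, by approximating with dyadic $\lambda_n$ and using Cao's continuity of $\mathcal{R}_{\lambda,s}$ and $b(\lambda,s)$. Your plan does not mention this step.

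\textbf{Minor slip.} As $n$ grows $U$ shrinks, so you want the smallest $n$ with $y\notin U$, not the largest.
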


\begin{proof}
Let $c_{0}$ be the constant in Proposition \ref{GeoRT3}.

(i) We prove $(\mathrm{R}_{\leq }^{H})$ first. Fix $x,y\in K$. We
distinguish two cases.

Case $(1)$: $|x-y|\geq c_{0}/2$. Then by (\ref{wKlRE2}) and the
definition of $H$, there exists a constant $C$ independent of $\lambda
,s$ such that 
\begin{equation*}
\mathcal{R}(x,y)\leq C=CH(x,1)\leq C(c_{0}/2)^{-\gamma ^{\ast
}}H(x,c_{0}/2)\leq C(c_{0}/2)^{-\gamma ^{\ast }}H(x,|x-y|),
\end{equation*}
thus proving $(\mathrm{R}_{\leq }^{H})$ in this case.

Case $(2)$: $|x-y|<c_{0}/2$. Let $w,v\in \Lambda _{\mathbf{\rho }
}(2|x-y|/c_{0})$ be words satisfying $x\in K_{w},y\in K_{v}$. Since $K$ satisfies $(\mathrm{CP}_{\mathbf{\rho },c_{0}})$ by Proposition \ref{KlCP}, there exist finite words $\left\{ \tau ^{(i)}\right\} _{i=1}^{5}\subset
\Lambda _{\mathbf{\rho }}(2|x-y|/c_{0})$ such that, 
\begin{equation*}
\tau ^{(1)}=w,\quad\tau ^{(5)}=v\quad\text{and}\quad K_{\tau ^{(i)}}\cap K_{\tau
^{(i+1)}}\neq \emptyset\quad\text{for every}\quad 1\leq i\leq 4.
\end{equation*}
Fix $z_{i}\in K_{\tau ^{(i)}}\cap K_{\tau ^{(i+1)}}\ $for every $1\leq i\leq
5$ and let $z_{0}=x,z_{5}=y$.Take $C_{\mathrm{AV}}$, the constant in
Proposition \ref{KlAV}. Noting that $(\mathcal{E},\mathcal{F})$ is
a self-similar resistance form and $\mathbf{\zeta }$ satisfies $(\mathrm{AV}_{\mathbf{\rho }})$, then
\begin{equation*}
\mathcal{R}(x,y)\leq \dsum\limits_{i=1}^{5}\mathcal{R}(z_{i-1},z_{i})\leq
\dsum\limits_{i=1}^{5}Cs_{\tau ^{(i)}}\leq C\dsum\limits_{i=1}^{5}C_{\mathrm{%
\ AV}}^{i-1}s_{w}=5CC_{\mathrm{AV}}^{4}s_{w}=:C^{\prime }s_{w}.
\end{equation*}
Thus by (\ref{HsimS}) and (\ref{scpr}), 
\begin{equation*}
\mathcal{R}(x,y)\leq C^{\prime }s_{w}\leq C^{\prime }C_{\mathbf{s}
}H(x,2|x-y|/c_{0})\leq C^{\prime }C_{\mathbf{s}}(2/c_{0})^{\gamma ^{\ast
}}H(x,|x-y|),
\end{equation*}
which shows $(\mathrm{R}_{\leq }^{H})$ again.

(ii) Now we prove $(\mathrm{R}_{\geq }^{H})$. We distinguish two cases.

Case (1): $\lambda \in (0,1/2)\cap \mathbb{D}$. In this setting, the harmonic extension on cells is well-defined by \cite[Lemma 5.2]{Cao.2023.AG}.

We first show there exists a constant $C:=C(\lambda ,s)>0$ depending continuously on $\lambda,s$ such that, for all $B(x,r)\subsetneqq K$ and $w\in \Lambda _{\mathbf{\rho }}(\frac{r}{4})$ satisfying $x\in K_{w}$, 
\begin{equation}
\mathcal{R}(x,B(x,r)^{c})\geq C^{-1}s_{w}.  \label{rb-w}
\end{equation}

Fix $x\in K$, $r\in (0,1)$ and set
\begin{equation*}
S_{r}(x):=\left\{w\in \Lambda_{\mathbf{\rho }}\left(\frac{r}{4}\right):x\in
K_{w}\right\},\quad K_{r}(x):=\bigcup _{w\in S_{r}(x)}K_{w}.
\end{equation*}
It follows from the geometry of $K$ that $\# S_r(x)\leq 3$ for all $x\in K$ and all $r\in(0,1)$. Figure~\ref{fig:srx123} illustrates all possible configurations of $K_r(x)$ when $\#S_r(x)=1$, $2$, or $3$.
\begin{figure}[htbp]
	\centering
	\includegraphics[width=0.9\textwidth]{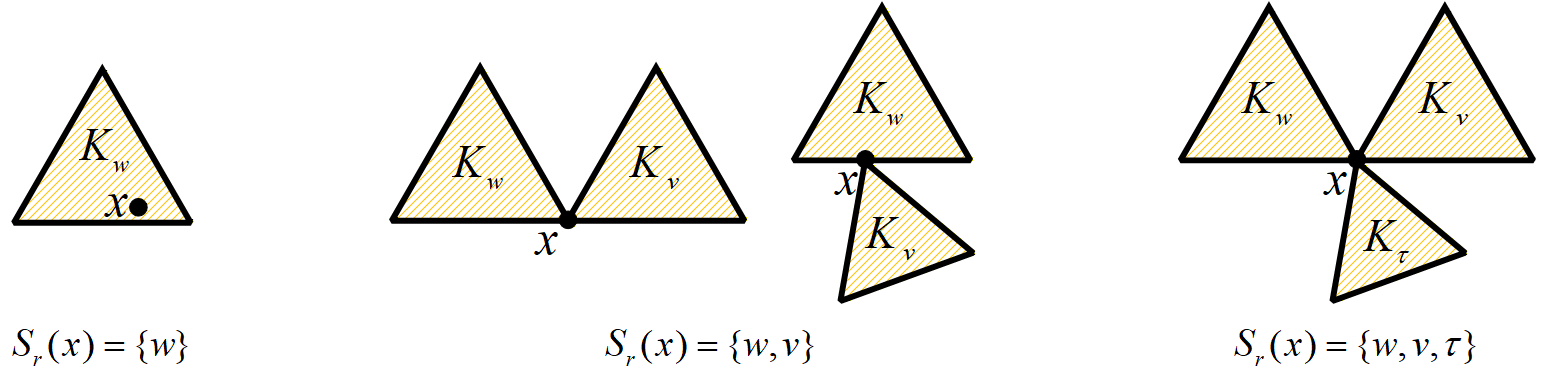}
	\caption{All possible cases for $K_r(x)$.}
	\label{fig:srx123}
\end{figure}

Now we consider sufficiently small cells intersecting $K_r(x)$. Set 
\begin{equation}
	S^{(1)}_{c_0r/4}(x):=\left\{v\in\Lambda_{\mathbf{\rho}}(c_0r/4):K_v\cap K_r(x)\neq\emptyset, K_v\nsubseteq K_r(x)\right\}.
\end{equation}
It follows by (2) of Proposition \ref{GeoRT3} that $\# S^{(1)}_{c_0r/4}(x)\leq 3\times 9=27$ for all $x\in K$ and all $r\in(0,1)$, and by (3) of Lemma \ref{GeoRT2} that, for every $v\in S^{(1)}_{c_0r/4}(x)$, there exists a unique word $w\in S_r(x)$ such that $K_v\cap K_w\neq\emptyset$. Set
\begin{equation}
	S_1[v]:=\{\tau\in\Lambda_{\mathbf{\rho}}(c_0^2 r/4): K_\tau\cap K_v\neq\emptyset, K_\tau\nsubseteq K_v, K_\tau\cap K_w=\emptyset\}.
\end{equation}
It follows by (2) of Proposition \ref{GeoRT3} that $\# S_1[v]\leq 9$.
See Figure \ref{fig:tfr} for the illustration of $S^{(1)}_{c_0r/4}(x)$ and $S_1[v]$.
\begin{figure}[htbp]
	\centering
	\includegraphics[width=0.6\textwidth]{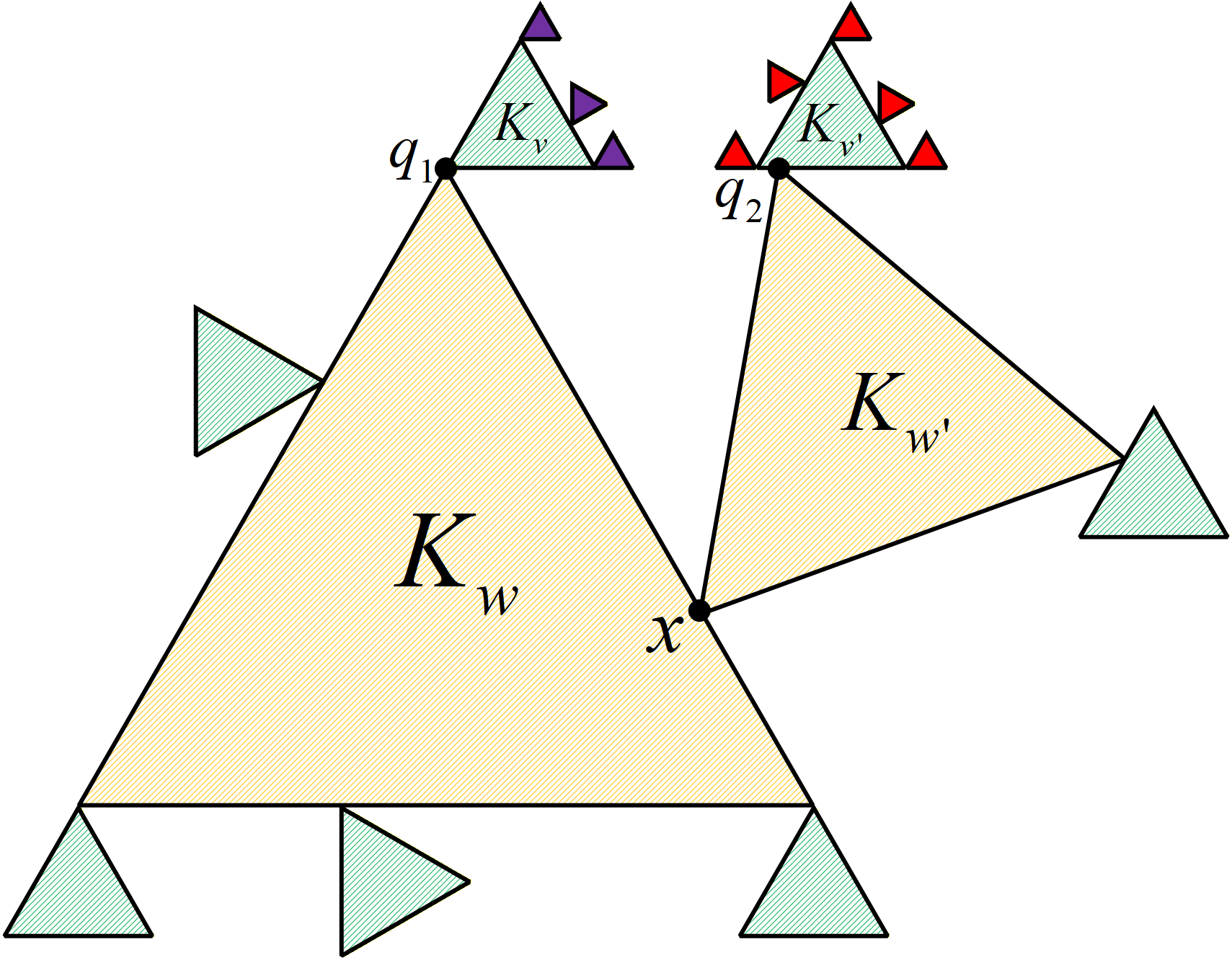}
	\caption{In this figure, the two yellow cells constitute $K_r(x)$, where $w,w'$ are the elements of $S_r(x)$. Green cells correspond to words in $S^{(1)}_{c_0r/4}(x)$, purple cells to $S_1[v]$, and red cells to $S_1[v']$.}
	\label{fig:tfr}
\end{figure}

Now we construct a function $h$. To begin with, for each $v \in S^{(1)}_{c_0r/4}(x)$, let $w$ be the unique word in $S_r(x)$ such that $K_v\cap K_w=\{q_{vw}\}\neq\emptyset$. We define a function $h'_v$ on $K_v$ according to the position of $q_{vw}$ in the triangle $F_v(\triangle)$:
\begin{itemize}
	\item \textbf{Case 1:} $q_{vw}$ is a vertex of $F_v(\triangle)$ (e.g., $q_1$ in Figure~\ref{fig:tfr}). Let $l_v$ be the opposite side. Define $h'_v$ to be the harmonic function on $K_v$ satisfying $h'_v(q_{vw})=1$ and $h'_v|_{l_v}\equiv0$.
	\item \textbf{Case 2:} $q_{vw}$ lies on a side of $F_v(\triangle)$ but is not a vertex (e.g., $q_2$ in Figure~\ref{fig:tfr}). In this case set $h'_v\equiv1$.
\end{itemize}
In both cases, the energy of $h'_v\circ F_v$ is uniformly bounded: when $q_{vw}$ is a vertex, the bound follows from \cite[Lemma 5.2]{Cao.2023.AG}; when $q_{vw}$ is not a vertex, $\mathcal{E}(h'_v\circ F_v) = 0$. Hence, we always have
\begin{equation}\label{Ev}
	\mathcal{E}(h'_v\circ F_v) \leq 2\left( \frac{1}{b} + \frac{1}{s} \right) \leq 2\left( \frac{5}{3} + \frac{1}{s} \right).
\end{equation}

For each $\tau \in S_1[v]$, the intersection $K_\tau \cap K_v$ is a single point $q_{\tau v}$, which is necessarily a vertex of $F_\tau(\triangle)$. Let $l_\tau$ be the opposite side. We define $h_\tau$ on $K_\tau$ as the harmonic function with boundary values $h_\tau(q_{\tau v}) = h'_v(q_{\tau v})$ and $h_\tau|_{l_\tau} \equiv 0$. Again by \cite[Lemma 5.2]{Cao.2023.AG}, we have
\begin{equation}\label{Etau}
	\mathcal{E}(h_\tau \circ F_\tau) \leq 2\left( \frac{5}{3} + \frac{1}{s} \right) \text{ for any } \tau \in S_1[v].
\end{equation}

Recall by Property (1) of Proposition \ref{GeoRT3} that $K_1[v]\cap K_1[v']$ whenever $v\ne v'$, where
$$K_1[v]:=\cup_{\tau\in S_1[v]}K_{\tau}\cup K_v.$$

Now, define function $h$ on $K$ such that
\begin{equation}
	h(y)=\begin{cases}
		1,&y\in K_r(x),\\
		h'_v(y),&y\in K_v\quad\text{for any}\quad v\in S^{(1)}_{c_0r/4}(x),\\
		h_\tau(y),&y\in K_\tau\quad\text{for any}\quad\tau\in S_1[v]\quad\text{with}\quad v\in S^{(1)}_{c_0r/4}(x),\\
		0,&y\in K\setminus\left(K_r(x)\cup\left(\cup_{v\in S^{(1)}_{c_0r/4}(x)}K_1[v]\right)\right),
	\end{cases}
\end{equation}
It is easy to verify that $h$ is well-defined and continuous, with $h\in\mathcal{F}$ and $h(x)=1$. In addition, for any $z\in K_r(x)\cup(\cup_{v\in S^{(1)}_{c_0r/4}(x)}K_1[v])$, there exist $w\in\Lambda_{\mathbf{\rho}}(r/4)$,$v\in S_{ar}^{(1)}(x), \tau\in S_1[v]$ such that
$$x\in K_w,\quad K_w\cap K_v\ne\emptyset,\quad K_v\cap K_\tau\ne\emptyset\quad\text{and}\quad z\in K_\tau.$$
Therefore,
\begin{equation*}
	d(x,z)\leq d(x, q_{vw})+d(q_{vw},q_{\tau v})+d(q_{\tau v},z)\leq \rho_w+\rho_v+\rho_\tau\leq \dfrac{3r}{4}<r,
\end{equation*}
thus 
\begin{equation*}
K_r(x)\cup\left(\cup_{v\in S^{(1)}_{c_0r/4}(x)}K_1[v]\right)\subset B(x,r)\quad\text{and}\quad h|_{B(x,r)^c}\equiv0,
\end{equation*}
showing that $h$ is a test function with respect to $\mathcal{R}(x,B(x,r)^c)$.

Now we estimate $\mathcal{E}(h)$. Indeed, noting by the self-similarity of $(\mathcal{E},\mathcal{F})$, the fact that $h$ is a constant on $K_r(x)$ or $K\setminus(K_r(x)\cup(\cup_{v\in S^{(1)}_{c_0r/4}(x)}K_1[v]))$, \eqref{Ev} and \eqref{Etau}, we have
\begin{eqnarray}
	\mathcal{E}(h)&=&\sum_{v\in\Lambda_{\mathbf{\rho}}(c_0r/4)}\frac{1}{s_v}\mathcal{E}(h\circ F_v)=\sum_{v\in S_{c_0r/4}^{(1)}(x)}\frac{1}{s_v}\mathcal{E}(h\circ F_v)+\sum_{v\in S_{c_0r/4}^{(1)}(x)}\sum_{\tau\in S_{1}(v)}\frac{1}{s_\tau}\mathcal{E}(h\circ F_\tau)\notag\\
	&=& \sum_{v\in S_{c_0r/4}^{(1)}(x)}\frac{1}{s_v}\mathcal{E}(h'_v\circ F_v)+\sum_{v\in S_{c_0r/4}^{(1)}(x)}\sum_{\tau\in S_{1}(v)}\frac{1}{s_\tau}\mathcal{E}(h_{\tau}\circ F_\tau)\notag\\
	&\leq& 2\left(\frac{5}{3}+\frac{1}{s}\right)\left(\sum_{v\in S_{c_0r/4}^{(1)}(x)}\frac{1}{s_v}+\sum_{v\in S_{c_0r/4}^{(1)}(x)}\sum_{\tau\in S_{1}(v)}\frac{1}{s_\tau}\right).\label{Eh1}
\end{eqnarray}

Now fix $\hat{w}\in\Lambda_{\mathbf{\rho}}(r)$ such that $x\in K_w$. We will show that the values of $s_{v}^{-1}, s_{\tau}^{-1}$ above can be controlled by $s_{\hat{w}}^{-1}$. 

Indeed, fix any $v\in S^{(1)}_{c_0r/4}(x)$, let $w\in K_r(x)$ be the word such that $K_v\cap K_w\neq\emptyset$. Then, there exists a finite word $\hat{v}\in\Lambda_{\mathbf{\rho}}(r/4)$ such that $K_v\subset K_{\hat{v}}$. Then it follows by $(\mathrm{AV})$ that for any $v\in S_{ar}^{(1)}(x)$, there is
\begin{equation}\label{vandw}
	\frac{1}{s_v}=\frac{1}{s_w} \cdot\frac{s_w}{s_{\hat{v}}}\cdot \frac{s_{\hat{v}}}{s_v}\leq \frac{1}{s_w}\cdot C_{\mathrm{AV}}\left(\frac{\rho_{\hat{v}}}{\rho_v}\right)^{\gamma^{\ast}}	\leq\left(\frac{3}{5}\wedge s\right)^{-4}\left(\frac{c_0\lambda_{\ast }}{8}\right)^{\frac{\ln ((3/5)\wedge s)}{\ln 2}}\frac{1}{s_w}:=\frac{C_{0,1}(\lambda,s)}{s_w}.
\end{equation}
Clearly $C_{0,1}(\lambda,s)$ depends on $\lambda,s$ continuously.

Similarly, noticing by definition of $S_1(v)$, we obtain that for any $\tau\in S_1(v)$,
\begin{equation}\label{tauandw}
	\frac{1}{s_\tau}\leq \frac{C_{0,1}(\lambda,s)}{s_v}\leq \frac{C_{0,1}^2(\lambda,s)}{s_w}.
\end{equation} 

Applying \eqref{vandw} and \eqref{tauandw} to \eqref{Eh1}, also noticing that $\# S_{ar}^{(1)}(x)\leq 27,\# S_1(v)\leq 9$ for all $v\in S_{ar}^{(1)}(x)$ and for all $x\in K$, we finally obtain
\begin{eqnarray*}
	\mathcal{E}(h)&\leq &2\left(\frac{5}{3}+\frac{1}{s}\right)\left(\sum_{v\in S_{ar}^{(1)}(x)}\frac{1}{s_v}+\sum_{v\in S_{ar}^{(1)}(x)}\sum_{\tau\in S_{1}(v)}\frac{1}{s_\tau}\right)\\
	&\leq &2\left(\frac{5}{3}+\frac{1}{s}\right)\left(27C_{0,1}(\lambda,s)+27\times9C_{0,1}^2(\lambda,s)\right)\frac{1}{s_{\hat{w}}}=:C_{0,2}(\lambda,s)\frac{1}{s_{\hat{w}}},
\end{eqnarray*}
thus proving \eqref{rb-w} with constant
\begin{equation*}
	C_{0,2}(\lambda,s):=2\left(\frac{5}{3}+\frac{1}{s}\right)\left(27C_{0,1}(\lambda,s)+27\times9C_{0,1}^2(\lambda,s)\right)
\end{equation*}
continuously depending on $\lambda,s$. Combing this consequence with \eqref{Cs} and the scaling property of $H$, we obtain that
\begin{eqnarray}
	\mathcal{R}(x,B(x,r)^c)&\geq& \frac{1}{C(\lambda,s)} s_w \geq \frac{1}{C_{0,2}(\lambda,s)C_{\mathbf{s}}} H\left(x,\frac{r}{4}\right)=\frac{s_{\min}}{C_{0,2}(\lambda,s)}\left(\frac{s_{\min}}{s_{\max}}\right)^4H\left(x,\frac{r}{4}\right)\notag\\
	&\geq&\frac{\left(\frac{3}{5}\wedge s\right)^5}{C_{0,2}(\lambda,s)}H\left(x,\frac{r}{4}\right)=:C_{0,3}(\lambda,s)H\left(x,\frac{r}{4}\right)\label{RGeqKV}
\end{eqnarray}
Noting that $y\in B(x,|x-y|/4)^c$, it follows by the monotonicity of $\mathcal{R}$, \eqref{RGeqKV} and the scaling property of $H$ that
\begin{equation*}
	\mathcal{R}(x,y)\geq \mathcal{R}(x,B(x,|x-y|/4)^c)\geq C_{0,3}(\lambda,s)H\left(x, \frac{|x-y|}{4}\right)\geq C_{0,3}(\lambda,s)4^{-\gamma^\ast}C_H^{-1} H(x,r),
\end{equation*}
thus proving $(\mathrm{R}^{H}_{\geq})$ in this case.

Case $(2)$: $\lambda \in (0,1/2)\setminus \mathbb{D}$. Take a
sequence $\{\lambda _{n}\} _{n=1}^{\infty }$ in $(0,1/2)\cap\mathbb{D}$ such that $\lambda _{n}\rightarrow \lambda $ as $n\rightarrow
\infty $. Note that $x_{n}\rightarrow x$, $y_{n}\rightarrow x$ for all $y\in K$ as $\lambda
_{n}\rightarrow \lambda $, where $x_{n}, y_{n}$ are points on $K_{\lambda _{n}}$ with the same addresses as $x,y$ respectively. Then by \cite[Subsection 6.1]{Cao.2023.AG}, we see $\mathcal{R}_{\lambda _{n},s}(x_{n},y_{n})\rightarrow \mathcal{%
\ R}(x,y)$ for all $x,y\in K$ and $b_{n}:=b(\lambda_n,s)\rightarrow b$. In particular, with $%
H_{n}$ the scaling function for $K_{\lambda _{n}}$ with $\lambda _{n},s$, we
see 
\begin{equation*}
H\left(x,\frac{|x-y|}{4}\right)\leq C_0\varliminf_{n\rightarrow \infty }H_{n}\left(x_{n}, \frac{|x_{n}-y_{n}|}{4}\right)
\end{equation*}
with a positive constant $C_0$ depending only on $\lambda $ and $s$ but independent of $x,y,x_n,y_n$.

Since $\{\lambda _{n}\} _{n=1}^{\infty }\subset (0,1/2)\cap 
\mathbb{D}$, we have by \eqref{RGeqKV} that 
\begin{equation*}
\mathcal{R}_{\lambda _{n},s}(x_{n},y_{n})\geq C_{0,3}(\lambda _{n},s)H_n\left(x_n,\frac{|x_n-y_n|}{4}\right)\quad\text{for any}\quad n\geq 1.
\end{equation*}
Note by \eqref{RGeqKV} that $C_{0,3}(\cdot,\cdot)$ continuously depending on $\lambda,s$. By letting $n\rightarrow \infty $, we have 
\begin{eqnarray*}
\mathcal{R}(x,y) &\geq &C_{0,3}(\lambda ,s)\varliminf_{n\rightarrow \infty }H_n\left(x_n,\frac{|x_n-y_n|}{4}\right) \\
&\geq &\frac{C_{0,3}(\lambda ,s)}{C_{0} }H\left(x,\frac{|x-y|}{4}\right)\geq c^{\prime }H(x,|x-y|)
\end{eqnarray*}
with a constant $c^{\prime }>0$ independent of $x,y$, thus proving $(\mathrm{R}_{\geq }^{H})$ again. The proof is complete.
\end{proof}

\subsection{Spatial inhomogeneity of $W$ and heat kernel estimates}

Now we are ready to derive heat kernel estimates on $K$. Fix $\lambda \in (0,1/2)$ and $s\in (0,1)$. Let $( \mu ,\{p_{i}\}_{i=1}^{4})$ be a doubling self-similar measure on $K:=K_{\lambda}$ (with $p_1=p_2=p_3$, by Corollary \ref{KlVD}) and $(\mathcal{E},\mathcal{F}):=(\mathcal{E}_{\lambda ,s},\mathcal{F}_{\lambda ,s})$ be the Dirichlet form on $L^{2}(K,\mu)$ from Proposition \ref{KlDF}. 

The combined analytic-geometric data is encoded in the $4$-tuple 
\begin{equation}
	\mathbf{\psi}:=(\psi _{1},\psi _{2},\psi _{3},\psi _{4})=(p_{1}b,p_{2}b,p_{3}b,p_{4}s). \label{psi}
\end{equation}
Since $p_1=p_2=p_3$, this tuple satisfies $(\mathrm{AV}_{\mathbf{\rho}})$ by Proposition \ref{KlAV}. Parallel to the definition of $\gamma_1,\gamma_4$ in \eqref{Klgam1}, define
\begin{equation}
	\beta _{1}:=\beta _{1}(\lambda ,s)=\frac{\ln \psi _{1}}{\ln \rho _{1}},\quad
	\beta _{4}:=\beta _{4}(\lambda ,s)=\frac{\ln \psi _{4}}{\ln \rho _{4}},\quad
	\beta _{\ast }=\beta _{1}\wedge \beta _{4},\quad \beta ^{\ast }=\beta _{1}\vee \beta _{4}.  \label{bt1-}
\end{equation}
Proposition \ref{ML4} then guarantees the existence of a scaling function $W(x,r)$ with exponents $\beta_\ast, \beta^\ast$ such that
\begin{equation*}
	W(x,1)=1,\quad W(x,r)\asymp V(x,r)H(x,r) \quad\text{for all}\quad x\in K,\ r\in(0,1).
\end{equation*}

The function $W$ is the cornerstone of our heat kernel estimates. Its most striking feature is that, unlike in the homogeneous setting, $W$ \emph{essentially} depends on the spatial variable $x$ whenever the exponents $\beta_1$ and $\beta_4$ differ. This is the analytic presentation of the geometric inhomogeneity raised by the rotated mapping $F_4$.

\begin{remark}
If $\beta_1 \neq \beta_4$, then the scaling function $W(x,r)$ depends essentially on both $x$ and $r$. Specifically, the asymptotic power-law behavior of $W(\cdot, r)$ as $r\to 0$ varies from point to point.
\end{remark}

To be exact, Let $q_1$ and $q_4$ be the fixed points of the maps $F_1$ and $F_4$ respectively. By the construction of $W$ in Proposition \ref{ML4}, at these points
$$W(q_1, r) \asymp r^{\beta_1}, \quad W(q_4, r) \asymp r^{\beta_4} \quad \text{for all}\quad r\in(0,1).$$
Since $\beta_1 \neq \beta_4$, the ratio 
$$\lim_{r\to 0^+} \frac{W(q_1, r)}{W(q_4, r)} = 0\quad\text{or}\quad \infty$$
diverges. Hence $W$ cannot be controlled by a function of $r$ alone; it exhibits essential joint dependence.

We emphasize that, under whatever small scale, the inhomogenity always exists. For example, with $x_m = F_{1^{(m)}}(q_4)$, which constitutes a convergent sequence to $q_1$, we still have
\[W(x_m, r) = o(r^{\beta_4}) \quad \text{as}\quad r\to 0^+.\]
That is, for any $\varepsilon > 0$, one can find two points $x_0, y_0 \in K$ with $d(x_0, y_0) < \varepsilon$ such that $W(x_0, r)$ and $W(y_0, r)$ tend to $0$ with different rates as $r\to 0^+$. This highlights the local heterogeneity of the effective metric induced by $W$.

Since $b\in \lbrack 3/5,1)$ by Lemma \ref{KlCao} and $p_{1}\in (0,1/3)$ by
the fact $p_1=p_2=p_3$, we always have 
\begin{equation*}
\beta ^{\ast }\geq \beta _{1}=\frac{\ln p_{1}b}{\ln \rho _{1}}\geq \frac{\ln
3}{\ln 2}>1,
\end{equation*}%
whereas $\beta _{\ast }$ may be smaller than $1$ provided that $p_{4},s$ are
both sufficiently close to $1$.

Based on all the conclusions through Sections \ref{mainresults} to \ref{Kl}, we have the following estimates:

\begin{corollary}
\label{KlHK} Let $\lambda \in (0,1/2)$, $s\in (0,1)$, and $(\mu
,\{p_{i}\}_{i=1}^{4})$ be a doubling self-similar measure on $K:=K_{\lambda
} $. Let $(\mathcal{E},\mathcal{F}):=(\mathcal{E}_{\lambda ,s},\mathcal{F}
_{\lambda ,s})$ be the Dirichlet form on $L^{2}(K,\mu)$ in Proposition \ref%
{KlDF}. Then $(\mathcal{E},\mathcal{F})$ admits a heat kernel $%
p_{t}(x,y):=p_{t}^{(\lambda ,s)}(x,y)$ such that $p_{t}(\cdot ,\cdot)\in
C(M\times M)$ for any $t>0$ and satisfies $(\mathrm{UE}_{\exp }^{W})$ and $( 
\mathrm{NLE}^{W})$. Moreover, if $\beta _{\ast }>1$, then $(\mathrm{LE}
_{\exp }^{W})$ holds.\qed
\end{corollary}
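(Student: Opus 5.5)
The plan is to apply Theorem \ref{maint} with $M=K$, $d$ the Euclidean metric (so $\bar{R}=1$), $F=H$ from \eqref{HsimS}, and $W$ the scaling function built from $\boldsymbol{\psi}$ in \eqref{psi}. All hypotheses of that theorem will be checked from the results of Sections \ref{SSSWay}--\ref{Kl}.

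\emph{Metric measure hypotheses.} Since $p_1=p_2=p_3$, Corollary \ref{KlVD} gives both $(\mathrm{VD})$ and $(\mathrm{RVD})$. By Corollary \ref{muScal} we also have $V(x,r)\asymp p_w$ for $w\in\Lambda_{\boldsymbol{\rho}}(r)$ with $x\in K_w$. On the other hand, $H(x,r)\asymp s_w$ by \eqref{HsimS}, and $W(x,r)\asymp \psi_w=p_ws_w$ by Proposition \ref{ML4}(2) applied to $\boldsymbol{\psi}$. Hence \eqref{WF} holds, i.e.\ $W\asymp HV$, with $F=H$ a genuine scaling function. The chain condition $(\mathrm{CH})$ was proved above. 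Finally, $\beta^\ast\ge\beta_1\ge \ln 3/\ln 2>1$.

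\emph{Dirichlet form hypotheses.} Proposition \ref{KlDF} gives that $(\mathcal{E},\mathcal{F})$ is regular and strongly local on $L^2(K,\mu)$, so $(\mathrm{sloc})$ holds. The preceding proposition gives $(\mathrm{R}^H)$. For $(\mathrm{Para})$, note that $K$ is compact and $1\in\mathcal{F}$ with $\mathcal{E}(1)=0$ (a resistance form annihilates constants). So for any compact set the admissible test function $u\equiv1\in\mathcal{F}\cap C_0(K)=\mathcal{F}$ yields $\mathrm{Cap}=0$. One should double-check that $C_0(K)=C(K)$ here, which is immediate from compactness.

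\emph{Conclusion.} Theorem \ref{maint} i) now yields $(\mathrm{UE}^W_{\exp})$ and $(\mathrm{NLE}^W)$. When in addition $\beta_\ast>1$, part ii) together with $(\mathrm{CH})$ yields $(\mathrm{LE}^W_{\exp})$. Existence of the heat kernel is part of these conditions. Its continuity $p_t(\cdot,\cdot)\in C(K\times K)$ follows from Remark \ref{ctn-resist}, since $(\mathrm{MI}^H)$ holds: either by Theorem \ref{thm:main2}, or directly from $(\mathrm{R}^H_\le)$ and $(\mathrm{Para})$ via Lemma \ref{RandQR}. Alternatively, continuity follows from $\mathcal{F}\subset C(K)$ in Lemma \ref{KlCao}.

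The main obstacle is the identification $W\asymp HV$ with matching partitions, because the constructed functions are normalized with $\bar{R}$ and defined via $\Lambda_{\boldsymbol{\rho}}(r/\bar{R})$. This is harmless since $\bar{R}=1$. One must also check that $\psi_w=p_ws_w$ is consistent with the multiplicativity in \eqref{wtau} and that the constants in Proposition \ref{ML4}(2) are uniform. A minor point is that Theorem \ref{maint} requires $\iota$-restricted ranges, and these are covered by compactness.
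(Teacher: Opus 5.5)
Your proposal is correct and is essentially the paper's own argument. The paper obtains this corollary by feeding the following into Theorem~\ref{maint}, with continuity of the kernel coming from Remark~\ref{ctn-resist}:
\begin{itemize}
\item $(\mathrm{VD})$ and $(\mathrm{RVD})$;
\item \eqref{WF} with $F=H$ and $W\asymp VH$;
\item $(\mathrm{sloc})$ and $(\mathrm{R}^H)$;
\item $(\mathrm{Para})$, from compactness together with $1\in\mathcal{F}$ and $\mathcal{E}(1)=0$;
\item $(\mathrm{CH})$ and $\beta^\ast\ge\beta_1>1$.
\end{itemize}
One small point: the hypothesis is only that $\mu$ is doubling. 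You should therefore first use the ``only if'' part of Corollary~\ref{KlVD} to get $p_1=p_2=p_3$, and then its ``if'' part to obtain $(\mathrm{RVD})$, rather than starting from $p_1=p_2=p_3$.
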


Indeed, with the help of more detailed geometry of $K$, we can
obtain a better lower estimate. Before that, we give an
observation on chains in $K$.

Denote by $\triangle _{0}:=\triangle $ and $\triangle _{k}:=\cup _{w\in
I^{k}}F_{w}(\triangle)$ for any $k\geq 1$. It is easy to see that%
\begin{equation*}
\triangle =\triangle _{0}\subset \triangle _{1}\subset \triangle _{2}\subset
\cdots \subset \triangle _{k}\subset \triangle _{k+1}\subset \cdots ,
\end{equation*}

and the set $\triangle ^{\ast }:=\cup _{k\geq 1}\triangle _{k}$ is a dense
subset of $K$.

\begin{proposition}
\label{Chain}There exist an integer $A\geq 1$ and a constant $C>0$ such
that, for any $n\geq 1$, any $l\in\{1,2,3\}$ and any $x\in
\triangle_{n}$, there exists a chain $\{x_{i}\}_{i=0}^{L}\subset\triangle _{n}$ satisfying the following:

\begin{enumerate}
\item $L\leq A2^{n},x_{0}=q_{l},x_{L}=x$, 

\item $W(x_{i-1},|x_{i-1}-x_{i}|)\leq C\psi _{1}^{n}$ for any $1\leq
i\leq L$, where $\psi_1$ comes from \eqref{psi}.
\end{enumerate}
\end{proposition}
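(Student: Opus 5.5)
The plan is an induction on $n$, building the chain recursively through the cell structure and using $W(x,r)\asymp V(x,r)H(x,r)\asymp p_w s_w=\psi_w$ for $w\in\Lambda_{\mathbf{\rho}}(r)$ with $x\in K_w$. This comparability follows from Corollary \ref{muScal}, \eqref{HsimS} and part (2) of Proposition \ref{ML4} applied to $\mathbf{\psi}$. The key observation is that every segment of $\triangle_n$ lies in a triangle $F_w(\triangle)$ with $w\in I^n$. Moreover, since $K_4\cap\triangle=\emptyset$ (Lemma \ref{GeoRT}(4)), any point of $\triangle_n$ reachable from $q_l$ along $\triangle_n$ can be reached by steps of Euclidean length about $2^{-n}$. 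Each such step is taken inside a cell whose word contains the digit $4$ only in positions where the path has already been forced into a rotated cell, and the weight $\psi$ of those cells is still controlled by $\psi_1^n$ once we measure step lengths against the correct partition level.

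Concretely, for $x\in\triangle_n$ I would choose $w=w_1\cdots w_n\in I^n$ with $x\in F_w(\triangle)$ and proceed through the prefixes $w(k)=w_1\cdots w_k$. The step from level $k-1$ to level $k$ moves from a vertex of $F_{w(k-1)}(\triangle)$ to a vertex of $F_{w(k)}(\triangle)$. When $w_k\in\{1,2,3\}$ this is done along the outer sides, which lie in $F_{w(k-1)}(\triangle_1)$, using $O(2^{n-k})$ points spaced by $\rho_{w(k-1)}2^{-(n-k)}$ on sub-cells of level $n$. When $w_k=4$, $K_{w(k-1)4}$ touches the three outer subcells at its vertices (Lemma \ref{GeoRT2}), so we reach a vertex of $F_{w(k-1)4}(\triangle)$ through at most two subcells $w(k-1)j$. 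By $A_3$-symmetry the starting vertex $q_l$ can be handled uniformly. The total number of steps is then $\sum_k O(2^{n-k})\cdot(\text{bounded})\le A2^n$, which gives (1). The final piece, inside $F_w(\triangle)$ from its vertex to $x$, has length at most $2^{-n}$ after subdividing a side.

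For (2), each step between $x_{i-1},x_i$ lies on a side of some $F_{u}(\triangle)$ with $|u|\le n$, has length about $\rho_{u}2^{-(n-|u|)}$, and $x_{i-1}$ sits in a cell $K_{u\sigma}$ with $\sigma\in\{1,2,3\}^{n-|u|}$ of that diameter. Hence, by the scaling property of $W$, $W(x_{i-1},|x_{i-1}-x_i|)\lesssim\psi_{u\sigma}=\psi_u\psi_1^{n-|u|}$. The difficulty is to check $\psi_u\lesssim\psi_1^{|u|}$. If $\psi_4\le\psi_1$ this is immediate. Otherwise I would instead take steps of length $\rho_u\cdot$ (appropriate) so that the weight at $x_{i-1}$ is $\psi_1^n$ up to constants. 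This means choosing the spacing inside each rotated cell $K_{u}$ according to the partition $\Lambda_{\mathbf{\psi}}(\psi_1^n)$ rather than by Euclidean length. The count in (1) stays of order $2^n$ provided the number of $\psi$-partition pieces along a side of $F_u(\triangle)$ is at most $\psi_u/\psi_1^{n}\cdot$(const), and the sides of $F_u(\triangle)$ contain only digits $1,2,3$ below $u$, so this count is $2^{m}$ with $\psi_1^m\approx\psi_1^n/\psi_u$. Summing over the at most $n$ nested rotated cells visited then gives a geometric series.

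I expect the main obstacle to be this bookkeeping: showing that the step count stays $O(2^n)$ when rotated cells of larger weight are traversed. The first thing I would try is to restrict the chain to cells whose words contain at most a bounded number of $4$'s beyond the prefix forced by $x$, and to let $A$ absorb $(\psi_4/\psi_1)$-dependent constants through the geometric sum.
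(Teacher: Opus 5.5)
Your proposal is correct and follows essentially the same route as the paper: link successive vertices of the nested cells $F_{w(k)}(\triangle)$ along sides of a level-$k$ cell, with only digits $1,2,3$ appended below it, using $2^{n-k}$ steps when $\psi_4\le\psi_1$ and, when $\psi_4>\psi_1$, a finer mesh that keeps each step's weight $\lesssim\psi_1^n$. The bookkeeping you flag is settled exactly as in the paper: $\psi_u\le\psi_4^{|u|}=\psi_1^{\eta|u|}$ with $\eta=\log_{\psi_1}\psi_4\in(0,1)$, so a side of $F_u(\triangle)$ needs $O(2^{n-\eta|u|})$ steps and the sum over the nested cells is geometric. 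The paper fixes this worst-case mesh uniformly rather than adapting it to the actual $\psi_u$. Your interim criterion ``at most $\psi_u/\psi_1^n$ pieces'' is too weak and should be replaced by this $2^m$ count.
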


\begin{proof}
Since $x\in \triangle _{n}$, there exists a finite word $w=w_{1}w_{2}\cdots
w_{n}$ such that $x\in F_{w}(\triangle)$. Let $w^{(0)}$ be the empty word
and denote by 
\begin{equation*}
w^{(k)}=w_{1}w_{2}\cdots w_{k},\quad v^{(k)}=w^{(k-1)}l\quad\text{for any}\quad 1\leq k\leq n.
\end{equation*}
It follows that $\{F_{w^{(k-1)}}(q_{l}),F_{w^{(k)}}(q_{l})\}
\subset F_{v^{(k)}}(\triangle)$ for any $1\leq k\leq n$. We distinghuish
two cases.

Case $(1)$: $\psi _{1}\geq \psi _{4}$. Indeed, it is easy to see by the
geometry of $\triangle $ that, for any $1\leq k\leq n$, there exists a chain $\{x_{k,i}\}_{i=0}^{2^{n-k}}\subset F_{v^{(k)}}(\triangle)$
such that $x_{k,0}=F_{w^{(k-1)}}(q_{l}),x_{k,2^{n-k}}=F_{w^{(k)}}(q_{l})$
and 
\begin{equation*}
|x_{k,i-1}-x_{k,i}|\leq 2\rho _{v^{(k)}}\times 2^{k-n}\quad\text{for any}\quad 1\leq i\leq 2^{n-k}.
\end{equation*}

Then, by using the definition of $W$ and the assertion $\psi _{1}\geq \psi
_{4}$, we see there exists a constant $C>0$ such that, for any $1\leq k\leq
n $ and any $1\leq i\leq 2^{n-k},$ 
\begin{equation}
W(x_{k,i-1},|x_{k,i-1}-x_{k,i}|)\leq C\psi _{v^{(k)}}\times \psi
_{1}^{n-k}\leq C\psi _{1}^{n}.\text{ }  \label{ChBa1}
\end{equation}

Now consider the chain $\{x_{i}\} _{i=0}^{2^{n}}$ defined by 
\begin{equation*}
x_{2^{n}}=x,\quad x_{2^{n}-2^{n-k+1}+i}=x_{k,i}\quad\text{for any}\quad 1\leq k\leq n,\quad 0\leq
i\leq 2^{n-k},
\end{equation*}
then $\{ x_{i}\} _{i=0}^{2^{n}}\subset \triangle _{n}$ is
well-defined and satisfies the property (1) above.

We show that $\{ x_{i}\} _{i=0}^{2^{n}}$ also satisfies the property (2). Indeed, it follows by (\ref{ChBa1}) that
$$W(x_{i-1},|x_{i-1}-x_{i}|)\leq C\psi _{1}^{n}$$
for every $1\leq i\leq 2^{n}-1$. Noting that $x_{2^{n}-1}=F_{w}(q_{l})$ and $x_{2^{n}}=x\in F_{w}(\triangle)$, we have by definition of $W$ and the assertion $\psi_{1}\geq \psi _{4}$ that 
\begin{equation*}
W(x_{2^{n}-1},|x_{2^{n}-1}-x_{2^{n}}|)\leq W(F_{w}(q_{l}),\rho _{w})=\psi
_{w}\leq \psi _{1}^{n},
\end{equation*}
thus proving ii). Hence the chain $\{x_{i}\}_{i=0}^{2^{n}}$ is
what we desired in this case.

Case $(2)$: $\psi _{1}<\psi _{4}$. Let $\eta :=\log _{\psi _{1}}\psi _{4}$
and denote by $s_{k}:=N_{4}(v^{(k)})$ for any $1\leq k\leq n$. It is not
hard to see $0<\eta <1$ and $s_{k}\leq k-1$ for any $1\leq k\leq n.$

It is easy to see that, for any $1\leq k\leq n$, there exists a chain $\{x_{k,i}\} _{i=0}^{2^{n-\lceil\eta (k-1)\rceil }}$
such that $x_{k,0}=F_{w^{(k-1)}}(q_{l}),x_{k,2^{n-\lceil\eta(k-1)\rceil +1}}=F_{w^{(k)}}(q_{l})$ and 
\begin{equation*}
|x_{k,i-1}-x_{k,i}|\leq 2\rho _{v^{(k)}}\times 2^{^{\left\lceil \eta
(k-1)\right\rceil -n}}=2\rho _{1}^{n-\left\lceil \eta (k-1)\right\rceil
+k-s_{k}}\rho _{4}^{s_{k}}\quad\text{for any}\quad 1\leq i\leq 2^{n-\left\lceil \eta
k\right\rceil +1}.
\end{equation*}
Then it follows by using the definition of $W$ and the assertion $\psi
_{1}<\psi _{4}$ that, there exists a constant $C>0$ such that, for any $1\leq k\leq n$ and any $1\leq i\leq 2^{n-\lceil \eta (k-1)\rceil+1},$ 
\begin{equation}
W(x_{k,i-1},|x_{k,i-1}-x_{k,i}|)\leq C\psi _{1}^{n-\left\lceil \eta
(k-1)\right\rceil +k-s_{k}}\psi _{4}^{s_{k}}<C\psi _{1}^{n-\left\lceil \eta (k-1)\right\rceil +1}\left(\psi _{1}^\eta\right)^{k-1}\leq C\psi _{1}^{n}.\label{ChBa2}
\end{equation}

Now we consider the chain linking $F_{w}(q_{l})$ and $x$. Similarly, we can
find a chain $\{x_{n,i}\} _{i=0}^{2^{n-\lceil \eta
n\rceil }}$ such that $x_{n,0}=F_{w}(q_{l}),x_{k,2^{n-\lceil \eta
n\rceil }}=x$ and 
\begin{equation*}
|x_{n,i-1}-x_{n,i}|\leq 2\rho _{w}\times 2^{^{\left\lceil \eta
(k-1)\right\rceil -n}}=2\rho _{1}^{n-\left\lceil \eta n\right\rceil
+n-N_{4}(w)}\rho _{4}^{N_{4}(w)}\quad\text{for any}\quad 1\leq i\leq 2^{n-\left\lceil
\eta k\right\rceil +1}.
\end{equation*}
It follows that, for any $1\leq i\leq n-\lceil \eta n\rceil $, 
\begin{equation}
W(x_{n,i-1},|x_{n,i-1}-x_{n,i}|)\leq C\psi _{1}^{n-\left\lceil \eta
n\right\rceil +n-N_{4}(w)}\psi _{4}^{N_{4}(w)}< C\psi _{1}^{n-\left\lceil \eta n\right\rceil }\left(\psi _{1}^{\eta}\right)^n\leq
C\psi _{1}^{n-1}=\frac{C}{\psi _{1}}\psi _{1}^{n}, \label{ChBa3}
\end{equation}
where $C>0$ is the same constant in (\ref{ChBa2}). Linking $%
F_{w^{(k-1)}}(q_{l})$ and $F_{w^{(k)}}(q_{l})$ by chain $\{
x_{k,i}\} _{i=0}^{2^{n-\lceil \eta (k-1)\rceil }}$ in
sequence with respect to $1\leq k$ $\leq n$, and finally linking $%
F_{w^{(k-1)}}(q_{l})$ and $F_{w^{(k)}}(q_{l})$ by chain $\{
x_{n,i}\} _{i=0}^{2^{n-\lceil \eta n\rceil }}$, we obtain a
new chain $\{x_{i}\} _{i=0}^{L}$ linking $q_{l}$ and $x$, where 
\begin{equation*}
L=\dsum\limits_{k=0}^{n}2^{n-\left\lceil \eta k\right\rceil }\leq
2^{n+1}\dsum\limits_{k=0}^{n}2^{-\eta k}\leq \frac{2^{n+1}}{1-2^{-\eta }}
\leq \left\lceil \frac{2}{1-2^{-\eta }}\right\rceil 2^{n},
\end{equation*}
thus proving i). On the other hand, it follows by (\ref{ChBa2}) and (\ref{ChBa3}) that $\{x_{i}\}_{i=0}^{L}$ satisfies condition ii).
The proof is complete.
\end{proof}

\begin{proposition}
\label{LEGEN}Fix $\lambda \in (0,1/2)$ and $s\in (0,1)$, let $%
p_{t}(x,y):=p_{t}^{(\lambda ,s)}(x,y)$ be the heat kernel admitted in
Corollary \ref{KlHK}. Then there exists two constants $c,C>0$ such that, for
all $0<t<1$ and every $x,y\in K$, 
\begin{equation}
p_{t}(x,y)\geq \frac{C^{-1}}{V\left( x,W^{-1}(x,t)\right) }\exp \left\{
-c\left( \frac{W(x,d(x,y))}{t}\right) ^{\frac{1}{\beta _{1}-1}}\right\} ,
\label{legen}
\end{equation}
where $\beta _{1}$ is the constant defined in (\ref{bt1-}).
\end{proposition}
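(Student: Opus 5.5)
The plan is the classical chaining argument: combine the near-diagonal lower bound $(\mathrm{NLE}^{W})$ from Corollary \ref{KlHK} with the chains of Proposition \ref{Chain}, and use the semigroup property.

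\textbf{Reductions.} Fix $0<t<1$ and $x,y\in K$, and set $D:=W(x,d(x,y))/t$.

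If $D\le c_1$ for a small constant $c_1$, then $d(x,y)\le \eta W^{-1}(x,t)$ by \eqref{WinvScal}. In that case \eqref{legen} follows directly from $(\mathrm{NLE}^{W})$.

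Otherwise $D$ is large. By continuity of $p_t$ (Remark \ref{ctn-resist}) and density of $\triangle^\ast$, it suffices to treat $x,y\in\triangle_n$ for some large $n$.

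\textbf{Choice of scale.} Choose the integer $n$ with $\psi_1^{n}\asymp t/D^{\frac{1}{\beta_1-1}}$. Equivalently, $2^{n}\asymp D^{\frac{1}{\beta_1-1}}$, since $\psi_1=2^{-\beta_1}$ and $\beta_1>1$. Then a chain of length $L\asymp 2^{n}$ whose steps all have $W$-size $\lesssim\psi_1^n$ gives $L\,\psi_1^n\asymp t$. Each step thus receives time $t/L$, with $W(x_{i-1},|x_{i-1}-x_i|)\le C\psi_1^n\le \varepsilon\, t/L$ after adjusting constants. This is exactly the balance producing the exponent $\frac{1}{\beta_1-1}$.

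\textbf{Building the chain.} Connecting $x$ to $y$ uses Proposition \ref{Chain} applied at a common coarse cell. Let $w$ be a word of $\Lambda_{\mathbf{\rho}}(d(x,y))$ containing $x$, and use the connectivity $(\mathrm{CP}_{\mathbf{\rho},c_0})$ (Proposition \ref{KlCP}) to find at most $5$ cells of comparable size linking $x$ to $y$.

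Inside each cell $K_{\tau}$, rescale Proposition \ref{Chain} by $F_\tau$. A vertex of $K_\tau$ is joined to any point of $F_\tau(\triangle_{m})$ by $\le A2^{m}$ steps, each with $W$-size $\lesssim \psi_\tau\psi_1^{m}$. This uses $(\mathrm{AV}_{\mathbf{\rho}})$ for $\psi$ and the relation $W(\cdot,\rho_\tau r)\asymp\psi_\tau W(\cdot,r)$ on the cell. One has $\psi_\tau\asymp W(x,d(x,y))$, and $m$ is chosen so that $\psi_\tau\psi_1^m\asymp\psi_1^n$.

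The total length is $L\lesssim 2^{m}$. Since $\psi_\tau\psi_1^m\approx\psi_1^n$ gives $2^{m}\lesssim D^{\frac1{\beta_1-1}}$ again, the length is consistent. Note that only the factor $2^m$ (not $\rho_4$-steps) enters, which is why $\beta_1$ rather than $\beta_\ast$ appears.

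\textbf{Chaining the kernel.} Set $r_i\asymp W^{-1}(x_i,t/L)$ and $B_i=B(x_i,\eta r_i)$. Applying $(\mathrm{NLE}^{W})$ at time $t/L$ gives
\[
p_t(x,y)\ge\int\!\cdots\!\int p_{t/L}(x,z_1)\cdots p_{t/L}(z_{L-1},y)\,d\mu(z_1)\cdots d\mu(z_{L-1}) \ge \prod_i \frac{c\,\mu(B_i)}{V(z_{i},W^{-1}(z_i,t/L))}\cdot\frac{1}{V(\cdot)}\ge \frac{c^{L}}{V(x,W^{-1}(x,t))}.
\]
Here neighbouring balls are comparable by \eqref{scpr} and \eqref{muscal}, and the step bound guarantees $z_{i+1}\in B(z_i,\eta W^{-1}(z_i,t/L))$. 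The factor $c^{L}=e^{-L\ln(1/c)}$ with $L\lesssim D^{\frac1{\beta_1-1}}$ yields \eqref{legen}. The final prefactor is converted back to $V(x,W^{-1}(x,t))$ using the doubling and reverse-doubling property of $V$ together with $L$ being bounded by an exponential that is absorbed in the exponent.

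\textbf{Main obstacle.} The main obstacle is verifying the step condition uniformly in the inhomogeneous setting. One needs $W(z_i,\cdot)$ at nearby points $z_i\in B_i$ to be comparable to $W(x_i,\cdot)$ at scale $t/L$. This holds by \eqref{scpr} because $d(z_i,x_i)\lesssim W^{-1}(x_i,t/L)$. One must also handle the case $\psi_1<\psi_4$, where Proposition \ref{Chain}'s chain is uneven; its conclusion (2) is stated uniformly, so I would rely on that.
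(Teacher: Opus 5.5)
Your overall route matches the paper's. You chain $(\mathrm{NLE}^{W})$ through the semigroup identity along rescaled copies of the chains of Proposition \ref{Chain}, placed in boundedly many cells of size $\asymp d(x,y)$ and weight $\psi_\tau\asymp W(x,d(x,y))$, with $2^m\asymp D^{1/(\beta_1-1)}$ steps, and you reach general points by continuity of $p_t$ and density of $\triangle^\ast$. Getting the cells from $(\mathrm{CP}_{\mathbf{\rho},c_0})$ and $(\mathrm{AV}_{\mathbf{\rho}})$, instead of from the paper's junction point $z$ of $F_{wi}(\triangle)$ and $F_{wj}(\triangle)$, is fine. Take the cells in $\Lambda_{\mathbf{\rho}}(2d(x,y)/c_0)$, though, since $(\mathrm{CP}_{\mathbf{\rho},c_0})$ needs $d(x,y)<c_0r$. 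Also, ``equivalently $2^n\asymp D^{1/(\beta_1-1)}$'' does not follow from $\psi_1^n\asymp tD^{-1/(\beta_1-1)}$. It is the relative level $m$ (with $\psi_\tau\psi_1^m\asymp t/L$ and $\psi_\tau\asymp Dt$) that satisfies it, and your later count does use $m$ correctly.

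The step that fails as written is how the density reduction interacts with the level $m$. The approximants $x_k,y_k\in\triangle_k$ have $k\to\infty$, while $t$ and $D$, and hence $m$, stay essentially fixed. Proposition \ref{Chain} rescaled to $K_\tau$ only reaches points of $F_\tau(\triangle_m)$, and since the $\triangle_j$ increase in $j$, $x_k$ is generally not in $F_\tau(\triangle_m)$. Applying Proposition \ref{Chain} at level $k-|\tau|$ instead makes $L\asymp 2^{k-|\tau|}\to\infty$, and the factor $c^L$ destroys the bound.

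What you actually need is a level-free version: every point of $K_\tau$ can be joined to a vertex of $K_\tau$ by $O(2^m)$ steps of $W$-size $\lesssim\psi_\tau\psi_1^m$. This is exactly what the paper uses when it takes chains of length $A2^n$ from $x$ ``for any $n\geq 1$''. It follows from the proof of Proposition \ref{Chain}:
\begin{itemize}
\item When $\psi_4\le\psi_1$: run the chain to the vertex $F_{\tau u}(q_l)$ of the subcell $K_{\tau u}\ni x$ with $|u|=m$. Then add one step of $W$-size $\lesssim\psi_{\tau u}\le\psi_\tau\psi_1^m$.
\item When $\psi_4>\psi_1$: continue the Case (2) construction along the address of $x$ past level $m$, with $\eta=\log_{\psi_1}\psi_4$ as there. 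The step counts $2^{m-\lceil\eta(k-1)\rceil}$ still sum to $O(2^m)$. Stop once the current cell has weight $\le\psi_\tau\psi_1^m$, which happens by depth about $m/\eta$, and make one final step.
\end{itemize}
With this in hand, the density and continuity reduction is not needed at all.
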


\begin{proof}
Fix two points $x,y\in K$ and assume that $x\neq y$, (otherwise \eqref{legen} holds
trivially from $(\mathrm{NLE})$). We distinguish two cases.

Case $(1)$: There exists some integer $n\geq 1$ such that $x,y\in \triangle
_{n}$. Let $w$ be the longest finite word (possibly the empty word) such that $x, y \in F_w(\triangle)$. Then there exist two distinct digits $i, j \in \{1,2,3,4\}$ such that $x \in F_{wi}(\triangle)$ and $y \in F_{wj}(\triangle)$. Let $z$ be the unique intersecting point of $F_{wi}(\triangle)$ and $F_{wj}(\triangle)$ (whose existence and uniqueness are guaranteed by Lemma \ref{GeoRT2}). 

By geometry on $F_i(\triangle)$'s and self-similarity, the ratios $d(x,z)/d(x,y)$ and $d(y,z)/d(x,y)$ are uniformly bounded. Consequently, there exists a constant $C_0 > 0$, depending only on $\lambda$, such that
\begin{equation*}
\max \{d(x,z),d(y,z)\}\leq C_{0}d(x,y).
\end{equation*}

Hence, it follows by the scaling property of $W$ that, there exists a
constant $C_{1}>0$ such that 
\begin{equation*}
C_{1}^{-1}W(x,d(x,y))\leq \max \{W(x,d(x,z)),W(y,d(y,z))\}\leq
C_{1}W(x,d(x,y)).
\end{equation*}

Without loss of the generality, assume that $W(x,d(x,z))\geq W(y,d(y,z))$.
Similarly as above, let $v$ be the longest finite word such that $x,z\in
K_{v}$. It is also easy to see that $\frac{\sqrt{3}}{2}\rho _{v}\leq
d(x,z)\leq \rho _{v}$, then it follows by definition of $W$ that there
exists a constant $C_{2}>0$, such that 
\begin{equation*}
C_{2}^{-1}\psi _{v}\leq W(x,d(x,z))\leq \psi _{v}.
\end{equation*}

Then it follows from the proof of Proposition \ref{Chain} that, there exist
an integer $A\geq 1$ and a constant $C>0$ such that, for any $n\geq 1$,
there exists two chains $\{x_{i}\}_{i=0}^{A2^{n}},\{z_{x,i}\} _{i=0}^{A2^{n}}$ satisfying $x_{0}=x,x_{A2^{n}}=$ $%
z_{0}=F_{v}(q_{1}),z_{A2^{n}}=z$ and 
\begin{eqnarray*}
W(x_{i-1},|x_{i-1}-x_{i}|) &\leq &C\psi _{v}\psi _{1}^{n}\leq
CC_{1}C_{2}W(x,d(x,y))\psi _{1}^{n}, \\
W(z_{x,i-1},|z_{x,i-1}-z_{x,i}|) &\leq &C\psi _{v}\psi _{1}^{n}\leq
CC_{1}C_{2}W(x,d(x,y))\psi _{1}^{n}
\end{eqnarray*}
for any $1\leq i\leq A2^{n}$. Similarly there is a chain connecting $z,y$. Coupling the two chains, we can finally obtain a chain $\{z_{i}\}_{i=0}^{4A2^{n}}$ such that $z_{0}=x,z_{4A2^{n}}=y$ and 
\begin{equation*}
W(z_{i-1},|z_{i-1}-z_{i}|)\leq C\psi _{v}\psi _{1}^{n}\leq
CC_{1}C_{2}W(x,d(x,y))\psi _{1}^{n}\quad\text{for any}\quad 1\leq i\leq 4A2^{n}.
\end{equation*}
Taking 
$$n_{0}:=\left\lceil C^{\prime}\left(\frac{W(x,d(x,y))}{t}\right) ^{\frac{1}{\beta _{1}-1}}\right\rceil$$
with some constant $C^{\prime }>0$ independent of $x,y$ and $t$, then under assumptions $\beta _{1}>1$ and $\psi _{1}^{n_{1}}\lesssim t$, we have \eqref{n0}. Hence, by a similar argument as in the proof of Proposition \ref{thm:main3}, we finally obtain (\ref {legen}).

Case $(2)$: The remaining case. Since $\triangle ^{\ast }=\cup _{n\geq
1}\triangle _{n}$ is a dense subset of $K$, it follows that, for any $n\geq
1 $, there exists $x_{n},y_{n}\in \triangle _{n}$ such that $|x_{n}-x|\leq
2^{-n},|y_{n}-y|\leq 2^{-n}$. On one hand, it follows from the proof of
above Case $(1)$ in Proposition \ref{LEGEN} that 
\begin{equation}
p_{t}(x_{n},y_{n})\geq \frac{C^{-1}}{V\left( x_{n},W^{-1}(x_{n},t)\right) }
\exp \left\{ -c\left( \frac{W(x_{n},d(x_{n},y_{n}))}{t}\right) ^{\frac{1}{
\beta _{1}-1}}\right\} .  \label{ptnet}
\end{equation}
for all $0<t<1$ and any $n\geq 1$, where $c,C>0$ are two constants
independent of $x,y,t$ and $n$.

On one hand, since $p_{t}(\cdot ,\cdot)\in C(M\times M)$ for any $t>0$, it
follows that 
\begin{equation}
p_{t}(x_{n},y_{n})\rightarrow p_{t}(x,y)\quad\text{as}\quad n\rightarrow \infty .
\label{ptnet0}
\end{equation}

On the other hand, by the scaling property of $W$ and $V$, there exists a
constant $C_{1}>0$ such that, for all $0<t<1$,
$$C_{1}^{-1}\leq \liminf_{n\rightarrow \infty }\frac{W(x_{n},d(x_{n},y_{n}))}{
W(x,d(x,y))}\leq \limsup_{n\rightarrow \infty }\frac{W(x_{n},d(x_{n},y_{n})) 
}{W(x,d(x,y))}\leq C_{1}$$
and 
$$C_{1}^{-1}\leq \liminf_{n\rightarrow \infty }\frac{V\left(
x_{n},W^{-1}(x_{n},t)\right) }{V\left( x,W^{-1}(x,t)\right) }\leq
\limsup_{n\rightarrow \infty }\frac{V\left( x_{n},W^{-1}(x_{n},t)\right) }{
V\left(x,W^{-1}(x,t)\right) }\leq C_{1}.$$
Combining (\ref{ptnet0}) and letting $n\rightarrow\infty$, we finally obtain (\ref{legen}) again in this case. The proof is complete.
\end{proof}

Now we show the optimal estimates of $p_{t}(x,y)$ provided that $\beta_{1}\geq\beta_{4}$.

\begin{theorem}
\label{le1-} Fix $\lambda \in (0,1/2)$ and $s\in (0,1)$. Take a doubling
self-similar measure $(\mu ,\{p_{i}\}_{i=1}^{4})$ such that $\beta _{1}\geq
\beta _{4}$. Then $\mathcal{E}$ admits a heat kernel $p_{t}(x,y)$ with
respect to $\mu $, and there exists three constants $C,c_{-},c_{+}>0$ such
that for all $0<t<1$ and every $x,y\in K$, 
\begin{equation}
p_{t}(x,y)\asymp \frac{C}{V\left( x,W^{-1}(x,t)\right) }\exp \left\{
-c_{\bullet }\left( \frac{W(x,d(x,y))}{t}\right) ^{\frac{1}{\beta _{1}-1}
}\right\} ,  \label{ue2+}
\end{equation}

where $c_{\bullet }$ has subscript ``$+$'' for the upper
bound and ``$-$'' for the lower bound.
\end{theorem}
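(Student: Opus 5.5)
The proof splits into the upper and the lower bound, and both come from results already established. If $\beta_1\ge\beta_4$, then by \eqref{bt1-} we have $\beta^\ast=\beta_1$ and $\beta_\ast=\beta_4$. Note that $\beta_4$ may be $\le 1$, so Proposition \ref{thm:main3} and the lower half of Corollary \ref{KlHK} are not available in general.

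\textbf{Upper bound.} Corollary \ref{KlHK} gives existence and continuity of $p_t$ together with $(\mathrm{UE}^W_{\exp})$. The exponent in Definition \ref{dUE} is $\frac{1}{\beta^\ast-1}$, and $\beta^\ast=\beta_1>1$ (since $\beta_1\ge \ln 3/\ln 2$). Hence $(\mathrm{UE}^W_{\exp})$ is exactly the upper half of \eqref{ue2+}, with $c_+=c$. The range $0<t<1$ is covered because $W(x,\bar R)=W(x,1)=1$ for all $x$, so $t<W(x,\bar R)\wedge W(y,\bar R)$ holds. The only thing to check is that the sharp super-scaling exponent of the constructed $W$ is $\beta_1$ and not something larger. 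This is part (1) of Proposition \ref{ML4} applied to $\mathbf{\theta}=\mathbf{\rho}$, $\mathbf{\zeta}=\mathbf{\psi}$, together with $W\asymp VH$.

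\textbf{Lower bound.} This is exactly Proposition \ref{LEGEN}, which holds for all $\lambda,s$ with exponent $\frac{1}{\beta_1-1}$ and needs no assumption on $\beta_\ast$. Its mechanism is the chain construction of Proposition \ref{Chain}. Chains of length $\asymp 2^n$ are built through the Sierpi\'nski-gasket part $\triangle_n$, and each link has $W$-size $\lesssim \psi_1^n\,W(x,d(x,y))$. One then chooses $n\asymp (W(x,d(x,y))/t)^{1/(\beta_1-1)}$ so that $\psi_1^n W(x,d(x,y))\lesssim t/n$ while $n\lesssim 2^n$. Applying $(\mathrm{NLE}^W)$ along the chain via the semigroup property then gives the Gaussian-type product. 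So I would simply cite Proposition \ref{LEGEN}, with $c_-=c$. The volume normalisations along the chain are controlled by $(\mathrm{VD})$ and by the comparability $V(x_i,W^{-1}(x_i,t/n))$ versus $V(x,W^{-1}(x,t))$, which in turn uses \eqref{WinvScal} and \eqref{muscal0}.

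\textbf{Combining.} The two bounds have the same prefactor $\frac{1}{V(x,W^{-1}(x,t))}$ and the same exponent, and differ only in the constants $c_\pm$; together they give \eqref{ue2+}. The main point of care is the justification that the upper exponent equals $1/(\beta_1-1)$. This is precisely where the hypothesis $\beta_1\ge\beta_4$ enters: otherwise $\beta^\ast=\beta_4$, and the upper and lower exponents would not match. I would state explicitly that the sharp exponents from Step 6 of Proposition \ref{ML4} are inherited by $W$, since $W$ is built from the tuple $\mathbf{\psi}$.
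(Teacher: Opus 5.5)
Your proposal is correct and matches the paper's proof: the upper bound is $(\mathrm{UE}^W_{\exp})$ from Corollary~\ref{KlHK} read with $\beta^\ast=\beta_1$, and the lower bound is Proposition~\ref{LEGEN} cited directly. One minor slip in your aside on the mechanism, which does not affect the argument since you cite Proposition~\ref{LEGEN}: the quantity of order $(W(x,d(x,y))/t)^{1/(\beta_1-1)}$ is the chain length $\asymp 2^n$, not the level $n$, because each link must have $W$-size $\lesssim t/2^n$, i.e.\ $(2\psi_1)^n\lesssim t/W(x,d(x,y))$.
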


\begin{proof}
Since $\beta _{1}\geq \beta _{4}$, we have by Corollary \ref{KlHK} that the
upper bound of (\ref{ue2+}) holds trivially from $(\mathrm{UE}_{\exp }^{W})$
by noting that $\beta ^{\ast }=\beta _{1}$. On the other hand, the lower
bound of (\ref{ue2+}) follows directly by Proposition \ref{LEGEN}. The proof is
complete.
\end{proof}

Clearly (\ref{ue2+}) gives an optimal two-sided estimate. Note that here we give an off-diagonal lower estimate even when $\beta _{\ast }=\beta _{4}<1$. Indeed $\beta _{\ast }$ disappears completely.

The case $\beta _{1}\leq \beta _{4}$ is extremely complicated, where the
estimate on the length of a nearly shortest chain depends irregularly on
(the dyadic representation of) $\lambda $.

\appendix
\section{Proof Details in Section \ref{mainresults}}\label{app1}
For simplicity, since the scaling function $W$ (or $F$) is already clear from the context, we omit the superscript from every involved condition.

\subsection{Proof of Proposition \ref{thm:main1}}\label{pf1}
Recalling from the sketch, it suffices to prove the following four lemmas:
\begin{lemma}\label{pf1-1}
$(\mathrm{S})+(\mathrm{sloc})\Rightarrow(\mathrm{T}_{\exp})$.
\end{lemma}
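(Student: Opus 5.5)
The plan is to follow the classical iteration scheme of Grigor'yan--Hu (and Hu's Proposition 5.2), adapted so that every time scale is tied to the centre of the ball under consideration. Throughout, $\beta^\ast$ is the super-scaling exponent from (\ref{scpr}).

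\textbf{Step 1: one-step escape bound.} First I would convert $(\mathrm{S})$ into an exit estimate. By the Markov property, $1-P_t^{B}1_B$ dominates the probability of leaving $B$ before time $t$. Hence $(\mathrm{S})$ gives
\[
\esup_{\delta_S B}P_{t}1_{B^{c}}\le 1-\varepsilon \quad\text{for } t\le\delta W(B).
\]
Strong locality is what lets one work with the exit time $\tau_B$ and continuous paths. Rather than only the semigroup, I would record this as
\[
\mathbb{P}_x\bigl(\tau_{B(x,\rho)}\le t\bigr)\le 1-\varepsilon \quad\text{whenever } t\le \delta W(x,\rho),
\]
after shrinking the ball via $\delta_S$ and using (\ref{scpr}) to replace $W(x,\rho/\delta_S)$ by $W(x,\rho)$.

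\textbf{Step 2: chaining of exits.} Next, fix $B=B(x_0,r)$, $x\in\frac12 B$, $t>0$ and an integer $n\ge1$, and set $\rho=r/(2n)$. Any path from $x$ leaving $B$ must perform $n$ successive exits from balls $B(X_{\tau_{k}},\rho)$ centred at its current positions, and all of these centres lie in $B$. By the strong Markov property and the standard inequality of \cite[Lemma 3.7]{GrigoryanHu.2008.IM81} (the $\mathbb{E}e^{-\tau}$ iteration), I would get
\[
\mathbb{P}_x(\tau_B\le t)\le C\,e^{\lambda t}\bigl(\sup_{z\in B}\mathbb{E}_z e^{-\lambda\tau_{B(z,\rho)}}\bigr)^n,
\]
with $\sup_{z}\mathbb{E}_ze^{-\lambda\tau_{B(z,\rho)}}\le 1-\varepsilon/2$ as soon as $\lambda\ge c/\inf_{z\in B}W(z,\rho)$.

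\textbf{Step 3: the inhomogeneity.} Here is the new point. Since $d(z,x_0)<r$, (\ref{scpr}) gives
\[
\inf_{z\in B}W(z,\rho)\ge C_W^{-1}W(x_0,r)\,(2n)^{-\beta^\ast}.
\]
This is exactly why $\beta^\ast$, rather than $\beta_\ast$, appears in (\ref{Tail}). Choosing $\lambda\asymp n^{\beta^\ast}/W(x_0,r)$ and then $n\asymp (W(x_0,r)/t)^{1/(\beta^\ast-1)}$ balances $\lambda t\lesssim n$ against $n\log\frac{1}{1-\varepsilon/2}$. The result is
\[
\mathbb{P}_x(\tau_B\le t)\le C\exp\bigl(-c(W(x_0,r)/t)^{1/(\beta^\ast-1)}\bigr),
\]
and therefore (\ref{Tail}). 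Where $W(x_0,r)/t$ is bounded the bound is trivial, since $P_t1\le1$.

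I expect the main obstacle to be making Step 2 rigorous at the level of a.e.\ semigroup statements rather than the process: one has to pass from $\einf$ over $\delta_S B$ to a statement for all starting points in $B$, handling the exceptional sets. I would try first to run the analytic version of the iteration, as in Hu's Proposition 5.2, using the functions $u_B=1-P^B_t1_B$ and the comparison $P_t1_{B^c}\le \sup_{s\le t}$ of the local quantities. In that version the uniformity over centres $z\in B$ again comes only from (\ref{scpr}).
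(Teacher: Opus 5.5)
Your argument is correct, but it follows a different, probabilistic route; the paper's proof is purely analytic, in the style of Grigor'yan--Hu.

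\textbf{How the paper does it.} It never uses the process.
\begin{enumerate}
\item It turns $(\mathrm{S})$ into the resolvent bound $\einf_{\delta_SB}\lambda R^B_\lambda1_B\ge\varepsilon_1$ for $\lambda W(B)\ge L$.
\item It uses $(\mathrm{sloc})$ only through the comparison principle $w\le1-\lambda R^B_\lambda1_B$ for $\lambda$-harmonic $0\le w\le1$ (Grigor'yan--Hu, Cor.~4.15).
\item It iterates $b_i\le(1-\varepsilon_1)b_{i+1}$ for $b_i=\esup_{B(x_0,ir/n)}\lambda R_\lambda 1_{B^c}$.
\item It needs two further steps to get back from the resolvent to $P_t1_{B^c}$: the lower bound $\lambda R_\lambda1_{B^c}\ge\varepsilon_1$ on $(2B)^c$, and $P_tu\le e^{\lambda t}u$ for $u=\lambda R_\lambda f$.
\item It finishes by optimizing in $\lambda$ and using a covering argument to pass from a small ball around $x_0$ to $\frac12B$.
\end{enumerate}

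\textbf{What your route buys.} Your chaining replaces all of this by $\mathbb{P}_x(\tau_B\le t)\le e^{\lambda t}\mathbb{E}_x e^{-\lambda\tau_n}$ and the strong Markov property. No auxiliary lemma is needed for the product bound; it is immediate at each $\tau_{k-1}$. You get the stronger exit-time tail directly at (quasi-)every starting point in $\frac12B$, with no covering step. You also use $(\mathrm{sloc})$ transparently, since path continuity keeps the $n$ centres inside $B$. The price is process-level bookkeeping that the paper's $\mu$-a.e.\ formulation avoids.

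The inhomogeneous input is identical in both proofs. The number of steps $n$ is chosen so that $\lambda W(z,r/n)\gtrsim 1$ uniformly for $z\in B$. By \eqref{scpr} this costs a factor $n^{\beta^\ast}$, which is where the exponent $1/(\beta^\ast-1)$ comes from.

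\textbf{The obstacle you flag.} It is real but standard. The cleanest fix is to state your Step 1 for $\mathbb{E}_z e^{-\lambda\tau_B}$ rather than for $\mathbb{P}_z(\tau_B\le t)$.
\begin{enumerate}
\item On $B$ one has $\mathbb{E}_z e^{-\lambda\tau_B}=1-\lambda R^B_\lambda 1_B(z)$. This function is $\lambda$-excessive, hence finely continuous.
\item So the $\mu$-a.e.\ bound $\le 1-\varepsilon/2$ on $\delta_S B$, which is exactly the paper's Step 1, upgrades to quasi-every point.
\item Apply this to countably many balls, with centres in a dense set and rational radii.
\item For a general ball $B(z,\rho)$, compare it with one of these balls of comparable radius contained in $B(z,\rho)$ whose $\delta_S$-part contains $z$, using \eqref{scpr}.
\item This gives the bound for all $z$ outside a single properly exceptional set. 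The diffusion started outside that set never hits it, so the chaining is rigorous for $\mu$-a.e.\ $x\in\frac12B$.
\end{enumerate}

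Your suggested analytic fallback, via $1-P^B_t1_B$ and a ``$\sup_{s\le t}$'' comparison, is not how the analytic proof works. If you go that way, the resolvent $\lambda R_\lambda$ and the comparison principle above are the tools you need.
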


\begin{proof}
The argument is lengthy, so we divide it into the following six steps.
	
	\textbf{Step 1.} Let $B:=B(x_{0},r)$ be a ball, and let $\{\lambda R_{\lambda}^{B}\}_{\lambda >0}$ be the resolvent associated with the Dirichlet form $(\mathcal{E},\mathcal{F}(B))$. We claim that if $(\mathrm{S})$ holds, then there exist two constants $\varepsilon _{1}\in (0,1)$ and $L>0$ such that for all $\lambda \geq L/W(x_{0},r)$,
	\begin{equation}
		\einf_{\delta _{S}B}\lambda R_{\lambda }^{B}1_{B}\geq \varepsilon _{1}. \label{s1res}
	\end{equation}
	
	Indeed, we have by $(\mathrm{S})$ that
	\begin{equation*}
		\lambda R_{\lambda }^{B}1_{B}(y)=\lambda \int_{0}^{\infty }e^{-\lambda t}P_{t}^{B}1_{B}(y)dt\geq \lambda \int_{0}^{\delta W(x_0,r)}e^{-\lambda t}\varepsilon dt=\varepsilon (1-\exp (-\lambda\delta W(x_0,r)))
	\end{equation*}
	for $\mu$-a.e.\ $y\in \delta _{S}B$ and for any $\lambda >0$, where $\varepsilon,\delta,\delta _{S}$ are the constants from condition $(\mathrm{S})$. Consequently, when $\lambda W(x_{0},r)>L:=(\ln 2)/\delta$, we obtain
	\begin{equation*}
		\einf_{\delta _{S}B}\lambda R_{\lambda }^{B}1_{B}\geq \varepsilon /2=:\varepsilon_{1},
	\end{equation*}
	which proves the claim.
	
	\textbf{Step 2.} Consider the constants $\varepsilon _{1}\in (0,1),L>0$ from Step 1. We claim that under $(\mathrm{sloc})$, if there exists a function $w\in \mathcal{F}\cap L^{\infty }(M,\mu)$ such that $0\leq w\leq 1$ in a ball $B:=B(x_{0},r)$, and $w$ is a weak solution in $B$ of the equation
	\begin{equation*}
		-\mathcal{L}w+\lambda w=0,
	\end{equation*}
	where $\mathcal{L}$ is the infinitesimal generator corresponding to $(\mathcal{E},\mathcal{F})$, and $\lambda $ satisfies
	\begin{equation*}
		\lambda W(B)\geq L,
	\end{equation*}
	then
	\begin{equation*}
		\esup_{\delta_{S}B}w\leq 1-\varepsilon _{1}.
	\end{equation*}
	Indeed, using precompactness of open balls and $(\mathrm{sloc})$, we obtain by \cite[Corollary 4.15]{GrigoryanHu.2008.IM81} that
	\begin{equation*}
		w\leq 1-\lambda R_{\lambda }^{B}1_{B}
	\end{equation*}
	$\mu$-a.e. in $\delta_{S}B$. Combining \eqref{s1res}, the claim is proved.
	
	\textbf{Step 3.} We show that there exist constants $C^{\prime},c^{\prime }>0$ such that for any ball $B:=B(x_{0},r)$ and any $\lambda >0$,
	\begin{equation}
		\esup_{\frac{1}{2n}B}\lambda R_{\lambda }1_{B^{c}}\leq C^{\prime }\exp\left(-c'(\lambda W(B))^{1/\beta ^{\ast }}\right),  \label{s3conc}
	\end{equation}
	where $n\ge 2$ is an integer depending only on $\lambda W(B)$.
	
	In fact, choose a real number $R$ such that $R>\delta_S^{-1}r$, and introduce functions
	\begin{equation*}
		\phi=\mathbf{1}_{B(x_0,R)\setminus B}\quad\text{and}\quad
		u=\lambda R_{\lambda}\phi.
	\end{equation*}
	It suffices to prove
	\begin{equation}  \label{s3ues}
		\esup_{\frac{1}{2n}B}u\leq C^{\prime }\exp(-c^{\prime }(\lambda W(B))^{1/\beta^{\ast}}),
	\end{equation}
	and then let $R\rightarrow\infty$ to complete this step. Since $0\leq\phi\leq 1$ and $\phi\in L^2(M,\mu)$, we have $0\leq u\leq 1$ on $M$, $u\in\mathrm{dom}(\mathcal{L})\subset\mathcal{F}$, and $u$ satisfies on $M$ the equation
	\begin{equation}  \label{s3eq}
		-\mathcal{L}u+\lambda u=\lambda\phi.
	\end{equation}
	We may assume
	\begin{equation}  \label{s3nwelldef}
		\lambda W(B)\geq L_0
	\end{equation}
	with some $L_0>0$ to be determined later, since otherwise \eqref{s3ues} holds trivially due to $u\leq 1$.
	
	Let $n\ge 2$ be an arbitrary integer. For each $1\leq i\leq n $, set
	\begin{equation*}
		r_{i}:=\frac{ir}{n},\quad b_{i}:=\esup_{B(x_{0},r_{i})}u,\quad w_{i}(x):=\frac{u(x)}{b_{i+1}}.
	\end{equation*}
	Clearly, for each $1\leq i\leq n $, $w_{i}\in \mathcal{F}\cap L^{\infty }(M,\mu)$. Since $\phi \equiv 0 $ in $B$, equation \eqref{s3eq} implies that for each $1\leq i<n$, $w_{i}$ satisfies
	\begin{equation*}
		-\mathcal{L}w_{i}+\lambda w_{i}=0
	\end{equation*}
	in $B(x_{0},r)$. By the definition of $b_{i}$, we have $0\leq w_{i}\leq 1$ in the ball $B(x_{0},r_{i+1})$. In particular, the same inequality holds in any ball $B(x,r_{1})$ for $x\in B(x_{0},r_{i})$.
	
	Now we select proper $n$ such that
	\begin{equation}\label{key-i}
	\lambda W(x,r_{1})\geq L \quad \text{for all}\quad x\in B.
	\end{equation}
	Indeed, by the scaling property of $W$, for any $x\in B(x_{0},r)$,
	\begin{equation*}
	\frac{W(x_{0},r)}{W(x,r/n)}\leq C_{W}n^{\beta ^{\ast }}.
	\end{equation*}
	Therefore, \eqref{key-i} holds by choosing
	\begin{equation}
		n=\left\lfloor \left( \frac{\lambda W(x_{0},r)}{C_{W}L}\right) ^{1/\beta^{\ast }}\right\rfloor,\label{s3n}
	\end{equation}
which satisfies $n\geq 2$ when $L_{0}=2C_{W}L$.

Applying Step 1 with $\varepsilon _{2}:=1-\varepsilon _{1}$ on every $B(x,r_1)$ with $x\in B(x_0,r_i)\subset B$, and using standard covering techniques, it follows that $w_{i}\leq \varepsilon_{2}$ holds $\mu$-a.e. on $B(x,\delta _{S}r_{1})$. In particular,
$$b_{i}=b_{i+1}\esup_{B(x_{0},r_{i})}w_i\leq \varepsilon _{2}b_{i+1}$$
for every $1\leq i<n$. Iterating this inequality and noting $b_{n}\leq 1$, we finally obtain $b_{1}\leq \varepsilon _{2}^{n-1}b_{n}\leq \varepsilon _{2}^{n-1}$. Hence
$$\esup_{\frac{1}{2n}B}\lambda R_{\lambda }u\leq b_{1}\leq\varepsilon _{2}^{n-1}\leq C\exp(-cn)\leq C^{\prime}\exp (-c^{\prime }(\lambda W(x_{0},r))^{1/\beta^{\ast }}).$$
Clearly, this implies \eqref{s3ues} -- with the integer $n$ given by \eqref{s3n}.
	
	\textbf{Step 4.} We show that there exists a positive constant $L_1\geq 1$ such that for any ball $B:=B(x_0,r)$ with
	\begin{equation}  \label{s4c}
		\lambda W(x_0, r)\geq L_1,
	\end{equation}
	we have
	\begin{equation}  \label{s4conc}
		\einf_{x\in (2B)^{c}}\lambda R_{\lambda }1_{B^{c}}(x)\geq \varepsilon_{1},
	\end{equation}
	where $\varepsilon_1$ is the constant from \eqref{s1res}.
	
	Indeed, let $L$ be the other constant from \eqref{s1res}, and fix an arbitrary point $x\in (2B)^{c}$. Since $d(x,x_{0})>2r$, it is easy to verify
	\begin{equation*}
		r\leq \frac{d(x,x_{0})}{2}\leq d(x,x_{0})-r.
	\end{equation*}
	Therefore, by the scaling property \eqref{scpr} of $W$,
	\begin{eqnarray*}
		W(x_{0},r) &\leq &W(x_{0},d(x_{0},x)/2)\leq C_{W}2^{-\beta _{\ast }}W(x_{0},d(x_{0},x))\leq C_{W}^{2}2^{-\beta _{\ast }}W(x,d(x_{0},x))\\
		&\leq &C_{W}^{3}2^{\beta ^{\ast }-\beta _{\ast }}W(x,d(x_{0},x)/2)\leq C_{W}^{3}2^{\beta ^{\ast }-\beta _{\ast }}W(x,d(x_{0},x)-r).
	\end{eqnarray*}
	Combining this with \eqref{s4c} and setting $L_{1}:=C_{W}^{3}2^{\beta ^{\ast }-\beta_{\ast }}L$, we see
	$$\lambda W(x,d(x_{0},x)-r)\geq \lambda W(x_{0},r)\geq C_{W}^{-3}2^{\beta _{\ast }-\beta ^{\ast }}L_{1}=L.$$
	
	This implies, together with \eqref{s1res} and the fact $B(x,d(x_{0},x)-r)\subset B^{c}$ for any $x\in (2B)^{c}$, that
	\begin{equation*}
		\lambda R_{\lambda }1_{B^{c}}\geq \lambda R_{\lambda}^{B(x,d(x_{0},x)-r)}1_{B(x,d(x_{0},x)-r)}\geq \varepsilon _{1}
	\end{equation*}
	$\mu$-a.e. in the ball $B(x,\delta _{S}(d(x_{0},x)-r))$ (and hence also $\mu$-a.e. in $B(x,\delta _{S}r)$). This proves \eqref{s4conc}.
	
	\textbf{Step 5.} Same as \cite[Step 4 of Theorem 3.4]{GrigoryanHu.2008.IM81}, we see for any non-negative function $f\in L^\infty(M,\mu)$ and any $t,\lambda>0$, the function $u=\lambda R_\lambda f$ satisfies the inequality
	\begin{equation}  \label{s5conc}
		P_t u\leq\exp(\lambda t)u
	\end{equation}
	$\mu$-a.e. in $M$.
	
	\textbf{Step 6.} We complete the proof of Lemma \ref{pf1-1}. By \eqref{s3conc} in Step 3, for every $\lambda >0$ and $u=\lambda R^{\lambda }1_{B^{c}}$, we have
	\begin{equation}
		\esup_{\frac{1}{2n}B}u\leq C^{\prime }\exp (-c^{\prime}(\lambda W(x_{0},r))^{1/\beta ^{\ast }}).  \label{s6stt}
	\end{equation}
	Now let $L_{1}$ be the constant from \eqref{s4c}. We distinguish two cases.
	
	Case (1): $\lambda W(x_{0},r)\geq L_{1}$. Note that $u\geq \varepsilon _{1}\mathbf{1}_{(2B)^{c}}$ by \eqref{s4conc}. Applying the operator $P_{t}$ to both sides and using \eqref{s5conc}, we see
	\begin{equation*}
		\varepsilon _{1}P_{t}\mathbf{1}_{(2B)^{c}}\leq P_{t}u\leq \exp (\lambda t)u.
	\end{equation*}
	Combining \eqref{s6stt}, we obtain
	\begin{equation}
		\esup_{\frac{1}{2n}B}P_{t}\mathbf{1}_{(2B)^{c}}\leq C_{0}\exp\left( \lambda t-c^{\prime }\left( \lambda W(B)\right) ^{1/\beta ^{\ast }}\right) ,  \label{harmo}
	\end{equation}
	where $C_{0}:=\varepsilon _{1}^{-1}C^{\prime}$.
	
	Case (2): $\lambda W(x_{0},r)<L_{1}$. Noting that $0\leq u\leq 1$ on $M$, inequality \eqref{harmo} holds again trivially with the constant $C_{0}:=\exp(c^{\prime }L_{1}^{1/\beta ^{\ast }})$.
	
	Thus, \eqref{harmo} holds for all $\lambda >0$. Consequently, by \eqref{scpr} and \eqref{s3n}, one can derive
	\begin{equation}
		\esup_{\frac{1}{2n}B}P_{t}\mathbf{1}_{B^{c}}\leq C_{0}^{\prime}\exp\left( \lambda t-c^{\prime \prime}\left( \lambda W(x_{0},r)\right) ^{1/\beta ^{\ast }}\right) ,  \label{harmo1}
	\end{equation}
	for some constants $C_{0}^{\prime}, c^{\prime \prime}>0$ similar to Step 3. The same holds on $\frac{1}{2}B$ by standard covering techniques. Finally, minimizing the right-hand side of \eqref{harmo1} over $\lambda >0$ yields \eqref{Tail}, which completes the proof.
\end{proof}

\begin{lemma}\label{pf1-2}
$(\mathrm{DUE})+(\mathrm{T}_{\exp})\Rightarrow(\mathrm{UE}_{\mu,\exp})$.
\end{lemma}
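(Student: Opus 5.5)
We plan to follow the classical argument of \cite[Pages 550--551]{GrigoryanHu.2014.MMJ505}: split the heat kernel at the half-time via the semigroup property. Then use $(\mathrm{DUE})$ to bound one factor on the diagonal, and $(\mathrm{T}_{\exp})$ to make the other factor small off the diagonal. Throughout we work with $\mu$-a.e.\ points, which is why only $(\mathrm{UE}_{\mu,\exp})$ is claimed.

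Fix $t$ with $0<t<W(x,\bar R)\wedge W(y,\bar R)$ and points $x,y$ (Lebesgue points in the appropriate sense), and set $r=d(x,y)$. If $W(x,r)\le t$, the exponential factor is bounded below by a constant. We then simply use
\[
p_t(x,y)\le\sqrt{p_t(x,x)p_t(y,y)}
\]
together with $(\mathrm{DUE})$. By the scaling \eqref{scpr} and $(\mathrm{VD})$ (the points are at distance $\lesssim W^{-1}(x,t)$), $V(y,W^{-1}(y,t))\asymp V(x,W^{-1}(x,t))$.

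Otherwise, let $B_x=B(x,r/2)$ and $B_y=B(y,r/2)$. We write
\[
p_t(x,y)=\int p_{t/2}(x,z)p_{t/2}(z,y)\,d\mu(z)
\]
and split the integral into $z\in B_x^c$ and $z\in B_y^c$. On $B_x^c$ we bound $p_{t/2}(z,y)\le\sup_{z'\in B_x^c}p_{t/2}(z',y)$. To do so cleanly, we use the standard trick of averaging over small balls. For $x'\in B(x,\epsilon)$, $y'\in B(y,\epsilon)$, consider
\[
\int_{B(x,\epsilon)}\int_{B(y,\epsilon)}p_t\,d\mu\,d\mu=\langle P_{t/2}1_{B(x,\epsilon)},P_{t/2}1_{B(y,\epsilon)}\rangle.
\]
This is bounded by
\[
\|P_{t/2}1_{B(x,\epsilon)}\cdot 1_{B(y,r/2)^c\cap\cdots}\|\,\|\cdots\|.
\]
Splitting $M=\{d(\cdot,x)\ge r/2\}\cup\{d(\cdot,y)\ge r/2\}$, each piece is controlled by
\[
\|P_{t/2}1_{B(x,\epsilon)}\|_\infty\cdot\int_{B(y,\epsilon)}P_{t/2}1_{B(y,r/2)^c}.
\]
The first factor is bounded via $(\mathrm{DUE})$ by $\mu(B(x,\epsilon))\,C/V(x,W^{-1}(x,t))$, using the semigroup and Cauchy--Schwarz, $p_{t/2}(u,v)\le\sqrt{p_{t/2}(u,u)p_{t/2}(v,v)}$, together with volume comparisons near $x$. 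The second factor is bounded via $(\mathrm{T}_{\exp})$ applied to the ball $B(y,r/2)$, for $\epsilon<r/4$, by
\[
\mu(B(y,\epsilon))\,C\exp\bigl(-c(W(y,r/2)/t)^{1/(\beta^*-1)}\bigr).
\]
Dividing by $\mu(B(x,\epsilon))\mu(B(y,\epsilon))$ and letting $\epsilon\to0$ at Lebesgue points yields the pointwise bound for $\mu$-a.e.\ pairs.

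Finally, we convert $W(y,r/2)$ and $W(x,r/2)$ into $W(x,d(x,y))$ using \eqref{scpr}, which costs only a constant in the exponent. We also convert $V(y,W^{-1}(y,t))$ to $V(x,W^{-1}(x,t))$. For this, using $(\mathrm{VD})$ with \eqref{muscal0} and \eqref{WinvScal}, we get
\[
\frac{V(x,W^{-1}(x,t))}{V(y,W^{-1}(y,t))}\le C\Bigl(1+\tfrac{r}{W^{-1}(x,t)}\Bigr)^{\alpha^*}\le C\Bigl(1+\tfrac{W(x,r)}{t}\Bigr)^{\alpha^*/\beta_*}.
\]
This polynomial factor is absorbed by halving $c$ in the exponential.

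We expect the main obstacle to be this inhomogeneous bookkeeping: the on-diagonal bound at $y$ is expressed through $W(y,\cdot)$, and the tail is centered at $y$. We must check that every transfer between $x$- and $y$-based quantities produces at most polynomial factors in $W(x,r)/t$. This holds because all comparisons happen at distance $\le r$, where \eqref{scpr} applies with uniform constants.
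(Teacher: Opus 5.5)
The step that fails is your bound $\|P_{t/2}1_{B(x,\epsilon)}\|_\infty\le C\mu(B(x,\epsilon))/V(x,W^{-1}(x,t))$. The same problem affects its unaveraged form, where $\sup_{z'\in B(x,r/2)^c}p_{t/2}(z',y)$ would have to be $\lesssim 1/V(y,W^{-1}(y,t))$.

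In the piece over $\{d(\cdot,y)\ge r/2\}$, the supremum runs over all $z\in B(y,r/2)^c$. That set contains points arbitrarily far from $x$, so ``volume comparisons near $x$'' are not available. What $(\mathrm{DUE})$ and Cauchy--Schwarz actually give is
\[
p_{t/2}(z,v)\le C\,\bigl(V(z,W^{-1}(z,t/2))\,V(v,W^{-1}(v,t/2))\bigr)^{-1/2}.
\]
Here $V(z,W^{-1}(z,t/2))$ is not bounded below by $c\,V(x,W^{-1}(x,t))$ uniformly in $z$. $(\mathrm{VD})$ only yields $V(x,\rho)/V(z,\rho)\le C(1+d(x,z)/\rho)^{\alpha^*}$, which is unbounded in $z$. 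For example, on $\mathbb{R}^n$ with the doubling weight $(1+|\xi|)^a$, $-n<a<0$, one has $V(z,1)\to 0$ as $|z|\to\infty$.

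A uniform bound $\sup_z p_{t/2}(z,x)\lesssim 1/V(x,W^{-1}(x,t))$ does hold a posteriori. But it needs the tail to control far $z$, i.e.\ it is essentially the off-diagonal estimate you are proving, so invoking it here is circular. Your closing claim that all transfers between $x$- and $y$-based quantities happen at distance $\le r$ is exactly what is false: the integration variable in the semigroup identity is unrestricted.

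The missing ingredient is a tail estimate against a polynomial weight, obtained by an annular decomposition; this is how the paper argues. Write $V_{z,t}:=V(z,W^{-1}(z,t))$ and $r_{xy}:=d(x,y)/D$. On $B(x,r_{xy})^c$:
\begin{itemize}
\item keep $p_{t/2}(z,y)\le C(V_{z,t}V_{y,t})^{-1/2}$ pointwise in $z$;
\item use $(V_{x,t}/V_{z,t})^{1/2}\le C(W(x,d(x,z))/t)^{\theta}$ with $\theta=\alpha^*/(2\beta_*)$;
\item bound $\int_{B(x,r_{xy})^c}p_{t/2}(x,z)\,(W(x,d(x,z))/t)^{\theta}\,d\mu(z)$ by summing over the annuli $B(x,D^{k+1}r_{xy})\setminus B(x,D^{k}r_{xy})$, applying $(\mathrm{T}_{\exp})$ to the balls $B(x,D^{k}r_{xy})$.
\end{itemize}
For $D$ large, \eqref{scpr} makes $W(x,D^{k}r_{xy})/t$ grow geometrically in $k$. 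After splitting off one factor $\exp(-\frac c2 (W(x,r_{xy})/t)^{1/(\beta^*-1)})$, the remaining series is bounded by $\int_0^\infty s^{\theta-1}e^{-\frac c2 s^{1/(\beta^*-1)}}\,ds<\infty$.

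Your averaged formulation can be repaired the same way. On $B(y,r/2)^c$, bound $P_{t/2}1_{B(x,\epsilon)}(z)\le C\mu(B(x,\epsilon))(V_{x,t}V_{z,t})^{-1/2}$ pointwise rather than by its supremum. Then integrate this weight against $P_{t/2}1_{B(y,\epsilon)}$ over annuli centred at $y$.

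The rest of your outline is fine: the near-diagonal case, and the final conversion of $W(y,r/2)$ and $V_{y,t}$ into $x$-based quantities at polynomial cost.
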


\begin{proof}
Let $x,y$ be two arbitrary distinct points in $M$, and fix any $0<t<W(x,\bar{R})\wedge W(y,\bar{R})$. To begin with, using the semigroup property, the Cauchy-Schwarz inequality and $(\mathrm{DUE})$ successively, we obtain 
\begin{eqnarray}
	p_t(x,y)&=&\int_K
	p_{t/2}(x,z)p_{t/2}(z,y)d\mu(z)\leq\left(\int_K(p_{t/2}(x,z))^2d\mu(z)%
	\right)^{1/2}\left(\int_K(p_{t/2}(y,z))^2d\mu(z)\right)^{1/2}  \notag \\
	&=&\sqrt{p_t(x,x)p_t(y,y)}\leq\frac{C}{\sqrt{V(x,W^{-1}(x,t))V(y,W^{-1}(y,t))%
	}}.  \label{eDUE}
\end{eqnarray}

Now let $D\geq 2$ be a number to be determined later. We distinguish two
cases.

Case $(1)$: $d(x,y)\leq D( W^{-1}(x,t)\vee W^{-1}(y,t)) $.
Indeed, by a similar argument as in \cite[Proposition 9.1]%
{GrigoryanHuHu.2022.TPLE}, there exists a constant $C>0$ such that 
\begin{equation}
	C^{-1}\leq \frac{W^{-1}(x,t)}{W^{-1}(y,t)},\quad\frac{V(x,W^{-1}(x,t))}{%
		V(y,W^{-1}(y,t))}\leq C.  \label{Wctrl}
\end{equation}%
Consequently, 
\begin{equation*}
	d(x,y)\leq D\left( W^{-1}(x,t)\vee W^{-1}(y,t)\right) \leq DCW^{-1}(x,t),
\end{equation*}%
which implies with the help of (\ref{scpr}) that $W(x,d(x,y))\leq C^{\prime }t$ with a
constant $C^{\prime }$ independent of $x,y,t$. Hence, by (\ref{eDUE}) and (%
\ref{Wctrl}), 
\begin{equation*}
	p_{t}(x,y)\leq \frac{C^{3/2}}{V(x,W^{-1}(x,t))}\leq \frac{C^{3/2}}{%
		V(x,W^{-1}(x,t))}\exp \left( C^{\prime \frac{1}{\beta ^{\ast }-1}}\right)
	\exp \left( -\left( \frac{W(x,d(x,y))}{t}\right) ^{\frac{1}{\beta ^{\ast }-1}%
	}\right) ,
\end{equation*}%
which shows $(\mathrm{UE}_{\exp })$ in this case.

Case $(2)$: $d(x,y)>D( W^{-1}(x,t)\vee W^{-1}(y,t)) $. For
simplicity, set $r_{xy}:=d(x,y)/D$, and denote $V_{z,t}:=V(z,W^{-1}(z,t))$
for any $z\in M$ and $t>0$. Note that 
\begin{equation}
	p_{t}(x,y)\leq \left( \int_{B(x,r_{xy})^{c}}+\int_{B(y,r_{xy})^{c}}\right)
	p_{t/2}(x,z)p_{t/2}(z,y)d\mu (z)=:I_{1}+I_{2}.  \label{DUEtoUE}
\end{equation}
We first estimate $I_{1}$. Indeed, note by $(\mathrm{VD})$ and \eqref{WinvScal} that, there exists a constant $C>0$ such that, for all $x,z\in M$ and all $t>0$ satisfying $d(x,z)>W^{-1}(z,t)$, 
\begin{equation*}
\frac{V(z,d(x,z))}{V(z,W^{-1}(z,t))}\leq C\left( \frac{d(x,z)}{W^{-1}(z,t)}\right) ^{\alpha ^{\ast }}\leq C^{2}\left( \frac{W(z,d(x,z))}{t}\right) ^{\alpha ^{\ast }/\beta_{\ast }}\leq C^{2+\alpha ^{\ast }/\beta _{\ast }}\left( \frac{W(x,d(x,z))}{t}\right) ^{\alpha ^{\ast }/\beta _{\ast }}.
\end{equation*}
Combining $(\mathrm{DUE})$, it follows that
\begin{eqnarray}
	I_{1} &=&\int_{B(x,r_{xy})^{c}}p_{t/2}(x,z)p_{t/2}(z,y)d\mu (z)\leq\frac{C}{\sqrt{V_{x,t}V_{y,t}}}\int_{B(x,r_{xy})^{c}}p_{t/2}(x,z)\sqrt{\frac{V_{x,t}}{V_{z,t}}}d\mu (z)  \notag \\
	&\leq &\frac{C}{\sqrt{V_{x,t}V_{y,t}}}\int_{B(x,r_{xy})^{c}}p_{t/2}(x,z)%
	\left( \frac{W(x,d(x,z))}{t}\right) ^{\theta }d\mu (z)=:\frac{C}{\sqrt{%
			V_{x,t}V_{y,t}}}J_{1},  \label{I1}
\end{eqnarray}%
where $\theta =\alpha ^{\ast }/2\beta _{\ast }$. For each $k\geq 0$, define 
\begin{equation*}
	A_{k}:=B(x,D^{k+1}r_{xy})\setminus B(x,D^{k}r_{xy}),\quad
	s_{k}:=t^{-1}W(x,D^{k-1}r_{xy}).
\end{equation*}%
Note that for all $k\geq 0$, 
\begin{equation}\label{sk}
s_{k}\geq s_{1}\quad\text{and}\quad\frac{s_{k}}{s_{k+1}}=\frac{W(x,D^{k-1}r_{xy})}{W(x,D^{k}r_{xy})}\leq CD^{-\beta _{\ast }}.
\end{equation}
Combining $(\mathrm{T}_{\exp })$, it follows that
\begin{eqnarray}
	J_{1} &=&\int_{B(x,r_{xy})^{c}}p_{t/2}(x,z)\left( \frac{W(x,d(x,z))}{t}\right) ^{\theta }d\mu (z)\leq \sum_{k=0}^{\infty
	}\int_{A_{k}}p_{t/2}(x,z)\left( \frac{W(x,d(x,z))}{t}\right) ^{\theta }d\mu(z)  \notag \\
	&\leq &\sum_{k=0}^{\infty }P_{t/2}1_{B(x,D^{k}r_{xy})^{c}}(x)\left( \frac{%
		W(x,D^{k+1}r_{xy})}{t}\right) ^{\theta }\leq \sum_{k=0}^{\infty }C^{\prime
	}s_{k+1}^{\theta }\exp \left( -cs_{k+1}^{\frac{1}{\beta ^{\ast }-1}}\right) 
	\notag \\
	&\leq &C^{\prime }\sum_{k=0}^{\infty }s_{k+1}^{\theta }\exp \left( -\frac{c}{%
		2}s_{1}^{\frac{1}{\beta ^{\ast }-1}}\right) \exp \left( -\frac{c}{2}s_{k+1}^{%
		\frac{1}{\beta ^{\ast }-1}}\right) =C^{\prime }\Phi
	(s_{1})\sum_{k=0}^{\infty }s_{k+1}^{\theta }\Phi (s_{k+1}),  \label{J1plus}
\end{eqnarray}%
where 
\begin{equation*}
	\Phi (s):=\exp \left( -\frac{c}{2}s^{\frac{1}{\beta ^{\ast }-1}}\right) .
\end{equation*}%
Since $\Phi $ is decreasing on $[0,\infty)$, for any $k\geq 0$, we have by (\ref{sk}) that 
\begin{eqnarray*}
	\int_{s_{k}}^{s_{k+1}}s^{\theta -1}\Phi (s)ds &\geq &\Phi
	(s_{k+1})\int_{s_{k}}^{s_{k+1}}s^{\theta -1}ds=\theta ^{-1}\Phi
	(s_{k+1})\left( s_{k+1}^{\theta }-s_{k}^{\theta }\right) \\
	&\geq &\theta ^{-1}\Phi (s_{k+1})\left( s_{k+1}^{\theta }-(CD^{-\beta _{\ast
	}})^{\theta }s_{k+1}^{\theta }\right) .
\end{eqnarray*}%
By choosing $D\geq 2$ sufficiently large such that $CD^{-\beta _{\ast }}\leq
1/2$, we obtain by summing up both sides over $k\geq 0$ that 
\begin{equation}
	\sum_{k=0}^{\infty }s_{k+1}^{\theta }\Phi (s_{k+1})\leq \frac{\theta }{%
		1-2^{-\theta }}\int_{s_{1}}^{\infty }s^{\theta -1}\Phi (s)ds\leq \frac{%
		\theta }{1-2^{-\theta }}\int_{0}^{\infty }s^{\theta -1}\Phi (s)ds<\infty .
	\label{infsumint}
\end{equation}%
Combining (\ref{I1}), (\ref{J1plus}), (\ref{infsumint}) with the fact that $%
s_{1}=W(x,r_{xy})/t=W(x,D^{-1}d(x,y))/t$, we obtain 
\begin{equation*}
	I_{1}\leq \frac{CC^{\prime }}{\sqrt{V_{x,t}V_{y,t}}}\exp \left( -\frac{c}{2}s_{1}^{\frac{1}{\beta ^{\ast }-1}}\right) \sum_{k=0}^{\infty
	}s_{k+1}^{\theta }\Phi (s_{k+1})\leq \frac{C^{\prime \prime }}{\sqrt{V_{x,t}V_{y,t}}}\exp \left( -c^{\prime }\left( \frac{W(x,d(x,y))}{t}\right)
	^{\frac{1}{\beta _{\ast }-1}}\right) .
\end{equation*}

Symmetrically, since $W(x,d(x,y))\asymp W(y,d(x,y))$, we see 
\begin{equation*}
	I_2\leq\frac{C^{\prime \prime }}{\sqrt{V_{x,t}V_{y,t}}}\exp\left(-c^{\prime
		\prime }\left(\frac{W(x,d(x,y))}{t}\right)^{\frac{1}{\beta_\ast-1}}\right),
\end{equation*}
yielding by (\ref{DUEtoUE}) that (say $c^{\prime \prime }<c^{\prime }$ without loss of generality)
\begin{eqnarray*}
	p_t(x,y)&\leq&\frac{2C^{\prime \prime }}{\sqrt{V_{x,t}V_{y,t}}}%
	\exp\left(-c^{\prime \prime }\left(\frac{W(x,d(x,y))}{t}\right)^{\frac{1}{%
			\beta_\ast-1}}\right) \\
	&\leq&\frac{2C^{\prime \prime }}{V_{x,t}}\left(\frac{W(x,d(x,y))}{t}%
	\right)^\theta\exp\left(-c^{\prime \prime }\left(\frac{W(x,d(x,y))}{t}%
	\right)^{\frac{1}{\beta_\ast-1}}\right) \\
	&\leq&\frac{2C^{\prime \prime }c(\theta)}{V(x,W^{-1}(x,t))}\exp\left(-\frac{%
		c^{\prime \prime }}{2}\left(\frac{W(x,d(x,y))}{t}\right)^{\frac{1}{%
			\beta_\ast-1}}\right),
\end{eqnarray*}
where 
\begin{equation*}
	c(\theta)=\sup\limits_{s>0}s^{\theta}\exp\left(-\frac{c^{\prime \prime }}{2}%
	s^{\frac{1}{\beta^\ast-1}}\right)<\infty.
\end{equation*}
Hence $(\mathrm{UE}_{\mu,\exp})$ is proved again.
\end{proof}
		
\begin{lemma}\label{pf1-3}
$(\mathrm{VD})+(\mathrm{UE}_{\mu,\exp})+(\mathrm{C}) \Rightarrow (\mathrm{sloc})$.
\end{lemma}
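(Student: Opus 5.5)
The plan is to follow \cite[Lemma 4.5]{GrigoryanHuLau.2009.3}. By the Beurling--Deny decomposition it suffices to show that the jump measure $j$ and the killing measure $k$ both vanish. Both are recovered from the heat semigroup through the standard limits
\[
\mathcal{E}(u,v)=\lim_{t\to0}\frac1t\bigl(u-P_tu,v\bigr),
\]
applied to test functions with separated supports, together with conservativeness $(\mathrm{C})$.

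\textbf{Jump part.} Take $u,v\in\mathcal{F}\cap C_0(M)$ with disjoint compact supports, so that $\delta:=d(\supp u,\supp v)>0$. Then $\mathcal{E}^{(L)}(u,v)=\mathcal{E}^{(K)}(u,v)=0$ and $(u,v)=0$, hence
\[
\mathcal{E}(u,v)=-\int u\,v\,dj=\lim_{t\to0}-\frac1t\iint u(x)p_t(x,y)v(y)\,d\mu(x)d\mu(y).
\]
For $\mu$-a.e.\ $x\in\supp u$ and all $y\in\supp v$ we have $d(x,y)\ge\delta$. By $(\mathrm{UE}_{\mu,\exp})$ and the scaling property \eqref{scpr},
\[
p_t(x,y)\le \frac{C}{V(x,W^{-1}(x,t))}\exp\Bigl(-c\bigl(W(x,\delta)/t\bigr)^{\frac1{\beta^\ast-1}}\Bigr).
\]
Since $\supp u$ is compact, $W(x,\delta)\ge c_1>0$ uniformly on it, with $c_1=c_1(\delta,\supp u)$. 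By $(\mathrm{VD})$ and \eqref{WinvScal}, $V(x,W^{-1}(x,t))^{-1}\le C\,V(x_0,1)^{-1}t^{-\alpha^\ast/\beta_\ast}$ for $x$ in a fixed compact set and $t$ small. Consequently
\[
\frac1t\sup_{x\in\supp u,\,y\in\supp v}p_t(x,y)\le Ct^{-1-\alpha^\ast/\beta_\ast}\exp\bigl(-c(c_1/t)^{1/(\beta^\ast-1)}\bigr)\to0.
\]
The double integral is bounded by this times $\|u\|_1\|v\|_1$, which is finite since $u,v$ are continuous with compact support. So $\int uv\,dj=0$ for all such pairs. Density of $C_0$-functions with disjoint supports then gives $j\equiv0$ off the diagonal.

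\textbf{Killing part.} Take $u\in\mathcal{F}\cap C_0(M)$ and let $\phi\in\mathcal{F}\cap C_0(M)$ satisfy $\phi\equiv1$ on a neighbourhood $U$ of $\supp u$; such a $\phi$ exists by regularity. Strong locality of $\mathcal{E}^{(L)}$ gives $\mathcal{E}^{(L)}(\phi,u)=0$, and $j=0$ from the previous step. Hence $\mathcal{E}(\phi,u)=\int u\,dk$. On the other hand
\[
\frac1t(\phi-P_t\phi,u)=\frac1t\int u(x)\Bigl(1-\int p_t(x,y)\phi(y)\,d\mu(y)\Bigr)d\mu(x).
\]
Using $(\mathrm{C})$, that is $\int p_t(x,y)\,d\mu(y)=1$, and $\phi=1$ on $U$, this equals
\[
\frac1t\int u(x)\int_{U^c}p_t(x,y)(1-\phi(y))\,d\mu(y)\,d\mu(x).
\]
This is controlled by $t^{-1}\|u\|_1\sup_{x\in\supp u}P_t1_{B(x,\delta)^c}(x)$, where $\delta=d(\supp u,U^c)>0$.

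The main obstacle is the tail bound here: now $y$ ranges over the noncompact set $U^c$, so the pointwise kernel bound must be integrated. I would prove $P_t1_{B(x,\delta)^c}(x)\le C\exp(-c(W(x,\delta)/t)^{1/(\beta^\ast-1)})$ from $(\mathrm{UE}_{\mu,\exp})$ by exactly the annulus decomposition used in Lemma \ref{pf1-2}. Split into the annuli $A_k=B(x,2^{k+1}\delta)\setminus B(x,2^k\delta)$. Bound $\mu(A_k)/V(x,W^{-1}(x,t))$ by $C(W(x,2^{k+1}\delta)/t)^{\alpha^\ast/\beta_\ast}$ via $(\mathrm{VD})$ and \eqref{WinvScal}. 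Then sum, using that $W(x,2^k\delta)$ grows geometrically in $k$ by \eqref{scpr}. Since $W(x,\delta)\ge c_1$ uniformly for $x\in\supp u$, the resulting bound is $o(t)$. Thus $\int u\,dk=0$ for all such $u$, so $k\equiv0$, and $(\mathrm{sloc})$ follows.
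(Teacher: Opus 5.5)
Your proposal is correct. The jump part is the paper's argument: for $u,v\in\mathcal{F}\cap C_0(M)$ with disjoint compact supports, write $\mathcal{E}(u,v)=\lim_{t\to0}-\frac1t(u,P_tv)$. Then $(\mathrm{UE}_{\mu,\exp})$, the scaling of $W$ and $(\mathrm{VD})$ bound the right-hand side by something of the form $t^{-1-\alpha^\ast/\beta_\ast}\exp\bigl(-c\,t^{-1/(\beta^\ast-1)}\bigr)\to0$.

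The difference is in the killing part. The paper proves only locality and then says in one line that this "further implies strong locality due to $(\mathrm{C})$". That is an unproved appeal to the standard fact that a conservative regular Dirichlet form carries no killing measure.

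You prove $k=0$ directly instead. With $\phi\equiv1$ near $\supp u$, conservativeness turns $\int u\,dk=\mathcal{E}(\phi,u)$ into $\lim_{t\to0}\frac1t\bigl(P_t(1-\phi),u\bigr)$. You then make this $o(1)$ by integrating the kernel bound over dyadic annuli, exactly as in Lemma \ref{pf1-2}. This costs an extra integrated tail estimate, essentially a pointwise form of $(\mathrm{T}_{\exp})$ at small times. In return, the lemma becomes self-contained, and $(\mathrm{C})$ enters through the explicit analytic identity $1-P_t\phi=P_t(1-\phi)$ rather than as a cited black box.

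One cosmetic point: with the paper's convention \eqref{EJ}, disjointly supported $u,v$ give $\mathcal{E}(u,v)=-2\int u(x)v(y)\,dj(x,y)$, not $-\int uv\,dj$. This does not affect the conclusion $j\equiv0$.
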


\begin{proof}
	Let $u,v\in \mathcal{F}$ be functions with disjoint compact supports $A=\supp%
	(u)$ and $B=\supp(v)$. Noticing that $(u,v)=0$, there is 
	\begin{equation*}
		\mathcal{E}_{t}(u,v):=\frac{1}{t}\left( u,v-P_{t}v\right) =-\frac{1}{t}\left(
		u,P_{t}v\right) =-\frac{1}{t}\int_{A}\int_{B}u(x)v(y)p_{t}(x,y)d\mu (y)d\mu
		(x).
	\end{equation*}%
	Let $R:=d(A,B)$ and $r:=\diam A$. Fix $x_{0}\in A$. By (\ref{scpr}) and $(%
	\mathrm{VD})$, for all $x\in A,y\in B$ and sufficiently small $t>0$, 
	\begin{eqnarray*}
		\frac{W(x,d(x,y))}{W(x_{0},r)} &\geq &\frac{W(x,R)}{W(x,r+R)}\cdot \frac{%
			W(x,r+R)}{W(x_{0},r)}\geq \frac{C_{W}^{-2}R^{\beta ^{\ast }}}{r^{\beta
				_{\ast }}(R+r)^{\beta ^{\ast }-\beta _{\ast }}}=:c_{R,r};\\
		\frac{V(x_{0},r)}{V(x,W^{-1}(x,t))} &\leq &\frac{V(x,2r)}{V(x,W^{-1}(x,t))}%
		\leq C\left( \frac{W(x,r)}{t}\right) ^{\alpha ^{\ast }/\beta _{\ast }}\leq
		C_{0}\left( \frac{W(x_{0},r)}{t}\right) ^{\alpha ^{\ast }/\beta _{\ast }}.
	\end{eqnarray*}%
	Consequently, by $(\mathrm{UE}_{\exp})$ we obtain 
	\begin{eqnarray*}
		\left\vert \mathcal{E}_{t}(u,v)\right\vert &\leq &\frac{1}{t}\int_{A}\int_{B}%
		\frac{C}{V(x,W^{-1}(x,t))}\Phi \left( \frac{W(x,d(x,y))}{t}\right)
		|u(x)||v(y)|d\mu (y)d\mu (x) \\
		&\leq &\frac{CC_{0}\Vert v\Vert _{1}}{t}\left( \frac{W(x_{0},r)}{t}\right)
		^{\alpha ^{\ast }/\beta _{\ast }}\Phi \left( \frac{c_{R,r}W(x_{0},r)}{t}%
		\right) \int_{A}\frac{|u(x)|d\mu (x)}{V(x_{0},r)} \\
		&\leq &\frac{c_{R,r}^{-\alpha ^{\ast }/\beta _{\ast }-1}CC_{0}\Vert u\Vert
			_{1}\Vert v\Vert _{1}}{V(x_{0},r)W(x_{0},r)}\left( \frac{c_{R,r}W(x_{0},r)}{t%
		}\right) ^{\alpha ^{\ast }/\beta _{\ast }+1}\Phi \left( \frac{%
			c_{R,r}W(x_{0},r)}{t}\right),
	\end{eqnarray*}
	where $\Phi(s):=\exp\left(-cs^{\frac{1}{\beta^\ast-1}}\right)$.
	Then, there exists an increasing chain $\{s_k\}_{k=1}^\infty$ such that
	\begin{equation*}
		s_k\to\infty,\quad s_k^{\alpha^\ast/\beta_\ast+1}\Phi(s_k)\to 0\quad\text{as}%
		\quad k\to\infty.
	\end{equation*}
	Now letting $t_k=c_{R,r}W(x_0,r)/s_k$ for each $k\geq 1$, we obtain (cf.\ \cite[Lemma 1.3.4]{FukushimaOshimaTakeda.2011.489})
	\begin{equation*}
		|\mathcal{E}(u,v)|=\lim_{k\to\infty}|\mathcal{E}_{t_k}(u,v)|=0,
	\end{equation*}
	that is, $(\mathcal{E},\mathcal{F})$ is local. This further implies strong locality due to $(\mathrm{C})$, thus obtaining what is desired.
\end{proof}

\begin{lemma}\label{pf1-4}
$(\mathrm{VD})+(\mathrm{UE}_{\mu,\exp})+(\mathrm{C})\Rightarrow (\mathrm{S})$.
\end{lemma}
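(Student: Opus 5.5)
Following \cite[Proposition 5.8 and Page 549]{GrigoryanHu.2014.MMJ505}, I will obtain $(\mathrm{S})$ from two ingredients: conservativeness, which makes $P_t1_B$ close to $1$ near the centre of $B$ for small times, and Dynkin's formula, which compares $P_t^B1_B$ with $P_t1_B$. Fix a ball $B=B(x_0,r)$ with $r<\bar R$, and put $\delta_S:=1/4$.

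\textbf{Step 1 (exit estimate for the free semigroup).} Let $y\in\frac14B$ and let $B'=B(y,r/2)\subset B$. By $(\mathrm{C})$,
\[
1-P_t1_B(y)=P_t1_{B^c}(y)\le P_t1_{B'^c}(y)=\int_{B'^c}p_t(y,z)\,d\mu(z).
\]
I will bound the last integral exactly as $J_1$ in Lemma \ref{pf1-2}, decomposing $B'^c$ into annuli $A_k=B(y,2^{k+1}r/2)\setminus B(y,2^kr/2)$. On each annulus I apply $(\mathrm{UE}_{\mu,\exp})$, together with $(\mathrm{VD})$ and the scaling \eqref{WinvScal} to control $V(z,\cdot)$ against $V(y,\cdot)$. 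This yields
\[
P_t1_{B'^c}(y)\le C\exp\Bigl(-c\bigl(W(y,r/2)/t\bigr)^{\frac{1}{\beta^\ast-1}}\Bigr)
\]
for $\mu$-a.e.\ $y$. Since $d(x_0,y)<r$, \eqref{scpr} gives $W(y,r/2)\asymp W(x_0,r)$. Hence for $t\le\delta W(B)$ with $\delta$ small enough, $\esup_{\frac14B}P_t1_{B^c}\le 1/4$.

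\textbf{Step 2 (killed versus free semigroup).} By the strong Markov property (Dynkin's formula), for $y\in\frac14B$,
\[
P_t1_B(y)-P_t^B1_B(y)\le \sup_{s\le t,\ z\in\partial B}P_s1_B(z)\;\le\;\sup_{s\le t}\esup_{z\notin B}P_s1_{B(x_0,r/2)}(z).
\]
To be safer I will work with the smaller ball $\frac12B$. The estimate I need is then
\[
P_t^B1_B\ge P_t^B1_{\frac12B}\ge P_t1_{\frac12B}-\sup_{s\le t}\esup_{B^c}P_s1_{\frac12B},
\]
which is \cite[Lemma]{GrigoryanHu.2014.MMJ505}-type. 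The last term is at most $\sup_{z\in B^c}\mathbb{P}_z(\text{hit }\frac12B\text{ before }s)$. I will bound this by the same tail estimate applied at points $z$ with distance $\ge r/2$ from $\frac12B$, combined with the argument of Step 1. This gives a quantity $\le1/4$ when $t\le\delta W(B)$. Meanwhile $P_t1_{\frac12B}\ge 3/4$ on $\frac18B$ by Step 1 applied to $\frac12B$. Hence $P_t^B1_B\ge 1/2$ on $\frac18B$, which is $(\mathrm{S})$ with $\varepsilon=1/2$ and $\delta_S=1/8$.

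\textbf{Main obstacle.} The hard step is the hitting estimate in Step 2: bounding $\sup_{s\le t}P_s1_{\frac12B}$ uniformly over starting points outside $B$, with only a $\mu$-a.e.\ heat kernel bound available. My plan is the standard resolvent trick. The function $u(z)=\int_0^\infty e^{-s/t}P_s1_{\frac12B}(z)\,ds\cdot t^{-1}$ is excessive, and $e\cdot u$ dominates $\sup_{s\le t}P_s1_{\frac12B}$ by \eqref{s5conc}-type monotonicity. $u$ itself is estimated off $B$ by integrating the $(\mathrm{UE}_{\mu,\exp})$ bound against $1_{\frac12B}$, using $V(z,W^{-1}(z,s))$ versus $\mu(\frac12B)$ via $(\mathrm{VD})$ and the $x$-dependent scaling \eqref{scpr}. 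The inhomogeneity of $W$ is handled throughout by comparing $W(z,\cdot)$ and $W(x_0,\cdot)$ at distance $\le$ radius, as in Lemma \ref{pf1-2}.
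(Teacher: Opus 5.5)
Your overall architecture is the paper's. Step 1 is exactly its preliminary estimate: $P_t1_{B^c}$ is small on $\tfrac14B$ for $t\le\delta W(B)$, obtained from $(\mathrm{UE}_{\mu,\exp})$, $(\mathrm{VD})$ and $(\mathrm{C})$. Step 2 is the part the paper delegates to \cite[Proposition 5.8]{GrigoryanHu.2014.MMJ505}. Step 1 is fine, but Step 2 does not work as written.

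\textbf{What fails in Step 2.} The second inequality of your first display runs the wrong way, since $P_s1_B\ge P_s1_{B(x_0,r/2)}$. More seriously, the inequality you then rely on,
$P_t^B1_{\frac12B}\ge P_t1_{\frac12B}-\sup_{s\le t}\esup_{B^c}P_s1_{\frac12B}$,
is false in general. The exit position $X_{\tau_B}$ lies in $B^c$ (on $\partial B$ for a diffusion). But $\partial B$, and even all of $B^c$, may be $\mu$-null, so a $\mu$-a.e.\ bound there says nothing. For instance, take $M=[-1,1]$ with Lebesgue measure and the Neumann Laplacian, and $B=B(0,1)$. Then $B^c=\{\pm1\}$, so the correction term is vacuous, while $P_t1_{\frac12B}-P_t^B1_{\frac12B}>0$ at $0$.

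Your remedies for the ``main obstacle'' do not help.
\begin{enumerate}
\item Rewriting the correction term as $\mathbb{P}_z(\text{hit }\tfrac12B\text{ before }s)$ produces an exit-time quantity of the same type as $(\mathrm{S})$ itself.
\item The inequality \eqref{s5conc} gives $P_su\le e^{s/t}u$, so it controls $P_su$, not $P_s1_{\frac12B}$. Concluding $\sup_{s\le t}P_s1_{\frac12B}\le C\,u$ would need a lower bound $u\ge c$ on $\tfrac12B$, which you do not have.
\item In any case, $u$ is again only controlled $\mu$-a.e.
\end{enumerate}

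\textbf{How to repair it.} No resolvent is needed. Take the essential supremum over the open set $O:=M\setminus\overline{\tfrac34B}\supset B^c$. Alternatively, using the strong locality given by Lemma \ref{pf1-3}, take it over the annulus $B\setminus\overline{\tfrac34B}$, which every path from $\tfrac12B$ to $B^c$ must cross. Since $O$ is open, a $\mu$-a.e.\ bound on $O$ for the quasi-continuous version of $P_s1_{\frac12B}$ holds quasi-everywhere on $O\supset\partial B$. That is what Dynkin's formula needs.

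Then bound $P_s1_{\frac12B}(z)$ for $z\in O$ and each $s\le t$ directly, by integrating $(\mathrm{UE}_{\mu,\exp})$ over $\tfrac12B$. Make the treatment of inhomogeneity explicit here. You must use the radius $d(z,\tfrac12B)\ge d(z,x_0)/3$, not $r/2$. For $z$ far from $x_0$, $W(z,r/2)$ can be much smaller than $W(x_0,r)$; for example, take $x_0=q_1$ and $z=q_4$ on $K_\lambda$ with $\beta_4>\beta_1$. With the correct radius, \eqref{scpr} gives $W(z,d(z,x_0)/3)\ge cW(z,d(z,x_0))\ge c'W(x_0,r)$. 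The factor $\mu(\tfrac12B)/V(z,W^{-1}(z,s))$ is then absorbed into the exponential via $(\mathrm{VD})$.

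This makes the correction term $\le 1/4$ for $t\le\delta W(B)$. Combined with $P_t1_{\frac12B}\ge 3/4$ on $\tfrac18B$, it yields $(\mathrm{S})$ as you intended.
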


\begin{proof}	
Under $(\mathrm{T}_{\exp})$, by a routine argument we see there exist constants $\varepsilon,\delta\in(0,1/4)$ such that for any ball $B:=B(x_0,r)$ and any $t\in(0,\delta W(x_0,r))$,
\begin{equation}
	P_t 1_{B^c} (x)<\varepsilon\quad\text{for}\quad\mu\text{-a.e. }\ x\in\frac{1}{4}B.
\end{equation}
Then, it follows by the argument in the proof of \cite[Proposition 5.8]{GrigoryanHu.2014.MMJ505} that $(\mathrm{S})$ holds. The proof is completed.
\end{proof}

\subsection{Proofs of Theorem \ref{thm:main2} and Proposition \ref{thm:main3}}\label{pf2}
Recall that, in the setting here, there exists a scaling function $F$ with scaling exponents $0<\gamma_\ast\leq\gamma^\ast$ such that \eqref{WF} holds, and hence (by Remark \ref{ctn-resist}) all functions $u\in\mathcal{F}$ and the heat kernel $p_t(x,y)$ admit continuous versions.

Recalling from the sketch, to prove Theorem \ref{thm:main2}, it suffices to prove the following three lemmas:

\begin{lemma}\label{pf2-1}
$(\mathrm{VD})+(\mathrm{MI})+(\mathrm{RVD})\Rightarrow (\mathrm{FK})$.
\end{lemma}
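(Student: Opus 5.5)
We prove $(\mathrm{FK})$ by combining the Morrey inequality with a Hölder-type oscillation bound, following the proof of (6.35) in \cite{GrigoryanHuLau.2014.TAMS6397}. Fix a ball $B=B(x_0,r)$ with $r<\varepsilon\bar R$, a non-empty open set $U\subset B$, and $u\in\mathcal{F}(U)\cap C_0(U)$; by density (regularity) it suffices to treat such $u$. Set $\mathcal{E}=\mathcal{E}(u)$, and assume $\|u\|_2^2=1$ after scaling.

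\textbf{Step 1: reduce to a pointwise lower bound.} Since $\|u\|_2^2\le \|u\|_\infty^2\mu(U)$, it suffices to show
\[
\|u\|_\infty^2\le C\,\frac{W(B)}{\mu(B)}\Bigl(\frac{\mu(U)}{\mu(B)}\Bigr)^{\kappa'}\mathcal{E}(u)
\]
for some $\kappa'>0$, and then use $\|u\|_2^2\le \|u\|_\infty^2\mu(U)$ to get $\lambda_1(U)\ge C^{-1}W(B)^{-1}(\mu(B)/\mu(U))^{1+\kappa'}$. In fact it is better to aim directly at $\|u\|_2^2\le \|u\|_\infty^2\mu(U)$ and the bound above with the factor $(\mu(U)/\mu(B))^{\kappa'}$ absorbed.

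\textbf{Step 2: choose the radius adapted to $\mu(U)$.} Pick a point $x$ with $|u(x)|\ge \frac12\|u\|_\infty$; then $x\in U\subset B$. Choose $\rho\le 2r$ minimal, up to constants, such that $V(x,\rho)\ge 2\mu(U)$. By $(\mathrm{VD})$ and $(\mathrm{RVD})$ (see \eqref{muscal0}) this gives
\[
\rho\asymp r\,\bigl(\mu(U)/\mu(B)\bigr)^{\theta},
\]
with $\theta$ between $1/\alpha^\ast$ and $1/\alpha_\ast$. If $\mu(U)$ is comparable to $\mu(B)$, take $\rho\asymp r$; then $\rho<\iota\bar R$ because $\varepsilon$ is small. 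The ball $B(x,\rho)$ contains a set of measure at least $\mu(U)$ outside $U$, where $u=0$, so there is $y\in B(x,\rho)\setminus U$ with $u(y)=0$.

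\textbf{Step 3: apply $(\mathrm{MI}^F)$ and convert via \eqref{WF}.} The Morrey inequality gives
\[
\tfrac14\|u\|_\infty^2\le |u(x)-u(y)|^2\le C F(x,d(x,y))\,\mathcal{E}\le C' F(x,\rho)\,\mathcal{E}\asymp \frac{W(x,\rho)}{V(x,\rho)}\,\mathcal{E}.
\]
The scaling of $F$ handles $d(x,y)<\rho$. Now $V(x,\rho)\asymp\mu(U)$, and by \eqref{scpr},
\[
W(x,\rho)\le C\,W(x_0,r)\,(\rho/r)^{\beta_\ast}.
\]
Hence
\[
1=\|u\|_2^2\le \|u\|_\infty^2\mu(U)\le C\,W(B)\,(\rho/r)^{\beta_\ast}\,\mathcal{E}\le C\,W(B)\,\bigl(\mu(U)/\mu(B)\bigr)^{\beta_\ast/\alpha^\ast}\mathcal{E}.
\]
This yields $(\mathrm{FK})$ with $\kappa=\beta_\ast/\alpha^\ast$. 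The last inequality uses $\rho/r\lesssim(\mu(U)/\mu(B))^{1/\alpha^\ast}$, which follows from the lower volume bound in \eqref{muscal0}: $\mu(U)\asymp V(x,\rho)\ge C^{-1}V(x,2r)(\rho/2r)^{\alpha^\ast}$. So only $(\mathrm{VD})$ enters this step, and $(\mathrm{RVD})$ is needed just to guarantee that a radius $\rho\le 2r$ with $V(x,\rho)\ge 2\mu(U)$ exists below $\iota\bar R$.

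The main obstacle is Step 2: making the choice of $\rho$ rigorous under continuity of $V(x,\cdot)$ (volume may jump) and keeping $\rho$ inside the range of $(\mathrm{MI})$. I would handle this by taking $\rho=\inf\{s: V(x,s)\ge 2\mu(U)\}$ and using $(\mathrm{VD})$ to compare $V(x,\rho)$ with $V(x,\rho/2)<2\mu(U)$, restricting to $\varepsilon\le\iota/4$. The inhomogeneity of $W$ causes no difficulty here, since every comparison is made at the centers $x$ and $x_0$ with $d(x,x_0)<r$, which is exactly what \eqref{scpr} allows.
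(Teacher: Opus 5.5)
Your argument is correct, with two minor fixes. First, (RVD) only guarantees a radius $\rho\le Ar$ with $V(x,\rho)\ge 2\mu(U)$, where $A$ depends on the (RVD) constants, not $\rho\le 2r$; so take $\varepsilon\le(1\wedge\iota)/A$. Second, $V(x,\cdot)$ is only left-continuous, so use e.g.\ $2\rho$ instead of the infimum and (VD) for the upper comparison.

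Your route differs from the paper's. The paper does not localize at the scale of $U$. It uses (VD)+(RVD) only to find a point $\hat x\in\varepsilon_0^{-1}B\setminus B$, where $u(\hat x)=0$. It then applies $(\mathrm{MI}^F)$ between $\hat x$ and a maximum point of $|u|$ at the fixed scale $r$, which gives $\|u\|_\infty^2\le CF(x_0,r)\mathcal{E}(u)$. Combined with $\|u\|_2^2\le\|u\|_\infty^2\mu(U)$ and $F(x_0,r)\mu(B)\asymp W(B)$, this yields (FK) immediately with $\kappa=1$. That is exactly your Step 1 with $\kappa'=0$, so Steps 2--3 are not needed for the lemma.

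What your choice of $\rho$ with $V(x,\rho)\asymp\mu(U)$ buys is the sharper intermediate bound $\|u\|_2^2\lesssim F(x,\rho)\mu(U)\mathcal{E}(u)$. You then give part of this back by passing through $W(x,\rho)\lesssim W(B)(\rho/r)^{\beta_\ast}$, which only gives $\kappa=\beta_\ast/\alpha^\ast$; this can be below the paper's $1$, for instance when $\beta_\ast<1$. That is still a valid (FK), since any $\kappa>0$ suffices. If you instead use the scaling of $F$, namely $F(x,\rho)\lesssim F(x_0,r)(\rho/r)^{\gamma_\ast}$, you get $\kappa=1+\gamma_\ast/\alpha^\ast$. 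That is strictly better than the paper's exponent, at the cost of the radius-selection bookkeeping that the paper's proof avoids.
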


\begin{proof}
	Since both $(\mathrm{VD})$, $(\mathrm{RVD})$ hold and $\supp(\mu)=M$, it
	follows that there exists a constant $\varepsilon _{0}\in (0,1)$ such that
	for every open ball $B$ with radius $r\in (0,\bar{R})$, 
	\begin{equation}
		B\setminus \varepsilon _{0}B\neq \emptyset .  \label{UP}
	\end{equation}
	
	Fix a ball $B:=B(x_{0},r)$ with $x_{0}\in M$ and $0<r<(\frac{1}{3}\wedge 
	\frac{\iota }{2})\varepsilon _{0}\bar{R}$.
	
	Since here $\varepsilon _{0}^{-1}r<\bar{R}/3$, it is easy to check that $%
	M\setminus \varepsilon _{0}^{-1}B\neq \emptyset $. Fix a point $\hat{x}\in
	\varepsilon _{0}^{-1}B\setminus B$, which exists by (\ref{UP}) with $%
	\varepsilon _{0}^{-1}B$ replacing $B$.
	
	For any non-empty open set $U\subset B$, take an arbitrary $u\in \mathcal{F}\cap
	C_{0}(U)$ such that $\Vert u\Vert _{\infty }=1$. Then there exists $y_{0}\in
	U$ such that $|u(y_{0})|=1$. Noting that $U\subset B$ and $\hat{x}\in B^{c}$%
	, we have $u(\hat{x})=0$. Since $d(\hat{x},y_{0})\leq (\varepsilon
	_{0}^{-1}+1)r<\iota \bar{R}$, it follows by $(\mathrm{MI})$ that 
	\begin{eqnarray*}
		1&=&|u(\hat{x})-u(y_{0})|^{2}\leq CF(\hat{x},d(\hat{x},y_{0}))\mathcal{E}(u)\\
		&\leq& CF(\hat{x},(\varepsilon _{0}^{-1}+1)r)\mathcal{E}(u)\leq CC_{F}(\varepsilon _{0}^{-1}+1)^{\gamma ^{\ast }}F(x_{0},r)\mathcal{E}(u).
	\end{eqnarray*}%
	On the other hand, using the fact that $\supp(u)\subset U$, we see 
	\begin{equation*}
		\Vert u\Vert _{2}^{2}=\int_{U}u^{2}d\mu \leq \mu (U)=V(x_{0},r)\left( \frac{\mu (U)}{\mu (B)}\right) .
	\end{equation*}%
	Combining the two inequalities above, we obtain 
	\begin{equation*}
		\frac{\mathcal{E}(u)}{\Vert u\Vert _{2}^{2}}\geq \frac{1}{CC_{F}(\varepsilon
			_{0}^{-1}+1)^{\gamma ^{\ast }}F(x_{0},r)V(x_{0},r)}\cdot \left( \frac{\mu (U)%
		}{\mu (B)}\right) ^{-1}\geq \frac{c}{W(x_{0},r)}\left( \frac{\mu (B)}{\mu (U)%
		}\right) ,
	\end{equation*}%
	thus showing $(\mathrm{FK})$ with $\varepsilon =(\frac{1}{3}\wedge \frac{\iota }{2})\varepsilon _{0}$ and $\nu =1$.
\end{proof}

\begin{lemma}\label{pf2-2}
$(\mathrm{S})+(\mathrm{MI})+(\mathrm{DUE})\Rightarrow(\mathrm{NLE})$.
\end{lemma}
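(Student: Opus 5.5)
The plan is to adapt the classical argument of \cite[Pages 6434--6435]{GrigoryanHuLau.2014.TAMS6397}. It combines an on-diagonal lower bound, obtained from $(\mathrm{S})$, with Hölder-type continuity of the heat kernel, obtained from $(\mathrm{MI})$. By Remark \ref{ctn-resist}, $p_t$ is continuous, so every pointwise statement below makes sense.

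Fix $x\in M$ and $0<t<W(x,\bar R)$, and set $r:=W^{-1}(x,t/\delta)$ (up to a constant, so that $r<\bar R$). Let $B=B(x,r)$.

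\textbf{Step 1: on-diagonal lower bound.} By $(\mathrm{S})$, since $t\le\delta W(B)$, we have $P^B_{t/2}1_B\ge\varepsilon$ on $\delta_S B$. Using the semigroup property, Cauchy--Schwarz and the monotonicity $p^B\le p$, we get for $y\in\delta_SB$
\[
\varepsilon\le\int_B p^B_{t/2}(y,z)\,d\mu(z)\le \mu(B)^{1/2}\,p_t(y,y)^{1/2}.
\]
Hence $p_t(x,x)\ge \varepsilon^2/V(x,r)$. By (\ref{WinvScal}) and $(\mathrm{VD})$, $V(x,r)\asymp V(x,W^{-1}(x,t))$, so
\[
p_t(x,x)\ge c\,V(x,W^{-1}(x,t))^{-1}.
\]

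\textbf{Step 2: continuity of $p_t(x,\cdot)$.} Set $u:=p_t(x,\cdot)$. It lies in $\mathcal F$, but not necessarily in $C_0(M)$; I expect to justify applying $(\mathrm{MI})$ by approximation, or by compactness when $\bar R<\infty$. Spectral calculus gives
\[
\mathcal E(u)=-\tfrac{d}{ds}p_s(x,x)\big|_{s=t}\le \tfrac{1}{t}\,p_{t/2}(x,x),
\]
using that $s\mapsto p_s(x,x)$ is decreasing and log-convex. By $(\mathrm{DUE})$ this is at most $C/(t\,V(x,W^{-1}(x,t)))$. Then $(\mathrm{MI})$ gives, for $d(x,y)=\rho$,
\[
|p_t(x,x)-p_t(x,y)|^2\le C\,F(x,\rho)\,\frac{1}{t\,V(x,W^{-1}(x,t))}.
\]
By \eqref{WF}, $F(x,\rho)\asymp W(x,\rho)/V(x,\rho)$. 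For $\rho=\eta W^{-1}(x,t)$, the scaling of $W$ and of $V$ (through $(\mathrm{RVD})$ and $(\mathrm{VD})$ combined with \eqref{scpr}) gives
\[
F(x,\rho)\le C\eta^{\beta_\ast}\frac{t}{V(x,\rho)}\le C\eta^{\beta_\ast-\alpha^\ast}\frac{t}{V(x,W^{-1}(x,t))}.
\]
This is the main obstacle: the exponent $\beta_\ast-\alpha^\ast$ need not be positive, so I must use $F$'s own sub-scaling exponent $\gamma_\ast$. I would apply \eqref{scpr} for $F$ directly:
\[
F(x,\eta R)\le C\eta^{\gamma_\ast}F(x,R)\asymp \eta^{\gamma_\ast}\frac{W(x,R)}{V(x,R)},\qquad R=W^{-1}(x,t),
\]
so that
\[
|p_t(x,x)-p_t(x,y)|^2\le C\eta^{\gamma_\ast}\,V(x,W^{-1}(x,t))^{-2}.
\]
The spatial inhomogeneity enters only through comparing $W(x,\cdot)$ at the single center $x$, so no chaining in the variable $x$ is needed.

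\textbf{Step 3: conclusion.} Choose $\eta$ small enough that $C\eta^{\gamma_\ast/2}\le c/2$, with $c$ from Step 1. Then for $y\in B(x,\eta W^{-1}(x,t))$,
\[
p_t(x,y)\ge \frac{c}{2}\,V(x,W^{-1}(x,t))^{-1},
\]
which is $(\mathrm{NLE})$. The range restriction $d(x,y)<\iota\bar R$ in $(\mathrm{MI})$ and the requirement $r<\bar R$ are handled by shrinking $\delta$ and $\eta$. For $t$ comparable to $W(x,\bar R)$, I would use monotonicity of $p_t(x,x)$ together with $(\mathrm{VD})$.
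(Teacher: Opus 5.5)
Your core argument is the paper's own. You get the on-diagonal bound $p_t(x,x)\ge \varepsilon^2/V(x,r)$ from $(\mathrm{S})$ by Cauchy--Schwarz, bound $\mathcal{E}(p_t(x,\cdot))$ spectrally, and apply $(\mathrm{MI})$ with the sub-scaling exponent $\gamma_\ast$ of $F$ and \eqref{WF} to reach $|p_t(x,x)-p_t(x,y)|^2\le C\eta^{\gamma_\ast}V(x,W^{-1}(x,t))^{-2}$ via $(\mathrm{DUE})$, then take $\eta$ small. Your switch from $\beta_\ast-\alpha^\ast$ to $\gamma_\ast$ is exactly what the paper does.

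There is a small slip in the spectral identity. Since $\|p_t(x,\cdot)\|_2^2=p_{2t}(x,x)$, the correct identity is $\mathcal{E}(p_t(x,\cdot))=-\frac{d}{ds}p_s(x,x)\big|_{s=2t}$, not at $s=t$. Your final bound $\mathcal{E}(p_t(x,\cdot))\le t^{-1}p_{t/2}(x,x)$ still holds by convexity of $s\mapsto p_s(x,x)$. The paper uses the equivalent $\mathcal{E}(p_t(x,\cdot))\le (et)^{-1}p_t(x,x)$.

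There is one step that fails as written: the range $t\in[\delta W(x,\bar R),W(x,\bar R))$. Step 1 needs $r=W^{-1}(x,t/\delta)<\bar R$, so it does not cover these times. Monotonicity of $t\mapsto p_t(x,x)$ cannot carry the lower bound forward, because $p_t(x,x)$ is nonincreasing and only gives upper bounds at larger times.

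The paper closes this range with the semigroup property. Fix $k$ so that $t/k$ lies in the already-treated range, uniformly in the intermediate points since $W(z,\bar R)\asymp W(x,\bar R)$. Write $p_t$ as a $k$-fold convolution of $p_{t/k}$, restrict the intermediate variables to a small ball around $x$, and apply the near-diagonal bound to each factor; this loses only a factor $c^k$.

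Alternatively, your log-convexity remark does the job if used this way. Take $t_2=\delta W(x,\bar R)/2$ and $t_1=t_2/2$. Log-convexity gives $p_t(x,x)\ge p_{t_2}(x,x)\bigl(p_{t_2}(x,x)/p_{t_1}(x,x)\bigr)^{(t-t_2)/(t_2-t_1)}$ for $t>t_2$. Step 1 at $t_2$ together with $(\mathrm{DUE})$ at $t_1$ bounds the ratio below, and the exponent is at most $4/\delta$. Hence $p_t(x,x)\ge c/V(x,W^{-1}(x,t))$ on the whole range, and your Step 2 then goes through unchanged.
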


\begin{proof}
	Fix $x\in M$ and $0<t<W^{-1}(x,\bar{R})$, and let $\eta \in (0,1)$ be a
	constant to be determined. Noting that $p_{t}(x,\cdot)\in \mathcal{F}$, we
	obtain by the spectral decomposition of $\mathcal{E}$ that 
	\begin{equation}
		\mathcal{E}(p_{t}(x,\cdot))\leq \frac{1}{et}p_{t}(x,x).  \label{pt-sp}
	\end{equation}%
	It follows by $(\mathrm{MI}^F)$, \eqref{WF} and the scaling property of $F$ that, for all $y\in B(x,\eta W^{-1}(x,t))$, 
	\begin{eqnarray*}
		|p_{t}(x,x)-p_{t}(x,y)|^{2} &\leq &CF(x,d(x,y))\mathcal{E}(p_{t}(x,\cdot
		))\leq \frac{CF(x,\eta W^{-1}(x,t))}{et}p_{t}(x,x) \\
		&\leq &CC_F\eta ^{\gamma _{\ast }}\frac{F(x,W^{-1}(x,t))}{W(x,W^{-1}(x,t))}%
		p_{t}(x,x)\leq \frac{C^{2}C_F\eta ^{\gamma _{\ast }}}{V(x,W^{-1}(x,t))}%
		p_{t}(x,x).
	\end{eqnarray*}%
	Combining this inequality with $(\mathrm{DUE})$, we obtain 
	\begin{equation}
		p_{t}(x,y)\geq p_{t}(x,x)-\frac{CC_F^{1/2}\eta ^{\gamma _{\ast }/2}}{V(x,W^{-1}(x,t))}.  \label{nle2}
	\end{equation}
	
	Meanwhile, for every ball $B:=B(x,r)$ with $0<r<\bar{R}$ and every $0<t\leq \delta W(B)$, by the semi-group property of $p_{t}$, Cauchy-Schwarz inequality and $(\mathrm{S})$, we have 
	\begin{eqnarray*}
		p_{t}(x,x) &=&\int_{M}(p_{t/2}(x,z))^{2}d\mu (z)\geq
		\int_{B}(p_{t/2}(x,z))^{2}d\mu (z) \\
		&\geq &\frac{1}{\mu (B)}\left( \int_{B}p_{t/2}(x,z)d\mu (z)\right) ^{2}\geq 
		\frac{1}{V(x,r)}\left( P_{t/2}^{B}\mathbf{1}_{B}(x)\right) ^{2}\geq \frac{%
			\varepsilon ^{2}}{V(x,r)}.
	\end{eqnarray*}%
	In particular, with $t=\delta W(x,r)$, by $(\mathrm{VD})$ and (\ref{scpr})
	we obtain 
	\begin{equation*}
		p_{t}(x,x)\geq \frac{\varepsilon ^{2}}{V(x,W^{-1}(x,t/\delta))}\geq \frac{%
			\varepsilon ^{2}\delta ^{\alpha ^{\ast }/\beta _{\ast }}}{V(x,W^{-1}(x,t))}.
	\end{equation*}%
	Inserting this inequality into \eqref{nle2}, we obtain for all $x\in M$ and $%
	0<t<\delta W(x,\bar{R})$ that 
	\begin{equation*}
		p_{t}(x,\cdot)\geq \frac{\varepsilon ^{2}\delta ^{\alpha ^{\ast }/\beta_{\ast }}-CC_F^{1/2}\eta ^{\gamma _{\ast }/2}}{V(x,W^{-1}(x,t))}\quad \text{on}\quad B(x,\eta W^{-1}(x,t)).
	\end{equation*}%
	By choosing $\eta >0$ sufficiently small such that $\varepsilon
	^{2}-CC_F^{1/2}\eta ^{\gamma _{\ast }/2}\geq \varepsilon ^{2}/2$ and applying
	the semigroup property to cover general $t\in (0,W(x,\bar{R}))$, we finally
	obtain $(\mathrm{NLE})$.
\end{proof}

\begin{lemma}\label{pf2-3}
$(\mathrm{VD})+(\mathrm{NLE})\Rightarrow(\mathrm{MI})$.
\end{lemma}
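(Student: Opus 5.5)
The plan follows the classical argument of \cite[Proof of Theorem 4.11]{GrigoryanHuLau.2003.TAMS2065}: from $(\mathrm{NLE})$ we first derive a lower bound for the ``pair energy'' $\mathcal{E}_t(u)=\frac1t(u-P_tu,u)$, and then let $t$ run along the scale $t\asymp W(x,d(x,y))$.

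Fix $u\in\mathcal{F}\cap C_0(M)$ and $x,y\in M$ with $r:=d(x,y)<\iota\bar R$. Since $\mathcal{E}_t(u)\uparrow\mathcal{E}(u)$ as $t\downarrow0$ and $t\mapsto\mathcal{E}_t(u)$ is non-increasing (spectral theorem), we have $\mathcal{E}_t(u)\le\mathcal{E}(u)$ for every $t>0$. We use the pointwise representation
\begin{equation*}
\mathcal{E}_t(u)\ge\frac{1}{2t}\int_M\int_M(u(z)-u(z'))^2p_t(z,z')\,d\mu(z')\,d\mu(z),
\end{equation*}
which holds as an inequality because $P_t1\le1$ and the killing term is non-negative.

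Next choose $t:=W(x,Ar)$, where $A\ge1$ is a large constant fixed below (if $Ar\ge\bar R$, shrink $\iota$ or use the truncated range). Let $\rho:=W^{-1}(x,t)=Ar$ and $B:=B(x,\epsilon\rho)$ with small $\epsilon$. By the scaling property of $W$ (cf.\ \eqref{Wctrl}), for $z,z'\in B$ we get $W^{-1}(z,t)\asymp\rho$. Choosing $\epsilon$ small relative to the constant $\eta$ in $(\mathrm{NLE})$ gives $d(z,z')<\eta W^{-1}(z,t)$, hence $p_t(z,z')\ge c/V(x,\rho)$ by $(\mathrm{VD})$. Consequently
\begin{equation*}
\mathcal{E}(u)\ge\frac{c}{tV(x,\rho)}\int_B\int_B(u(z)-u(z'))^2\,d\mu\,d\mu\ge\frac{c\,\mu(B)}{t}\int_B(u-u_B)^2\,d\mu,
\end{equation*}
which is a Poincaré-type inequality with constant $W(x,\rho)$ on balls of radius $\asymp\rho$, uniformly in the centre.

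We convert this into the pointwise oscillation bound by the standard telescoping (Campanato) argument. Put $B_k:=B(x,2^{-k}\epsilon\rho)$. Applying the same estimate with $t_k=W(x,2^{-k}A r)$ yields
\begin{equation*}
|u_{B_k}-u_{B_{k+1}}|^2\le C\frac{W(x,2^{-k}\rho)}{V(x,2^{-k}\rho)}\mathcal{E}(u)\asymp F(x,2^{-k}\rho)\,\mathcal{E}(u)
\end{equation*}
by \eqref{WF}. Summing over $k$ requires geometric decay of $F(x,2^{-k}\rho)$, i.e.\ $\gamma_\ast>0$, which holds since $F$ is a scaling function. Using continuity of $u$, we obtain $|u(x)-u_{B_0}|^2\le CF(x,\rho)\mathcal{E}(u)$, and similarly $|u(y)-u_{B(y,\epsilon\rho)}|^2\le CF(y,\rho)\mathcal{E}(u)$.

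Finally, choose $A$ so large that $B(x,\epsilon\rho)$ and $B(y,\epsilon\rho)$ both lie inside a single ball of radius $\asymp\rho$ on which the Poincaré estimate applies. This compares $u_{B_0}$ with $u_{B(y,\epsilon\rho)}$ with the same bound $CF(x,\rho)\mathcal{E}(u)$, using $(\mathrm{VD})$ to compare the ball measures. Since $F(y,\rho)\asymp F(x,\rho)\asymp F(x,r)$ by the scaling property, $(\mathrm{MI}^F)$ follows.

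The main obstacle is the inhomogeneity. When we apply $(\mathrm{NLE})$ at the centres $z$ and at different scales, the radii $W^{-1}(z,t)$ must be compared uniformly. This uses \eqref{scpr} in the two-point form and requires that $d(x,y)$ be at most a multiple of the radius, as in \eqref{Wctrl}. The time restriction $t<W(z,\bar R)$ must also be respected, which is handled by the restriction $d(x,y)<\iota\bar R$ with $\iota$ small.
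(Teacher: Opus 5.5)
Your proposal is correct and follows essentially the same route as the paper. Both arguments bound $\mathcal{E}(u)$ from below by the pair energy $\frac1t\int\int(u(z)-u(z'))^2p_t(z,z')\,d\mu\,d\mu$ using $(\mathrm{NLE})$, compare ball averages at a scale $\asymp d(x,y)$, and telescope dyadically using $\gamma_\ast>0$, \eqref{WF} and continuity of $u$; the only difference is that the paper compares $u_r(x)$ and $u_r(y)$ directly through $\fint_{B(x,r)}\fint_{B(y,r)}$, whereas you pass through a Poincaré inequality on a common ball. One small slip: the last inequality in your Poincaré display should read $\ge\frac{c}{t}\int_B(u-u_B)^2\,d\mu$, since the factor $\mu(B)$ cancels against $V(x,\rho)$ by $(\mathrm{VD})$. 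That is also the form you actually use afterwards.
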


\begin{proof}
	Indeed, noting by $(\mathrm{NLE})$, there exists a constant $C>0$ such that, for any $u\in\mathcal{F}$, 
	\begin{equation}\label{spem}
		\mathcal{E}(u)\geq \sup_{s\in(0,R_{W})}\int_{M}\int_{B(x,W^{-1}(x,s))}\frac{(u(x)-u(y))^{2}}{sV(x,W^{-1}(x,s))}d\mu (y)d\mu (x),
	\end{equation}
	where $R_{W}:=\inf_{x\in M}W(x,\bar{R})\in \lbrack 0,\infty].$ 
	Note that $R_W=\infty$ if $\bar{R}=\infty$, while if $0<\bar{R}<\infty$, by (\ref{scpr}), with any $x_0\in M$, we have 
	\begin{equation*}
		\infty>W(x_0,\bar{R})\geq R_W\geq C_W^{-1}\sup_{x\in M}W(x,\bar{R})>0.
	\end{equation*}

	Fix $u\in \mathcal{F}\cap C_{0}(M)$. For any $x\in M$ and any $r>0$, let 
	\begin{equation*}
		u_{r}(x):=\fint_{B(x,r)}u(\xi)d\mu (\xi).
	\end{equation*}%
	For any $r\in (0,\bar{R})$ and any $x,y\in M$ with $d(x,y)\leq r$, 
	\begin{equation*}
		|u_{r}(x)-u_{r}(y)|^{2}=\left\vert \fint_{B(x,r)}\fint_{B(y,r)}(u(\xi
		)-u(\zeta))d\mu (\zeta)d\mu (\xi)\right\vert ^{2}\leq \fint_{B(x,r)}\fint%
		_{B(y,r)}|u(\xi)-u(\zeta)|^{2}d\mu (\zeta)d\mu (\xi).
	\end{equation*}%
	For any $\xi \in B(x,r)$, since 
	\begin{equation*}
		d(y,\xi)\leq d(y,x)+d(x,\xi)<2r\quad \text{and}\quad
		3r=C_{W}^{-1}W^{-1}(x,s)\leq W^{-1}(\xi ,s)
	\end{equation*}%
	by \cite[Proposition 9.1]{GrigoryanHuHu.2022.TPLE}, we see 
	\begin{equation*}
		B(y,r)\subset B(\xi ,3r)\subset B\left( \xi ,W^{-1}(\xi ,s)\right) \quad 
		\text{with}\quad s=s(r)=W(x,3C_{W}r).
	\end{equation*}%
	Consequently, by $(\mathrm{VD})$, 
	\begin{equation*}
		\frac{V(\xi ,W^{-1}(\xi ,s))}{V(y,r)}\leq \frac{V(y,2r+C_{W}W^{-1}(x,s))}{%
			V(y,r)}\leq C(2+3C_{W}^{2})^{\alpha ^{\ast }}=:C^{\prime }.
	\end{equation*}%
	Therefore, for any $r\in (0,\bar{R})$ with $s(r)\in (0,R_{W})$, and any $%
	x,y\in M$ with $d(x,y)\leq r$, we obtain 
	\begin{eqnarray}
		|u_{r}(x)-u_{r}(y)|^{2} &\leq &\frac{1}{V(x,r)V(y,r)}\int_{B(x,r)}%
		\int_{B(y,r)}|u(\xi)-u(\zeta)|^{2}d\mu (\zeta)d\mu (\xi)  \notag \\
		&\leq &\frac{s}{V(x,r)}\int_{M}\int_{B(\xi ,W^{-1}(\xi ,s))}\frac{|u(\xi
			)-u(\zeta)|^{2}}{sV\left( \xi ,W^{-1}(\xi ,s)\right) }\frac{V(\xi
			,W^{-1}(\xi ,s))}{V(y,r)}d\mu (\zeta)d\mu (\xi)  \notag \\
		&\leq &\frac{C^{\prime }s}{V(x,r)}\int_{M}\int_{B(\xi ,W^{-1}(\xi ,s))}\frac{%
			|u(\xi)-u(\zeta)|^{2}}{sV\left( \xi ,W^{-1}(\xi ,s)\right) }d\mu (\zeta
		)d\mu (\xi)  \notag \\
		&\leq &\frac{C^{\prime }W(x,3C_{W}r)}{V(x,r)}E^{W,2}(u)\leq \frac{C^{\prime\prime
			}W(x,r)}{V(x,r)}\mathcal{E}(u)\leq C_{0}F(x,r)%
		\mathcal{E}(u),  \label{73tem4}
	\end{eqnarray}%
	where we used \eqref{spem} and \eqref{WF} for the last line.
	
	Similarly, for every $k\geq 0$, 
	\begin{equation}  \label{73tem6}
		\left|u_{2^{-k}r}(x)-u_{2^{-(k+1)}r}(x)\right|^2\leq
		C_0F(x,2^{-k}r)E^{W,2}(u)\leq C_12^{-k\gamma_\ast}F(x,r)\mathcal{E}(u).
	\end{equation}
	
	Since $u$ is continuous, we have by (\ref{73tem4}) and (\ref{73tem6}) (for
	both $x$ and $y$) that 
	\begin{eqnarray*}
		|u(x)-u(y)|&\leq&|u_r(x)-u_r(y)|+|u(x)-u_r(x)|+|u(x)-u_r(x)| \\
		&\leq&|u_r(x)-u_r(y)|+\sum_{k=0}^\infty%
		\left|u_{2^{-k}r}(x)-u_{2^{-(k+1)}r}(x)\right|+\sum_{k=0}^\infty%
		\left|u_{2^{-k}r}(y)-u_{2^{-(k+1)}r}(y)\right| \\
		&\leq&\sqrt{C_0F(x,r)\mathcal{E}(u)}+\sum_{k=0}^\infty\sqrt{%
			C_12^{-\gamma_\ast k}F(x,r)\mathcal{E}(u)}+\sum_{k=0}^\infty\sqrt{%
			C_12^{-\gamma_\ast k}F(y,r)\mathcal{E}(u)} \\
		&\leq&\left(\sqrt{C_0}+\frac{\sqrt{C_1}}{1-2^{-\gamma_\ast/2}}\right)\sqrt{%
			F(x,r)\mathcal{E}(u)}+\frac{\sqrt{C_1F(y,r)\mathcal{E}(u)}}{%
			1-2^{-\gamma_\ast/2}}\leq C^{\prime }_F\sqrt{F(x,r)\mathcal{E}(u)}.
	\end{eqnarray*}
	
	Finally, $(\mathrm{R}_\leq)$ follows directly by taking a sufficiently small 
	$\iota\in(0,1)$ such that 
	\begin{equation*}
		W(x,3C_Wd(x,y))<(2C_W)^{-1}R_W
	\end{equation*}
	for all $x,y$ with $d(x,y)<\iota\bar{R}$, since $s(r)<R_W$ in this case.
\end{proof}

Now we turn to lower bounds of $p_{t}(x,y)$, for which we will consider chains in $M$. For any $n\geq 1$ and for any $x,y\in M$, let 
\begin{equation*}
	L_{x,y}^{n}:=\left\{ \{x_{i}\}_{i=0}^{n}\subset M:x_{0}=x,x_{n}=y\right\}
\end{equation*}%
be the set of all chains from $x$ to $y$ with length $n$. We improve lower bounds from $(\mathrm{NLE})$ under assumption
$$\limsup_{n\rightarrow \infty }\left[ n\inf_{\{x_{i}\}_{i=1}^{n}\in L_{x,y}^{n}}\max_{0\leq i<n}W(x_{i},d(x_{i},x_{i+1}))\right] =0\quad\text{for all}\quad x,y\in M$$
on the metric structure of $(M,d)$. In other words, we assume for any $x,y\in M$ and $t>0$, there exists a positive integer $n_{0}:=n_{0}(t,x,y)$ such that for any $n\geq n_{0}$, there exists a chain $\{x_{i}\}_{i=1}^{n}\in L_{x,y}^{n}$ such that for any $0\leq i<n$, 
\begin{equation}
	W(x_{i},d(x_{i},x_{i+1}))<\left( \frac{\eta }{3C_{W}^{3}}\right) ^{\beta
		^{\ast }}\frac{t}{n}.  \label{n0}
\end{equation}%
Note that (\ref{n0}) implies with the help of (\ref{WinvScal}) that for each 
$0\leq i<n$, 
\begin{equation*}
	d(x_{i},x_{i+1})<\frac{\eta }{3C_{W}^{2}}W^{-1}\left( x_{i},\frac{t}{n}%
	\right) =:r_{i}.
\end{equation*}%
Then, for every $0\leq i<n$, $z_{i}\in B(x_{i},r_{i})$ and $z_{i+1}\in
B(x_{i+1},r_{i+1})$, we have 
\begin{eqnarray*}
	d(z_{i},z_{i+1}) &\leq
	&d(x_{i},x_{i+1})+d(x_{i},z_{i})+d(x_{i+1},z_{i+1})<2r_{i}+r_{i+1} \\
	&=&\frac{2\eta }{3C_{W}^{2}}W^{-1}\left( x_{i},\frac{t}{n}\right) +\frac{%
		\eta }{3C_{W}^{2}}W^{-1}\left( x_{i+1},\frac{t}{n}\right)\\
	&\leq&\left( \frac{%
		2\eta }{3C_{W}}+\frac{\eta }{3}\right) W^{-1}\left( z_{i},\frac{t}{n}\right)
	\leq \eta W^{-1}\left( z_{i},\frac{t}{n}\right) ,
\end{eqnarray*}%
where we use \cite[Proposition 9.1]{GrigoryanHuHu.2022.TPLE} in the last line.

Fix $z_{0}=x_{0}=x$ and $z_{n}=x_{n}=y$. By the semigroup identity and $(\mathrm{NLE})$, we obtain 
\begin{eqnarray}
	p_{t}(x,y) &=&\int_{M^{n-1}}p_{\frac{t}{n}}(x,z_{1})p_{\frac{t}{n}%
	}(z_{1},z_{2})\cdots p_{\frac{t}{n}}(z_{n-1},y)d\mu (z_{1})d\mu
	(z_{2})\cdots d\mu (z_{n-1})  \notag \\
	&\geq &\int_{\times _{i=1}^{n-1}B(x_{i},r_{i})}p_{\frac{t}{n}}(x,z_{1})p_{%
		\frac{t}{n}}(z_{1},z_{2})\cdots p_{\frac{t}{n}}(z_{n-1},y)d\mu (z_{1})d\mu
	(z_{2})\cdots d\mu (z_{n-1})  \notag \\
	&\geq &\int_{\times _{i=1}^{n-1}B(x_{i},r_{i})}\frac{cd\mu (z_{1})d\mu
		(z_{2})\cdots d\mu (z_{n-1})}{V(x,W^{-1}(x,t/n))}\prod_{i=1}^{n-1}\frac{c}{%
		V(x_{i},W^{-1}(x_{i},t/n))}  \notag \\
	&\geq &\frac{c^{n}}{V(x,W^{-1}(x,t))}\prod_{i=1}^{n-1}\frac{V(x_{i},r_{i})}{%
		V(x_{i},W^{-1}(x_{i},t/n))}  \notag \\
	&\geq &\frac{C^{-1}c^{n}}{V(x,W^{-1}(x,t))}\left( \frac{\eta }{3C_{W}^{2}}%
	\right) ^{\alpha ^{\ast }(n-1)}=\frac{c^{\prime }}{V(x,W^{-1}(x,t))}\exp
	(-c^{\prime \prime }n)  \label{glb}
\end{eqnarray}%
for all $x,y\in M$, $t\in (0,W(x,\bar{R}))$ and every integer $n\geq
n_{0}(t,x,y)$.

Apparently, (\ref{glb}) gives a relatively better lower estimate. However,
it is hard to verify (\ref{n0}), especially to obtain a precise
estimate on the $n_0$ above so as to get the lower estimate of $p_t(x,y)$ as
we desired. Meanwhile, with the help of the chain condition, a relatively
weak upper bound of $n_0$ will be obtained and then $(\mathrm{LE}_{\exp})$
is satisfied, which completes the proof of Proposition \ref{thm:main3}:

\begin{proof}[Proof of Proposition \ref{thm:main3}]
Based on Theorem \ref{thm:main2}, it suffices to prove the implication
\begin{equation*}
(\mathrm{NLE})+(\mathrm{CH})\Rightarrow(\mathrm{LE}_{\exp})
\end{equation*}
when $\beta_\ast>1$. For that purpose, it is enough to find $n_0=n_0(t,x,y)\geq 1$ ensuring (\ref{n0}).
	
For that purpose, let $x,y\in M$ be two different points, and fix $0<t<W(x,\bar{R})$. We have by $(\mathrm{CH})$ that, for each positive integer $n$,
there exists a chain $\{x_{i}\}_{i=0}^{n}$ satisfying 
\begin{equation*}
	x_{0}=x,\quad x_{n}=y\quad \text{and}\quad d(x_{i},x_{i+1})\leq \frac{C_{\mathrm{CH}}}{n}d(x,y)\quad\text{for all}\quad 0\leq i<n.
\end{equation*}%
This implies in particular that $d(x,x_{i})\leq C_{\mathrm{CH}}d(x,y)$ for
every $0\leq i<n$. Therefore, 
\begin{equation*}
	\frac{W(x,d(x,y))}{W(x_{i},d(x_{i},x_{i+1}))}\geq C_{W}^{-1}C_{\mathrm{CH}}^{-\beta ^{\ast }}\frac{W(x,C_{\mathrm{CH}}d(x,y))}{W(x_{i},d(x_{i},x_{i+1}))}\geq C_{W}^{-2}C_{\mathrm{CH}}^{-\beta ^{\ast
	}}n^{\beta _{\ast }}.
\end{equation*}
Consequently, under the assumption $\beta _{\ast }>1$, (\ref{n0}) is satisfied when 
\begin{equation*}
n\geq \left\lceil \left( \frac{C_{W}^{2}C_{\mathrm{CH}}^{\beta ^{\ast }}}{\left( \eta /3C_{W}^{3}\right) ^{\beta ^{\ast }}t}W(x,d(x,y))\right) ^{\frac{1}{\beta _{\ast }-1}}\right\rceil =:n_{0}.
\end{equation*}
Inserting this formula of $n_{0}$ into (\ref{glb}), then $(\mathrm{LE}_{\exp}^W)$ follows.
\end{proof}

\subsection{Proof of Theorem \ref{maint}}\label{pf4}
Throughout this subsection, the reference function $F$ is always a scaling function.

\begin{lemma}\label{RandQR}
	Let $(\mathcal{E},\mathcal{F})$ be a regular Dirichlet form on $L^2(M,\mu)$. 
	\begin{enumerate}
		\item Suppose that $(\mathrm{R}_{\leq})$ and $(\mathrm{Para})$ holds. Then $(\mathrm{MI})$ holds.
		\item Suppose that $(\mathrm{sloc})$, $(\mathrm{MI})$ and $(\mathrm{R}_{\geq })$ hold with a scaling function $F$. Then $(\mathrm{RB}_{\geq })$ holds.
	\end{enumerate}
\end{lemma}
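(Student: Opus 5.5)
For part (1), I will follow the route of \cite[Proposition 6.6]{GrigoryanHuLau.2014.TAMS6397}. Fix $u\in\mathcal{F}\cap C_0(M)$ and $x,y\in M$ with $d(x,y)<\iota\bar R$. If $u(x)=u(y)$ there is nothing to prove; otherwise set
\[
v:=\frac{u-u(y)}{u(x)-u(y)},
\]
so that $v(x)=1$ and $v(y)=0$.

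\textbf{Step 1.} The function $v$ is a competitor for $\mathcal{R}(x,y)^{-1}$, except that $v\notin C_0(M)$ when $u(y)\neq0$, because the constant $u(y)$ does not vanish at infinity. This is where $(\mathrm{Para})$ enters. For the compact set $\supp u\cup\{x,y\}$, I will pick $\phi_k\in\mathcal{F}\cap C_0(M)$ with $0\le\phi_k\le1$, $\phi_k=1$ on that set and $\mathcal{E}(\phi_k)\to0$; truncation keeps these properties. I then replace the constant $u(y)$ by $u(y)\phi_k$.

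\textbf{Step 2.} The modified function $v_k:=(u-u(y)\phi_k)/(u(x)-u(y))$ lies in $\mathcal{F}\cap C_0(M)$ and satisfies $v_k(x)=1$, $v_k(y)=0$. By the triangle inequality for $\mathcal{E}^{1/2}$,
\[
\mathcal{E}(v_k)^{1/2}\le\frac{\mathcal{E}(u)^{1/2}+|u(y)|\,\mathcal{E}(\phi_k)^{1/2}}{|u(x)-u(y)|}.
\]

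\textbf{Step 3.} Letting $k\to\infty$ and using the definition of $\mathcal{R}(x,y)$ gives $\mathcal{R}(x,y)^{-1}\le\mathcal{E}(u)/|u(x)-u(y)|^2$. Combined with $(\mathrm{R}_\le)$ this is exactly (\ref{MIe}). No strong locality is needed here, only the fact that normal contractions act on $\mathcal{F}$.

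For part (2), the main obstacle is to pass from a point $y$ to the whole complement $B(x,r)^c$. Fix a ball $B(x,r)\subsetneqq M$ and let $\varepsilon\in(0,1)$ be a small constant to be chosen. I will adapt \cite[Proposition 6.9]{GrigoryanHuLau.2014.TAMS6397}.

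\textbf{Step 4.} By $(\mathrm{R}_\ge)$, applied at a point $y$ with $d(x,y)\asymp\varepsilon r$ (such $y$ exist by connectedness arguments, or by choosing points at distance of order $\varepsilon r$), the resistance between $x$ and a small sphere is controlled. Rather than relying on that directly, I will argue by contradiction. Take an almost-minimiser $h$ for $\mathcal{R}(x,B^c)^{-1}$, and assume $\mathcal{E}(h)\ge K/F(x,r)$ for a large constant $K$.

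\textbf{Step 5.} By $(\mathrm{MI})$, every $z$ with $d(x,z)\le\varepsilon r$ satisfies
\[
|h(z)-1|^2\le CF(x,\varepsilon r)\mathcal{E}(h)\le C\varepsilon^{\gamma_*}F(x,r)\mathcal{E}(h).
\]
So $h$ stays close to $1$ on the small ball only when $\mathcal{E}(h)\lesssim\varepsilon^{-\gamma_*}/F(x,r)$. On the far side, I will combine the points-to-ball reduction through the variational characterisation: $\mathcal{R}(x,B^c)\ge\mathcal{R}(x,y)-\sup_{z\in B^c}\mathcal{R}(y,z)$-type bounds do not hold, so I will use the standard trick instead.

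\textbf{Step 6 (standard trick).} Pick $y\in B(x,r)\setminus B(x,\varepsilon r)$ with $d(x,y)\ge\varepsilon r$, which exists by the RVD-type observation (\ref{UP}) used in Lemma \ref{pf2-1}. Let $g$ be the equilibrium potential for $\mathcal{R}(x,\{y\}\cup B^c)$. By strong locality and the Markov property,
\[
\mathcal{R}(x,\{y\}\cup B^c)^{-1}\le\mathcal{R}(x,y)^{-1}+\mathcal{R}(x,B^c)^{-1}.
\]
That inequality only bounds things from above, so I would instead iterate: by $(\mathrm{MI})$ the potential $h$ takes values $\ge1/2$ on $B(x,\varepsilon r)$ once $\varepsilon$ is small compared to $\mathcal{R}(x,B^c)/F(x,r)$. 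Then $2h\wedge1$ is admissible for $\mathcal{R}(B(x,\varepsilon r),B^c)$, and comparing with $\mathcal{R}(x,y)\ge c F(x,\varepsilon r)$ through the point $y'$ at distance $\varepsilon r$ closes the loop.

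I expect this closing bookkeeping, namely choosing $\varepsilon$ and using the scaling of $F$ with exponent $\gamma_*$ together with (\ref{scpr}), to be the delicate part. I would follow the published argument essentially verbatim, replacing $r^\gamma$ by $F(x,r)$ throughout and invoking the comparability $F(x,r)\asymp F(y,r)$ for $d(x,y)\le r$.
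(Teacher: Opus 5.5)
Part (1) is correct and matches the paper. The paper simply quotes $|u(x)-u(y)|^2\le\mathcal{R}(x,y)\mathcal{E}(u)$ under $(\mathrm{Para})$ from \cite[Proposition 6.6]{GrigoryanHuLau.2014.TAMS6397}, and your cutoff $u-u(y)\phi_k$ is the standard proof of that inequality.

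Part (2) has a genuine gap. Since $\mathcal{R}(x,B^c)^{-1}=\inf\mathcal{E}(u)$, a lower bound on $\mathcal{R}(x,B^c)$ requires you to exhibit an admissible function of small energy. Analysing the minimiser $h$ cannot supply one.
\begin{itemize}
\item Your requirement that $h\ge\frac12$ on $B(x,\varepsilon r)$ needs $\varepsilon^{\gamma_\ast}\lesssim\mathcal{R}(x,B^c)/F(x,r)$. This presupposes the very bound being proved, and it makes $\varepsilon$ depend on it.
\item Admissibility of $2h\wedge1$ for $\mathcal{R}(B(x,\varepsilon r),B^c)$ only gives $\mathcal{R}(x,B^c)\le4\mathcal{R}(B(x,\varepsilon r),B^c)$, which is an upper bound on $\mathcal{R}(x,B^c)$.
\item Together with monotonicity, every relation you have bounds $\mathcal{R}(x,B^c)$ from above. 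Comparing with $\mathcal{R}(x,y)\ge cF(x,\varepsilon r)$ therefore cannot close the argument, and the contradiction set up in Step 4 is never reached.
\end{itemize}

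The missing idea, which is the paper's (after \cite{BarlowCoulhonKumagai.2005.CPAM1642}), is to apply $(\mathrm{MI})$ to point potentials centred in the annulus, not to $h$ near the centre.
\begin{itemize}
\item For $z\in B\setminus\frac12B$, take $\phi_z\in\mathcal{F}\cap C_0(M)$ with $0\le\phi_z\le1$, $\phi_z(x_0)=1$, $\phi_z(z)=0$ and $\mathcal{E}(\phi_z)\le2\mathcal{R}(x_0,z)^{-1}$. By $(\mathrm{R}_\ge)$, $d(x_0,z)\in[r/2,r)$ and the scaling of $F$, this energy is $\lesssim F(z,r)^{-1}$.
\item Because this energy bound comes from the hypothesis rather than from the unknown quantity, $(\mathrm{MI})$ gives $\phi_z^2\lesssim F(z,\eta r)/F(z,r)\lesssim\eta^{\gamma_\ast}\le\frac14$ on $B(z,\eta r)$ for a fixed $\eta$. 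There is no circularity.
\item Cover the annulus by $N$ balls $B(z_i,\eta r)$, with $N$ bounded independently of the ball via $(\mathrm{VD})$. You need $(\mathrm{VD})$ here even though the lemma does not list it.
\item Set $f=\min_i\phi_{z_i}$ and $g=(2f-1)_+$. Then $g(x_0)=1$, $g=0$ on $B\setminus\frac12B$, and $\mathcal{E}(g)\le4\sum_i\mathcal{E}(\phi_{z_i})\lesssim N/F(x_0,r)$.
\item Strong locality lets you discard $g\mathbf{1}_{B^c}$, whose support is separated from that of $g\mathbf{1}_{\frac12B}$ by the annulus. What remains is a test function for $\mathcal{R}(x_0,B^c)$ with energy $\lesssim F(x_0,r)^{-1}$.
\end{itemize}
The subadditivity of capacity you wrote down in Step 6 is the right tool. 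It has to be applied to the small balls covering the annulus, not to $\{y\}\cup B^c$.
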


\begin{proof}
We first show $(\mathrm{Para})+(\mathrm{R}_{\leq })\Rightarrow (\mathrm{MI})$ with some scaling function $F$.

Since $(\mathrm{Para})$ holds, same as in \cite[Proposition 6.6]{GrigoryanHuLau.2014.TAMS6397} we have 
\begin{equation}
|u(x)-u(y)|^{2}\leq \mathcal{R}(x,y)\mathcal{E}(u),  \label{res+}
\end{equation}
which implies $(\mathrm{MI})$ by estimating $\mathcal{R}(x,y)$ by $(\mathrm{R}_{\leq })$. Here we emphasize that no extra condition is needed.

Now we show that $(\mathrm{MI})+(\mathrm{R}_{\geq })+(\mathrm{sloc})\Rightarrow (\mathrm{RB}_{\geq }).$ Our proof here is motivated by \cite[Proof of Lemma 2.4]{BarlowCoulhonKumagai.2005.CPAM1642}.

Fix a ball $B:=B(x_{0},r)$ with $x_{0}\in M$, $r\in (0,\bar{R})$. For each $z\in B\setminus \frac{1}{2}B$, we can easily see that $\mathcal{R}(x_{0},z)>0 $ and there exists a function $\phi _{z}^{x_{0}}\in \mathcal{F}\cap C_{0}(M)$ such that 
\begin{equation*}
0\leq \phi _{z}^{x_{0}}\leq 1,\quad \phi _{z}^{x}(x)=1,\quad \phi_{z}^{x_{0}}(z)=0\quad \text{and}\quad \mathcal{R}(x_{0},z)^{-1}\leq\mathcal{E}(\phi _{z}^{x_{0}})\leq 2\mathcal{R}(x_{0},z)^{-1}.
\end{equation*}
For any $0<\eta <\iota /2$, it follows from $(\mathrm{MI}^F)$, $(\mathrm{R}_{\geq })$ and \eqref{WF} that, for all $z^{\prime }\in B(z,\eta r)$,
\begin{eqnarray*}
\phi _{z}^{x_{0}}(z^{\prime })^{2}&=&\left\vert \phi _{z}^{x_{0}}(z)-\phi_{z}^{x_{0}}(z^{\prime })\right\vert ^{2}\leq CF(z,d(x_{0},z))\mathcal{E}(\phi _{z}^{x_{0}})\\
&\leq&2C\frac{F(z,\eta r)}{\mathcal{R}(x_{0},z)}\leq \frac{2C^{2}F(z,\eta r)}{F(z,d(z,x_{0}))}\leq 2C^{3}(2\eta)^{\gamma _{\ast }}.
\end{eqnarray*}
Choose $\eta $ small enough such that $2C^{3}(2\eta)^{\gamma _{\ast }}\leq 1/4$. Hence for every $z\in B\setminus \frac{1}{2}B$, 
\begin{equation*}
\phi_{z}^{x_{0}}\leq \frac{1}{2}\quad\text{in}\quad B(z,\eta r).
\end{equation*}

Since $(\mathrm{VD})$ holds, there exist $N\geq 1$ independent of $B$ and a sequence $\{z_{i}\}_{i=1}^{N}\subset B\setminus \frac{1}{2}B$ such that $B\setminus \frac{1}{2}B\subset \tbigcup_{i=1}^{N}B(z_{i},\eta r).$ Fix $\phi\in \cutoff(\frac{1}{2}B,B)\cap C_{0}(M)$, and define 
\begin{equation*}
f:=\begin{cases}\phi _{z_{1}}^{x_{0}}\wedge \phi _{z_{2}}^{x_{0}}\wedge\cdots\wedge\phi _{z_{N}}^{x_{0}},&\text{if}\quad N\geq 1;\\
1_{\frac{1}{2}B},&\text{if}\quad N=0,
\end{cases}
\end{equation*}
so that $f(x_{0})=1$ and $f\leq \frac{1}{2}$ in $B\setminus \frac{1}{2}B$.
Consider the functions 
\begin{equation*}
	g:=(2f-1)_{+},\quad g_{1}:=g\phi \quad \text{and}\quad g_{2}=g\mathbf{1}%
	_{B^{c}}.
\end{equation*}%
Clearly $g(x_{0})=g_{1}(x_{0})=1$ and $0\leq g\leq 1$ in $M$. Note also that 
$g_{1}$ is supported in $B$ and $g_{1}\in \mathcal{F}$. Hence $%
g_{2}=g-g_{1}\in \mathcal{F}$.

Since the supports of $g_{1}$ and $g_{2}$ are disjoint, by $(\mathrm{sloc})$, we directly have $\mathcal{E}(g_{1},g_{2})=0$. Therefore, 
\begin{equation}
	\mathcal{E}(g_{1})=\mathcal{E}(g)-\mathcal{E}(g_{2})-2\mathcal{E}%
	(g_{1},g_{2})\leq \mathcal{E}(g)+2C^{\prime }F(x_{0},r)^{-1}.  \label{ed}
\end{equation}%
On the other hand, by \cite[Formula (6.14)]{GrigoryanHuLau.2014.TAMS6397}
and $(\mathrm{R}_{\geq })$, 
\begin{equation*}
	\mathcal{E}(g)\leq 4\mathcal{E}(f)\leq 4\sum_{i=1}^{N}\mathcal{E}(\phi
	_{z_{i}}^{x_{0}})\leq 8\sum_{i=1}^{N}\mathcal{R}(x_{0},z_{i})^{-1}\leq
	8\sum_{i=1}^{N}F(x_{0},d(x_{0},z_{i}))^{-1}\leq 8N_{1}CF(x_{0},r)^{-1}.
\end{equation*}%
Combining (\ref{ed}), and noting that $g_{1}$ is a test function for the
resistance $\mathcal{R}(x_{0},B^{c})$, we have 
\begin{equation*}
	\mathcal{R}(x_{0},B^{c})^{-1}\leq \mathcal{E}(g_{1})\leq \mathcal{E}%
	(g)+2C^{\prime }F(x_{0},r)^{-1}\leq (8N_{1}C+2C^{\prime })F(x_{0},r)^{-1},
\end{equation*}%
which completes the proof.
\end{proof}

To deduce survival estimates, we introduce Green functions. Let $\Omega \Subset M$ (i.e., the closure of $\Omega$ is a compact subset of $M$) be an open set satisfying
\begin{equation}\label{Ld}
\diam\overline{\Omega }<\iota \bar{R}\quad\text{and}\quad\Omega ^{c}\neq \emptyset.
\end{equation}
For any non-empty closed $A\subset \Omega $, set
\begin{equation*}
\mathcal{F}_{\Omega }^{A}:=\big\{u\in \mathcal{F}\cap C_{0}(M):\ \ u|_{A}\equiv 1\ \text{ and }\ u|_{\Omega ^{c}}\equiv 0\big\}.
\end{equation*}
 Noticing that $(\mathrm{MI})$ holds with a scaling function $F$, it follows by Lemma \ref{preHF} that, there exists a unique function $\phi _{\Omega }^{A}\in 
 \mathcal{F}_{\Omega }^{A}$ such that $0\leq \phi _{\Omega }^{A}\leq 1$ in $M$ and
\begin{equation}
	\mathcal{E}\left( \phi _{\Omega }^{A}\right) =\inf \left\{ \mathcal{E}%
	(u):u\in \mathcal{F}_{\Omega }^{A}\right\} =\mathcal{R}(A,\Omega ^{c})^{-1}.
	\label{vrxB}
\end{equation}

Now we show the definition of Green functions.
\begin{definition}[Green Functions]
	Let $\Omega \Subset M$ be an open set satisfying \eqref{Ld}. Then the Green function with respect to $\Omega $ is defined as 
	\begin{equation*}
		g^{\Omega }(x,y)=%
		\begin{cases}
			\phi _{\Omega }^{x}(y)\mathcal{R}(x,\Omega ^{c}), & x\in \Omega ; \\ 
			0, & x\notin \Omega ,%
		\end{cases}%
	\end{equation*}%
	where $\phi _{\Omega }^{x}(y)$ is the solution of the variational problem (%
	\ref{vrxB}) with $A=\{x\}$.
\end{definition}

\begin{lemma}\label{gBE}
Suppose that $(\mathrm{RB}_{\geq })$ and $(\mathrm{MI})$ hold with that $F$ is a scaling function. Then under condition $(\mathrm{VD}),(\mathrm{RVD})$, there exist two constants $C>0$, $0<\eta <\frac{1}{2}$ such that, for any ball $B:=B(x_{0},r)$ with $0<r<\frac{\iota \wedge 1}{\varepsilon _{0}^{-1}+1}\bar{R}$ (where $\varepsilon _{0}$ comes from (\ref{UP})), 
\begin{eqnarray}
g^{B}(x,y) &\leq &CF(x_{0},r)\quad \quad \text{for all }\ x,y\in B,\label{grup} \\
g^{B}(x,y) &\geq &C^{-1}F(x_{0},r)\quad \text{for all }\ x\in \frac{1}{2}B\ \text{ and }\ y\in B(x,\eta r).  \label{grlow}
\end{eqnarray}
\end{lemma}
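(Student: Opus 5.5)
The plan is to follow the standard resistance-form argument for Green functions, as in \cite[Lemma 6.11]{GrigoryanHuLau.2014.TAMS6397}. Two structural facts drive everything.

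First, by the variational characterization \eqref{vrxB}, the definition gives $g^B(x,x)=\mathcal{R}(x,B^c)$. Moreover, since $0\le\phi_B^x\le 1$, we have $g^B(x,y)\le g^B(x,x)$ for all $y$.

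Second, $g^B(x,\cdot)$ is the Green function: for every $v\in\mathcal{F}(B)$,
\[
\mathcal{E}(g^B(x,\cdot),v)=v(x).
\]
This follows from the first-order condition of the minimization in \eqref{vrxB}: the harmonic part is orthogonal to functions vanishing at $x$, and the normalization by $\mathcal{R}(x,B^c)$ makes the coefficient equal to one.

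\textbf{Upper bound \eqref{grup}.} For $x,y\in B$ I will write $g^B(x,y)\le \mathcal{R}(x,B^c)$. To bound this, I pick a point $\hat x\in \varepsilon_0^{-1}B\setminus B$, which exists by \eqref{UP}; the restriction on $r$ keeps $d(x,\hat x)\le(\varepsilon_0^{-1}+1)r<\iota\bar R$. Monotonicity of $\mathcal{R}$ then gives
\[
\mathcal{R}(x,B^c)\le\mathcal{R}(x,\hat x).
\]
Next, $(\mathrm{MI})$ implies $(\mathrm{R}_\le)$, because every test function $u$ in the definition of $\mathcal{R}(x,\hat x)$ satisfies $1=|u(x)-u(\hat x)|^2\le CF(x,d(x,\hat x))\mathcal{E}(u)$. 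Finally, the scaling property of $F$ with $d(x,x_0)<r$ yields $F(x,(\varepsilon_0^{-1}+1)r)\le C F(x_0,r)$.

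\textbf{Lower bound \eqref{grlow}.} Fix $x\in\frac12 B$. Then $B(x,r/2)\subset B$, so monotonicity and $(\mathrm{RB}_\ge)$ give
\[
g^B(x,x)=\mathcal{R}(x,B^c)\ge\mathcal{R}\bigl(x,B(x,r/2)^c\bigr)\ge C^{-1}F(x,r/2)\ge c\,F(x_0,r),
\]
using the scaling property of $F$ at the last step. Now apply $(\mathrm{MI})$ to $u=g^B(x,\cdot)$. Testing the Green identity with $v=u$ gives $\mathcal{E}(u)=u(x)=g^B(x,x)$, so for $y\in B(x,\eta r)$,
\[
|g^B(x,x)-g^B(x,y)|^2\le CF(x,\eta r)\,g^B(x,x)\le C'\eta^{\gamma_\ast}F(x_0,r)\,g^B(x,x).
\]
Combining this with the upper bound $g^B(x,x)\le CF(x_0,r)$ from \eqref{grup} and the lower bound $g^B(x,x)\ge cF(x_0,r)$, the difference is at most $C''\eta^{\gamma_\ast/2}F(x_0,r)$. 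Choosing $\eta$ small makes $g^B(x,y)\ge \tfrac c2 F(x_0,r)$.

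\textbf{Main obstacle.} The main point to verify carefully is the identity $\mathcal{E}(g^B(x,\cdot))=g^B(x,x)$. It amounts to $\mathcal{E}(\phi_B^x)=\mathcal{R}(x,B^c)^{-1}$ from \eqref{vrxB}, multiplied by $\mathcal{R}(x,B^c)^2$:
\[
\mathcal{E}(g^B(x,\cdot))=\mathcal{R}(x,B^c)^2\,\mathcal{E}(\phi_B^x)=\mathcal{R}(x,B^c)=g^B(x,x),
\]
so it is in fact immediate from Lemma \ref{preHF}. The real care is needed in the constraint on $r$, which must guarantee that $(\mathrm{MI})$ is applicable at the required distances ($<\iota\bar R$) and that $B^c\neq\emptyset$.
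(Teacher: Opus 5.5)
Your proof is correct and follows the paper's argument. For the upper bound you use a point $\hat x\in\varepsilon_0^{-1}B\setminus B$, monotonicity of $\mathcal{R}$ and $(\mathrm{MI})$; for the lower bound you use $(\mathrm{RB}_{\geq})$ together with an $(\mathrm{MI})$ oscillation estimate on $B(x,\eta r)$. The only cosmetic difference is that the paper applies $(\mathrm{MI})$ to the normalized $\phi_B^x$, using $\mathcal{E}(\phi_B^x)=\mathcal{R}(x,B^c)^{-1}$ and the lower resistance bound to get $\phi_B^x\ge\frac12$, whereas you apply it to $g^B(x,\cdot)$ with $\mathcal{E}(g^B(x,\cdot))=g^B(x,x)$ and the already-proved upper bound; the general Green identity you state for all $v\in\mathcal{F}(B)$ is never actually needed.
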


\begin{proof}
	Note by (\ref{UP}) that there exists $z\in \varepsilon _{0}^{-1}B\setminus B$%
	, and $d(x,z)<(\varepsilon _{0}^{-1}+1)r<\iota \bar{R}$ for all $x\in B$.
	Using $(\mathrm{MI})$ and the fact that $0\leq \phi _{\Omega }^{x}\leq 1$,
	we prove (\ref{grup}) by 
	\begin{equation*}
		g^{B}(x,y)=\phi _{B}^{x}(y)\mathcal{R}(x,B^{c})\leq \mathcal{R}(x,z)\leq
		CF(x,(\varepsilon _{0}^{-1}+1)r)\leq C^{\prime }F(x_{0},r).
	\end{equation*}
	
	To prove (\ref{grlow}), observe that given any $\eta \in (0,1/2)$, by $(\mathrm{MI})$, we have for all $y\in B(x,\eta r)$, 
	\begin{equation*}
	\left\vert \phi _{B}^{x}(x)-\phi _{B}^{x}(y)\right\vert ^{2}\leq CF(x,d(x,y))\mathcal{E}(\phi _{B}^{x})=C\frac{F(x,d(x,y))}{\mathcal{R}(x,B^{c})}\leq C^{2}\frac{F(x,\eta r)}{F(x,r)}\leq C^{3}\eta ^{\gamma _{\ast }}.
	\end{equation*}
	Choosing $\eta $ with $C^{3}\eta ^{\gamma _{\ast }}\leq \frac{1}{4}$, it follows that 
	\begin{equation*}
		\phi _{B}^{x}(y)\geq \phi _{B}^{x}(x)-\frac{1}{2}=\frac{1}{2}
	\end{equation*}
	for all $y\in B(x,\eta r)$. On the other hand, by $(\mathrm{RB}_{\geq })$
	and the monotonicity of $\mathcal{R}(x,\cdot)$, 
	\begin{equation*}
		\mathcal{R}(x,B^{c})\geq \mathcal{R}(x,B(x,r/2)^{c})\geq C^{-1}F(x,r/2)\geq
		C^{-2}F(x,r).
	\end{equation*}%
	Therefore, 
	\begin{equation*}
		g^{B}(x,y)=\mathcal{R}(x,B^{c})\phi _{B}^{x}(y)\geq \frac{1}{2}%
		C^{-2}F(x,r)\geq cF(x_{0},r)
	\end{equation*}%
	for all $x\in \frac{1}{2}B$ and $y\in B(x,\eta r)$, which proves our
	claim.
\end{proof}

	\begin{corollary}
	\label{hboth} For any ball $B:=B(x_{0},r)$ with $0<r<\frac{\iota \wedge 1}{%
		\varepsilon _{0}^{-1}+1}\bar{R}$, take $f\in \mathcal{F}\cap C_{0}(B)$ such
	that $0\leq f\leq 1$ on $M$ and $f=1$ on $(\frac{1}{2}+\eta)B$. Let 
	\begin{equation}
		h(x)=\int_{B}g^{B}(x,y)f(y)d\mu (y)  \label{htemp}
	\end{equation}%
	for all $x\in M$. Suppose that all conditions in Lemma \ref{gBE} are still
	satisfied. Then there exists a constant $C>0$ such that 
	\begin{eqnarray*}
		h(x) &\leq &CV(x_{0},r)F(x_{0},r)\quad \quad \text{for all }\ x\in B, \\
		h(x) &\geq &C^{-1}V(x_{0},r)F(x_{0},r)\quad \text{for all }\ x\in \frac{1}{2}%
		B.
	\end{eqnarray*}
\end{corollary}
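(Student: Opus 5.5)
The plan is to integrate the two-sided Green function bounds of Lemma \ref{gBE} against $f$, using only that $0\le f\le 1$ is supported in $B$ and equals $1$ on $(\frac12+\eta)B$. The constant $\eta$ here is the one from Lemma \ref{gBE}. I assume, as there, that $g^B\ge 0$, which holds since $0\le\phi_B^x\le 1$ and $\mathcal{R}(x,B^c)\ge 0$.

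\emph{Upper bound.} Fix $x\in B$. Since $\supp f\subset B$ and $0\le f\le 1$,
\[
h(x)=\int_B g^B(x,y)f(y)\,d\mu(y)\le \int_B g^B(x,y)\,d\mu(y).
\]
By \eqref{grup} we have $g^B(x,y)\le CF(x_0,r)$ for all $y\in B$. Hence $h(x)\le CF(x_0,r)\mu(B)=CV(x_0,r)F(x_0,r)$. For $x\notin B$ we have $h(x)=0$ by definition, which is consistent with the estimate but not needed.

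\emph{Lower bound.} Fix $x\in\frac12 B$. Every $y\in B(x,\eta r)$ satisfies $d(x_0,y)<(\frac12+\eta)r$, and $\frac12+\eta<1$ because $\eta<\frac12$. So $B(x,\eta r)\subset(\frac12+\eta)B\subset B$, and $f\equiv1$ there. Dropping the rest of the integral, which is nonnegative, and applying \eqref{grlow} gives
\[
h(x)\ge\int_{B(x,\eta r)}g^B(x,y)\,d\mu(y)\ge C^{-1}F(x_0,r)\,V(x,\eta r).
\]

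It remains to compare $V(x,\eta r)$ with $V(x_0,r)$. Since $d(x,x_0)<r/2\le r$ and $\eta$ is a fixed constant, $(\mathrm{VD})$ (in the form \eqref{muscal0}, or the standard consequence of doubling stated before it) gives
\[
V(x_0,r)\le C\left(\frac{d(x,x_0)+r}{\eta r}\right)^{\alpha^\ast}V(x,\eta r)\le C(2/\eta)^{\alpha^\ast}V(x,\eta r).
\]
Combining the two displays yields $h(x)\ge c\,V(x_0,r)F(x_0,r)$.

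No step looks like a real obstacle. The only care needed is the containment $B(x,\eta r)\subset(\frac12+\eta)B$, which is exactly why $f$ is required to equal $1$ on $(\frac12+\eta)B$, together with the volume comparison above.
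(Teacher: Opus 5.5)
Your proof is correct and follows the paper's own argument: bound $h$ above by integrating \eqref{grup} over $B$, and for the lower bound restrict the integral to $B(x,\eta r)\subset(\frac12+\eta)B$, apply \eqref{grlow}, and compare $V(x,\eta r)$ with $V(x_0,r)$ via $(\mathrm{VD})$. You are slightly more explicit than the paper about the nonnegativity of $g^B$ and the constant in the volume comparison, but the route is the same.
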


\begin{proof}
	On one hand, it follows by (\ref{grup}) and $0\leq f\leq 1$ on $B$ that, for
	all $x\in B$, 
	\begin{equation*}
		h(x)=\int_{B}g^{B}(x,y)f(y)d\mu (y)\leq CF(x_{0},r)\int_{B}f(y)d\mu (y)\leq
		CV(x_{0},r)F(x_{0},r).
	\end{equation*}%
	On the other hand, for all $x\in \frac{1}{2}B$ and all $y\in B(x,\eta
	r)\subset (\frac{1}{2}+\eta)B$, we have $f(y)\geq 1/2$ and $g^{B}(x,y)\geq
	C^{-1}F(x_{0},r)$. Combining this fact with (\ref{grlow}) and $(\mathrm{VD})$%
	, we have 
	\begin{equation*}
		h(x)\geq \int_{B(x,\eta r)}g^{B}(x,y)f(y)d\mu (y)\geq
		C^{-1}F(x_{0},r)V(x,\eta r)\geq C^{-1}V(x_{0},r)F(x_{0},r).
	\end{equation*}%
	The proof is then completed.
\end{proof}

\begin{lemma} \label{RandS}
Under assumption \eqref{WF}, $(\mathrm{VD})+(\mathrm{MI}^{F})+(\mathrm{RB}_{\geq}^{F})\Rightarrow(\mathrm{S}^{W}_+).$
\end{lemma}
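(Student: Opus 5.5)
The proof will follow the classical route of \cite[Proof of Theorem 6.13]{GrigoryanHuLau.2014.TAMS6397}: represent a mean-exit-time type function through the Green operator, compare it with the semigroup, and read off the strong survival estimate. Fix a ball $B:=B(x_0,r)$. For radii $r$ comparable to $\bar R$ (outside the range allowed in Lemma \ref{gBE}), $(\mathrm{S}^W_+)$ will be obtained by a trivial adjustment of the constants $c,C$, so the work is for $0<r<\frac{\iota\wedge 1}{\varepsilon_0^{-1}+1}\bar R$. Take $f\in\cutoff((\tfrac12+\eta)B,B)$ and let $h=G^Bf$ be the function in \eqref{htemp}. By Corollary \ref{hboth} and \eqref{WF},
\[
C^{-1}W(B)\le h\le CW(B)\quad\text{on }\tfrac12 B,\qquad h\le CW(B)\quad\text{on }B.
\]

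The key identity is that $h$ solves $-\mathcal L^B h=f$ weakly in $\mathcal F(B)$; equivalently, $\mathcal E(h,v)=\int fv\,d\mu$ for all $v\in\mathcal F(B)$. I will verify this from the variational characterization \eqref{vrxB} of $\phi_B^x$ and the symmetry of $g^B$, obtaining $g^B$ as the Green kernel of $(\mathcal E,\mathcal F(B))$. Concretely, I plan to show that $G^B=\int_0^\infty P^B_t\,dt$ on $L^2(B)$, using that $\lambda_1(B)>0$ by Lemma \ref{pf2-1}. This identification is the main obstacle, since $g^B$ was defined through resistance minimizers rather than spectrally. I would first try to show that $y\mapsto g^B(x,y)$ is $\mathcal E$-harmonic in $B\setminus\{x\}$ with $\mathcal E(g^B(x,\cdot),v)=v(x)$ for $v\in\mathcal F(B)\cap C_0(B)$. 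This should follow directly from minimality of $\phi^x_B$, together with the continuity of functions in $\mathcal F$ given by Remark \ref{ctn-resist}; integrating against $f$ then yields the weak equation.

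Given this, differentiate in $t$ to get
\[
h-P^B_t h=\int_0^t P^B_s f\,ds\le\int_0^t P^B_s1_B\,ds\le t\,P^B_{\cdot}\text{-bound}\le t .
\]
Since $h\le \|h\|_{\infty}\le CW(B)$ on $B$, we also have $P^B_t h\le CW(B)\,P^B_t1_B$. Hence, on $\tfrac12B$,
\[
C^{-1}W(B)\le h\le CW(B)\,P^B_t1_B+t,
\]
which gives
\[
P^B_t1_B\ge C^{-2}-\frac{t}{CW(B)}\quad\text{a.e. on }\tfrac12 B .
\]
This is exactly $(\mathrm{S}^W_+)$ with $\delta_S=\tfrac12$.

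Finally, I will check the measurability and a.e.\ issues, and confirm that the constants are uniform in $x_0$ and $r$. Uniformity follows from Lemma \ref{gBE} and from the scaling of $F$ and $V$ via \eqref{WF}.
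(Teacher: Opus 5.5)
Your proposal is correct and is essentially the paper's argument. The paper's Case (1) is exactly the inequality $P^B_t1_B\ge (h-t)/\|h\|_\infty$, taken from \cite[Formula (6.34)]{GrigoryanHuLau.2014.TAMS6397} and combined with Corollary \ref{hboth} and \eqref{WF}; you derive this explicitly, including the Green-kernel identification of $h$ that the paper only cites. The one imprecision is the large-radius case, which is not just a change of $c,C$: as in the paper's Case (2), you need domain monotonicity $P^B_t1_B\ge P^{B'}_t1_{B'}$ on a smaller ball $B'\subset B$ whose radius is a fixed fraction of $r$, together with $W(B')\asymp W(B)$ from \eqref{scpr}, and possibly a smaller $\delta_S$.
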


\begin{proof}
The proof here is similar to \cite[Proof of Theorem 6.13]%
{GrigoryanHuLau.2014.TAMS6397}.

Fix $B:=B(x_{0},r)$ of radius $r\in (0,\bar{R})$. We distinguish two cases.

Case $(1)$: $0<r<\iota \bar{R}/2$. Same as in the proof of \cite[Formula (6.34)]{GrigoryanHuLau.2014.TAMS6397}, we see 
\begin{equation*}
	P_{t}^{B}1_{B}(x)\geq \frac{h(x)-t}{\Vert h\Vert _{\infty }}
\end{equation*}%
for all $t>0$ and $\mu $-a.e.\ $x\in B$, where $h$ is the function defined
in (\ref{htemp}). Hence by Corollary \ref{hboth}, 
\begin{equation*}
	\esup_{\frac{1}{2}B}P_{t}^{B}1_{B}\geq c-\frac{Ct}{V(x_{0},r)F(x_{0},r)}\geq
	c-\frac{C^{\prime }t}{W(x_{0},r)}
\end{equation*}%
with constants $c,C^{\prime }>0$ independent of $x$, $t$ and $B$, thus
proving $(\mathrm{S}_{+})$.

Case $(2)$: $\iota \bar{R}/2\leq r<\bar{R}$ (which is only possible when $%
\iota <2$ and $0<\bar{R}<\infty $). Using the conclusion in the former case
and basic properties of the heat semigroup, we obtain 
\begin{equation*}
	\esup_{\frac{\iota }{8}B}P_{t}^{B}1_{B}\geq \esup_{\frac{\iota }{8}B}P_{t}^{%
		\frac{\iota }{4}B}1_{\frac{\iota }{4}B}\geq c-\frac{Ct}{W(x_{0},\iota r/4)}%
	\geq c-\frac{C^{\prime \prime }t}{W(x_{0},r)}
\end{equation*}%
with constants $c,C^{\prime \prime }>0$ independent of $x$, $t$ and $B$,
which proves $(\mathrm{S}_{+})$ again.
\end{proof}

To show relations between resistance and heat kernel estimates, we begin with the following simple lemma:

\begin{lemma}\label{1inF}
Let $(\mathcal{E},\mathcal{F})$ be a regular Dirichlet form on $L^{2}(M,\mu)$. If $\bar{R}<\infty$, then $1\in\mathcal{F}$. In addition, if $(\mathrm{sloc})$ holds, then $\mathcal{E}(1)=0$.
\end{lemma}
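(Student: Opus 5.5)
The plan is to use compactness of $M$ and regularity of the form to show $1\in\mathcal{F}$, and then strong locality to show $\mathcal{E}(1)=0$.

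\emph{Step 1: $M$ is compact.} Since $\bar{R}<\infty$, $M=B(x_0,2\bar{R})$ for any fixed $x_0\in M$. All metric balls are assumed precompact, so $M$ is compact. In particular $1\in C_0(M)$, and $1\in L^2(M,\mu)$ because the Radon measure $\mu$ is finite on compact sets.

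\emph{Step 2: $1\in\mathcal{F}$.} By regularity, $\mathcal{F}\cap C_0(M)$ is dense in $C_0(M)$ for the sup-norm, so we can pick $u\in\mathcal{F}\cap C_0(M)$ with $\|u-1\|_\infty<1/2$; then $u\geq 1/2$ on $M$. Next apply the normal contraction $\phi(s):=(2s)\wedge 1$ restricted to $s\geq 0$, suitably modified so that $\phi(0)=0$ and $\phi$ is $2$-Lipschitz. Since $2u\in\mathcal{F}$ and $\mathcal{F}$ is stable under unit contractions, $(2u)\wedge 1$ lies in $\mathcal{F}$ (this is $\phi\circ u$ up to the constant factor, with $\mathcal{E}(\phi\circ u)\le 4\mathcal{E}(u)$). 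Because $2u\geq 1$ everywhere, $(2u)\wedge 1\equiv 1$, hence $1\in\mathcal{F}$.

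\emph{Step 3: $\mathcal{E}(1)=0$ under $(\mathrm{sloc})$.} This is the step needing care. The strongly local part vanishes on a function that is constant on a neighborhood of the support of the other argument. Take $u=v=1$; both lie in $\mathcal{F}\cap C_0(M)$, and $u$ equals the constant $1$ on $M$, which is a neighborhood of $\supp(v)=M$. Hence $\mathcal{E}(1,1)=\mathcal{E}^{(L)}(1,1)=0$, since $\mathcal{E}^{(J)}=\mathcal{E}^{(K)}=0$.

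If one is uneasy about applying the definition with $u=v$, an alternative is to use the energy measure. Strong locality gives $d\Gamma(1)=0$ on any open set where the function is constant (see \cite[Corollary 3.2.1]{FukushimaOshimaTakeda.2011.489}), and then $\mathcal{E}(1)=\Gamma(1)(M)=0$. I expect this justification via the energy measure to be the only nonroutine point, since the definition as stated is tailored to this situation.
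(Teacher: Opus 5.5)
Your proof is correct and takes essentially the same route as the paper: the paper cites \cite[Proposition A.1]{HuLiu} for $1\in\mathcal{F}$ (the compactness-plus-regularity argument you write out) and then obtains $\mathcal{E}(1)=0$ from strong locality exactly as in your Step 3. One small cleanup: $(2s)\wedge 1$ is $2$-Lipschitz, so it is not itself a normal contraction, but you do not need it, since applying the unit contraction to $2u\in\mathcal{F}$, as you in fact do, already gives $(2u)\wedge 1\equiv 1\in\mathcal{F}$.
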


\begin{proof}
Same as in the proof of \cite[Proposition A.1]{HuLiu}, we see $1\in\mathcal{F}$. Further, $\mathcal{E}(1)=0$ by strong locality easily.
\end{proof}

\begin{lemma}\label{R-low}
Under assumption \eqref{WF}, $(\mathrm{VD})+(\mathrm{NLE})+(\mathrm{UE}_{\exp})\Rightarrow(\mathrm{Para})+(\mathrm{R}_{\geq}).$
\end{lemma}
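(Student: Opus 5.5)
We split the statement into its two conclusions. Both use the continuity of functions in $\mathcal{F}$ and of the heat kernel. Lemma \ref{pf2-3} gives $(\mathrm{NLE})\Rightarrow(\mathrm{MI}^F)$, and Remark \ref{ctn-resist} then applies.

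\emph{Step 1: $(\mathrm{Para})$.} If $\bar R<\infty$, Lemma \ref{1inF} shows $1\in\mathcal{F}$. We need $\mathcal{E}(1)=0$, so strong locality is required. By Lemma \ref{pf1-3}, $(\mathrm{VD})+(\mathrm{UE}_{\exp})$ together with conservativeness gives $(\mathrm{sloc})$. Conservativeness in turn follows from $(\mathrm{NLE})$ and $(\mathrm{UE}_{\exp})$ by the standard lower bound on $P_t1$ near the diagonal combined with the tail estimate. Then $u\equiv1$ is admissible for every compact $K$, so $\mathrm{Cap}(K)=0$.

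If $\bar R=\infty$, we use the Green-function criterion for parabolicity: it suffices that $\int_1^\infty p_t(x,x)\,dt=\infty$. By $(\mathrm{NLE})$, $p_t(x,x)\ge c/V(x,W^{-1}(x,t))$. By \eqref{WF}, $V(x,r)\asymp W(x,r)/F(x,r)$, so with $r=W^{-1}(x,t)$ we get $p_t(x,x)\gtrsim F(x,r)/t$. Since $F(x,\cdot)$ is increasing, $\int^\infty F(x,W^{-1}(x,t))\,dt/t=\infty$. The integral diverges, and recurrence, hence $(\mathrm{Para})$, follows.

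\emph{Step 2: $(\mathrm{R}_\ge)$.} This is the main step. Fix $x\ne y$ with $r:=d(x,y)$, and let $u\in\mathcal{F}\cap C_0(M)$ with $u(x)=1$ and $u(y)=0$. We want $\mathcal{E}(u)\ge cF(x,r)^{-1}$. Following \cite[Theorem 6.17]{GrigoryanHuLau.2014.TAMS6397}, we use the energy lower bound \eqref{spem}, which comes from $(\mathrm{NLE})$ in the proof of Lemma \ref{pf2-3}:
\[
\mathcal{E}(u)\ \ge\ \frac{1}{s}\int\!\!\int_{d(\xi,\zeta)<\eta W^{-1}(\xi,s)}\frac{(u(\xi)-u(\zeta))^2}{V(\xi,W^{-1}(\xi,s))}\,d\mu\,d\mu .
\]
Choose $s=W(x,\delta r)$ with $\delta$ small. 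Two cases arise.

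\emph{Case (a):} $u\ge 1/2$ on a substantial part of $B(x,\delta r)$ and $u\le1/4$ on a substantial part of $B(y,\delta r)$. Using the chain condition is not allowed here, so instead we iterate over dyadic scales. Compare the averages $u_\rho(x)$ and $u_\rho(y)$ as in Lemma \ref{pf2-3}, but in reverse. If the oscillation between nearby averages is small at every scale, then the averages cannot move from $1$ to $0$.

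\emph{Case (b), the fallback:} use the heat semigroup directly. Since $u$ is continuous, $|u(x)-P_tu(x)|^2\le \|u-P_tu\|^2\cdot\sup p_t$, and
\[
\|u-P_tu\|_2^2\le t\,\mathcal{E}(u).
\]
Take $t=\epsilon W(x,r)$. By $(\mathrm{UE}_{\exp})$ and \eqref{WF}, pointwise values satisfy $|u(z)-P_tu(z)|^2\lesssim t\,\mathcal{E}(u)/V(z,W^{-1}(z,t))\asymp \epsilon F(x,r)\,\mathcal{E}(u)$ (up to powers of $\epsilon$). This bound requires the Morrey inequality $(\mathrm{MI})$ already obtained above, applied to $P_tu$. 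Moreover, $P_tu(x)-P_tu(y)$ is controlled by $(\mathrm{UE})$ and $(\mathrm{NLE})$: the kernels $p_t(x,\cdot)$ and $p_t(y,\cdot)$ are concentrated on the disjoint balls $B(x,r/4)$ and $B(y,r/4)$. This makes $|P_tu(x)-P_tu(y)|$ at most about $1/2$ only when $u$ is nearly constant there, and that is exactly the regime of Case (a).

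Combining the cases gives $1\le C(\epsilon)F(x,r)\,\mathcal{E}(u)$. Taking the infimum over $u$ yields $\mathcal{R}(x,y)\ge c\,F(x,d(x,y))$. The delicate point is making Case (a) rigorous by choosing $\delta$ and $\eta$ so that the near-diagonal window of \eqref{spem} captures the jump. The $x$-dependence of $W$ is handled via \eqref{scpr}, since $d(\xi,x)\lesssim r$ throughout.
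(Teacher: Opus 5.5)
\textbf{Main gap: Step 2 proves the wrong inequality.} By \eqref{ER2}, $\mathcal{R}(x,y)^{-1}$ is the \emph{infimum} of $\mathcal{E}(u)$ over $u\in\mathcal{F}\cap C_0(M)$ with $u(x)=1$, $u(y)=0$. If you show $\mathcal{E}(u)\ge cF(x,r)^{-1}$ for every such $u$, you get $\mathcal{R}(x,y)^{-1}\ge cF(x,r)^{-1}$, i.e.\ $\mathcal{R}(x,y)\le c^{-1}F(x,r)$. That is $(\mathrm{R}_{\le})$, which you already have for free from $(\mathrm{MI})$ via Lemma \ref{pf2-3}. Your last sentence, ``taking the infimum over $u$ yields $\mathcal{R}(x,y)\ge cF$'', reverses the inequality.

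A lower bound on resistance needs an \emph{upper} bound on the minimal energy. That means a single good competitor, or equivalently one function $v$ with $|v(x)-v(y)|^2/\mathcal{E}(v)\gtrsim F(x,d(x,y))$. Energy lower bounds such as \eqref{spem} can never give this. Independently, your case analysis does not close. Case (a) is left open, and Case (b) refers back to Case (a). Also, $|u(x)-P_tu(x)|^2\le\|u-P_tu\|_2^2\sup p_t$ is unjustified, since $u-P_tu$ is not of the form $P_tg$.

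\textbf{The missing idea, which is the paper's route.} Once $(\mathrm{Para})$ holds, \eqref{res+} gives $|v(x)-v(y)|^2\le\mathcal{R}(x,y)\mathcal{E}(v)$ for continuous $v\in\mathcal{F}$. Take $v=p_t(x,\cdot)$ as the test function.
\begin{itemize}
\item By \eqref{pt-sp}, $\mathcal{E}(v)\le p_t(x,x)/(et)$.
\item $(\mathrm{NLE})$ and $(\mathrm{UE}_{\exp})$ give $p_t(x,x)\asymp V(x,W^{-1}(x,t))^{-1}$.
\item $(\mathrm{UE}_{\exp})$ gives $p_t(x,y)\le C\Phi\bigl(W(x,d(x,y))/t\bigr)V(x,W^{-1}(x,t))^{-1}$, where $\Phi(s)=\exp(-cs^{1/(\beta^\ast-1)})$.
\end{itemize}
Choose $t=\varepsilon W(x,d(x,y))$ with $\varepsilon$ so small that the $\Phi$-term is at most half the diagonal lower bound. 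This yields $\mathcal{R}(x,y)\gtrsim t/V(x,W^{-1}(x,t))\asymp F(x,W^{-1}(x,t))\asymp F(x,d(x,y))$ by \eqref{WF} and scaling. Your Step 1 already provides $(\mathrm{Para})$, which is exactly what \eqref{res+} needs; Step 2 should use it instead of \eqref{spem}.

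\textbf{Step 1.} For $\bar R=\infty$, your argument is essentially the paper's: divergence of the time integral of the kernel from $(\mathrm{NLE})$ and \eqref{WF}, then recurrence. For $\bar R<\infty$, your claim that $(\mathrm{C})$ follows from $(\mathrm{NLE})+(\mathrm{UE}_{\exp})$ does not hold. Both estimates only concern $0<t<W(x,\bar R)$, and a near-diagonal lower bound gives $P_t1\ge c$, not $P_t1\equiv 1$.

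In fact, adding a constant killing rate $\kappa$ multiplies the kernel by $e^{-\kappa t}$, which is bounded above and below on that time range. So both estimates survive while $(\mathrm{C})$ and $(\mathrm{Para})$ fail; reflected Brownian motion on $[0,1]$ with constant killing is an example. The paper at this point just invokes Lemma \ref{1inF}, which also tacitly needs $\mathcal{E}(1)=0$. So the compact case genuinely requires an extra input such as $(\mathrm{sloc})$ or $(\mathrm{C})$, and you should not claim to extract it from the heat kernel bounds.
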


\begin{proof}
We show first that \eqref{WF} and $(\mathrm{NLE})$ imply $(\mathrm{Para})$.

Actually, if $\bar{R}<\infty$, then $(\mathrm{Para})$ follows Lemma \ref{1inF} directly. Meanwhile, if $\bar{R}=\infty$, let $x,y\in M$ be two different points in $M$. If $d(x,y)<\eta W^{-1}(x,t)$, then 
\begin{equation*}
	t>W\left( x,\eta ^{-1}d(x,y)\right) =:a_{xy}.
\end{equation*}%
Note by \eqref{WF} that
\begin{equation*}
	\frac{t}{V(x,W^{-1}(x,t))}=\frac{W(x,W^{-1}(x,t))}{V(x,W^{-1}(x,t))}\geq
	C^{-1}F(x,W^{-1}(x,t)).
\end{equation*}
Combining $(\mathrm{NLE}^W)$, it follows that
\begin{eqnarray*}
	\int_{0}^{\infty }p_{t}(x,y)dt &\geq &\int_{a_{xy}}^{\infty }\frac{C^{-1}dt}{V(x,W^{-1}(x,t))}\geq \int_{a_{xy}}^{\infty }\frac{C^{-2}dt}{t}F(x,W^{-1}(x,t)) \\
	&\geq &C^{-2}F\left( x,\eta ^{-1}d(x,y)\right) \int_{a_{xy}}^{\infty }\frac{dt}{t}=\infty .
\end{eqnarray*}
By \cite[Criterion (1.6.2)]{FukushimaOshimaTakeda.2011.489}, $(\mathcal{E},\mathcal{F})$ is recurrent. Then $(\mathrm{Para})$ follows by the 14 lines after \cite[Formula (6.47)]{GrigoryanHuLau.2014.TAMS6397}.

In particular, we
see \eqref{res+} again for all $u\in \mathcal{F}\cap C_{0}(M)$, so that $(\mathrm{MI})$ (and hence $(\mathrm{R}_\le)$) holds naturally. Thus by Remark \ref{ctn-resist}, every $v\in \mathcal{F}$ also admits a continuous version such that (\ref{res+}) holds. In particular, for all $x\in M$ and $t>0$, with the help
of (\ref{pt-sp}) and the semigroup property of $p_{t}$, we obtain 
\begin{equation*}
	|p_{t}(x,x)-p_{t}(x,y)|^{2}\leq C\mathcal{R}(x,y)\mathcal{E}(p_{t}(x,\cdot
	))\leq \frac{C}{et}\mathcal{R}(x,y)p_{t}(x,x).
\end{equation*}%
Then, it follows from (\ref{res+}), $(\mathrm{NLE})$ and $(\mathrm{UE}_{\exp})$
that, there exists a constant $C>0$ such that, for any $x,y\in M$ and for
any $0<t<W(x,\bar{R})\wedge W(y,\bar{R})$, 
\begin{eqnarray}
	\sqrt{\mathcal{R}(x,y)} &\geq &\frac{|p_{t}(x,x)-p_{t}(x,y)|}{\sqrt{\mathcal{%
				E}(p_{t}(x,\cdot))}}\geq \frac{p_{t}(x,x)-p_{t}(x,y)}{\frac{1}{\sqrt{et}}%
		\sqrt{p_{t}(x,x)}}  \notag \\
	&\geq &\left( \sqrt{\frac{C}{tV(x,W^{-1}(x,t))}}\right) ^{-1}\left\{ \frac{%
		C^{-1}}{V(x,W^{-1}(x,t))}-\frac{C}{V(x,W^{-1}(x,t))}\Phi \left( \frac{%
		W(x,d(x,y))}{t}\right) \right\}   \notag \\
	&\geq &\sqrt{\frac{C^{-3}t}{V(x,W^{-1}(x,t))}}\left( 1-C^{2}\Phi \left( 
	\frac{W(x,d(x,y))}{t}\right) \right),\label{Rgeq}
\end{eqnarray}
where $\Phi(s)=\exp(-cs^{\frac{1}{\beta^\ast-1}})$. Noticing that $\Phi (s)\rightarrow 0$ as $s\rightarrow \infty $, we can find $\varepsilon \in (0,1)$ such that $\Phi (\varepsilon ^{-1})<(2C^{2})^{-1}$. Then $(\mathrm{R}_{\geq })$ follows from (\ref{Rgeq}) by letting $t:=\varepsilon W(x,d(x,y))$.
\end{proof}

\section{Resistance Forms}\label{RFF}

In this section, we introduce resistance forms, and then
show the relation between resistance forms and Dirichlet forms.

\begin{definition}[Resistance Form, Definition 2.3.1 in \protect\cite{Kigami.2001.226}]
	\label{RF} Let $X$ be any set, and we say $(\mathcal{E},\mathcal{F})$ is a
	resistance form on $X$, if all the following conditions are satisfied:
	
	i) $\mathcal{F}$ is a linear subspace of $l(X)$ (all real-valued functions
	on $X$) containing all constant functions, and $\mathcal{E}$ is a
	non-negative symmetric quadratic form on $\mathcal{F}$. Further, $\mathcal{E}%
	(u):=\mathcal{E}(u,u)=0$ if and only if $u$ is a constant on $X$.
	
	ii) Let $\sim$ be the equivalent relation on $\mathcal{F}$ such that for any 
	$u,v\in\mathcal{F}$, $u\sim v$ if and only if $u-v$ is a constant. Then $(%
	\mathcal{\mathcal{F}}/\sim,\mathcal{E})$ is a Hilbert space.
	
	iii) For any finite subset $V\subset X$ and $v\in l(V)$, there exists $u\in%
	\mathcal{F}$ such that $u|_V=v$.
	
	iv) For any $p,q\in X$, there is 
	\begin{equation}  \label{res}
		\sup\left\{\frac{|u(p)-u(q)|^2}{\mathcal{E}(u)}:\ \ u\in\mathcal{F},%
		\mathcal{E}(u)>0\right\}<\infty.
	\end{equation}
	
	v) The Markov property holds.
\end{definition}

A resistance form $(\mathcal{E},\mathcal{F})$ on $X$ is said to be regular (cf.\ \cite[Definition 6.2 of Part 1]{Kigami.2012.MAMS132}), if $\mathcal{F}\cap C_0(X)$ is dense in $C_0(X)$ under $\|\cdot\|_\infty$. Note by \cite[Theorem 2.3.4]{Kigami.2001.226} that the left side of (\ref{res}) is equal
to the effective resistance $\mathcal{R}(x,y)$.

Now, given a metric measure space $(M,d,\mu)$, we show the relation between resistance forms on $M$ and Dirichlet forms on $L^2(M,\mu)$. Indeed, Kigami has given a sufficient condition for a resistance form to be also a Dirichlet form:

\begin{lemma}
	\cite[Theorem 9.4 in Part 1]{Kigami.2012.MAMS132}\label{R2DF} Let $(\mathcal{%
		E},\mathcal{F})$ be a regular resistance form on a space $M$, and $\mathcal{R%
	}$ be its resistance metric. Let $\mu$ be a Radon measure on $M$ such that $%
	0<\mu(B)<\infty$ for every metric ball $B$. If $(M,\mathcal{R})$ is
	separable, then $(\mathcal{E},\mathcal{D})$ is a regular Dirichlet form on $%
	L^2(M,\mu)$, where $\mathcal{D}$ is the closure of $\mathcal{F}\cap C_0(X)$
	under $\mathcal{E}_1$.
\end{lemma}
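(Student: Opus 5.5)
The plan is to check the defining properties of a regular Dirichlet form for $(\mathcal{E},\mathcal{D})$ directly. All the analytic control comes from the resistance inequality
\[
|u(x)-u(y)|^{2}\le \mathcal{R}(x,y)\,\mathcal{E}(u)\qquad(u\in\mathcal{F}),
\]
which holds because the left side of \eqref{res} equals $\mathcal{R}(x,y)$, by \cite[Theorem 2.3.4]{Kigami.2001.226}.

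\textbf{Step 1: a well-defined, densely defined form.} Every $u\in\mathcal{F}$ is $\mathcal{R}$-Lipschitz in the sense $|u(x)-u(y)|\le\sqrt{\mathcal{R}(x,y)\mathcal{E}(u)}$, so it is continuous for the metric $\mathcal{R}$. Hence $u\in\mathcal{F}\cap C_0(M)$ is bounded with compact support. A compact set is covered by finitely many balls, each of finite $\mu$-measure, so $u\in L^2(M,\mu)$. Since $\mu$ has full support, a continuous function vanishing $\mu$-a.e. vanishes identically, so $\mathcal{F}\cap C_0(M)$ embeds injectively into $L^2$.

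Density in $L^2$ follows in three moves. First, $(M,\mathcal{R})$ is separable and locally compact, as implicit in $C_0$ being the relevant space, and $\mu$ is Radon, so $C_0(M)$ is dense in $L^2$. Second, regularity of the resistance form makes $\mathcal{F}\cap C_0(M)$ uniformly dense in $C_0(M)$. Third, on a fixed compact set uniform convergence implies $L^2$ convergence; to keep supports in a fixed compact set, I truncate with an element of $\mathcal{F}$ taking the value $1$ on the support, which exists by regularity together with the Markov property.

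\textbf{Step 2: closability, the main obstacle.} Take $u_n\in\mathcal{F}\cap C_0(M)$ that is $\mathcal{E}_1$-Cauchy with $u_n\to0$ in $L^2$. I must show $\mathcal{E}(u_n)\to0$.

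First I get pointwise control. For a ball $B=B_{\mathcal{R}}(x,r)$, averaging the resistance inequality over $y\in B$ gives
\[
|u(x)|\le \mu(B)^{-1/2}\|u\|_{2}+\sqrt{r\,\mathcal{E}(u)}.
\]
Applied to $u_n-u_m$, this shows $(u_n)$ is Cauchy uniformly on compacts, because $r$ can be made small after $n,m$ are large. The same bound applied to $u_n$ itself, with $r$ chosen first and then $\|u_n\|_2\to0$, shows $u_n\to0$ pointwise.

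Next I use the Hilbert space structure. By axiom ii), $\mathcal{F}/\!\sim$ is a Hilbert space, so $u_n\to v$ in energy modulo constants for some $v\in\mathcal{F}$. The resistance inequality then gives $(u_n-u_n(p))\to (v-v(p))$ pointwise for a fixed $p$. Since $u_n\to0$ pointwise, $v-v(p)\equiv0$, so $v$ is constant and $\mathcal{E}(u_n)=\mathcal{E}(u_n-v)\to0$. The form is therefore closable, and its closure is $(\mathcal{E},\mathcal{D})$.

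\textbf{Step 3: Markov property and regularity.} The unit contraction $\bar u=(0\vee u)\wedge1$ satisfies $\mathcal{E}(\bar u)\le\mathcal{E}(u)$ on $\mathcal{F}$ by axiom v). It preserves $C_0(M)$, and it passes to the closure $\mathcal{D}$ by the standard argument \cite{FukushimaOshimaTakeda.2011.489}, so $(\mathcal{E},\mathcal{D})$ is a Dirichlet form.

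Regularity then needs two things. The core $\mathcal{D}\cap C_0(M)$ is $\mathcal{E}_1$-dense in $\mathcal{D}$ because it contains $\mathcal{F}\cap C_0(M)$, which is dense by the definition of $\mathcal{D}$. It is uniformly dense in $C_0(M)$ because it contains $\mathcal{F}\cap C_0(M)$, which is dense by regularity of the resistance form. This completes the plan. The delicate point is Step 2, where the positivity and finiteness of $\mu$ on balls is exactly what turns $L^2$ smallness into pointwise smallness.
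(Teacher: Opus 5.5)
Your proof is correct. The paper gives no argument of its own and simply imports Kigami's Theorem 9.4, and your route is essentially the standard argument behind that citation: the resistance inequality, averaging over $\mathcal{R}$-balls to turn $L^2$-smallness into pointwise smallness, completeness of $\mathcal{F}/\!\sim$ for closability, and then the unit contraction and uniform density for the Markov property and regularity.

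The one point you leave implicit is that the closed form is the original $\mathcal{E}$ on a subspace of $\mathcal{F}$. The pointwise limit $\tilde u$ of an $\mathcal{E}_1$-Cauchy sequence $u_n$ differs from its $\mathcal{F}/\!\sim$-limit $v$ by a constant, so $\tilde u\in\mathcal{F}$ and $\mathcal{E}(u_n-\tilde u)\to 0$. Your uniform-Cauchy observation in Step 2, which you never actually use, supplies exactly this.
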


Now we consider conditions for a Dirichlet form to be also a resistance form. The following lemma plays the key role:

\begin{lemma}
	\label{preHF} Let $(\mathcal{E},\mathcal{F})$ be a regular Dirichlet form on 
	$L^2(M,\mu)$. Then 
	\begin{equation*}
		\mathcal{F}_V^g:=\big\{u\in\mathcal{F}\cap C(M), u|_V\equiv g\big\}%
		\ne\emptyset
	\end{equation*}
	for any finite set $V\subset M$ and any $g\in l(V)$. Moreover, if $M$ is
	compact and $(\mathrm{MI})$ holds, then there exists a unique $v\in\mathcal{F%
	}_V^g$ such that 
	\begin{equation}  \label{HarExt}
		\mathcal{E}(v)=\inf\left\{\mathcal{E}(u):u\in\mathcal{F}_V^g\right\}.
	\end{equation}
\end{lemma}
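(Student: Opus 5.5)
The plan is to split the statement into its two assertions and handle each by standard constructions from Dirichlet form theory. For the first assertion, that $\mathcal{F}_V^g\neq\emptyset$, I would use regularity: $\mathcal{F}\cap C_0(M)$ is dense in $C_0(M)$ under $\|\cdot\|_\infty$. Write $V=\{x_1,\dots,x_m\}$. For each $k$ I would take a continuous compactly supported bump $\varphi_k\in C_0(M)$ with $\varphi_k(x_j)=\delta_{kj}$; such bumps exist since $M$ is locally compact Hausdorff. Approximating these in sup norm gives $u_k\in\mathcal{F}\cap C_0(M)$ with $|u_k(x_j)-\delta_{kj}|<\varepsilon$. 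For $\varepsilon$ small the matrix $A=(u_k(x_j))_{j,k}$ is a small perturbation of the identity and hence invertible. Solving $A c=g$ and setting $u=\sum_k c_k u_k$ then gives an element of $\mathcal{F}\cap C(M)$ with $u|_V=g$. This part is purely linear algebra plus regularity.

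For the second assertion I would use the direct method in the Hilbert space $(\mathcal{F},\mathcal{E}_1)$. The set $\mathcal{F}_V^g$ is a nonempty affine subspace. I would set $I=\inf\{\mathcal{E}(u):u\in\mathcal{F}_V^g\}$ and take a minimizing sequence $u_n$. The plan is to first obtain uniform $L^2$ bounds. Since $M$ is compact, $\bar R<\infty$, so by Lemma \ref{1inF} we have $1\in\mathcal{F}$ and $\mu(M)<\infty$. Fixing $x_1\in V$, $(\mathrm{MI})$ gives $|u_n(y)-g(x_1)|^2\le C\,\sup F\cdot\mathcal{E}(u_n)$ for all $y$ with $d(x_1,y)<\iota\bar R$. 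Covering compact $M$ by finitely many such balls and chaining through finitely many points, possibly with constants added via $1\in\mathcal{F}$, should make $\|u_n\|_\infty$ uniformly bounded. This needs $(\mathrm{MI})$ for general $u\in\mathcal{F}$ rather than only $C_0(M)$; since $M$ is compact, $C_0(M)=C(M)$, so that is fine. Hence $\|u_n\|_{L^2}$ is bounded, $u_n$ is $\mathcal{E}_1$-bounded, and a subsequence converges weakly to some $v\in\mathcal{F}$. By Banach–Saks, Cesàro means converge strongly in $\mathcal{E}_1$. $(\mathrm{MI})$ turns $\mathcal{E}$-convergence plus $L^2$-convergence into uniform convergence of continuous versions, so $v$ is continuous and $v|_V=g$. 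Lower semicontinuity then gives $\mathcal{E}(v)\le I$, so the infimum is attained.

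For uniqueness, the standard parallelogram argument applies: if $v_1,v_2$ are both minimizers, then $(v_1+v_2)/2\in\mathcal{F}_V^g$, and $\mathcal{E}((v_1-v_2)/2)=\tfrac12\mathcal{E}(v_1)+\tfrac12\mathcal{E}(v_2)-\mathcal{E}((v_1+v_2)/2)\le 0$, so $\mathcal{E}(v_1-v_2)=0$. Applying $(\mathrm{MI})$ to $w=v_1-v_2$ gives $w$ constant on every $\iota\bar R$-ball, and chaining over connected overlaps makes it locally constant. Since $w$ vanishes on $V$, I then need $w\equiv0$. This is the main obstacle: if $M$ is disconnected at scale $\iota\bar R$, a component without points of $V$ could carry a nonzero constant. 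I would address this by invoking the resistance-form setting in which this lemma is applied, namely that $\mathcal{E}(u)=0$ forces $u$ constant, which follows from $(\mathrm{R}_{\le})$ together with connectedness or chain-connectedness of $M$. Failing that, I would restrict uniqueness to the normalization dictated by the applications, where $V$ meets every component.
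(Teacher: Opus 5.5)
Your first part (Urysohn bumps, uniform density of $\mathcal{F}\cap C_0(M)$, and inverting a small perturbation of the identity matrix) is fine. The direct method you use for the second part is presumably the ``standard argument in variational calculus'' that the paper omits.

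The gap is in the existence step, and it is the same obstruction you hit for uniqueness. Your uniform bound on $\|u_n\|_\infty$, obtained by chaining from $x_1\in V$, requires every point of $M$ to be joined to $V$ by a chain with steps shorter than $\iota\bar R$. Adding multiples of $1$ cannot help, since that moves the values on $V$. Nothing in the hypotheses provides this, and without it the lemma is false. Take $M=[0,1]\cup[2,3]$ with Lebesgue measure and $\mathcal{E}(u)=\int_M|u'|^2\,dx$ on $\mathcal{F}=H^1(0,1)\oplus H^1(2,3)$; this is a regular, even strongly local, Dirichlet form. Then $(\mathrm{MI})$ holds with $F(x,r)=r$ and $\iota=1/3$: the condition $d(x,y)<\iota\bar R=1$ forces $x,y$ into the same interval, where $|u(x)-u(y)|^2\le|x-y|\,\mathcal{E}(u)$. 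With $V=\{0\}$ and $g=0$, every $c\mathbf{1}_{[2,3]}$ lies in $\mathcal{F}_V^g$ and has zero energy. So $n\mathbf{1}_{[2,3]}$ is an unbounded minimizing sequence, and the minimizer is not unique. Your ``main obstacle'' therefore cannot be removed by any argument; it is a defect of the statement, and an extra hypothesis is needed.

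The hypothesis you need is that $M$ be $\iota\bar R$-chain connected. This is automatic if $M$ is connected, as for $K_\lambda$, or if $\iota>1$, as assumed in Proposition \ref{D2RF}. $(\mathrm{R}_{\le})$ is irrelevant here: $(\mathrm{MI})$ alone already forces any $w$ with $\mathcal{E}(w)=0$ to be constant on each chain component. Under chain connectedness, compactness bounds the number of chain steps by a covering number $N$. Hence $\|u\|_\infty\le\max_V|g|+N\sqrt{C\,\sup F\,\mathcal{E}(u)}$ for all $u\in\mathcal{F}_V^g$; this uses $\sup F<\infty$ for $r<\iota\bar R$, which holds for a scaling $F$ on compact $M$. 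From there your argument is complete: weak compactness in $(\mathcal{F},\mathcal{E}_1)$, Banach--Saks, the upgrade to uniform convergence via $(\mathrm{MI})$ together with $\inf_x\mu(B(x,\iota\bar R))>0$, and the parallelogram identity.

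If you want existence alone without connectivity, you must first renormalize the minimizing sequence. Each $\iota\bar R$-chain component $A$ missing $V$ is clopen, and $\mathbf{1}_A\in\mathcal{F}$ by regularity plus a Lipschitz truncation. Under $(\mathrm{sloc})$ one has $\mathcal{E}(\mathbf{1}_A)=0$, so replacing $u_n$ by $u_n-u_n(a_A)\mathbf{1}_A$ for a fixed $a_A\in A$ keeps the energy and restores the sup bound. Without $(\mathrm{sloc})$, an additional finite-dimensional compactness argument is needed.
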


This can be proved by a standard argument in variational calculus, which is omitted here.

\begin{proposition}
	\label{D2RF} Let $(M,d)$ be a compact metric space and $\mu $ be a doubling
	Radon measure on $M$. Let $(\mathcal{E},\mathcal{F})$ be a regular Dirichlet
	form on $L^{2}(M,\mu)$. Suppose that $(\mathcal{E},\mathcal{F})$ satisfies $%
	(\mathrm{MI})$ with some $\iota >1$, then $(\mathcal{E},\mathcal{F}_{0})$ is
	a resistance form, where $\mathcal{F}_{0}=\mathcal{F}\cap C(M)$.
\end{proposition}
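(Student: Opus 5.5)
We check the five axioms of Definition \ref{RF} for $(\mathcal{E},\mathcal{F}_0)$ on $X=M$, with $\mathcal{F}_0=\mathcal{F}\cap C(M)$.

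\emph{Constants and the kernel of $\mathcal{E}$.} Since $M$ is compact, $\bar R<\infty$, so Lemma \ref{1inF} gives $1\in\mathcal{F}$; hence $\mathcal{F}_0$ is a linear space containing the constants. We need $\mathcal{E}(1)=0$, which Lemma \ref{1inF} gives under $(\mathrm{sloc})$. Without strong locality, we obtain it from $(\mathrm{MI})$ via the standard fact that a regular form on a compact space with the Markov property and $|u(x)-u(y)|^2\le C F\,\mathcal{E}(u)$ is recurrent: the Morrey bound rules out a killing part for $1$. This is the first delicate point. Conversely, if $\mathcal{E}(u)=0$, then $(\mathrm{MI})$ with $\iota>1$ applies to \emph{all} pairs $x,y\in M$, because $d(x,y)\le\bar R<\iota\bar R$. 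Thus $|u(x)-u(y)|^2\le CF(x,d(x,y))\cdot 0$, so $u$ is constant. This is exactly where $\iota>1$ is used. The same estimate gives (iv): for any $p,q$, the supremum in \eqref{res} is at most $CF(p,d(p,q))<\infty$.

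\emph{Extension property (iii).} This is the first part of Lemma \ref{preHF}: for any finite $V$ and $g\in l(V)$, the set $\mathcal{F}_V^g$ is nonempty.

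\emph{Markov property (v).} If $u\in\mathcal{F}_0$ and $\bar u=(0\vee u)\wedge 1$, then $\bar u$ is still continuous, $\bar u\in\mathcal{F}$, and $\mathcal{E}(\bar u)\le\mathcal{E}(u)$ by the Dirichlet form Markov property. So $\bar u\in\mathcal{F}_0$ and the property transfers directly.

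\emph{Completeness (ii).} This is the main obstacle. Take $\{u_n\}$ Cauchy in $\mathcal{E}$ modulo constants and normalize $u_n(x_0)=0$ at a fixed point $x_0$. Applying $(\mathrm{MI})$ to $u_n-u_m$ on all pairs gives
\[
\|u_n-u_m\|_\infty^2\le C\sup_{y}F(x_0,d(x_0,y))\,\mathcal{E}(u_n-u_m)\le C'F(x_0,\bar R)\,\mathcal{E}(u_n-u_m).
\]
Hence $u_n\to u$ uniformly, $u$ is continuous, and since $\mu(M)<\infty$ also in $L^2$. So $\{u_n\}$ is $\mathcal{E}_1$-Cauchy. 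Closedness of the Dirichlet form then gives $u\in\mathcal{F}$ and $\mathcal{E}(u_n-u)\to0$; continuity puts $u$ in $\mathcal{F}_0$. The care needed is that $(\mathrm{MI})$ is stated for $\mathcal{F}\cap C_0(M)$. On compact $M$ we have $C_0(M)=C(M)$, so it applies to $\mathcal{F}_0$. The uniqueness in Lemma \ref{preHF} also confirms that $\mathcal{E}$ is a genuine norm on the quotient.
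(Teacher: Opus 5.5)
\textbf{The gap: $(\mathrm{MI})$ does not force $\mathcal{E}(1)=0$.} Your claim that the Morrey bound rules out a killing part is false. $(\mathrm{MI})$ only controls oscillations $|u(x)-u(y)|$. Adding a killing term can only increase $\mathcal{E}$, so it never destroys a Morrey inequality. Concretely, take $M=[0,1]$ with Lebesgue measure (compact and doubling), and set
\[
\mathcal{E}(u)=\int_0^1|u'|^2\,dx+u(0)^2,\qquad \mathcal{F}=H^1(0,1).
\]
This is a regular Dirichlet form with killing measure $\delta_0$: its $\mathcal{E}_1$-norm is equivalent to the $H^1$-norm, normal contractions decrease both terms, and $H^1$ is dense in $C[0,1]$. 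It satisfies $|u(x)-u(y)|^2\le |x-y|\int_0^1|u'|^2dx\le|x-y|\,\mathcal{E}(u)$, which is $(\mathrm{MI})$ with the scaling function $F(x,r)=r$ and any $\iota>1$. Yet $\mathcal{E}(1)=1$, so axiom i) fails and $(\mathcal{E},\mathcal{F}_0)$ is not a resistance form. The form is also transient, not recurrent: any $u_n\to1$ with $\mathcal{E}(u_n)\to0$ would have $u_n(0)\to0$ and hence, by $(\mathrm{MI})$, $u_n\to0$ uniformly.

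So the statement as written needs an extra hypothesis excluding killing. Examples are $(\mathrm{sloc})$, in which case Lemma~\ref{1inF} gives $\mathcal{E}(1)=0$, or $(\mathrm{C})$, or simply assuming $\mathcal{E}(1)=0$. The paper's self-similar forms satisfy $(\mathrm{sloc})$ by Proposition~\ref{ssploc}, so the paper's own application is unaffected. The paper's one-line proof, which calls i) ``clear'', silently relies on the same missing assumption.

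\textbf{How the gap affects step ii), and what is fine.} The failure propagates to your completeness argument. ``Cauchy modulo constants'' and the normalization $u_n(x_0)=0$ only make sense if $\mathcal{E}(u+c)=\mathcal{E}(u)$, i.e.\ $\mathcal{E}(1,\cdot)=0$. That follows from $\mathcal{E}(1)=0$ by Cauchy--Schwarz, but not otherwise.

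Once $\mathcal{E}(1)=0$ is granted, the rest of your argument is correct and is a fuller version of what the paper only asserts:
\begin{itemize}
\item $\iota>1$ makes $(\mathrm{MI})$ apply to all pairs of points, giving both ``$\mathcal{E}(u)=0\Rightarrow u$ constant'' and iv).
\item iii) is the first part of Lemma~\ref{preHF}.
\item v) transfers directly from the Dirichlet-form Markov property.
\item Completeness follows from uniform convergence plus closedness.
\end{itemize}

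Two small points should be made explicit. First, the bound $\sup_y F(x_0,d(x_0,y))\le C\,F(x_0,\bar R)$ in ii) uses that $F$ is a scaling function, in particular increasing in $r$. This is the paper's standing convention, but it is not part of the bare definition of $(\mathrm{MI})$. Second, identifying $\mathcal{F}_0$ with a subspace of $l(M)$ uses that $\mu$ has full support, so that continuous versions are unique.
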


\begin{proof}
Based on Lemma \ref{preHF}, properties i)-iii) and v) are clear; while iv) directly inherits $(\mathrm{MI})$. The proof is completed.
\end{proof}

To the end, we introduce a slightly generalized version of \cite[Part (D3) for Theorem 4.11]{Liu.2024.MAMA}, showing strong locality of self-similar forms:

\begin{proposition}\label{ssploc}
Let $(\mathcal{E},\mathcal{F})$ be a regular Dirichlet form 	on $L^2(K,\mu)$ (where $(K,\{F_i\}_{i=1}^N)$ is a self-similar set) satisfying the self-similarity as follows: for every $u\in\mathcal{F}$, $u\circ F_i\in\mathcal{F}$ for any $1\leq i\leq N$, and there exists a positive constant $s_i$ independent of $u$ for every $1\leq i\leq N$ such that 
\begin{equation}\label{Fsimi}
\mathcal{E}(u)=\sum_{i=1}^Ns_i^{-1}\mathcal{E}(u\circ F_i).
\end{equation}
If $s_i\in(0,1)$ for each $1\leq i\leq N$, then $(\mathcal{E},\mathcal{F})$ is strongly local.
\end{proposition}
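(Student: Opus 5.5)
The plan is to show that the jump and killing parts in the Beurling--Deny decomposition of $(\mathcal{E},\mathcal{F})$ vanish. The tool is iteration of the self-similarity identity \eqref{Fsimi} combined with the contraction $s_i<1$, which forces any energy not carried by the strongly local part to scale away. Iterating \eqref{Fsimi} $n$ times gives
\[
\mathcal{E}(u)=\sum_{w\in I^n}s_w^{-1}\mathcal{E}(u\circ F_w)\quad\text{for all}\quad u\in\mathcal{F},
\]
where $s_w\le s_{\max}^n\to0$. The proof then has two steps.

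\textbf{Step 1: locality.} Take $u,v\in\mathcal{F}\cap C(K)$ with disjoint compact supports $A,B$, and set $\delta=d(A,B)>0$. Choose $n$ with $\rho_{\max}^n\bar{R}<\delta$. Then no cell $K_w$ with $|w|=n$ meets both $A$ and $B$, so for each such $w$ one of $u\circ F_w$, $v\circ F_w$ vanishes identically. By polarization of the iterated identity,
\[
\mathcal{E}(u,v)=\sum_{w\in I^n}s_w^{-1}\mathcal{E}(u\circ F_w,v\circ F_w)=0.
\]
Hence $\mathcal{E}^{(J)}\equiv0$ by \eqref{EJ}, since the jump measure $j$ is then not charged off the diagonal.

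\textbf{Step 2: no killing.} It remains to show $\mathcal{E}(1)=0$, with $1\in\mathcal{F}$ by compactness (Lemma \ref{1inF}). From \eqref{Fsimi} with $u\equiv1$, and since $1\circ F_i=1$,
\[
\mathcal{E}(1)=\Bigl(\sum_{i=1}^N s_i^{-1}\Bigr)\mathcal{E}(1).
\]
Because $s_i\in(0,1)$ and $N\ge2$, the coefficient $\sum_{i} s_i^{-1}$ exceeds $1$, so $\mathcal{E}(1)=0$. Then
\[
\mathcal{E}^{(K)}(1,1)=k(K)\le\mathcal{E}(1)=0,
\]
so $k=0$. Together with Step 1 this gives $(\mathrm{sloc})$.

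The main obstacle is Step 1. The argument there uses only disjointness of supports, which gives locality and kills $j$. Strong locality additionally requires $\mathcal{E}(u,v)=0$ when $u$ is constant near $\supp v$ but the supports need not be disjoint. Once $j=0$ this is the statement $\mathcal{E}^{(K)}=0$, which Step 2 supplies, since a local form is strongly local exactly when it has no killing. The remaining point to check is that polarization and iteration of \eqref{Fsimi} apply to $u\circ F_w\in\mathcal{F}$, which holds by hypothesis.
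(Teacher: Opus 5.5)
Your proof is correct, and it differs from the paper's in two ways.

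The paper does not go through the Beurling--Deny decomposition. It takes $f,g\in\mathcal{F}$ with $g\equiv c$ on a neighbourhood of $\supp f$ and iterates \eqref{Fsimi} to a level $k$ so fine that every cell meeting $\supp f$ lies where $g=c$. Each term then vanishes, because either $f\circ F_w\equiv0$ or $\mathcal{E}(f\circ F_w,g\circ F_w)=c\,\mathcal{E}(f\circ F_w,1)=0$. This verifies the defining property of strong locality directly. It therefore avoids the identification of $j$ from disjoint-support pairs, and of $k$ from $\mathcal{E}(1)$, which your route needs; those facts are standard but not trivial. Once you have $\mathcal{E}(1)=0$, your Step 1 cell-splitting gives exactly this, so you could shorten your proof the same way.

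Your Step 2, on the other hand, improves on the paper. The paper obtains $\mathcal{E}(1)=0$ by quoting Lemma \ref{1inF}, but the second assertion of that lemma assumes $(\mathrm{sloc})$, which is what is being proved. As written, that step is circular. Your identity $\mathcal{E}(1)=\bigl(\sum_i s_i^{-1}\bigr)\mathcal{E}(1)$ supplies the missing non-circular justification. It is in fact the only place the hypothesis $s_i\in(0,1)$ is used. Any weights with $\sum_i s_i^{-1}\neq1$ would do, and $N\ge2$ is not needed.
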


\begin{proof}
Recall by Lemma \ref{1inF} that $1\in\mathcal{F}$ and $\mathcal{E}(1)=0$.

To show the strong locality of $(\mathcal{E},\mathcal{F})$, take $f,g\in%
\mathcal{F}$ with compact supports such that $g$ is equal to a constant $c$
in an open neighborhood $V_g$ of $\supp(f)$. Since both sets $\supp(f)$ and $%
K\setminus V_g$ are compact, the distance $2\delta$ between them is
positive, and hence 
\begin{equation*}
	\supp(f)\subset B(\supp(f),\delta)\subset V_g.
\end{equation*}

For any $k\geq 1$, define 
\begin{equation*}
	\Gamma_{f,k}:=\left\{w\in I^k:K_w\cap\supp(f)\neq\emptyset\right\}.
\end{equation*}
For any $x\in\cup_{w\in\Gamma_{f,k}}K_w$, there exists a word $%
w\in\Gamma_{f,k}$ such that $x\in K_w$. As $K_w\cap\supp(f)\neq\emptyset$,
there exists a point $z\in K_w\cap\supp(f)$. Writing $x=F_w(x^{\prime })$, $%
z=F_w(z^{\prime })$ for some points $x^{\prime },z^{\prime }\in K$, we know
that 
\begin{equation*}
	d(x,z)=d(F_w(x^{\prime }),F_w(z^{\prime }))\leq \rho_wd(x^{\prime
	},z^{\prime })\leq\rho_{\max}^k\bar{R}<2\rho_{\max}^k\bar{R},
\end{equation*}
from which follows $x\in B(\supp(f),2\rho_{\max}^k\bar{R})$. Thus 
\begin{equation}  \label{kw}
	\tbigcup_{w\in\Gamma_{f,k}}K_w\subset B\left(\supp(f),2\rho_{\max}^k\bar{R}%
	\right)\subset B(\supp(f),\delta/2),
\end{equation}
provided that $k$ is so large that $\rho_{\max}^k\bar{R}<\delta/4$.

For any $w\in \Gamma _{f,k}$, we know by using (\ref{kw}) that $g\circ
F_{w}\equiv c$ on $K$ and so 
\begin{equation*}
	\mathcal{E}(f\circ F_{w},g\circ F_{w})=\mathcal{E}(f\circ F_{w},c)=c\mathcal{%
		E}(f\circ F_{w},1)=0.
\end{equation*}%
On the other hand, for any $w\in I^{k}\setminus \Gamma _{f,k}$, we know that 
$f\circ F_{w}\equiv 0$ on $K$, and so 
\begin{equation*}
	\mathcal{E}(f\circ F_{w},g\circ F_{w})=\mathcal{E}(0,g\circ F_{w})=0.
\end{equation*}%
Therefore, it follows by (\ref{Fsimi}) that 
\begin{equation*}
	\mathcal{E}(f,g)=\sum_{w\in I^{k}}\frac{1}{s_{w}}\mathcal{E}(f\circ
	F_{w},g\circ F_{w})=\left\{ \sum_{w\in I^{k}\setminus \Gamma
		_{f,k}}+\sum_{w\in \Gamma _{f,k}}\right\} \frac{1}{s_{w}}\mathcal{E}(f\circ
	F_{w},g\circ F_{w})=0,
\end{equation*}%
thus showing the strong locality of $(\mathcal{E},\mathcal{F})$. The proof is completed.
\end{proof}

\begin{acknowledgement}
	The authors would like to thank Professor Jiaxin Hu for his continuous guidance and support throughout this research. We are also grateful to Professor Alexander Grigor'yan for his valuable discussions and support.
\end{acknowledgement}

\bibliographystyle{siam}

\end{document}